\newtheorem*{thA}{Theorem A}
\newtheorem*{thB}{Theorem B}
\newtheorem*{thC}{Theorem C}
\newtheorem{theorem}{Theorem}[section]
\newtheorem{claim}[theorem]{Claim}
\newtheorem{lemma}[theorem]{Lemma}
\newtheorem{corollary}[theorem]{Corollary}
\newtheorem{definition}[theorem]{Definition}
\newcommand{\aI}{{\alpha_{1}}}
\newcommand{\aII}{{\alpha_{2}}} 
\newcommand{\aIII}{{\alpha_{3}}}
\newcommand{\half}{\tfrac{1}{2}}
\newcommand{\llangle}{{\langle \! \langle}}
\newcommand{\rrangle}{{\rangle \! \rangle}}
\newcommand{\cE}{{\mathcal E}}
\newcommand{\cG}{{\mathcal G}}
\newcommand{\cH}{{\mathcal H}}
\newcommand{\cK}{{\mathcal K}}
\newcommand{\cR}{{\mathcal R}}
\newcommand{\cV}{{\mathcal V}}
\newcommand{\cX}{{\mathcal X}}
\newcommand{\cY}{{\mathcal Y}}
\newcommand{\cZ}{{\mathcal Z}}
\newcommand{\nocontentsline}[3]{}
\newcommand{\tocless}[2]{\bgroup\let\addcontentsline=\nocontentsline#1{#2}\egroup}
\newcommand{\labitem}[2]{%
\def\@itemlabel{({#1})}
\item
\def\@currentlabel{#1}
\label{#2}}
\newcommand{\Rmnum}[1]{\expandafter\@slowromancap\romannumeral #1@}
\newcommand{\aIna}{\llangle \aI n \rrangle}
\newcommand{\aIIna}{\llangle \aII n \rrangle}
\newcommand{\aIIIna}{\langle \aIII n \rangle}
\newcommand{\wY}{\widetilde{Y}}
\newcommand{\wX}{\widetilde{X}}
\renewcommand{\qed}{\qquad\hspace*{\fill}$\Box$}
\begin{document}

\begin{titlepage}
\begin{center}

\textsc{\LARGE \bf }\\[3cm]

{\huge \bf The Ramsey number of \\[12pt] mixed-parity cycles I}
\end{center}

\vspace{20mm}
\begin{center}
{\Large David G. Ferguson} 
\end{center}

\vspace{40mm}

\abstract{
\noindent 
Denote by $R(G_1, G_2, G_3)$ the minimum integer $N$ such that any three-colouring of the edges of the complete graph on $N$ vertices contains
a monochromatic copy of a graph $G_i$ coloured with colour~$i$ for some $i\in{1,2,3}$.
In a series of three papers of which this is the first, we consider the case where $G_1, G_2$ and $G_3$ are cycles of mixed parity. Specifically, in this and the subsequent paper, we consider~$R(C_n,C_m,C_{\ell})$, where $n$ and $m$ are even and $\ell$ is odd.
Figaj and \L uczak determined an asymptotic result for 
this case, which we improve upon to give an exact result. We prove that for~$n,m$ and $\ell$ sufficiently large
$$R(C_n,C_m,C_\ell)=\max\{2n+m-3, n+2m-3, \half n +\half m + \ell - 2\}.$$
In the case that the longest cycle is of even length, the proof in this paper is self-contained. However, in the case that the longest cycle is of odd length, we require an additional technical result, the proof of which makes up the majority of the subsequent paper.

}

\end{titlepage}


\pagenumbering{arabic}

\let\L\defaultL

 \renewcommand{\baselinestretch}{1.30}\small\normalsize
 
\setcounter{page}{2}


\renewcommand{\baselinestretch}{1.25}\small\normalsize

For graphs $G_1,G_2,G_3$, the Ramsey number $R(G_1,G_2,G_3)$ is the smallest integer~$N$ such that every edge-colouring of the complete graph on~$N$ vertices with up to three colours, results in the graph having, as a subgraph, a copy of~$G_{i}$ coloured with colour $i$ for some~$i$. 
We consider the case when $G_1,G_2$ and $G_3$ are~cycles.

In 1973, Bondy and Erd\H{o}s~\cite{BonErd} conjectured that, if~$n>3$ is odd, then $$R(C_{n},C_{n},C_{n})=4n-3.$$ 
Later, \L uczak~\cite{Lucz} proved that, for~$n$ odd,  $R(C_{n},C_{n},C_{n})=4n+o(n)$ as $n\rightarrow \infty$. Kohayakawa, Simonovits and Skokan~\cite{KoSiSk}, expanding upon the work of \L uczak, confirmed the Bondy-Erd\H{o}s conjecture for sufficiently large odd values of~$n$ by proving that there exists a positive integer~$n_0$ such that, for all odd $n,m,\ell>n_{0}$, 
$R(C_{n},C_{m},C_{\ell})=4 \max\{n,m,\ell\} -3.$

In the case where all three cycles are of even length, Figaj and \L uczak~\cite{FL2007} proved the following asymptotic. Defining $\llangle x\rrangle$ to be the largest even integer not greater than~$x$, they proved that, for all $\aI,\aII,\aIII>0$,
$$R(C_{\llangle \aI n \rrangle},C_{\llangle \aII n\rrangle},C_{\llangle \aIII n \rrangle})=\half\big(\aI + \aII +\aIII+\max \{\aI,\aII,\aIII\}\big)n+o(n),$$
as $n \rightarrow \infty$. 
Thus, in particular, for even~$n$, $R(C_{n},C_{n},C_{n})=2n+o(n),\text{ as }n\rightarrow \infty.$

Independently, Gy\'{a}rf\'{a}s, Ruszink\'{o}, S\'{a}rk\"{o}zy and Szem\'{e}redi~\cite{GyarSzem} proved a similar, but more precise, result for paths, namely that there exists a positive integer~$n_{1}$ such that, for $n>n_{1}$,
$$R(P_{n},P_{n},P_{n})=\begin{cases} 2n-1, & n \text{ odd,} \\ 2n-2, & n \text{ even.} \end{cases}$$
More recently, Benevides and Skokan~\cite{BenSko} proved that there exists~$n_{2}$ such that, for even $n>n_{2}$, 
$$R(C_{n},C_{n},C_{n})=2n.$$
We look at the mixed-parity case, for which, 
defining $\langle x \rangle$ to be the largest odd number not greater than~$x$, Figaj and \L uczak~\cite{FL2008} proved that, for all $\aI,\aII,\aIII>0$,
\begin{align*}
&\text{(i)}\ R(C_{\llangle \aI n \rrangle },C_{\llangle \aII n \rrangle },C_{\langle \aIII n\rangle  }) = \max \{2\aI+\aII, \aI+2\aII, \half\aI  + \half\aII +\aIII \}n +o(n),\\
&\text{(ii)}\ R(C_{\llangle \aI n \rrangle },C_{\langle \aII n \rangle  },C_{\langle \aIII n\rangle  }) = \max \{4\aI,\aI+2\aII, \aI  +2\aIII \}n +o(n),
\end{align*}
as $n\rightarrow \infty$.

Specifically, improving on their result, in the case when exactly one of the cycles is of odd length, we prove the following, which is the main result of this paper:

\phantomsection
\hypertarget{thA}
\phantomsection
\begin{thA}
\label{thA}

For every $\alpha_{1}, \alpha_{2}, \alpha_{3}>0$ such that $\aI \geq \aII$, there exists a positive integer $n_{A}=n_{A}(\aI,\aII,\aIII)$ such that, for $n> n_{A}$,
 \begin{align*}
 R(C_{\llangle \alpha_{1} n \rrangle},C_{\llangle \alpha_{2} n \rrangle}, C_{\langle \alpha_{3} n \rangle }) = \max\{ 2\llangle \alpha_{1} n \rrangle \!+\! \llangle \alpha_{2} n \rrangle  \!-\!\text{\:}3,\text{\:}\half\llangle  \alpha_{1} n \rrangle  \!+\! \half\llangle \alpha_{2} n \rrangle  \!+\! \langle \alpha_{3} n \rangle \!-\! \text{\:}2\}.
\end{align*}
\end{thA}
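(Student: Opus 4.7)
The plan is to establish matching lower and upper bounds. Write $n_1 := \llangle \aI n \rrangle$, $n_2 := \llangle \aII n \rrangle$, $n_3 := \langle \aIII n \rangle$ and set $N^{*} := \max\{2n_1 + n_2 - 3,\ \half n_1 + \half n_2 + n_3 - 2\}$. Note that the assumption $\aI \geq \aII$ removes the third candidate term $n_1+2n_2-3$ present in the abstract.

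\textbf{Lower bound.} For each of the two terms inside the maximum I would exhibit a standard $3$-edge-colouring of $K_{N^{*}-1}$ avoiding $C_{n_1}$ in colour~$1$, $C_{n_2}$ in colour~$2$ and $C_{n_3}$ in colour~$3$. The first term $2n_1 + n_2 - 3$ is witnessed by the classical colouring on three disjoint blocks of sizes $n_1-1$, $n_1-1$, $n_2-1$ in which each colour class decomposes into components too small to carry its forbidden cycle. The second term $\half n_1 + \half n_2 + n_3 - 2$ is witnessed by a bipartite-type construction: a vertex partition into sets of sizes $n_3-1$ and $\half n_1 + \half n_2 - 1$ with all cross-edges in colour~$3$ (so the colour-$3$ graph is bipartite, hence has no odd $n_3$-cycle), and the two parts internally split so that each colour-$1$ and colour-$2$ component has fewer than $n_1$ and $n_2$ vertices respectively.

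\textbf{Upper bound and reduction to connected matchings.} Given any $3$-colouring of $K_{N^{*}}$, I would follow the Figaj--\L uczak regularity paradigm. Apply Szemer\'edi's regularity lemma to obtain a reduced cluster graph~$H$ whose edges are $3$-coloured by majority density. As in \cite{FL2008}, finding the desired monochromatic cycle reduces to finding in $H$ either an \emph{even} monochromatic connected matching of size $\approx n_i/2$ in colour $i\in\{1,2\}$, or an \emph{odd} monochromatic connected matching of size $\approx n_3/2$ in colour~$3$; such a matching then lifts, through a standard embedding lemma that absorbs the exceptional cluster, to a monochromatic cycle of the exact prescribed length.

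\textbf{Stability step.} To close the asymptotic gap and reach the exact threshold $N^{*}$ (rather than only $N^{*}+o(n)$), I plan a stability dichotomy on~$H$. Either $H$ is structurally far from each of the two extremal colourings described in the lower bound, in which case a quantitative strengthening of the Figaj--\L uczak connected-matching theorem produces a monochromatic connected matching strictly larger than required and we are done; or $H$ is close to one of these extremal configurations, in which case I would identify the corresponding near-partition of the host graph, absorb the small set of atypical vertices using the reservoir provided by the regular partition, and close a cycle of exactly the required length by routing through a suitable segment of that reservoir. The two extremal families $2n_1+n_2-3$ and $\half n_1+\half n_2 + n_3-2$ give rise to two separate sub-cases, each analysed by the same absorbing-cycle scheme but with different part-sizes and parity obligations.

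\textbf{Main obstacle.} The hardest step is locating the required odd connected matching of size close to $n_3/2$ in colour~$3$ within a single connected component when $\aIII > \aI$, i.e.\ when the \emph{longest} required cycle is of odd length. In that regime the target matching nearly fills the reduced graph, the parity constraint interacts delicately with the connectivity requirement, and the possible near-extremal structures on $H$ are more varied, leaving very little slack for the absorbing step. As flagged in the abstract, this is precisely the sub-case whose proof is deferred to the sequel via an additional technical connected-matching result. In the complementary regime $\aI \geq \aIII$, the colour-$3$ target is strictly shorter than the colour-$1$ target, the stability analysis has room to spare, and the argument sketched above can be completed self-contained within the present paper.
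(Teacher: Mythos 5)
Your upper-bound outline (regularity, reduction to monochromatic connected matchings with an odd-parity requirement in the third colour, a stability dichotomy on the reduced graph, and an extremal endgame, with the sub-case $\aIII>\aI$ of the connected-matching result deferred to a companion paper) is essentially the route the paper takes, although all of the real content there — the precise stability statement and the exact-length endgame — is left unspecified. The genuine gap is in the step you treat as routine: neither of the lower-bound colourings you describe actually works. Writing $n_1=\llangle \aI n\rrangle$, $n_2=\llangle \aII n\rrangle$, $n_3=\langle \aIII n\rangle$, your first colouring uses three blocks of sizes $n_1-1,n_1-1,n_2-1$, i.e.\ $2n_1+n_2-3$ vertices, one more than the $2n_1+n_2-4$ you are permitted; worse, the mechanism you invoke ("each colour class decomposes into components too small to carry its forbidden cycle") is impossible in general: in any three-colouring of a complete graph on $N=2n_1+n_2-4$ vertices some monochromatic component has at least $N/2=n_1+\half n_2-2$ vertices, which is at least $n_1$ and at least $n_2$, so the only colour allowed a large component is green, and this already rules out the small-components mechanism whenever $n_3\le n_1+\half n_2-2$ — in particular throughout the case $\aI\ge\aIII$ that this paper handles self-containedly. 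The correct extremal colouring uses \emph{four} blocks $V_1,V_2,V_3,V_4$ with $|V_1|=|V_2|=n_1-1$, $|V_3|=|V_4|=\half n_2-1$, blue exactly on $[V_1,V_3]\cup[V_2,V_4]$ and green spanning $[V_1\cup V_3,V_2\cup V_4]$: the odd cycle is excluded because the green graph is bipartite, not because its components are small, and splitting the $\aII$-side into two halves is what keeps each blue component from containing $C_{n_2}$.

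Your second colouring fails precisely in the regime where the term $\half n_1+\half n_2+n_3-2$ attains the maximum. If every edge between the part $P$ with $|P|=n_3-1$ and the part $Q$ with $|Q|=\half n_1+\half n_2-1$ is green, then, as you intend, the interior of $P$ must be coloured with colours $1$ and $2$ only (a green edge inside $P$ closes odd green cycles through the complete green bipartite graph $[P,Q]$, including one of length $n_3$ whenever $n_3\le n_1+n_2-1$, and an all-green interior gives a green clique on $n_3$ vertices). But the second term being the maximum means $n_3\ge\tfrac32 n_1+\half n_2-1$, so $|P|=n_3-1\ge n_1+\half n_2-1=R(C_{n_1},C_{n_2})$, the two-colour Ramsey number for these even cycles; hence \emph{every} red/blue colouring of the inside of $P$ contains a red $C_{n_1}$ or a blue $C_{n_2}$, and your requirement that the colour-$1$ and colour-$2$ components of $P$ have fewer than $n_1$, respectively $n_2$, vertices cannot be met. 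The standard construction (Figure~\ref{th0}) does not put the cross-edges in green: it takes $|V_1|=\half n_1-1$, $|V_2|=\half n_2-1$, $|V_3|=n_3-1$, colours $G[V_1]\cup[V_1,V_3]$ red, $G[V_2]\cup[V_2,V_3]$ blue, and $G[V_3]\cup[V_1,V_2]$ green, so that the red (resp.\ blue) graph has too small an even side to contain $C_{n_1}$ (resp.\ $C_{n_2}$) and the green graph is a clique on $n_3-1$ vertices together with a disjoint bipartite piece. With these two colourings substituted, the rest of your outline is consistent with the paper's argument.
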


In order to prove Theorem~\hyperlink{thA}{A}, we require a new technical result, Theorem~\hyperlink{thB}{B}, which we will state in Section~\ref{s:struct}.
Owing to the length of the proof of Theorem~\hyperlink{thB}{B}, we postpone part of it to \cite{DF2}. However, this paper does contains a complete proof of Theorem~\hyperlink{thA}{A} in the case that $\aI\geq\aIII$.

In \cite{DF3}, we consider the case when exactly one of the cycles is of even length, proving the following result, which again improves upon the corresponding result of Figaj and \L uczak:

\phantomsection
\hypertarget{thC}
\phantomsection
\begin{thC}
\label{thC}

For every $\alpha_{1}, \alpha_{2}, \alpha_{3}>0$, there exists a positive integer $n_{C}=n_{C}(\aI,\aII,\aIII)$ such that, for $n> n_{C}$,
 \begin{align*}
 R(C_{\llangle \alpha_{1} n \rrangle},C_{\langle \alpha_{2} n \rangle}, C_{\langle \alpha_{3} n \rangle }) = \max\{
4\llangle \aI n \rrangle,
\llangle \aI n \rrangle+2\langle \aII n \rangle,
\llangle \aI n \rrangle+2\langle \aIII n \rangle\}-3
.
 \end{align*}
\end{thC}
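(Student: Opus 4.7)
The plan is to match the claimed value from above and below. The lower bound is established by exhibiting three explicit $3$-edge-colourings, one for each term of the maximum. For the bound $4\llangle \aI n \rrangle - 3$, I would partition the $4(\llangle \aI n \rrangle -1)$ vertices of $K_{4\llangle \aI n \rrangle -4}$ into four equal parts $V_1,V_2,V_3,V_4$, colouring every inside-part edge with colour~$1$, every edge of the $4$-cycle blow-up $V_1V_2V_3V_4$ with colour~$2$, and every edge of the diagonal blow-up $V_1V_3 \cup V_2V_4$ with colour~$3$. Colours~$2$ and~$3$ are then blow-ups of bipartite graphs and contain no odd cycles at all, while colour~$1$ is a disjoint union of cliques too small to carry $C_{\llangle \aI n \rrangle}$. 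The other two bounds, $\llangle \aI n \rrangle + 2\langle \aII n \rangle - 3$ and $\llangle \aI n \rrangle + 2\langle \aIII n \rangle - 3$, are obtained by the analogous construction in which the colour blocking the odd cycle $C_{\langle \aII n \rangle}$ (respectively $C_{\langle \aIII n \rangle}$) is a complete bipartite graph between two parts of size $\langle \aII n \rangle - 1$ (respectively $\langle \aIII n \rangle - 1$), with the remaining edges distributed between the other two colours in clique and bipartite form so as to avoid the respective monochromatic cycles.

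For the upper bound I would follow the regularity strategy of \L uczak and Figaj--\L uczak~\cite{FL2007,FL2008}, as already adapted in \cite{KoSiSk,BenSko} and in the proof of Theorem~\hyperlink{thA}{A}. Given an arbitrary $3$-edge-colouring of $K_N$ with $N$ equal to the claimed value, Szemer\'edi's Regularity Lemma produces a reduced graph $H$ whose edges inherit a dominant colour from the $\varepsilon$-regular pair they represent. It would then suffice to find in $H$ one of three monochromatic structures: a colour-$1$ connected matching of size roughly $\half \llangle \aI n \rrangle$, or a colour-$2$ (respectively colour-$3$) connected subgraph carrying an odd cycle of length roughly $\langle \aII n \rangle$ (respectively $\langle \aIII n \rangle$). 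Any such object lifts, via the standard path-absorbing argument used in Theorem~\hyperlink{thA}{A}, to the required monochromatic cycle in $K_N$.

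The main obstacle will be a three-colour stability statement on $H$: either $H$ carries one of the monochromatic structures above, or its colouring lies within small edit distance of one of the three extremal colourings exhibited in the lower-bound paragraph. This statement plays, in the proof of Theorem~\hyperlink{thC}{C}, the role that Theorem~\hyperlink{thB}{B} plays in the proof of Theorem~\hyperlink{thA}{A}; because two of the three colours must support odd cycles, I expect the proof to branch according to which of the three terms of the maximum is achieved, and to invoke the Erd\H{o}s--Gallai style argument used by Gy\'arf\'as--Ruszink\'o--S\'ark\"ozy--Szemer\'edi~\cite{GyarSzem} separately in each branch. Once the dichotomy is in hand, the proof will close by checking that in each near-extremal configuration the unavoidable additional vertex forces one of the target monochromatic cycles; this uniform-in-$(\aI,\aII,\aIII)$ case analysis, rather than the regularity reduction itself, is where I expect the bulk of the technical effort to lie, mirroring the situation in Theorem~\hyperlink{thA}{A}.
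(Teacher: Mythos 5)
First, a point of reference: this paper does not prove Theorem~C at all --- it only states it and defers the proof to the companion paper~\cite{DF3} --- so there is no detailed argument here to compare yours against beyond the series' general programme (explicit extremal colourings for the lower bound; regularity plus a connected-matching stability result in place of Theorem~\hyperlink{thB}{B}; an end-game converting near-extremal structure into exact cycles), which your outline does follow in spirit. Within your own sketch, however, there are concrete problems. Your construction for the term $4\llangle\aI n\rrangle-3$ is the standard one and is correct, but the constructions for $\llangle\aI n\rrangle+2\langle\aII n\rangle-3$ and $\llangle\aI n\rrangle+2\langle\aIII n\rangle-3$ fail as described. If colour~$2$ is the complete bipartite graph between parts $A,B$ of size $\langle\aII n\rangle-1$, then the remaining $\llangle\aI n\rrangle-2$ vertices $C$ leave the cliques on $A\cup C$ and $B\cup C$ to be coloured with colours~$1$ and~$3$ only; but $|A\cup C|=\llangle\aI n\rrangle+\langle\aII n\rangle-3$, and in the regime where this term of the maximum is relevant ($2\langle\aII n\rangle\geq 3\llangle\aI n\rrangle$, $\aII\geq\aIII$) this exceeds the two-colour Ramsey number $R(C_{\llangle\aI n\rrangle},C_{\langle\aIII n\rangle})$, so a forbidden colour-$1$ or colour-$3$ cycle is unavoidable no matter how you distribute the remaining edges. (Note also that making a colour bipartite already kills \emph{all} its odd cycles, so the part sizes $\langle\aII n\rangle-1$ do no work there; size constraints only block cycles inside cliques or components.) The correct extremal colouring makes the \emph{other} odd colour bipartite: split the $\llangle\aI n\rrangle+2\langle\aII n\rangle-4$ vertices into two halves of size $\half\llangle\aI n\rrangle+\langle\aII n\rangle-2$, colour all edges across the cut with colour~$3$, and inside each half place a colour-$2$ clique on $\langle\aII n\rangle-1$ vertices and give colour~$1$ every edge meeting the remaining $\half\llangle\aI n\rrangle-1$ vertices; the third term is obtained by exchanging the roles of $\aII$ and $\aIII$.

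For the upper bound your text is a plan rather than a proof, and the objects you aim for in the reduced graph are mis-stated: what is needed there is a colour-$1$ connected matching on roughly $\aI k$ vertices together with \emph{odd} connected matchings (connected matchings lying in non-bipartite components) on roughly $\aII k$ and $\aIII k$ vertices for the two odd colours --- not ``a connected subgraph carrying an odd cycle of length roughly $\langle\aII n\rangle$'', since the reduced graph has only $O(k)$ vertices and odd-ness of the component, not a long odd cycle, is what the blow-up lemma (Theorem~\ref{th:blow-up}) requires. More importantly, the entire technical content --- the stability analogue of Theorem~\hyperlink{thB}{B} for this parity pattern, the classification of the near-extremal structures, and the end-game that turns each of them into a cycle of exact length and parity (which for Theorem~A occupies Sections~\ref{s:stabp}--\ref{s:p13} here plus most of \cite{DF2}) --- is only gestured at, so the argument as given establishes neither inequality.
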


\section{Lower bounds}
\label{ram:low}
\setlength{\parskip}{0.1in plus 0.05in minus 0.025in}

The first step in proving Theorem~\hyperlink{thA}{A} is to exhibit three-edge-colourings of the complete graph on $$ \max\left\{ 2\llangle \alpha_{1} n \rrangle + \llangle \alpha_{2} n \rrangle -\text{\:}4,\text{\:}\half\llangle \alpha_{1} n \rrangle + \half\llangle \alpha_{2} n \rrangle +\langle \alpha_{3} n \rangle-\text{\:}3\right\}$$ vertices which do not contain any of the relevant coloured cycles, thus proving that 
$$ R(C_{\llangle \alpha_{1} n \rrangle},C_{\llangle \alpha_{2} n \rrangle}, C_{\langle \alpha_{3} n \rangle }) \geq \max\{ 2\llangle \alpha_{1} n \rrangle + \llangle \alpha_{2} n \rrangle -\text{\:}3,\text{\:}\half\llangle \alpha_{1} n \rrangle + \half\llangle \alpha_{2} n \rrangle +\langle \alpha_{3} n \rangle-\text{\:}2\}.$$
For this purpose, the well-known colourings shown in Figure~\ref{th0} suffice.
\begin{figure}[!h]
\centering{
\mbox{
{\includegraphics[width=64mm, page=1]{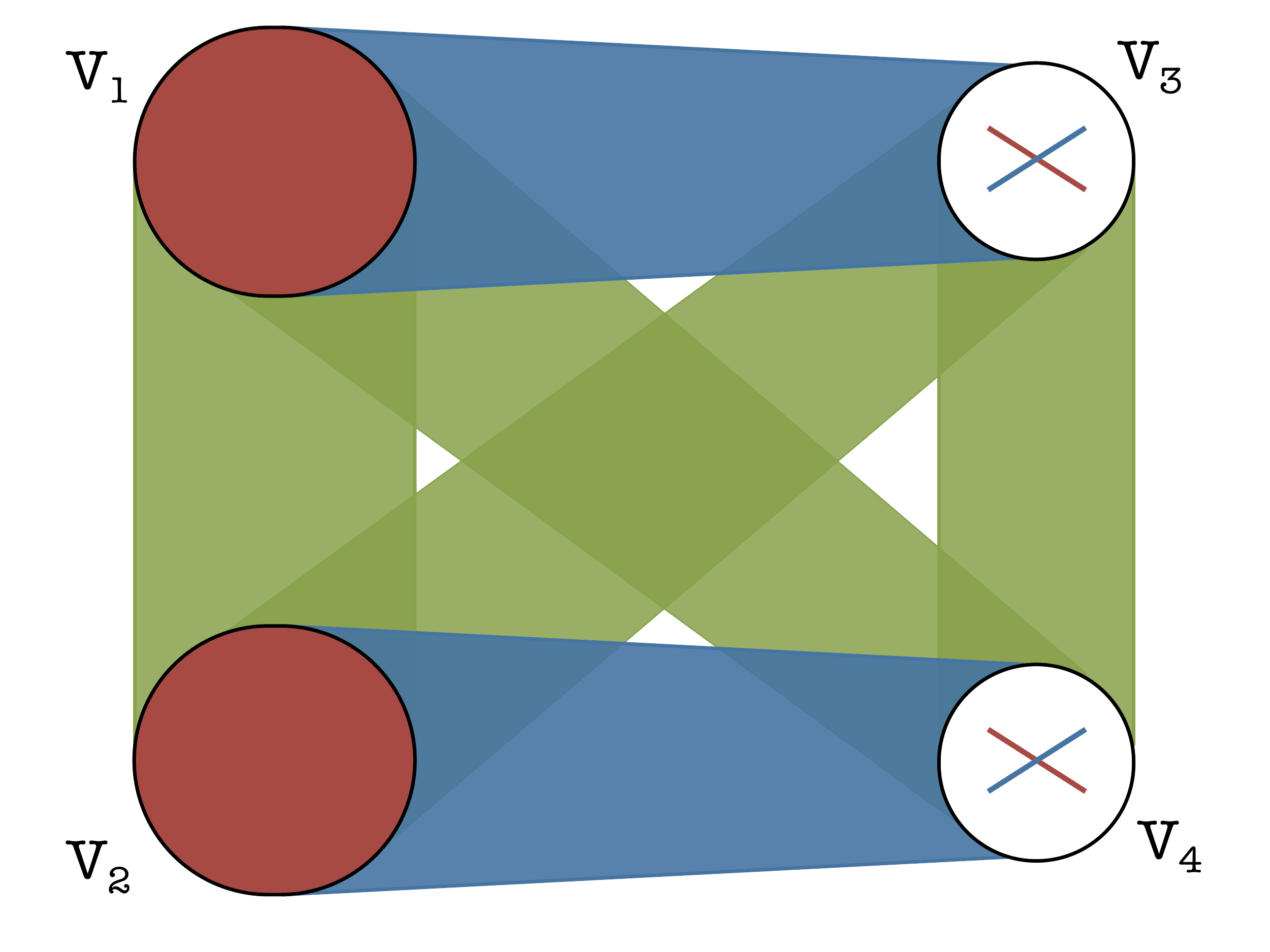}}\quad
{\includegraphics[width=64mm, page=2]{CH2-Figs.pdf}}}}
\caption{Extremal colouring for Theorem~A.}   
\label{th0}
\end{figure}

The first graph shown in Figure~\ref{th0} has $2\llangle \alpha_{1} n \rrangle + \llangle \alpha_{2} n \rrangle-4$ vertices divided into four classes $V_{1}, V_{2}, V_{3}$ and $V_{4}$, with 
$
|V_{1}|=|V_{2}|=\llangle \alpha_{1} n \rrangle-1$, 
$|V_{3}|=|V_{4}|=\half\llangle \alpha_{2} n \rrangle-1$,
such that all edges in $G[V_1]$ and $G[V_2]$ are coloured red; all edges in $G[V_1,V_3]$ and $G[V_2,V_4]$ are coloured blue; all edges in $G[V_1 \cup V_3,V_2 \cup V_4]$ are coloured green; and all edges in $G[V_3]$ and $G[V_4]$ are coloured red or blue.

The second graph shown in Figure~\ref{th0} has $\half\llangle \alpha_{1} n \rrangle + \half\llangle \alpha_{2} n \rrangle +\langle \alpha_{3} n \rangle- 3$ vertices, divided into three classes $V_{1}, V_{2}$ and $V_{3}$, with 
$
|V_{1}|=\half\llangle \alpha_{1} n \rrangle-1$, 
$|V_{2}|=\half\llangle \alpha_{2} n \rrangle-1$, 
$|V_{3}|=\langle \alpha_{3} n \rangle-1$,
such that all edges in $G[V_1]\cup G[V_1,V_3]$ are coloured red;
all edges in $G[V_2]\cup G[V_2,V_3]$ are coloured blue; and 
all edges in $G[V_1,V_2] \cup G[V_3]$ are coloured green.

Thus, it remains to prove the corresponding upper-bound. To do so, we combine regularity (as used in~\cite{Lucz},~\cite{FL2007},~\cite{FL2008}) with stability methods using a similar approach to~\cite{GyarSzem}, \cite{BenSko}, \cite{KoSiSk}, \cite{KoSiSk2}.

\section{Key steps in the proof}
\label{ram:key}

In order to complete the proof of Theorem~\hyperlink{thA}{A}, we must show that, for $n$ sufficiently large, any three-colouring of~$G$, the complete graph on $$N=\max\left\{ 2\llangle \alpha_{1} n \rrangle + \llangle \alpha_{2} n \rrangle -\text{\:}3,\text{\:}\half\llangle \alpha_{1} n \rrangle + \half\llangle \alpha_{2} n \rrangle +\langle \alpha_{3} n \rangle-\text{\:}2\right\}$$ vertices will result in either a red cycle on~$\llangle \aI n \rrangle$ vertices, a blue cycle on~$\llangle \aII n \rrangle$ or a green cycle on $\langle \aIII n \rangle$ vertices. 

The main steps of the proof are as follows: Firstly, we apply a version of the Regularity Lemma (Theorem~\ref{l:sze}) to give a partition $V_0\cup V_1\cup\dots\cup V_K$ of the vertices which is simultaneously regular for the red, blue and green spanning subgraphs of~$G$. Given this partition, we define the three-multicoloured reduced-graph $\cG$ on vertex set $V_1,V_2,\dots V_K$ whose edges correspond to the regular pairs. We colour the edges of the  reduced-graph with all those colours for which the corresponding pair has density above some threshold.
 \L uczak~\cite{Lucz} showed that, if the threshold is chosen properly, then the existence of a matching in a monochromatic connected-component of  the  reduced-graph implies the existence of a monochromatic cycle of the corresponding length in the original graph.

Thus, the key step in the proof of Theorem~\hyperlink{thA}{A} will be to prove a Ramsey-type stability result for so-called connected-matchings (Theorem~\hyperlink{thB}{B}). Defining a \textit{connected-matching} to be a matching with all its edges belonging to the same component, 
this result essentially says that, for every $\alpha_{1},\alpha_{2},\alpha_{3}>0$
such that $\aI\geq \aII$ and every sufficiently large $k$, every three-multicolouring of a graph $\cG$ on slightly fewer than $K=\max\{2\aI+\aII, \half\aI+\half\aII+\aIII\}k$ vertices with sufficiently large minimum degree results in either a connected-matching on at least~$\aI k$ vertices in the red subgraph of $\cG$, a connected-matching on at least~$\aII k$ vertices in the blue subgraph of $\cG$, a connected-matching on at least~$\aIII k$ vertices in a non-bipartite component of the green subgraph of $\cG$ or one of a list of particular structures which will be defined later.
In the case that $\cG$ contains a suitably large connected-matching in one of its coloured subgraphs, a blow-up result of Figaj and \L uczak (see Theorem~\ref{th:blow-up}) can be used to give a monochromatic cycle of the same colour in~$G$. If $\cG$ does not contain such a connected-matching, then the stability result gives us information about the structure of $\cG$. We then show that~$G$ has essentially the same structure, which we exploit to force the existence of a monochromatic cycle.

In the next section, given a three-colouring of the complete graph on~$N$ vertices, we will define its three-multicoloured  reduced-graph. We will also state and prove a version of the blow-up lemma of Figaj and \L uczak, which motivates our whole approach.
In Section~\ref{ram:defn}, we will deal with some notational formalities before proceeding in Section~\ref{s:struct} to define the structures we need and to give a precise formulation of the connected-matching stability result, which we shall call Theorem~\hyperlink{thB}{B}.
In Section~\ref{s:pre1}, we give a number of technical lemmas needed for the proofs of Theorem~\hyperlink{thA}{A} and Theorem~\hyperlink{thB}{B}. Among these is a decomposition result of Figaj and \L uczak,  which provides insight into the structure of the  reduced-graph. 
The hard work is done in Sections~\ref{s:stabp}--\ref{s:stabp2}, where we prove Theorem~\hyperlink{thB}{B}, and in Sections~\ref{s:p10}--\ref{s:p13}, where we translate this result for connected-matchings into one for cycles, thus completing the proof of Theorem~\hyperlink{thA}{A}. 

The proof of Theorem~\hyperlink{thB}{B} is divided into two parts according to the relative sizes of $\aI$ and $\aIII$. Section~\ref{s:stabp} deals with the case when $\aI \geq \aIII$, that is, the case when the longest cycle has even length. In that case, a combination of the decomposition lemma of Figaj and \L uczak and some careful counting of edges allows for a reasonably short proof. Section~\ref{s:stabp2} deals with the opposite case, which requires a longer proof utilising an alternative decomposition and extensive case analysis. 
The final part of the proof of Theorem~\hyperlink{thA}{A} is divided into four sub-parts, one dealing with the general setup and three further sections, each dealing with one of the structures that can occur. 

Note that Sections~\ref{ram:cmr}--\ref{s:stabp},~\ref{s:p10} and~\ref{s:p11} together give a complete proof for the case where the longest cycle is of even length, allowing the reader to postpone reading of sections~\ref{s:p12} and~\ref{s:p13}, which together with~\cite{DF2} complete the proof of Theorem~\hyperlink{thA}{A}.

\section{Cycles, matchings and regularity}
\label{ram:cmr}

 Recall that Szemer\'{e}di's Regularity Lemma~\cite{SzemRegu} asserts that any sufficiently large graph can be approximated by the union of a bounded number of random-like bipartite graphs. Recall also that, given a pair $(A,B)$ of disjoint subsets of the vertex set of a graph~$G$, we write $d(A,B)$ for the \textit{density} of the pair, that is, $d(A,B)=e(A,B)/|A||B|$. Finally, recall that we say such a pair  is $(\epsilon,G)$\textit{-regular} for some~$\epsilon>0$ if, for every pair $(A',B')$ with $A'\subseteq A$, $|A'|\geq \epsilon |A|$, $B' \subseteq B$, $|B'|\geq \epsilon |B|$, we have $ \left| d(A',B')-d(A,B) \right| <\epsilon.$
 
We will make use of a generalised version of Szemer\'edi's Regularity Lemma~in order to move from considering monochromatic cycles to considering monochromatic connected-matchings, the version below being a slight modification of one found, for instance, in~\cite{KomSim}: 

\begin{theorem}
\label{l:sze}
For every $\epsilon>0$ and every positive integer~$k_{0}$, there exists $K_{\ref{l:sze}}=K_{\ref{l:sze}}(\epsilon,k_{0})$ such that the following holds: For all graphs $G_{1},G_{2},G_{3}$ with $V(G_{1})=V(G_{2})=V(G_{3})=V$ and $|V|\geq K_{\ref{l:sze}}$, there exists a partition $\Pi =(V_{0},V_{1},\dots,V_{K})$ of $V$ such that
\begin{itemize}
\item [(i)]$k_{0}\leq K \leq K_{\ref{l:sze}}$;
\item [(ii)]$|V_0|\leq \epsilon |V|$;   
\item [(iii)]$|V_1|=|V_2|=\dots =|V_K|$; and
\item [(iv)] for each $i$, all but at most $\epsilon K$ of the pairs $(V_i,V_j)$, $1\leq i<j\leq K$, are simultaneously ($\epsilon,G_r$)-regular for $r=1,2,3$. 
\end{itemize}
\end{theorem}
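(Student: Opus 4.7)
The plan is to adapt the standard energy-increment proof of Szemerédi's Regularity Lemma to handle three graphs simultaneously, and then to perform a routine cleaning step to strengthen the usual ``few bad pairs in total'' conclusion to the per-class degree-form statement in condition~(iv).

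First, for any partition $\Pi=(V_0,V_1,\dots,V_K)$ of $V$ and each $r\in\{1,2,3\}$, I would introduce the mean-square density (energy)
$$q_r(\Pi)=\sum_{1\leq i<j\leq K}\frac{|V_i||V_j|}{|V|^2}\,d_{G_r}(V_i,V_j)^2,$$
and set $q(\Pi)=q_1(\Pi)+q_2(\Pi)+q_3(\Pi)$, so that $0\leq q(\Pi)\leq 3$. The standard defect form of the Cauchy--Schwarz inequality shows that if $\Pi$ fails to be $(\epsilon',G_r)$-regular for some $r$, then there is a refinement $\Pi'$ with $q_r(\Pi')\geq q_r(\Pi)+\delta$ for some $\delta=\delta(\epsilon')>0$; since each $q_s$ is monotone non-decreasing under refinement by convexity, the same refinement raises the total energy by at least $\delta$. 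Starting from the trivial partition and iterating, boundedness of $q$ forces termination after at most $3/\delta$ rounds, yielding an equitable partition into $k_0'\leq K'\leq K_0'(\epsilon',k_0')$ parts that is simultaneously $(\epsilon',G_r)$-regular in the standard sense (at most $\epsilon'(K')^2$ bad pairs) for all three colours $r$. Ensuring equal class sizes and absorbing left-over vertices into $V_0'$ uses the usual bookkeeping of repartitioning each $V_i'$ into pieces of a common small size; this is entirely standard and nothing in the multicolour setting changes it apart from tracking three energies instead of one.

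Second, I would promote this ``few bad pairs'' statement to the per-class statement (iv) by a cleaning step. Call a class $V_i'$ \emph{heavy} if it lies in more than $\epsilon K'$ pairs that fail to be simultaneously $(\epsilon,G_r)$-regular for all $r$. Double counting against the total bound $\epsilon'(K')^2$ on bad pairs gives at most $2\epsilon' K'/\epsilon$ heavy classes. Choosing $\epsilon'$ small in terms of $\epsilon$ (for instance $\epsilon'\leq\epsilon^2/4$) keeps this below $\epsilon K'/2$, and I would absorb all heavy classes into the exceptional class. This enlarges $|V_0|$ by at most $\epsilon|V|/2$, so if one begins with $|V_0'|\leq\epsilon|V|/2$ the final $|V_0|\leq\epsilon|V|$; the surviving classes retain a common size, their number still exceeds $k_0$ provided $k_0'$ was chosen large enough, and after relabelling each of them has at most $\epsilon K$ non-simultaneously-regular neighbours, which is precisely~(iv).

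I do not anticipate any real obstacle: the only place where the multicolour setting differs from a single graph is in verifying that the energy-increment step can always be driven by whichever $G_r$ currently witnesses irregularity, and monotonicity of each $q_s$ under refinement makes this transparent. The remainder is just arranging a parameter hierarchy $\epsilon\gg\epsilon'\gg 1/k_0'\gg 1/K_0'$ so that the cleaning, the size of~$V_0$, and the lower bound on $K$ all fall out simultaneously.
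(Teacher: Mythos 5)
The paper never proves Theorem~\ref{l:sze}; it is invoked as a slight modification of the multicolour regularity lemma found in the Koml\'os--Simonovits survey, so there is no in-paper argument to compare against. Your proposal is exactly the standard route that the citation stands in for, and it is essentially correct: run the energy increment on $q_1+q_2+q_3$, noting that each $q_s$ is non-decreasing under refinement and that whichever colour witnesses irregularity of a bad pair drives the increment, stop after boundedly many rounds with an equitable partition having few non-simultaneously-regular pairs in total, and then clean to the per-class (degree) form by absorbing classes with many irregular partners into the exceptional set; regularity of a pair does not depend on the other classes, so no new bad pairs are created, and $(\epsilon',G_r)$-regularity with $\epsilon'\leq\epsilon$ implies $(\epsilon,G_r)$-regularity. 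The one quantitative slip is in the cleaning step: after deleting the heavy classes the number of classes drops from $K'$ to $K\geq(1-\tfrac{1}{2}\epsilon)K'$, so the surviving classes are only guaranteed at most $\epsilon K'$ irregular partners, which is weaker than the required bound $\epsilon K$. This is repaired by declaring a class heavy already when it lies in more than $\tfrac{1}{2}\epsilon K'$ non-simultaneously-regular pairs (or by running the first stage with parameter $\epsilon/2$), at the cost of a harmless readjustment of $\epsilon'$ and of the budget for $|V_0|$; with that adjustment your parameter hierarchy delivers (i)--(iv) as stated.
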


Note that, given $\epsilon>0$ and graphs $G_1,G_2$ and $G_3$ on the same vertex set $V$, we call a partition $\Pi=(V_0,V_1,\dots,V_K)$, satisfying (ii)--(iv) \textit{$(\epsilon,G_1,G_2,G_3)$-regular}.

In what follows, given a three-coloured graph~$G$, we will use $G_1, G_2, G_3$ to refer to its monochromatic spanning subgraphs, that is, $G_1$ (resp. $G_2, G_3$) has the same vertex set as~$G$ and includes, as an edge, any edge which in~$G$ is coloured red (resp. blue, green).
Then, given a three-coloured graph~$G$, we can use Theorem~\ref{l:sze} to define a partition which is simultaneously regular for $G_1$, $G_2$, $G_3$ and then define the three-multicoloured  reduced-graph $\cG$ as follows: 

\begin{definition}
\label{reduced}
Given $\epsilon>0$, $\xi>0$, a three-coloured graph $G=(V,E)$  and an $(\epsilon,G_1,G_2,G_3)$-regular partition $\Pi =(V_{0},V_{1},\dots,V_{K})$, we define the three-multicoloured $(\epsilon,\xi,\Pi)$-reduced-graph $\cG=(\cV,\cE)$ by:
\begin{align*}
\mathcal{V}&=\{V_{1},V_{2},\dots ,V_{K}\}, \quad\quad\quad
\mathcal{E}&=\{ V_{i}V_{j} : (V_{i},V_{j}) \text{ is simultaneously } (\epsilon,G_{r})\text{-regular for }r=1,2,3\},
\end{align*}
where $V_{i}V_{j}$ is coloured with all colours~$r$ such that $d_{G_r}(V_i,V_j)\geq\xi$.

\end{definition}

One well-known fact about regular pairs is that they contain long paths. This is summarised in the following lemma, which is a slight modification of one found in~\cite{Lucz}:

\begin{lemma}
\label{longpath}
For every~$\epsilon$ such that $0\leq \epsilon < 1/600 $ and every $k\geq1/\epsilon$, the following holds: Let~$G$ be a bipartite graph with bipartition $V(G)=V_1\cup V_2$ such that $|V_1|,|V_2|\geq k$, the pair $(V_1,V_2)$ is $\epsilon$-regular and $e(V_1,V_2)\geq \epsilon^{1/2} |V_1||V_2|$. Then, for every integer~$\ell$ such that $1\leq \ell \leq k-2\epsilon^{1/2} k$ and every $v' \in V_1$, $v'' \in V_2$ such that $d(v'),d(v'')\geq \tfrac{2}{3}\epsilon^{1/2}k$,~$G$ contains a path of length $2\ell +1$ between~$v'$ and~$v''$.
\end{lemma}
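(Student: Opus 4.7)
The plan is a greedy path-embedding using $\epsilon$-regularity, together with a reservation trick to guarantee that the path terminates at $v''$ after exactly $2\ell+1$ edges. I construct the path $v'=u_0,u_1,\dots,u_{2\ell},u_{2\ell+1}=v''$ one vertex at a time, with $u_i\in V_1$ for even $i$ and $u_i\in V_2$ for odd $i$.

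As a preliminary, I extract the following \emph{typicality} fact from the hypotheses: since $(V_1,V_2)$ is $\epsilon$-regular with density at least $\epsilon^{1/2}$, for any $S\subseteq V_j$ satisfying $|S|\geq\epsilon|V_j|$, all but at most $\epsilon|V_{3-j}|$ vertices of $V_{3-j}$ have at least $(\epsilon^{1/2}-\epsilon)|S|$ neighbours in $S$; I call such vertices \emph{typical for} $S$. I then reserve a set $R\subseteq N(v'')\cap V_1\setminus\{v'\}$, chosen large enough that typicality applies to it and small enough that its exclusion does not overwhelm the available slack; $R$ will be kept untouched throughout the intermediate part of the construction. For each $i$ with $1\leq i\leq 2\ell-1$, I pick $u_i$ to be a neighbour of $u_{i-1}$ lying in the unused portion of the opposite side, avoiding $R$ when $u_i\in V_1$, and typical both for the current unused opposite side (to enable the next extension) and for $R$ (to enable the closing step). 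Since $\ell\leq k-2\epsilon^{1/2}k$, the unused portion of each side always has size at least $2\epsilon^{1/2}k$, so the typical in-neighbourhood of $u_{i-1}$ there has size at least $(\epsilon^{1/2}-\epsilon)\cdot 2\epsilon^{1/2}k$, which for small $\epsilon$ exceeds the total number of forbidden choices (atypical vertices plus those reserved in $R$). Finally, for $i=2\ell$, I take $u_{2\ell}$ to be any unused neighbour of $u_{2\ell-1}$ inside $R$---such a vertex exists because $u_{2\ell-1}$ was forced to be typical for $R$ and $R$ is entirely unused---and the edge $u_{2\ell}v''$, guaranteed by $R\subseteq N(v'')$, closes the path.

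The main technical obstacle is the bookkeeping in the greedy step: at each iteration one must verify that after excluding previously used vertices, the reserve $R$ (when $u_i\in V_1$), and vertices atypical for two different sets, the neighbourhood of $u_{i-1}$ in the unused opposite side is still nonempty. This reduces to an inequality of the form $(\epsilon^{1/2}-\epsilon)\cdot 2\epsilon^{1/2}k > O(\epsilon|V_j|)+|R|$, satisfied for $\epsilon$ sufficiently small and $|V_1|,|V_2|$ of comparable order to $k$. The first step requires a slight adjustment, since $v'$ is only assumed to have degree at least $\tfrac{2}{3}\epsilon^{1/2}k$ rather than to be typical itself; but this bound comfortably exceeds the $\epsilon|V_2|$ atypical count, so a suitable typical $u_1$ can still be extracted from among its neighbours.
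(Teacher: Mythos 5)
The paper contains no proof of Lemma~\ref{longpath}: it is quoted as a slight modification of a lemma of \L uczak~\cite{Lucz}, and your greedy embedding through ``typical'' vertices, with a reserve $R\subseteq N(v'')\cap V_1$ kept untouched so that the walk can be closed at $v''$ after exactly $2\ell+1$ edges, is essentially that standard argument. The route is the right one, but the counting you assert does not close at the extreme value of $\ell$. When $\ell$ is near $k-2\epsilon^{1/2}k$, the unused part of a side can shrink to roughly $2\epsilon^{1/2}k$, so the neighbourhood you select from has size at most about $(\epsilon^{1/2}-\epsilon)\cdot 2\epsilon^{1/2}k=2\epsilon k(1-\epsilon^{1/2})$; meanwhile the set you forbid --- vertices atypical for the unused opposite side \emph{and} vertices atypical for $R$, plus $|R|$ as you write the inequality --- is already at least $2\epsilon|V_j|\geq 2\epsilon k$. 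So ``typical in-neighbourhood exceeds forbidden choices'' fails for \emph{every} $\epsilon>0$, not just insufficiently small $\epsilon$; smallness of $\epsilon$ cannot rescue $2\epsilon k(1-\epsilon^{1/2})>2\epsilon k$. The repair is routine: impose typicality with respect to $R$ only when selecting $u_{2\ell-1}$ (at that step typicality for the unused part of $V_1$ is no longer needed, since $u_{2\ell}$ is drawn from $R$ and the closing edge is given by $R\subseteq N(v'')$), and do not count $R$ again among the forbidden vertices, since it was already deleted from the unused portion; then each step excludes a single atypical class of size at most $\epsilon|V_j|$ and the slack $2\epsilon^{1/2}k$ in the hypothesis is enough.

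A second, more structural point: your argument genuinely needs $|V_1|,|V_2|$ to be at most a constant (roughly $\tfrac{2}{3}\epsilon^{-1/2}$) times $k$, and this is not among the stated hypotheses, which require only $|V_1|,|V_2|\geq k$. You use it twice: to choose $R\subseteq N(v'')\setminus\{v'\}$ with $|R|\geq\epsilon|V_1|$ (otherwise ``typical for $R$'' is vacuous, as regularity says nothing about sets smaller than $\epsilon|V_1|$), and at the first step, where the at most $\epsilon|V_2|$ atypical vertices must not exhaust the $\tfrac{2}{3}\epsilon^{1/2}k$ guaranteed neighbours of $v'$. You flag this as ``comparable order to $k$'', but it should be made explicit, because it is not a dispensable convenience: if $|V_2|$ is much larger than $\epsilon^{-1/2}k$ (say larger than a constant times $\epsilon^{-3/2}k$), one can construct an $\epsilon$-regular pair of density at least $\epsilon^{1/2}$ in which every one of the $\lceil\tfrac{2}{3}\epsilon^{1/2}k\rceil$ neighbours of $v'$ has degree one, so no $v'$--$v''$ path of length $2\ell+1\geq 3$ exists and the lemma as literally stated fails in that regime. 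In \L uczak's original formulation the parts have size exactly $k$, and in the intended applications the pair comes from a regular partition with $k$ essentially equal to the cluster size, so the comparable reading is the correct one --- but your proof should either assume it explicitly or be adjusted (for example, by stating degree thresholds relative to $|V_1|,|V_2|$ rather than to $k$).
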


A \textit{matching} is a collection of pairwise vertex-disjoint edges. Note that, in what follows, we will sometimes abuse terminology and, where appropriate, refer to a matching by its vertex set rather than its edge set.
We call a matching with all its vertices in the same component of~$G$ a \textit{connected-matching} and note that we say a connected-matching is \textit{odd} if the component containing the matching also contains an odd cycle. 

The following theorem makes use of the Lemma~above to blow up large connected-matchings in the  reduced-graph to cycles (of appropriate length and parity) in the original. This facilitates our approach to proving Theorem~\hyperlink{thA}{A} in that it allows us to shift our attention away from cycles to connected-matchings, which turn out to be somewhat easier to find.

Figaj and \L uczak~\cite[Lemma~3]{FL2008} proved a more general version of this theorem in a slightly different context (they considered any number of colours and any combination of parities and used a different threshold for colouring the reduced-graph):

\begin{theorem}
\label{th:blow-up}

For all $c_1,c_2,c_3,d, \eta>0$ such that
$0<\eta<\min\{0.01, (64c_1+64c_2+64c_3)^{-1}\}$, there exists $n_{\ref{th:blow-up}}=n_{\ref{th:blow-up}}(c_1,c_2,c_3,d,\eta)$ such that, for $n>n_{\ref{th:blow-up}}$, the following holds:

Given $\aI, \aII, \aIII$  such that $0<\aI,\aII,\aIII\leq2$, and $\xi$ such that $\eta\leq \xi\leq\tfrac{1}{3}$, a complete three-coloured graph $G=(V,E)$ on
$$N=c_{1}\llangle \aI n\rrangle+c_{2}\llangle \aII n \rrangle +c_{3}\langle\alpha_{3}n\rangle-d$$
vertices and an $(\eta^4,G_1,G_2,G_3)$-regular partition $\Pi =(V_{0},V_{1},\dots,V_{K})$ for some $K>8(c_1+c_2+c_3)^2/\eta$, letting $\cG=(\cV,\cE)$ be the three-multicoloured $(\eta^4,\xi,\Pi)$- reduced-graph of~$G$ on~$K$ vertices and letting~$k$ be an integer such that 
$$
c_{1}\alpha_{1}k+c_{2}\aII k+c_{3}\aIII k - \eta k \leq  K \leq c_{1}\alpha_{1}k+c_{2}\aII k+c_{3}\aIII k - \half\eta k,$$
\begin{itemize}
\item[(i)] if $\cG$ contains a red connected-matching on at least $\alpha_{1}k$ vertices, then~$G$ contains a red cycle on $\llangle \alpha_{1} n\rrangle$ vertices;

\item[(ii)] if $\cG$ contains a blue connected-matching on at least $\alpha_{2}k$ vertices, then~$G$ contains a blue cycle on $\llangle \alpha_{2} n\rrangle$ vertices;

\item[(iii)] if $\cG$ contains a green odd connected-matching on at least $\alpha_{3}k$ vertices, then~$G$ contains a green cycle on $\langle\alpha_{3}n\rangle $ vertices.
\end{itemize}
\end{theorem}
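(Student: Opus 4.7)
All three parts follow the same blueprint, so I describe~(i) in detail and only indicate the extra ingredient needed for~(iii). Let $M=\{V_{i_s}V_{j_s}:1\leq s\leq t\}$ be the red connected-matching in $\cG$ with $2t\geq\aI k$, all of its edges lying in a single red-connected component $\cC$. The common cluster size satisfies $m=|V_r|\geq(1-\eta^4)N/K$, and combining the bounds
$$K\leq c_1\aI k+c_2\aII k+c_3\aIII k-\tfrac{1}{2}\eta k,\qquad N=c_1\llangle\aI n\rrangle+c_2\llangle\aII n\rrangle+c_3\langle\aIII n\rangle-d$$
gives $m\geq(1+\Theta(\eta))\,n/k$. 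Hence the $2t$ matching clusters contain in total at least $\aI k\cdot m\geq\llangle\aI n\rrangle+\Omega(\eta n)$ vertices, comfortably more than the target cycle length.

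I would then run the standard \L uczak blow-up. Indexing the matching cyclically by $s\in\{1,\dots,t\}$, fix a walk $W_s$ in $\cC$ from $V_{j_s}$ to $V_{i_{s+1}}$ of length at most $K$, and embed it in $G$ as a red path $Q_s$ greedily: each consecutive pair on $W_s$ is $(\eta^4,G_1)$-regular with red-density at least $\xi\geq\eta$, so all but an $\eta^4$-fraction of vertices in the current cluster have red-degree at least $(\xi-\eta^4)m$ into the next cluster, and an unused representative is always available. Collectively the $Q_s$ use at most $tK\leq K^2=o(n)$ vertices. Let $u_s\in V_{i_s}$ and $w_s\in V_{j_s}$ be the entry of $Q_{s-1}$ into $V_{i_s}$ and the exit of $Q_s$ from $V_{j_s}$; by the same degree count, $u_s$ and $w_s$ may be chosen with red-degree at least $\tfrac{2}{3}\eta^2 m$ into the opposite side of their pair, which is the hypothesis of Lemma~\ref{longpath}. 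Now prescribe even integers $\sigma_s\in[\eta^2 m,\,2m-4\eta^2 m]$ so that the concatenation of $t$ matching-pair sub-paths on $\sigma_s$ vertices with the $Q_s$ has length exactly $\llangle\aI n\rrangle$; the $\Omega(\eta n)$ surplus above makes such a distribution available (any parity mismatch in $\sum|Q_s|$ is absorbed by lengthening one walk by~$2$). Lemma~\ref{longpath} applied with $\epsilon=\eta^4$ inside each $(V_{i_s},V_{j_s})$ then supplies a red $u_s$-to-$w_s$ path on exactly $\sigma_s$ vertices, and splicing yields the desired red cycle on $\llangle\aI n\rrangle$ vertices. Cases (ii) and (iii) go identically.

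The only new ingredient for~(iii) is parity: bipartite regular-pair sub-paths and the connecting paths $Q_s$ jointly produce a cycle of a single fixed parity, but the assumption that the green matching is \emph{odd} supplies a green odd cycle $O\subseteq\cC$, and re-routing one connecting walk $W_s$ once around~$O$ flips the parity while adding only $O(K)=o(n)$ vertices, easily absorbed by the slack in the $\sigma_s$. The main obstacle throughout is not conceptual but bookkeeping: one must reserve vertices for the $Q_s$ without breaking the per-pair cap $\sigma_s\leq 2m-4\eta^2 m$ of Lemma~\ref{longpath}, maintain compatible degree conditions at $u_s,w_s$ after those exclusions, and distribute the target length among the $\sigma_s$ while respecting parity. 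All three are comfortably absorbed by the $\Theta(\eta n)$ slack forced by the $\tfrac{1}{2}\eta k$ gap in the upper bound on~$K$, together with the freedom to take $n$ large relative to~$K$.
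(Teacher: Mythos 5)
Your overall strategy is the one the paper itself relies on: the paper does not spell out a proof of Theorem~\ref{th:blow-up} but presents it as a slight modification of Lemma~3 of Figaj and \L uczak, whose proof is precisely this \L uczak-style blow-up --- connect the matching pairs by short monochromatic paths obtained greedily from regularity, then use Lemma~\ref{longpath} to stretch each regular pair to a prescribed even length, with the odd component supplying the parity flip in case (iii). Your quantitative bookkeeping (cluster size at least $(1+\Theta(\eta))n/k$, the $\Omega(\eta n)$ surplus over $\llangle\aI n\rrangle$, the degree threshold $\tfrac{2}{3}\eta^{2}m$ for the endpoints $u_s,w_s$, the admissible range of the $\sigma_s$) is essentially right, modulo the implicit assumption that $K$ is bounded while $n\to\infty$: you need $m\geq\eta^{-4}$ for Lemma~\ref{longpath} and $tK\leq K^{2}$ negligible against $\eta n$, which is how the theorem is applied in the paper (with $K\leq K_{\ref{l:sze}}(\eta^4,k_0)$) but is not literally provided by the statement, whose $n_{\ref{th:blow-up}}$ does not depend on $K$.

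There is, however, one step that fails as written: the parity adjustment in the even cases (i) and (ii). Each within-pair path from $u_s\in V_{i_s}$ to $w_s\in V_{j_s}$ has an even number of vertices, hence an odd number of edges, so the length of the spliced cycle is congruent mod~$2$ to $t+\sum_s|W_s|$, i.e.\ to the length of the closed walk in $\cG$ formed by the matching edges together with the connecting walks; the even $\sigma_s$ cannot influence this parity. If that closed walk is odd, ``lengthening one walk by $2$'' changes the vertex count by $2$ and therefore cannot repair the mismatch. The correct observation is: if the red (or blue) component is bipartite, every closed walk in it is even, so no mismatch can occur; if it is non-bipartite, re-route one connecting walk around an odd cycle of the component --- exactly the device you already invoke for case (iii) --- which changes the length by an odd amount and is absorbed by the same slack. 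With that replacement (and the routine care that the sub-pairs remaining after deleting the $O(K^{2})$ connector vertices are still regular with red density at least $\eta^{2}$), your argument goes through and coincides with the proof of Figaj and \L uczak that the paper cites.
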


\section{Definitions and notation}
\label{ram:defn}
%

Recall that, given a three-coloured graph~$G$, we refer to the first, second and third colours as red, blue and green respectively and use $G_1, G_2, G_3$ to refer to the monochromatic spanning subgraphs of $G$. 
If~$G_1$ contains the edge $uv$, we say that $u$ and $v$ are \textit{red neighbours} of each other in $G$. Similarly, if $uv\in E(G_2)$, we say that $u$ and $v$ are \textit{blue neighbours} and, if $uv\in E(G_3)$, we say that that $u$ and $v$ are \textit{green neighbours}. For vertices~$u$ and~$v$ in a graph~$G$, we will say that the edge $uv$ is \emph{missing} if~$uv\notin E(G)$.

We say that a graph~$G=(V,E)$ on~$N$ vertices is $a$-\emph{almost-complete} for $0\leq a\leq N-1$ if its minimum degree~$\delta(G)$ is at least $(N-1)-a$. Observe that, if~$G$ is $a$-almost-complete and $X\subseteq V$, then $G[X]$ is also $a$-almost-complete. We say that a graph~$G$ on~$N$ vertices is $(1-c)$-\emph{complete} for $0\leq c\leq 1$ if it is $c(N-1)$-almost-complete, that is, if $\delta(G)\geq (1-c)(N-1)$. Observe that, for $c\leq \half$, any $(1-c)$-complete graph is connected.

We say that a bipartite graph~$G=G[U,W]$ is $a$-\emph{almost-complete} if every $u\in U$ has degree at least $|W|-a$ and every $w\in W$ has degree at least $|U|-a$. Notice that, if~$G[U,W]$ is $a$-almost-complete and $U_1\subseteq U, W_1\subseteq W$, then $G[U_1,W_1]$ is $a$-almost-complete. We say that a bipartite graph~$G=G[U,W]$ is $(1-c)$-\emph{complete} if every $u\in U$ has degree at least $(1-c)|W|$ and every $w\in W$ has degree at least $(1-c)|U|$. Again, notice that, for $c< \half$, any $(1-c)$-complete bipartite graph $G[U,W]$ is connected, provided that~$U,W\neq \emptyset$.

We say that a graph~$G$ on~$N$ vertices is $c$-\emph{sparse} for $0<c<1$ if its maximum degree is at most $c(N-1)$. We say a bipartite graph~$G=G[U,W]$ is $c$-\emph{sparse} if every $u\in U$ has degree at most $c|W|$ and every vertex $w\in W$ has degree at most $c|U|$.

\section {Connected-matching stability result}
\label{s:struct}

Before proceeding to state Theorem~\hyperlink{thB}{B}, 
we define the coloured structures we will need.

\begin{definition}
\label{d:H}

For $x_{1}, x_{2}, c_1,c_2$ positive, $\gamma_1,\gamma_2$ colours, let $\cH(x_{1},x_{2}, c_1,c_2, \gamma_1,\gamma_2)$ be the class of edge-multicoloured graphs defined as follows: 
A given two-multicoloured graph $H=(V,E)$ belongs to~$\cH$ if its vertex set can be partitioned into $X_{1}\cup X_{2}$ such that
\begin{itemize}
\item[(i)] $|X_{1}|\geq x_{1}, |X_{2}|\geq x_{2} $;
\item[(ii)] $H$ is $c_1$-almost-complete; and
\item[(iii)] defining $H_1$ to be the spanning subgraph induced by the colour $\gamma_1$ and $H_2$ to be the subgraph induced by the colour $\gamma_2$,
\begin{itemize}
\item[(a)] $H_1[X_{1}]$ is $(1-c_2)$-complete and $H_2[X_{1}]$ is $c_2$-sparse,
\item[(b)] $H_2[X_1,X_2]$ is $(1-c_2)$-complete and $H_1[X_1,X_2]$ is $c_2$-sparse.
\end{itemize}
\end{itemize}
\end{definition}

\begin{figure}[!h]
\vspace{-2mm}
\centering
\includegraphics[width=64mm, page=1]{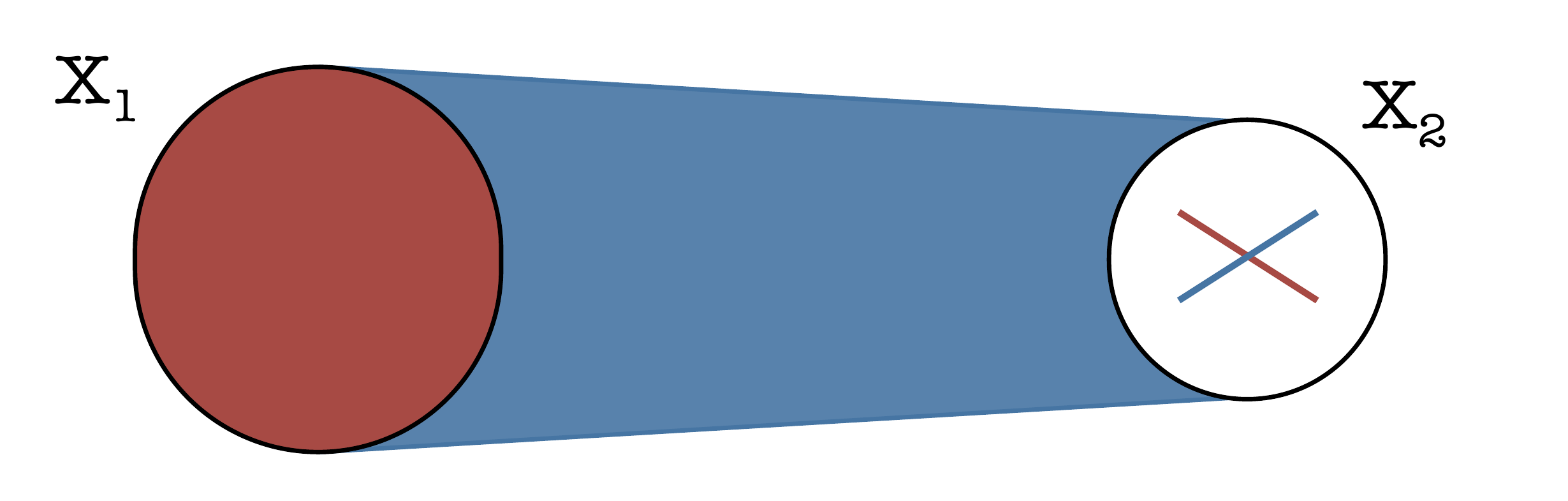}
\vspace{0mm}\caption{$H\in\cH(x_1,x_2,c_1,c_2,red,blue)$.}
\end{figure}

\begin{definition}
\label{d:K}

For $x_{1}, x_{2},x_{3}, c$ positive, let $\cK(x_{1},x_{2}, x_3,c)$ be the class of edge-multicoloured graphs defined as follows: 
A given three-multicoloured graph $H=(V,E)$ belongs to~$\cK$ if its vertex set can be partitioned into $X_{1}\cup X_{2}\cup X_{3}$ such that
\begin{itemize}
\item[(i)] $|X_{1}|\geq x_{1}, |X_{2}|\geq x_{2} , |X_{3}|\geq x_{3} $;
\item[(ii)] $H$ is $c$-almost-complete; 
\item[(iii)] all edges present in $H[X_1,X_3]$ are coloured exclusively red, all edges present in $H[X_2,X_3]$ are coloured exclusively blue and all edges present in $H[X_3]$ are coloured exclusively green. 
\end{itemize}
\end{definition}

\begin{definition}
\label{d:J}

For $x_{1}, x_{2},y_{1}, y_{2}, z, c$ positive, let $\cK^*(x_{1},x_{2}, y_1, y_2, z, c)$ be the class of edge-multicoloured graphs defined as follows: 
A given three-multicoloured graph $H=(V,E)$ belongs to~$\cK^*$, if its vertex set can be partitioned into $X_{1}\cup X_{2}\cup Y_{1}\cup Y_{2}$ such that
\begin{itemize}
\item[(i)] $|X_{1}|\geq x_{1}, |X_{2}|\geq x_{2} , |Y_{1}|\geq y_{2}, |Y_{2}|\geq y_{2}, |Y_1|+|Y_2|\geq z$;
\item[(ii)] $H$ is $c$-almost-complete; 
\item[(iii)] all edges present in $H[X_1,Y_1]$ and $H[X_2,Y_2]$ are coloured exclusively red, all edges present in $H[X_1,Y_2]$ and $H[X_2,Y_1]$ are coloured exclusively blue and all edges present in $H[X_1,X_2]$ and $H[Y_1,Y_2]$ are coloured exclusively green.
\end{itemize}
\end{definition}

\begin{figure}[!h]
\end{figure}
\vspace{-6mm}
\begin{figure}[!h]
\centering{
\mbox{
\hspace{-8mm}
{~}\quad
{\includegraphics[width=64mm, page=4]{CH2-Figs.pdf}}\quad\quad\quad
{\includegraphics[width=64mm, page=5]{CH2-Figs.pdf}}}}
\setcounter{figure}{2}
\caption{$H\in \cK(x_1,x_2,x_3,c)$ and $H\in \cK^{*}(x_1,x_2,y_1,y_2,c)$.}   
\label{th1}
\setcounter{figure}{2}
\end{figure}
\setcounter{figure}{3}

\FloatBarrier
Having defined the coloured structures, we are in a position to state the main technical result of this chapter, that is, the connected-matching stability result. The proof of this result, which follows in Sections~\ref{s:stabp}--\ref{s:stabp2}, takes up the majority of this chapter.

\FloatBarrier
\phantomsection
\hypertarget{thB}
\phantomsection
\begin{thB}
\label{thB}
\label{th:stab}
For every $\alpha_{1},\alpha_{2},\alpha_{3}>0$ 
such that $\aI\geq \aII$, letting $$c=\max\{2\aI+\aII, \half\aI+\half\aII+\aIII\},$$
there exists $\eta_{B}=\eta_{B}(\aI,\aII,\aIII)$ and $k_{B}=k_{B}(\aI,\aII,\aIII,\eta)$ 
such that, for every $k>k_{B}$ and 
every~$\eta$ such that $0<\eta<\eta_{B}$, every 
three-multicolouring of~$G$, a $(1-\eta^4)$-complete graph on 
$$(c-\eta)k\leq K\leq(c-\half\eta)k$$ vertices,  results in the graph containing at least one of the following:
\vspace{-2mm}
\begin{itemize}
\item [(i)]	a red connected-matching on at least $\aI
 k$ vertices;
\item [(ii)]  a blue connected-matching on at least $\aII k$ vertices;
\item [(iii)]  a green odd connected-matching on at least $\aIII k$ vertices;
\item [(iv)]  
disjoint subsets of vertices $X$ and $Y$ such that $G[X]$ contains a two-coloured spanning subgraph~$H$ from $\cH_{1}\cup\cH_{2}$ and $G[Y]$ contains a two-coloured spanning subgraph $K$ from $\cH_{1}\cup\cH_{2}$, where
\vspace{-2mm}
\begin{align*}
\cH_1=&\cH\left((\aI-2\eta^{1/32})k,(\half\aII-2\eta^{1/32})k,3\eta^4 k,\eta^{1/32},\text{red},\text{blue}\right),\text{ } 
\\ \cH_2=&\cH\left((\aII-2\eta^{1/32})k,(\half\aI-2\eta^{1/32})k,3\eta^4 k,\eta^{1/32},\text{blue},\text{red}\right);
\end{align*}
 \item [(v)]  a subgraph from 
 \vspace{-2mm}
 $$\cK\left((\half\aI-14000\eta^{1/2})k, (\half\aII-14000\eta^{1/2})k, (\aIII-68000\eta^{1/2})k, 4\eta^4 k \right);$$
\item [(vi)]  a subgraph from $\cK_1^{*}\cup \cK_2^*$, where
\vspace{-2mm}
\begin{align*}
\cK_{1}^{*}=\cK^*\big((\half\aI-97\eta^{1/2})k, (\half\aI-97&\eta^{1/2})k, (\half\aI+102\eta^{1/2})k, \\&(\half\aI+102\eta^{1/2})k, (\aIII-10\eta^{1/2})k, 4\eta^4 k\big),\hphantom{l}
\end{align*}
\vspace{-12mm}
\begin{align*}
\cK_{2}^{*}=\cK^*\big((\half\aI-97\eta^{1/2})k, (\half\aII-97\eta^{1/2})k, (\tfrac{3}{4}&\aIII-140\eta^{1/2})k, \\& 100\eta^{1/2}k, (\aIII-10\eta^{1/2})k, 4\eta^4 k\big).
\end{align*}
\end{itemize}
Furthermore, 
\vspace{-2mm}
\begin{itemize}
\item[(iv)] occurs only if $\aIII\leq \tfrac{3}{2}\aI+\half\aII+14\eta^{1/2}$ with $H, K\in \cH_1$ unless $\aII\geq\aI-\eta^{1/16}$; and
\item[(v)] and (vi) occur only if $\aIII\geq \tfrac{3}{2}\aI+\half\aII-10\eta^{1/2}$.
\end{itemize}
\end{thB}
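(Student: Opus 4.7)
The plan is to argue by contradiction: assume that $G$ contains none of the monochromatic connected-matchings listed in (i)--(iii), and deduce that one of the structural outcomes (iv)--(vi) must then hold. The workhorse is the decomposition lemma of Figaj and \L uczak (to be introduced in Section~\ref{s:pre1}), which, applied to a three-multicoloured graph lacking large monochromatic connected-matchings, extracts a partition of the vertex set together with rigid bipartite-style constraints on each colour class. These constraints arise from Gallai--Edmonds-type vertex covers for each large monochromatic component. Once combined with the near-completeness hypothesis $\delta(G)\geq (1-\eta^4)(K-1)$, they upgrade to almost-complete bipartite substructures modulo only $O(\eta^4 k)$ missing edges.

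The natural dichotomy is dictated by the \emph{Furthermore} clauses: outcome (iv) can occur only when $\aIII\leq \tfrac{3}{2}\aI+\tfrac{1}{2}\aII+O(\eta^{1/2})$, i.e.\ when $c=2\aI+\aII$ defines the vertex count; outcomes (v) and (vi) can occur only in the opposite regime $c=\tfrac{1}{2}\aI+\tfrac{1}{2}\aII+\aIII$. This split tracks precisely the division between Sections~\ref{s:stabp} and~\ref{s:stabp2}, namely $\aI\geq \aIII$ versus $\aI<\aIII$. In the first regime, the decomposition produces two disjoint halves of $V(G)$ on each of which red and blue display the clique-plus-bipartite-complete pattern defining $\cH$; whether both halves sit in $\cH_{1}$ or one sits in $\cH_{2}$ is controlled by the symmetry between $\aI$ and $\aII$, and one shows that only when these parameters agree to within $\eta^{1/16}$ can the roles of the two colours swap. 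A careful edge count, balancing the red and blue matching bounds against the $(1-\eta^4)$-completeness, pins down the class sizes to within the slack of $2\eta^{1/32} k$ declared in $\cH_{1}$ and $\cH_{2}$.

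In the harder regime $\aI<\aIII$, the absence of a green \emph{odd} connected-matching of size $\aIII k$ becomes the driving constraint. If the dominant green component is non-bipartite, an alternative decomposition isolates a green-dense set $X_{3}$ whose size is close to $\aIII k$; the red and blue matching bounds then force the complement of $X_{3}$ to split into parts $X_{1},X_{2}$ whose interactions with $X_{3}$ are respectively all-red and all-blue (modulo $O(\eta^4 k)$ slack), yielding outcome (v). If every large green component is bipartite, the natural bipartition must be propagated through the red and blue subgraphs to obtain the four-part structure of $\cK^{*}$: here the choice between $\cK_{1}^{*}$ and $\cK_{2}^{*}$ reflects whether $\aI\approx\aII$ (in which case the four parts are essentially balanced) or $\aI>\aII$ strictly (in which case the asymmetry can squeeze one of the $Y$-parts down to as little as $100\eta^{1/2}k$).

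The principal obstacle, as the authors themselves signal, is the second regime: propagating the bipartiteness of the dominant green component through three interlocking colour classes while tracking $O(\eta^{1/2} k)$ errors through an extensive case analysis. This is precisely why Section~\ref{s:stabp2}, together with~\cite{DF2}, forms the bulk of the proof of Theorem~\hyperlink{thB}{B}, whereas Section~\ref{s:stabp} can be dispatched with the original Figaj--\L uczak decomposition and careful edge counting alone.
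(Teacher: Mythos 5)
Your outline points in the right general direction for the regime actually proved in this paper ($\aI\geq\aIII$) --- decompose the green graph and count edges --- but it contains genuine gaps and misattributes the source of outcome (iv). First, the clique-plus-bipartite pattern of $\cH_1,\cH_2$ is not produced by the Figaj--\L uczak decomposition (Lemma~\ref{l:decomp}), and certainly not by ``Gallai--Edmonds-type vertex covers'': the decomposition only splits $V(G)$ into a green-bipartite part $X\cup Y$ and a set $W$ of odd green components with no green edges to $X\cup Y$. The $\cH$-structure is obtained from a separate two-colour stability input, Lemma~\ref{l:SkB} (from~\cite{KoSiSk2}), applied to the red/blue colouring of $G[X]$ and of $G[Y]$ once their sizes have been pinned down --- which itself requires the preliminary step that the green graph is dense (otherwise Corollary~\ref{l:eg} already gives (i) or (ii)) and an application of Lemma~\ref{l:hole} to bound $|X|+|W|$. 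Your proposal never invokes, or replaces, any such stability result, so the assertion that ``the decomposition produces two disjoint halves \dots\ displaying the pattern defining $\cH$'' is unsupported. Second, you silently assume $W$ is negligible. The edge count only yields a dichotomy on $w=|W|/k$: either $w>2\aI+\aII-10\eta$, where almost all vertices lie in an odd green component and Theorem~\ref{l:largeW} delivers (i)--(iii), or $w<\aII+10\eta$; and within the latter the subcase $\half\aI<w$, where $X$, $Y$ and $W$ each contain roughly a third of the vertices and necessarily $\aI\approx\aII$, does not lead to (iv) at all --- there the paper must construct a red or blue connected-matching directly (Cases B.i--B.iii, via Corollary~\ref{c:SkBe}, Lemma~\ref{l:eleven} and Lemma~\ref{l:dgf1}). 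Omitting these cases is a substantive gap, not a routine detail.

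Two smaller points. The ``Furthermore'' dichotomy does not track the split $\aI\geq\aIII$ versus $\aI<\aIII$: outcome (iv) can still occur when $\aI<\aIII\leq\tfrac{3}{2}\aI+\half\aII$, a range handled in Part II, so the two thresholds should not be conflated. And the entire regime $\aIII\geq\aI$ (which is where (v) and (vi) arise) is deferred by the paper to~\cite{DF2}; your sketch for it --- bipartite versus non-bipartite dominant green component --- is plausible in outline but far too coarse to verify, and nothing in this paper establishes it.
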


\setlength{\parskip}{0.1in plus 0.025in minus 0.025in}
This result forms a partially strengthened analogue of the main technical result of the paper of Figaj and \L uczak~\cite{FL2008}. In that paper, Figaj and \L uczak considered a similar graph but on slightly more than $\max\{2\aI+\aII, \half\aI+\half\aII+\aIII\}k$ vertices and proved the existence of a connected-matching, whereas we consider a graph on slightly fewer vertices and prove the existence of either a monochromatic connected-matching or a particular structure.

\section{{Tools}}
\label{s:pre1}
In this section, we summarise results that we shall use later in our proofs beginning with some results on Hamiltonicity including Dirac's Theorem, which gives us a minimum-degree condition for Hamiltonicity:

\begin{theorem}[Dirac's Theorem~\cite{Dirac52}]
\label{dirac}
If~$G$ is a graph on~$n\geq3$ vertices such that every vertex has degree at least $\half n$, then~$G$ is Hamiltonian, that is, $G$ contains a cycle of length exactly $n$.
\end{theorem}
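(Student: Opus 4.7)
The plan is to prove Dirac's theorem via the classical longest-path argument. First, I would note that the degree hypothesis forces $G$ to be connected: any component of $G$ must have at least $\delta(G)+1\geq \tfrac{1}{2}n+1$ vertices, so two distinct components would account for more than $n$ vertices in total. Then I would take a longest path $P=v_{1}v_{2}\cdots v_{k}$ in $G$; by maximality of $P$, every neighbor of $v_{1}$ and every neighbor of $v_{k}$ already lies on $P$.

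The central step is a pigeonhole/rerouting trick to produce a cycle of length $k$. I would let $A=\{\,i\in\{1,\dots,k-1\}:v_{1}v_{i+1}\in E(G)\,\}$ and $B=\{\,i\in\{1,\dots,k-1\}:v_{k}v_{i}\in E(G)\,\}$. Since every neighbor of $v_{1}$ and of $v_{k}$ lies on $P$, the degree condition gives $|A|,|B|\geq \tfrac{1}{2}n$. As $A,B\subseteq\{1,\dots,k-1\}$ and $|A|+|B|\geq n>k-1$, these sets must intersect; for any $i\in A\cap B$, rerouting $P$ via the edges $v_{1}v_{i+1}$ and $v_{i}v_{k}$ yields a cycle $C=v_{1}v_{2}\cdots v_{i}v_{k}v_{k-1}\cdots v_{i+1}v_{1}$ through all $k$ vertices of $P$.

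Finally, I would show that $k=n$, so that $C$ is a Hamilton cycle. If instead $k<n$, the connectedness of $G$ furnishes some $w\notin V(C)$ adjacent to a vertex of $C$; splicing $w$ into $C$ at that vertex and then unfolding the cycle produces a path of length $k$ (i.e.\ on $k+1$ vertices), contradicting the maximality of $P$. Hence $k=n$, completing the proof. The only delicate point is the pigeonhole step and the bookkeeping of indices in the rerouting; once those are handled, connectedness and path-extension close the argument routinely.
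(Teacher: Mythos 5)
Your argument is the standard longest-path/rotation proof of Dirac's theorem and it is correct: connectedness from the degree bound, the pigeonhole on the index sets $A$ and $B$ (both lying in $\{1,\dots,k-1\}$ with $|A|+|B|\geq n>k-1$) to close the longest path into a cycle on its $k$ vertices, and the extension argument showing $k=n$. The paper does not prove this statement at all -- it cites Dirac's original result -- so there is no internal proof to compare against; your write-up would serve as a complete and standard proof of the cited theorem.
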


Observe then that, by Dirac's Theorem, any $c$-almost-complete graph on $n$ vertices is Hamiltonian, provided that $c\leq \half n-1$. Then, since almost-completeness is a hereditary property, we may prove the following corollary:

\begin{corollary}
\label{dirac1a}
If $G$ is a $c$-almost-complete graph on $n$ vertices, then, for any integer~$m$ such that $2c+2\leq m\leq n$, $G$ contains a cycle of length $m$.
\end{corollary}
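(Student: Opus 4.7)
The plan is to apply Dirac's Theorem to an arbitrary induced subgraph of the right size. First, I would fix any subset $S\subseteq V(G)$ with $|S|=m$; this is possible because $m\leq n$. Since the paper already observes that almost-completeness is hereditary under taking induced subgraphs, the induced graph $G[S]$ is itself $c$-almost-complete, so every vertex of $G[S]$ has degree (in $G[S]$) at least $(m-1)-c$.

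Next, I would check the minimum-degree hypothesis of Dirac's Theorem, namely that every vertex of $G[S]$ has degree at least $\tfrac{1}{2}m$. This amounts to verifying
$$(m-1)-c \;\geq\; \tfrac{1}{2}m,$$
which rearranges to $m\geq 2c+2$, exactly the assumed lower bound on $m$. In particular this also guarantees $m\geq 2$, and since the degree bound forces $m\geq 2c+2\geq 3$ in any non-degenerate case (or else $G[S]$ is already a clique on $m\geq 3$ vertices when $c$ is small), the hypothesis $m\geq 3$ of Theorem~\ref{dirac} is satisfied.

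Finally, applying Theorem~\ref{dirac} (Dirac's Theorem) to $G[S]$ yields a Hamiltonian cycle of $G[S]$, which is a cycle of length exactly $m$ in $G$, as required.

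There is no real obstacle here — the only thing to verify is the elementary arithmetic inequality $(m-1)-c\geq m/2\Leftrightarrow m\geq 2c+2$, which is built into the hypothesis. The proof is essentially a one-line reduction to Theorem~\ref{dirac} via the hereditary nature of $c$-almost-completeness.
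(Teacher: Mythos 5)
Your proposal is correct and is exactly the argument the paper intends: restrict to an arbitrary $m$-vertex subset, use the hereditary nature of $c$-almost-completeness, and check the Dirac minimum-degree condition $(m-1)-c\geq \tfrac{1}{2}m$, which is precisely $m\geq 2c+2$. No meaningful difference from the paper's approach.
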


Dirac's Theorem may be used to assert the existence of Hamiltonian paths in a given graph as follows:

\begin{corollary}
\label{dirac2}
If $G=(V,E)$ is a simple graph on~$n\geq4$ vertices such that every vertex has degree at least $\half n+1$, then any two vertices of~$G$ are joined by a Hamiltonian path.
\end{corollary}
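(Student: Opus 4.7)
Given two distinct vertices $u, v \in V(G)$, my plan is to reduce the task to finding a Hamilton cycle in an auxiliary graph. Form $G'$ by adjoining to $G$ a new vertex $w$ whose only neighbours are $u$ and $v$. Because $\deg_{G'}(w) = 2$, every Hamilton cycle of $G'$ is forced to traverse the segment $u$--$w$--$v$, so deleting $w$ from such a cycle produces a Hamilton path from $u$ to $v$ in $G$. Hence it suffices to prove that $G'$ is Hamiltonian.

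Dirac's theorem cannot be applied to $G'$ directly, because the auxiliary vertex $w$ has degree only $2$, well below the required $(n+1)/2$. Instead I would apply the Chv\'{a}tal--Bondy closure argument: iteratively add an edge between any pair of non-adjacent vertices whose degree sum in the current graph is at least $n+1$, and stop when no such pair remains. For any non-adjacent pair $x,y \in V(G)$ considered inside $G'$, the degree sum is at least $2(\tfrac{1}{2}n+1) = n+2 \geq n+1$, so every missing edge within $V(G)$ is inserted and $V(G)$ becomes a clique in the closure. Once this happens, every $x \in V(G) \setminus \{u,v\}$ has degree exactly $n-1$ in the current graph, and the pair $\{w,x\}$ satisfies $\deg(w) + \deg(x) = 2 + (n-1) = n+1$, so $w$ is also joined to every such $x$. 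The closure is therefore the complete graph $K_{n+1}$, which is trivially Hamiltonian, and the closure theorem then yields a Hamilton cycle in $G'$ itself, as required.

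The main obstacle is exactly that the low degree of $w$ defeats any naive application of Dirac's theorem to $G'$; the closure argument circumvents this by first exploiting the large degree sums inside $V(G)$ to promote it to a clique and only afterwards absorbing $w$. A less elegant alternative would be a P\'{o}sa-style rotation--extension argument on a longest $u$-to-$v$ path in $G$ itself, using the hypothesis $\delta(G) \geq \tfrac{1}{2}n+1$ to force an extension whenever a vertex lies off the current path; but the auxiliary-graph reduction is considerably shorter and more transparent.
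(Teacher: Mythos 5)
Your proof is correct. The paper states Corollary~\ref{dirac2} without proof, presenting it as a direct consequence of Dirac's Theorem (Theorem~\ref{dirac}); your argument reaches the same conclusion by a genuinely different route. You perform the standard reduction of adding an auxiliary vertex $w$ adjacent only to the two prescribed endpoints, so that a Hamiltonian cycle of $G'$ is forced through $u$--$w$--$v$, and you correctly observe that Dirac cannot be applied to $G'$ because $\deg(w)=2$; you then finish with the Bondy--Chv\'atal closure theorem, checking that all pairs inside $V(G)$ have degree sum at least $n+2\geq n+1$ and that, once $V(G)$ is a clique, each pair $\{w,x\}$ has degree sum at least $2+(n-1)=n+1$, so the closure is $K_{n+1}$. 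All thresholds are right (the relevant bound is the order $n+1$ of $G'$), so the argument is complete. The only remark worth making is that the closure theorem is not among the tools the paper states; within the paper's own toolkit the same auxiliary-graph reduction can be finished more directly by applying Theorem~\ref{chv} to $G'$, whose degree sequence is $2, d_2,\dots,d_{n+1}$ with $d_2\geq \half n+1\geq 3$, so Chv\'atal's condition holds vacuously and $G'$ is Hamiltonian. Either way one gets a clean proof; your closure version buys a slightly more self-contained argument at the cost of invoking a result outside the paper, while the implicit Dirac/Chv\'atal route stays closer to the machinery the paper already quotes.
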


For balanced bipartite graphs, we make use of the following result of Moon and Moser:

\begin{theorem}[\cite{moonmoser}]
\label{moonmoser}
If~$G=G[X,Y]$ is a simple bipartite graph on~$n$ vertices such that $|X|=|Y|=\half n$ and $d(x)+d(y)\geq \half n+1$ for every $xy\notin E(G)$, then~$G$ is Hamiltonian. 
\end{theorem}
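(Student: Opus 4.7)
The plan is a bipartite adaptation of the Ore--Bondy--Chv\'atal closure argument, combined with a short rotation step.

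First, I would reduce to an edge-maximal case. Starting from $G$, repeatedly add any non-edge between $X$ and $Y$ whose addition does not create a Hamilton cycle, and let $G^{\ast}$ be the resulting graph. Every degree in $G^{\ast}$ is at least the corresponding degree in $G$, and every non-edge of $G^{\ast}$ is also a non-edge of $G$, so $G^{\ast}$ still satisfies $d(x)+d(y)\ge \half n+1$ for every non-edge $xy$. Thus it suffices to derive a contradiction under the extra assumption that $G$ itself is edge-maximal non-Hamiltonian within the bipartition $X\cup Y$.

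If every $X$-vertex is joined to every $Y$-vertex then $G=K_{n/2,n/2}$ is clearly Hamiltonian (for $n\ge 4$), so there is some non-edge $xy$ with $x\in X$, $y\in Y$. By edge-maximality, $G+xy$ contains a Hamilton cycle; this cycle must use $xy$, so deleting $xy$ leaves a Hamilton path $P=v_1 v_2\ldots v_n$ of $G$ with $v_1=x$ and $v_n=y$. Since $G$ is bipartite and $P$ alternates sides, the odd-indexed vertices lie in $X$ and the even-indexed vertices lie in $Y$ (so in particular $n$ is even, as required).

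The core of the proof is the following rotation. Define
\[
S=\{\,i\in\{1,\dots,n-1\}:\, v_i y\in E(G)\,\},\qquad T=\{\,i\in\{1,\dots,n-1\}:\, xv_{i+1}\in E(G)\,\}.
\]
By bipartiteness every element of $S\cup T$ is odd, and since $xy\notin E(G)$ we have $1\notin S$ and $n-1\notin T$. If some $i\in S\cap T$, then
\[
v_1\, v_2\,\ldots\, v_i\, v_n\, v_{n-1}\,\ldots\, v_{i+1}\, v_1
\]
is a Hamilton cycle of $G$, contradicting non-Hamiltonicity. Hence $S$ and $T$ are disjoint subsets of the $\half n$ odd indices in $\{1,\dots,n-1\}$, giving
\[
d(x)+d(y)=|S|+|T|\le \half n,
\]
which contradicts the hypothesis $d(x)+d(y)\ge \half n+1$.

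The only delicate point is parity bookkeeping: one must verify that the rotated cycle respects the bipartition (the new edges $v_iv_n$ and $v_{i+1}v_1$ each cross the bipartition precisely because $i$ is odd) and traverses every vertex exactly once. Once that is in place, the argument collapses to the single counting inequality above.
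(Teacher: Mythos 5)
The paper does not prove this statement at all --- it is imported directly from Moon and Moser's paper as a cited tool --- so there is no internal proof to compare yours against. Your argument is correct and is essentially the classical Ore-type closure-plus-rotation proof (in the spirit of Moon and Moser's original): the bipartite closure step preserves the degree hypothesis because added edges only raise degrees and every non-edge of the closure was a non-edge of $G$; edge-maximality yields a Hamilton path $v_1\ldots v_n$ with $v_1=x\in X$, $v_n=y\in Y$, whose alternation forces both index sets $S$ and $T$ into the $\half n$ odd positions of $\{1,\dots,n-1\}$; the rotation shows $S\cap T=\emptyset$ on pain of producing a Hamilton cycle; and then $d(x)+d(y)=|S|+|T|\le\half n$ contradicts $d(x)+d(y)\ge\half n+1$. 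The counting is right ($|S|=d(y)$ since $y$ has no neighbour at position $n$, and $|T|=d(x)$ since $x$ has none at position $1$), and your parity bookkeeping for the rotated cycle is sound, so this stands as a correct self-contained substitute for the citation, with only the trivial caveat that the statement is vacuous or degenerate below $n=4$, which is irrelevant to how the paper uses it.
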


Observe that, by the above, any $c$-almost-complete balanced bipartite graph on $n$ vertices is Hamiltonian, provided that $c\leq \tfrac{1}{4}n-\half$. Then, since almost-completeness is a hereditary property, we may prove the following corollary:

\begin{corollary}
\label{moonmoser2}
If $G=G[X,Y]$ is $c$-almost-complete bipartite graph, then, for any even integer $m$ such that $4c+2\leq m\leq 2\min\{|X|,|Y|\}$, $G$ contains a cycle on $m$ vertices.
\end{corollary}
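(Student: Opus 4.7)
The plan is to reduce to a balanced bipartite subgraph of order exactly $m$ and then apply the Moon--Moser theorem (Theorem~\ref{moonmoser}), in direct analogy with the way Corollary~\ref{dirac1a} was deduced from Dirac's theorem. Since $m$ is even and $m/2\leq \min\{|X|,|Y|\}$, we may choose arbitrary subsets $X'\subseteq X$ and $Y'\subseteq Y$ with $|X'|=|Y'|=m/2$, so that $G[X',Y']$ is a balanced bipartite graph on $n:=m$ vertices.

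Next, I would invoke the observation made in Section~\ref{ram:defn} that almost-completeness is hereditary for bipartite graphs: if $G[X,Y]$ is $c$-almost-complete and $X'\subseteq X$, $Y'\subseteq Y$, then $G[X',Y']$ is $c$-almost-complete as well. In particular, every $x\in X'$ satisfies $d_{G[X',Y']}(x)\geq |Y'|-c = m/2-c$, and symmetrically for $y\in Y'$. Hence for any non-edge $xy$ of $G[X',Y']$ with $x\in X'$, $y\in Y'$,
\[
d(x)+d(y)\;\geq\; 2\bigl(m/2-c\bigr)\;=\;m-2c.
\]

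Finally, I would verify that the Moon--Moser degree condition $d(x)+d(y)\geq n/2+1 = m/2+1$ is satisfied. The hypothesis $m\geq 4c+2$ rearranges precisely to $m-2c\geq m/2+1$, so the bound above suffices. Theorem~\ref{moonmoser} then yields that $G[X',Y']$ is Hamiltonian, which provides a cycle of length $m$ in $G$, completing the proof.

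There is no genuine obstacle here: the lemma is a standard corollary whose only content is matching the numerical threshold $4c+2$ to the Moon--Moser condition after passing to a balanced bipartite subgraph. The argument runs exactly parallel to Corollary~\ref{dirac1a}, with Moon--Moser in place of Dirac and with the factor of $2$ in the threshold accounting for the bipartite setting.
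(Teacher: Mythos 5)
Your proposal is correct and follows exactly the route the paper intends: pass to a balanced bipartite subgraph on $m/2+m/2$ vertices, use that almost-completeness is hereditary, and check that $m\geq 4c+2$ delivers the Moon--Moser condition $d(x)+d(y)\geq m/2+1$ for non-edges, so Theorem~\ref{moonmoser} gives the cycle of length $m$. No gaps.
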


For bipartite graphs which are not balanced, we make use of the Lemma below:
\begin{lemma}
\label{bp-dir}
If $G=G[X_1,X_2]$ is a simple bipartite graph on $n\geq 4$ vertices such that $|X_1|>|X_2|+1$ and every vertex in~$X_2$ has degree at least $\half n +1$, then any two vertices $x_1,x_2$ in $X_1$ such that $d(x_2)\geq 2$ are joined by a path which visits every vertex of~$X_2$.
\end{lemma}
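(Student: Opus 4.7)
The plan is to reduce the sought path to a Hamilton cycle in a balanced bipartite auxiliary graph. Set $p=|X_1|$ and $q=|X_2|$, so that $p\geq q+2$, and observe that the degree hypothesis means every $v\in X_2$ has at most $\half(p-q)-1$ non-neighbours in $X_1$. A path from $x_1$ to $x_2$ visiting every vertex of $X_2$ uses exactly $q+1$ vertices of $X_1$ (including $x_1$ and $x_2$) and all $q$ vertices of $X_2$, so it suffices to find a Hamilton path from $x_1$ to $x_2$ in an appropriate sub-bipartite graph of~$G$.

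First I would delete any $p-q-1$ vertices from $X_1\setminus\{x_1,x_2\}$ to obtain $G'=G[X_1'\cup X_2]$ with $|X_1'|=q+1$, and then adjoin a new vertex $v^*$ to the $X_2$-side, making it adjacent only to~$x_1$ and~$x_2$; call the resulting balanced bipartite graph $G^*$, on $2q+2$ vertices. Since $d_{G^*}(v^*)=2$, every Hamilton cycle of~$G^*$ must traverse the forced arc $x_1{-}v^*{-}x_2$; deleting $v^*$ from such a cycle yields a Hamilton path from~$x_1$ to~$x_2$ in~$G'$, which is the required path in~$G$. It therefore suffices to prove that $G^*$ has a Hamilton cycle.

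For most non-edges of~$G^*$, the Moon-Moser condition $d(u)+d(v)\geq q+2$ of Theorem~\ref{moonmoser} follows readily from the bound on $X_2$-non-neighbourhoods together with an averaging argument on $\sum_{v\in X_2}d_G(v)$, which forces most vertices of~$X_1'$ to have comparatively large degree. The \textbf{main obstacle} is non-edges of the form $uv^*$ with $u\in X_1'\setminus\{x_1,x_2\}$: since $d_{G^*}(v^*)=2$, Moon-Moser would demand $d_{G^*}(u)\geq q$, a bound the hypotheses do not supply. To circumvent this, I would not invoke Theorem~\ref{moonmoser} as a black box but instead adapt its rotation--extension proof, treating $x_1{-}v^*{-}x_2$ as a \emph{fixed} initial segment. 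Starting from a longest path of~$G^*$ that contains this arc, P\'osa-style rotations performed on the $X_2$-side (where the degree bound is strong) generate many candidate endpoints; the assumption $d_G(x_2)\geq 2$ supplies the two distinct initial extensions needed to launch the rotation without disturbing the arc. A standard closing argument then converts the resulting Hamilton path of~$G^*$ into a Hamilton cycle, completing the proof. The delicate point throughout is that the degree hypothesis is one-sided, so care is required to ensure that every rotation keeps $v^*$ interior to the path and that rotations remain available despite no degree lower bound on the $X_1'$-side.
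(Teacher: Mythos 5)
Your reduction breaks at its very first step. Write $p=|X_1|$, $q=|X_2|$. The hypothesis only controls degrees of $X_2$-vertices relative to the \emph{full} set $X_1$: each $v\in X_2$ has at most $\half(p-q)-1$ non-neighbours in $X_1$, and all of the slack lives precisely in the part of $X_1$ you propose to discard. If $p\geq 3q+4$ then $\half(p-q)-1\geq q+1=|X_1'|$, so after an unlucky choice of the $p-q-1$ deleted vertices some $v\in X_2$ can have no neighbours left in $X_1'$ at all. Worse, nothing in the hypothesis prevents a vertex $u\in X_1$ from having no neighbours in $X_2$ whatsoever (it merely counts as one non-neighbour for every $v\in X_2$, which is allowed as soon as $p\geq q+4$); your prescription ``delete \emph{any} $p-q-1$ vertices'' may keep such a $u$ in $X_1'$, and then $G^*$ contains a vertex with no neighbours on the $X_2$-side and is certainly not Hamiltonian, even though the lemma's conclusion is true (the required path simply avoids $u$). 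Concrete instance: $q=2$, $p=6$, one vertex $u\in X_1$ being the unique non-neighbour of both vertices of $X_2$. So the choice of which $q-1$ additional $X_1$-vertices to retain is the heart of the matter, and the reduction as stated fails.

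Even granting a careful choice of $X_1'$, the Hamiltonicity step is not actually proved. You correctly note that Moon--Moser fails at the non-edges through $v^*$, but your fallback rotation--extension sketch relies on a ``strong degree bound on the $X_2$-side'' that does not survive the passage to $G'$: there the guaranteed $X_2$-degree is only $(q+1)-\bigl(\half(p-q)-1\bigr)=q+2-\half(p-q)$, which can be as small as $2$ (take $p=3q$), and there is no lower bound at all on the $X_1'$-side. So the part you leave as ``standard'' is exactly where the difficulty sits. The hypothesis is meant to be used through common neighbourhoods, which is the paper's (much shorter) route: since each $v\in X_2$ has at most $\half(p-q)-1$ non-neighbours in $X_1$, any two vertices of $X_2$ have at least $p-(p-q-2)=q+2$ common neighbours in $X_1$, hence at least $q$ of them distinct from $x_1,x_2$. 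Order $X_2=u_1,\dots,u_q$ with $u_1$ a neighbour of $x_1$ and $u_q$ a neighbour of $x_2$ (the assumption $d(x_2)\geq2$ lets you take $u_q\neq u_1$), and between consecutive $u_i,u_{i+1}$ greedily insert an unused common neighbour; at most $q-2$ internal $X_1$-vertices are ever in use, so one is always available, and the path is built with no Hamiltonicity machinery at all.
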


\begin{proof}
Observe that $\half n+1=\half|X_1|+\half|X_2|+1=|X_1|-(\half|X_1|-\half|X_2|-1)$ so any pair of vertices in~$X_2$ have at least $|X_1|-(|X_1|-|X_2|-2)$ common neighbours and, thus, at least $|X_1|-(|X_1|-|X_2|)\geq |X_2| $ common neighbours distinct from~$x_1,x_2$. 

Then, ordering the vertices of~$X_2$ such that the first vertex is a neighbour of~$x_1$ and the last is a neighbour of~$x_2$, greedily construct the required path from~$x_1$ to~$x_2$.
\end{proof} 

For graphs with 
a few vertices of small degree, we make use of the following result of Chv\'atal:
\begin{theorem}[\cite{Chv72}]
\label{chv}
If $G$ is a simple graph on $n\geq3$ vertices with degree sequence $d_1\leq d_2 \leq \dots \leq d_n$ such that $$d_k\leq k \leq \frac{n}{2} \implies d_{n-k} \geq n-k,$$ then~$G$ is Hamiltonian.
\end{theorem}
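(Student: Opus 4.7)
The plan is the classical proof by contradiction via edge-maximality combined with an Ore-type rotation. First, observe that Chv\'atal's degree-sequence condition is preserved under edge addition, since adding an edge only weakly increases each term of the sorted degree sequence and weakens no hypothesis. So I may assume, for contradiction, that $G$ is an edge-maximal counterexample: $G$ satisfies the hypothesis and is non-Hamiltonian, but $G+e$ is Hamiltonian for every non-edge $e$. In particular, for every pair of non-adjacent vertices $u,v$, the graph $G$ contains a Hamilton path from $u$ to $v$.

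Among all non-adjacent pairs, choose $u,v$ with $d(u)+d(v)$ maximum, and (relabelling if necessary) assume $h:=d(u)\leq d(v)$. Let $u=x_1,x_2,\ldots,x_n=v$ be a Hamilton path, and define
$$S=\{i\,:\,ux_{i+1}\in E(G)\}, \qquad T=\{i\,:\,vx_i\in E(G)\}.$$
The standard rotation argument shows that if $i\in S\cap T$ then $u\,x_2\,\cdots\,x_i\,v\,x_{n-1}\,\cdots\,x_{i+1}\,u$ is a Hamilton cycle; hence $S\cap T=\emptyset$. Since $S,T\subseteq\{1,\ldots,n-1\}$, this gives $d(u)+d(v)=|S|+|T|\leq n-1$, so $h\leq(n-1)/2<n/2$.

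The first key step is to exhibit many vertices of small degree. For each $i\in S$ we have $i\notin T$, so $vx_i\notin E(G)$; the maximality of $d(u)+d(v)$ then forces $d(x_i)+d(v)\leq d(u)+d(v)$, i.e.\ $d(x_i)\leq h$. Since $1\in S$ (the path edge $ux_2$) and $|S|=h$, the set $\{x_i : i\in S\}$ consists of $h$ vertices of degree at most $h$ (and contains $u$ but not $v$). Thus the sorted degree sequence satisfies $d_h\leq h$. Applying Chv\'atal's hypothesis with $k=h$, which is permissible because $h\leq n/2$, yields $d_{n-h}\geq n-h$, so there is a set $Z$ of at least $h+1$ vertices whose degree is at least $n-h$.

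The main (and decisive) step is to compare $Z$ with $N(u)$. Since every vertex of $Z$ has degree at least $n-h>h=d(u)$, we have $u\notin Z$; and since $|N(u)|=h<h+1\leq|Z|$, there must exist some $z\in Z$ with $z\notin N(u)\cup\{u\}$, that is, $zu$ is a non-edge. The maximality choice of $u,v$ now gives $d(z)+d(u)\leq d(u)+d(v)$, so
$$d(z)\leq d(v)\leq n-1-h<n-h,$$
contradicting $z\in Z$. The expected obstacle is identifying the right contradicting inequality from the many available degree bounds; once one notices that a counting argument on $|Z|$ versus $|N(u)\cup\{u\}|$ produces a non-neighbour of $u$ inside $Z$, the maximality of $d(u)+d(v)$ closes the argument.
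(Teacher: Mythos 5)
Your proof is correct. Note that the paper does not prove this statement at all: it is imported verbatim from Chv\'atal's 1972 paper \cite{Chv72} and used purely as a tool, so there is no internal argument to compare against. What you have written is the classical proof of Chv\'atal's theorem (edge-maximal counterexample, a non-adjacent pair $u,v$ maximising $d(u)+d(v)$, the rotation sets $S$ and $T$ along a Hamilton path giving $S\cap T=\emptyset$ and $d(u)+d(v)\leq n-1$, then the degree condition applied at $k=h=d(u)$ to produce $h+1$ vertices of degree at least $n-h$, one of which must be a non-neighbour of $u$ and yields the contradiction). All the steps check out, including the small points you handle implicitly: $h\geq 1$ because $1\in S$, $h<n/2$ from $d(u)+d(v)\leq n-1$, $u\notin Z$ because $n-h>h$, and $x_i\neq v$ for $i\in S$ so that maximality of the pair $(u,v)$ really applies. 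The only cosmetic omission is the remark that a non-edge exists at all (i.e.\ the edge-maximal counterexample is not complete, since $K_n$ is Hamiltonian for $n\geq 3$), which is immediate.
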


We also make extensive use of the theorem of Erd\H{o}s and Gallai:

\begin{theorem}[\cite{ErdGall59}]
\label{th:eg}
Any graph on~$K$ vertices with at least~$\frac{1}{2}(m-1)(K-1)+1$ edges, where $3\leq m \leq K$, contains a cycle of length at least~$m$.
\end{theorem}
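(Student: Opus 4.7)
The plan is to prove the Erd\H{o}s--Gallai theorem (Theorem~\ref{th:eg}) by induction on $K$, peeling off either a low-degree vertex or a cut-vertex block, and reducing the remaining case to a Dirac-type long-cycle bound in a $2$-connected graph of high minimum degree.

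For the base case $K=m$ I would show that a graph on $m$ vertices with at least $\tfrac12(m-1)^2+1$ edges is Hamiltonian. This can be extracted from Chv\'atal's theorem (Theorem~\ref{chv}): the edge count forces the degree sequence to satisfy the hypothesis of that theorem (the number of vertices of small degree is tightly restricted by the average-degree bound $\bar d\ge m-1+2/m$). Alternatively, one can apply Ore's theorem to $\overline{G}$, whose edge count is at most $\binom{m}{2}-\tfrac12(m-1)^2-1=\tfrac{m-3}{2}$, which is easily seen to preclude any non-adjacent pair with low combined degree.

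For the inductive step, let $G$ have $K>m$ vertices and $e(G)\ge \tfrac12(m-1)(K-1)+1$. I would split into three cases. First, if some vertex $v$ satisfies $d(v)\le\lfloor (m-1)/2\rfloor$, then
\[
e(G-v)\ge \tfrac12(m-1)(K-1)+1-\tfrac{m-1}{2}=\tfrac12(m-1)(K-2)+1,
\]
so induction on the smaller graph $G-v$ delivers a cycle of length at least $m$. Second, if $G$ has a cut-vertex $u$ separating it into pieces on $K_1$ and $K_2$ vertices with $K_1+K_2=K+1$, then since $\tfrac12(m-1)(K_1-1)+\tfrac12(m-1)(K_2-1)=\tfrac12(m-1)(K-1)$, one of $e(G_1),e(G_2)$ exceeds $\tfrac12(m-1)(K_i-1)$; the hypothesis $3\le m\le K$ together with this inequality forces $K_i\ge m$, so induction applies (both $K_i<K$ because each side of a cut-vertex in a connected graph has at least two vertices). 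In the remaining case $G$ is $2$-connected and has $\delta(G)\ge m/2$.

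The main obstacle is this third case, where neither inductive reduction is available. Here I would invoke the Dirac long-cycle theorem for $2$-connected graphs: a $2$-connected graph with minimum degree $d$ contains a cycle of length at least $\min(2d,|V|)$, which immediately yields a cycle of length at least $\min(m,K)=m$. Since the present excerpt records only the Hamiltonicity version of Dirac's theorem (Theorem~\ref{dirac}), one would either cite this strengthening or reprove it via a rotation argument on a longest path $P=v_0v_1\cdots v_\ell$: the endpoint degrees force many neighbours on $P$, and $2$-connectivity together with the Pos\'a-type crossing lemma (if $v_0v_{i+1}, v_iv_\ell\in E(G)$ then $G[V(P)]$ contains a cycle of length $\ell+1$) allows one to iteratively lengthen the longest cycle until it reaches $\min(2\delta,K)\ge m$. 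This rotation argument, while standard, is where the real work lies; the two inductive reductions above are cleanly enforced by the edge-count arithmetic.
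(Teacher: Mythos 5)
The paper never proves this statement; it is imported verbatim from Erd\H{o}s and Gallai~\cite{ErdGall59}, so your argument can only be compared against the classical proof, and indeed your outline is the standard one: delete a vertex of degree at most $\tfrac12(m-1)$, split at a cut vertex, and finish in the $2$-connected case with the Dirac-type circumference bound $\min(2\delta,|V|)$. The base case is fine (the Ore-on-the-complement computation works: the complement has at most $\tfrac{m-3}{2}$ edges, so every pair has complement-degree sum at most $m-3$), and your justification that $e(G_i)>\tfrac12(m-1)(K_i-1)$ forces $K_i\ge m$ is correct.

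Two points, however, need repair before the induction actually closes. First, in the cut-vertex case you only obtain $e(G_i)>\tfrac12(m-1)(K_i-1)$ for some $i$, whereas the statement you are inducting on demands $e(G_i)\ge\tfrac12(m-1)(K_i-1)+1$; when $(m-1)(K_i-1)$ is odd the strict inequality falls half a unit short, and the inductive hypothesis cannot be invoked. Concretely, with $m=4$, $K_1=K_2=4$, $K=7$, one can have $e(G)=10$ and $e(G_1)=e(G_2)=5$, and neither side meets your hypothesis even though the conclusion holds. The clean fix is to induct on the sharp Erd\H{o}s--Gallai form --- \emph{more than} $\tfrac12(m-1)(K-1)$ edges forces a cycle of length at least $m$ --- which is stronger than Theorem~\ref{th:eg} as stated, and under which both reductions (vertex deletion and separation) go through without parity trouble, the base case still following from Ore. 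Second, your trichotomy omits disconnected graphs, which have no cut vertex and are not $2$-connected; they are handled by the same averaging applied to components, but this case must be stated. Finally, the $\min(2\delta,|V|)$ bound for $2$-connected graphs is not among the results recorded in this paper (Theorem~\ref{dirac} is only the Hamiltonicity version), so it must be cited or proved via the rotation argument you sketch; you flag this honestly, and with that lemma granted the remaining case is correct.
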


Observing that a cycle on $m$ vertices contains a connected-matching on at least~$m-1$ vertices, the following is an immediate consequence of the above.

\begin{corollary}
\label{l:eg}
For any graph~$G$ on~$K$ vertices and any~$m$ such that $3 \leq m \leq K$, if the average degree~$d(G)$ is at least $m$, then~$G$ contains a connected-matching on at least~$m$ vertices.
\end{corollary}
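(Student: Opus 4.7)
The plan is to derive the corollary as a direct consequence of the Erd\H{o}s--Gallai theorem (Theorem~\ref{th:eg}). First, I would convert the average-degree hypothesis into an edge count: $d(G)\geq m$ gives $e(G)\geq mK/2$, and since $m\geq 3$ this is at least $\tfrac{1}{2}m(K-1)+1$, the edge-count threshold appearing in Theorem~\ref{th:eg} with parameter $m+1$ in place of $m$.

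Before invoking the theorem, I would check the range condition $3\leq m+1\leq K$. The lower bound is immediate from $m\geq 3$, while the upper bound is automatic: a simple graph on $K$ vertices has average degree at most $K-1$, so the hypothesis $d(G)\geq m$ forces $m\leq K-1$ (the remaining case $m=K$ being vacuous). Theorem~\ref{th:eg} then produces a cycle $C$ in $G$ of length at least $m+1$.

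Finally, I would extract the required connected-matching by taking alternate edges along $C$; this yields a matching of size at least $\lfloor(m+1)/2\rfloor\geq \lceil m/2\rceil$ covering at least $m$ vertices, all of which lie in the single connected component of $G$ containing $C$. The proof is essentially a one-line deduction from Theorem~\ref{th:eg}; no step presents a real obstacle, the only mild subtlety being the verification that the range condition $m+1\leq K$ holds automatically under the hypothesis.
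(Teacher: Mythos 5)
Your proposal is correct and follows essentially the same route as the paper: the paper derives the corollary from Theorem~\ref{th:eg} via exactly this shift to parameter $m+1$ together with the observation that a long cycle yields a connected-matching missing at most one vertex. You simply spell out the details (the edge-count conversion, the vacuity of the case $m=K$, and taking alternate edges of the cycle) that the paper leaves implicit.
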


The following decomposition lemma~of Figaj and \L uczak~\cite{FL2008} also follows from the theorem of Erd\H{o}s and Gallai and is crucial in establishing the structure of a graph not containing large connected-matchings of the appropriate parities:

\begin{lemma}[{\cite[Lemma~9]{FL2008}}]
\label{l:decomp}
For any graph~$G$ on~$K$ vertices and any~$m$ such that $3\leq m \leq K$, if no odd component of~$G$ contains a matching on at least~$m$ vertices, then there exists a partition $V=V'\cup V''$ such that
\begin{itemize}
\item [(i)] $G[V']$ is bipartite;
\item [(ii)] every component of $G''=G[V'']$ is odd;
\item [(iii)] $G[V'']$ has at most $\half m |V(G'')|$ edges; and
\item [(iv)] there are no edges in $G[V',V'']$.
\end{itemize}
\end{lemma}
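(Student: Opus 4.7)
The plan is to partition $V(G)$ according to bipartiteness of components. Let $V''$ be the union of all connected components of $G$ that contain an odd cycle (the \emph{odd} components), and let $V'=V(G)\setminus V''$ be the union of the bipartite components. With this choice, properties (i), (ii) and (iv) are immediate: $G[V']$ is a disjoint union of bipartite graphs and hence bipartite; every component of $G[V'']$ is by definition an odd component of $G$; and since both $V'$ and $V''$ are unions of whole components of $G$, no edge of $G$ can run between them.

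All the work therefore goes into the edge bound (iii). The key observation is that Corollary~\ref{l:eg} applied to a single odd component $C$ yields exactly the contrapositive we need. Fix an odd component $C$ of $G$. If $|C|<m$, then trivially $e(C)\leq\binom{|C|}{2}<\half m|C|$. If $|C|\geq m$, then $3\leq m\leq|C|$, so Corollary~\ref{l:eg} applies to $G[C]$: were the average degree $d(C)$ at least $m$, the component $C$ would contain a connected-matching on at least $m$ vertices, contradicting the hypothesis that no odd component of $G$ contains a matching on $m$ vertices. Hence $d(C)<m$, i.e., $e(C)<\half m|C|$.

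Summing the strict inequality $e(C)<\half m|C|$ over all odd components $C$ of $G$ gives
$$e(G[V''])=\sum_{C\text{ odd}}e(C)<\half m\sum_{C\text{ odd}}|C|=\half m\,|V(G'')|,$$
which is stronger than (iii) and completes the argument. There is no real obstacle: the lemma is essentially a packaging of the contrapositive of Corollary~\ref{l:eg} one component at a time, the only mild care needed being to handle the degenerate case $|C|<m$ separately so that the hypothesis of Corollary~\ref{l:eg} is respected.
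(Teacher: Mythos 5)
The paper gives no proof of this lemma at all --- it is imported verbatim from \cite{FL2008} (their Lemma~9) --- so there is nothing in-paper to compare against, and your argument is correct: taking $V''$ to be the union of the non-bipartite components makes (i), (ii), (iv) automatic, and the edge bound (iii) follows component-by-component from the contrapositive of Corollary~\ref{l:eg} (with the $|C|<m$ case handled trivially), since in a connected component every matching is a connected-matching. This is essentially the same derivation as in the cited source, so no genuinely new route or gap to report.
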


We recall two more results of Figaj and \L uczak. The first, is the main technical result from~\cite{FL2007}. The second from~\cite{FL2008}, allows us to deal with graphs with a \emph{hole}, that is, a subset $W\subseteq V(G)$ such that no  edge of~$G$ lies inside $W$. Note that both of these results can be immediately extended to multicoloured graphs:

\begin{theorem}[{\cite[Lemma~8]{FL2007}}]
\label{l:largeW}
For every $\alpha_{1}, \alpha_{2}, \alpha_{3}>0$ and~$\eta$ such that 
$0<\eta<0.002\min\{\alpha_1^2, \alpha_2^2,\alpha_3^2\}$, there exists $k_{\ref{l:largeW}}=k_{\ref{l:largeW}}(\aI,\aII,\aIII,\eta)$ such that the following holds:

For every $k>k_{\ref{l:largeW}}$ and every $(1-\eta^4)$-complete graph~$G$ on $$K\geq \half\left(\alpha_{1}+\alpha_{2}+\alpha_{3}+\max\{ \alpha_{1}, \alpha_{2}, \alpha_{3} \} +18 \eta^{1/2}\right)k$$ vertices, for every three-colouring of the edges of~$G$, there exists a colour $i\in\{1,2,3\}$ such that~$G_i$ contains a connected-matching on at least $(\alpha_{i}+\eta)k$ vertices.
\end{theorem}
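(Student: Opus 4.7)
My plan is a proof by contradiction, combining the odd-component decomposition of Lemma~\ref{l:decomp} with refined edge counting against the $(1-\eta^4)$-completeness of $G$. Assume without loss of generality that $\alpha_1=\max_i\alpha_i$, and suppose, for contradiction, that $G_i$ contains no connected matching on $(\alpha_i+\eta)k$ vertices for each colour $i$. In particular, no odd component of $G_i$ contains a matching on $(\alpha_i+\eta)k$ vertices, so Lemma~\ref{l:decomp} applies with $m=(\alpha_i+\eta)k$ and yields, for each~$i$, a partition $V=V_i'\sqcup V_i''$ such that $G_i[V_i']$ is bipartite, $e(G_i[V_i''])\leq\tfrac{1}{2}(\alpha_i+\eta)k\,|V_i''|$, and no $G_i$-edge crosses the $V_i'/V_i''$ cut.

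I then refine the edge bound using the \emph{full} (not merely odd) hypothesis. Writing $(A_i,B_i)$ for the bipartition of $G_i[V_i']$ with $|A_i|\leq|B_i|$ (aggregated across components), the absence of any connected matching of size $(\alpha_i+\eta)k$ in $G_i$ forces the matching number of each bipartite component, hence of $G_i[V_i']$ itself, to be strictly less than $\tfrac{1}{2}(\alpha_i+\eta)k$, since a matching inside a connected bipartite graph is automatically a connected matching. König's theorem, applied componentwise, then gives
\[
e(G_i[V_i'])\;\leq\;|A_i|\,(K-|A_i|) \qquad \text{with} \qquad |A_i|\leq\tfrac{1}{2}(\alpha_i+\eta)k
\]
(up to Hall-defect terms to be absorbed into the error). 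Combined with the bound on $e(G_i[V_i''])$, this refines the naive estimate $e(G_i)\leq\tfrac{1}{2}(\alpha_i+\eta)k\,K$ given by Corollary~\ref{l:eg} by a quadratic saving of order $|A_i|^2\sim\tfrac{1}{4}\alpha_i^2k^2$.

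Summing the three per-colour bounds and setting this against $e(G)\geq(1-\eta^4)\binom{K}{2}$ produces a quadratic inequality in $K$. Its right-hand side is maximised when each $|A_i|$ attains its upper bound $\tfrac{1}{2}(\alpha_i+\eta)k$ and each $|V_i''|$ is as small as possible (since, in that regime, each $|A_i|(K-|A_i|)$ is increasing in $|A_i|$). Solving the resulting inequality yields a bound of the form $K\leq\tfrac{1}{2}(\alpha_1+\alpha_2+\alpha_3+\alpha_1+O(\eta^{1/2}))k$, which, since $\alpha_1=\max$, contradicts the hypothesis $K\geq\tfrac{1}{2}(\alpha_1+\alpha_2+\alpha_3+\max\alpha_i+18\eta^{1/2})k$.

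The main obstacle is verifying the refined bound when $G_i[V_i']$ splits into many bipartite components or has large König defects: in the worst case, the aggregated small side $|A_i|$ could exceed $\tfrac{1}{2}(\alpha_i+\eta)k$ while each component's matching number remains small, or a Hall defect of order $\delta$ could let $|A_i|$ drift upward by $\delta$. The standard resolution is to absorb the small and defective components into $V_i''$ (they contribute negligibly to edges, matching the Lemma's odd-component budget) and restrict attention to a single dominant bipartite component per colour; a short computation shows that a defect of $\delta$ in $A_i$ saves $\Theta(\delta^2)$ edges in $|A_i|\cdot|B_i|$, a trade-off that only strengthens the inequality. The $18\eta^{1/2}$ slack in the hypothesis is calibrated precisely to absorb the accumulated error from König defects, $V_i''$ sizes, and the $\eta^4$-incompleteness, so that the contradiction survives all error-tracking.
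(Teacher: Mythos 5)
You should first note that the paper itself contains no proof of this statement: it is imported verbatim as \cite[Lemma~8]{FL2007}, so there is no in-paper argument to match yours against, and your proposal has to stand on its own. As it stands it has two genuine gaps. The first is structural. Your refined bound only ever helps on the bipartite part $V_i'$: the step ``matching number of each component, hence of $G_i[V_i']$ itself, is less than $\tfrac12(\alpha_i+\eta)k$'' is false (matching numbers add over components), and K\"onig gives a small vertex \emph{cover}, not a small side $A_i$; this particular step happens to be repairable, since the per-component cover bound $e(C)\le\nu(C)\bigl(|C|-\nu(C)\bigr)$ does yield $e(G_i[V_i'])\le\mu_i\bigl(|V_i'|-\mu_i\bigr)$ with $\mu_i=\tfrac12(\alpha_i+\eta)k$. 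What is not repairable is your treatment of $V_i''$: Lemma~\ref{l:decomp} gives only the linear budget $e(G_i[V_i''])\le\mu_i|V_i''|$, and this is essentially tight — take $G_i$ to be a disjoint union of cliques of (odd) order just below $(\alpha_i+\eta)k$; then $V_i''$ is everything, there is no large connected matching, and $e(G_i)\approx\mu_i K$ with no quadratic saving at all. Consequently your claim that the right-hand side is maximised when each $|V_i''|$ is as small as possible is exactly backwards: the worst case is $V_i''=V$, where your count degenerates to the naive Erd\H{o}s--Gallai estimate of Corollary~\ref{l:eg} and only rules out $K\gtrsim(\alpha_1+\alpha_2+\alpha_3)k$, far above the claimed threshold whenever $\max_i\alpha_i<\sum_{j\ne i}\alpha_j$.

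The second gap is quantitative and is decisive even if one grants you the full saving in every colour. Assuming $e(G_i)\le\tfrac{\alpha_ik}{2}\bigl(K-\tfrac{\alpha_ik}{2}\bigr)$ for all $i$ and comparing with $e(G)\ge(1-\eta^4)\binom{K}{2}$ gives, up to lower-order terms,
\[
K\;\le\;\tfrac12\Bigl(S+\sqrt{S^2-2Q}\Bigr)k,
\qquad S=\alpha_1+\alpha_2+\alpha_3,\quad Q=\alpha_1^2+\alpha_2^2+\alpha_3^2,
\]
and $\sqrt{S^2-2Q}>\max_i\alpha_i$ in general: for $\alpha_1=\alpha_2=\alpha_3$ it equals $\sqrt3\,\alpha_1$, so the entire range $\tfrac12(S+\alpha_1+18\eta^{1/2})k\le K<\tfrac12(S+\sqrt3\,\alpha_1)k$ produces no contradiction — precisely the central symmetric case (the one behind $R(C_n,C_n,C_n)=2n+o(n)$) is missed. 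So no global edge-counting argument of this shape, however carefully the K\"onig defects and $\eta$-errors are tracked, can reach the stated threshold; the slack is not an error term but a genuine deficiency of summed degree bounds. The actual proof in \cite{FL2007} (and the analogous arguments this paper runs elsewhere, e.g.\ via Lemma~\ref{l:hole} and Lemma~\ref{l:dgf1}) must exploit the interaction of the three colourings on an almost-complete graph structurally: one analyses the largest monochromatic components and the bipartite/odd pieces of each colour simultaneously, rather than adding up three independent per-colour edge bounds.
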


\begin{lemma}[{\cite[Lemma~12]{FL2008}}]
\label{l:hole}
For every $\alpha, \beta>0$, $v\geq0$ and $\eta$ such that
$0<\eta<0.01 \min\{ \alpha,\beta\}$, 
there exists $k_{\ref{l:hole}}=k_{\ref{l:hole}}(\alpha,\beta,v,\eta)$ such that, for every $k>k_{\ref{l:hole}}$, the following holds:

Let $G=(V,E)$ be a graph obtained from a $(1-\eta^4)$-complete graph on $$ K\geq\half\left(\alpha+\beta+\max \{ 2v, \alpha, \beta \} + 6\eta^{1/2} \right)k $$
vertices by removing all edges contained within a subset $W \subseteq V$ of size at most~$vk$. Then, every two-multicolouring of the edges of~$G$ results in {either} a red connected-matching on at least $(\alpha+\eta)k$ vertices {or} a blue connected-matching on at least $(\beta+\eta)k$ vertices.
\end{lemma}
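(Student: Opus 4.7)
The plan is to argue by contradiction: assume $G$ contains neither a red connected-matching on $(\alpha+\eta)k$ vertices nor a blue connected-matching on $(\beta+\eta)k$ vertices. I will reduce to Theorem~\ref{l:largeW} via a three-colouring trick. Let $G_0$ denote the $(1-\eta^4)$-complete graph from which $G$ is obtained; I three-multicolour the edges of $G_0$ by inheriting the red and blue colourings from $G$ and assigning a third ``phantom'' colour to the edges of $G_0$ lying inside~$W$ (these are precisely the edges deleted to form $G$). Crucially, phantom-colour edges are confined to~$W$, so any phantom-colour connected-matching uses at most $|W|\le vk$ vertices.

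I then invoke Theorem~\ref{l:largeW} on $G_0$ with parameters $\alpha_1'=\alpha-\delta_1$, $\alpha_2'=\beta-\delta_2$ and $\alpha_3'=v+\eta$, choosing non-negative $\delta_1,\delta_2$ so that Theorem~\ref{l:largeW}'s vertex-count hypothesis $K\ge\tfrac{1}{2}(\alpha_1'+\alpha_2'+\alpha_3'+\max\{\alpha_1',\alpha_2',\alpha_3'\}+18\eta^{1/2})k$ is implied by our hypothesis $K\ge\tfrac{1}{2}(\alpha+\beta+\max\{2v,\alpha,\beta\}+6\eta^{1/2})k$. Because a phantom-colour connected-matching of $(\alpha_3'+\eta)k=(v+2\eta)k>vk$ vertices is impossible, Theorem~\ref{l:largeW} delivers either a red connected-matching of at least $(\alpha-\delta_1+\eta)k$ vertices or a blue connected-matching of at least $(\beta-\delta_2+\eta)k$ vertices in~$G_0$; these lie in $G$ itself because the red and blue edges of $G_0$ coincide (by construction) with those of $G$. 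The resulting matching may fall short of the target by $\delta_i k$ vertices, and I close this gap using vertices of~$W$: each $w\in W$ has $G$-degree at least $(1-\eta^4)(K-1)-(|W|-1)$ entirely directed into $V\setminus W$, so a pigeonhole argument shows that $w$ has plenty of red (respectively blue) neighbours lying in the component of the existing matching, allowing additional matching edges to be appended while preserving connectedness.

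The main technical obstacle is the numerical bookkeeping for $\delta_1$ and $\delta_2$: these must be large enough to close the $12\eta^{1/2}$ gap between the $18\eta^{1/2}$ error in Theorem~\ref{l:largeW} and the $6\eta^{1/2}$ error in our hypothesis, yet small enough that the residual shortfall can be absorbed by the extension step. This will be handled by a case analysis based on whether $2v\ge\max\{\alpha,\beta\}$ (the ``large hole'' regime, in which $\alpha_3'$ typically realises $\max\{\alpha_1',\alpha_2',\alpha_3'\}$) or $2v<\max\{\alpha,\beta\}$ (the ``small hole'' regime, in which $\alpha-\delta_1$ or $\beta-\delta_2$ realises the maximum), with $\delta_1,\delta_2$ computed separately in each subcase. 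A degenerate boundary case arises when $v$ is so small that the restriction $\eta<0.002(\alpha_3')^2$ in Theorem~\ref{l:largeW} would force $\alpha_3'$ to be substantially larger than $v+\eta$; in this situation I instead reduce directly to the two-colour version of the result (a corollary of Theorem~\ref{l:largeW} with $\alpha_3$ small) applied to the hole-free subgraph $G[V\setminus W]$, whose vertex count still exceeds the required threshold.
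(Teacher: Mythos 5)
First, note that the paper does not prove this statement at all: it is quoted verbatim as Lemma~12 of Figaj and \L uczak \cite{FL2008}, whose own proof is a direct structural argument about components of the red and blue graphs around the hole, not a reduction to the three-colour result. Your phantom-colour reduction to Theorem~\ref{l:largeW} is therefore a genuinely different route, but it has a gap that is structural rather than ``numerical bookkeeping.'' To rule out the phantom matching you must take $\alpha_3'\geq v-\eta$ (and in fact $\alpha_3'\geq\sqrt{500\eta}\approx 22\eta^{1/2}$ regardless, since Theorem~\ref{l:largeW} requires $\eta<0.002(\alpha_3')^2$ -- which already kills the choice $\alpha_3'=v+\eta$ for small $v$). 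But then the vertex count Theorem~\ref{l:largeW} charges is essentially $\tfrac{1}{2}\big(\alpha+\beta+v+\max\{v,\alpha,\beta\}\big)k$ plus error terms, whereas the hypothesis only supplies $\tfrac{1}{2}\big(\alpha+\beta+\max\{2v,\alpha,\beta\}\big)k$. Since $v+\max\{v,\alpha,\beta\}-\max\{2v,\alpha,\beta\}=\min\{v,\max\{\alpha,\beta\}-v\}$ whenever $0\leq v\leq\max\{\alpha,\beta\}$, the deficit your $\delta_1,\delta_2$ must absorb is not the $12\eta^{1/2}$ you describe but can be as large as $\tfrac{1}{2}\max\{\alpha,\beta\}$ (e.g.\ $v\approx\alpha/2$), so the matching handed back can be short by order $\alpha k$ vertices, far beyond anything an ``append a few edges through $W$'' step could recover. (A separate parameter issue: $\eta<0.01\min\{\alpha,\beta\}$ does not make $\eta^{1/2}$ small compared with $\alpha$, so even $\alpha-12\eta^{1/2}$ can be negative.)

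The extension step is also unsound in itself. Pigeonhole gives that each $w\in W$ has many red \emph{or} many blue neighbours in $V\setminus W$, not many neighbours of the colour of the matching you were given, and Theorem~\ref{l:largeW} does not let you choose which colour it returns. Concretely, take $V\setminus W=A\cup B$ with $|A|=(\tfrac{1}{2}\alpha+3\eta^{1/2})k$, $|B|=\tfrac{1}{2}\beta k$, $|W|=vk$, colour all edges inside $A$, between $A$ and $W$, and between $A$ and $B$ red, and all edges inside $B$ and between $B$ and $W$ blue. The hypothesis on $K$ is met and the lemma's conclusion holds via red, but every blue connected-matching is capped at $2|B|=\beta k<(\beta+\eta)k$; if the three-colour theorem returns its blue matching, no vertex of $W$ can extend it, and one must switch colours and argue structurally about red -- precisely the case analysis your proposal omits (and the substance of the actual proof in \cite{FL2008}). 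Finally, the small-$v$ fallback is not available as claimed: a two-colour statement with threshold $\tfrac{1}{2}(\alpha+\beta+\max\{\alpha,\beta\}+O(\eta^{1/2}))k$ is not a corollary of Theorem~\ref{l:largeW}, because the constraint $\eta<0.002\alpha_3^2$ forces $\alpha_3\geq 22\eta^{1/2}$, which alone overshoots the $6\eta^{1/2}$ slack in the hypothesis.
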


The following lemmas allow us to find large connected-matchings in almost-complete bipartite graphs:

\begin{lemma}[{\cite[Lemma~10]{FL2008}}]
\label{l:ten}
Let $G=G[V_1,V_2]$ be a bipartite graph with bipartition $(V_{1},V_{2})$, where $|V_{1}|\geq|V_{2}|$, which has at least $(1-\epsilon)|V_{1}||V_{2}|$ edges for some $\epsilon$ such that $0<\epsilon<0.01$. Then,~$G$ contains a connected-matching on at least $2(1-3\epsilon)|V_{2}|$ vertices.
\end{lemma}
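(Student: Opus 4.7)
The plan is to find a single giant component $C$ of $G$ containing almost all of $V_1$ and $V_2$, and then to extract a near-perfect matching from $C$ by a standard K\"{o}nig-type defect argument. For the giant component, I would write $x_i=|C_i\cap V_1|$ and $y_i=|C_i\cap V_2|$ over the components $C_i$ of $G$. Since every edge lies inside some component, $\sum_i x_iy_i\geq (1-\epsilon)|V_1||V_2|$; the bounds $\sum_i x_iy_i\leq |V_1|\max_iy_i$ and $\sum_i x_iy_i\leq|V_2|\max_ix_i$ produce components $C',C''$ with $|C'\cap V_2|\geq(1-\epsilon)|V_2|$ and $|C''\cap V_1|\geq(1-\epsilon)|V_1|$. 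If $C'\neq C''$, the pairs between $C'\cap V_2$ and $C''\cap V_1$ give at least $(1-\epsilon)^2|V_1||V_2|$ non-edges, contradicting the non-edge budget $\epsilon|V_1||V_2|$ when $\epsilon<0.01$. Hence $C:=C'=C''$ satisfies $|C\cap V_1|\geq(1-\epsilon)|V_1|$ and $|C\cap V_2|\geq(1-\epsilon)|V_2|$.

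Set $A=C\cap V_1$, $B=C\cap V_2$, and assume WLOG $|A|\geq|B|$ (else swap roles). The induced bipartite subgraph $H=G[A,B]$ inherits at most $\epsilon|V_1||V_2|$ non-edges from $G$, so its non-edge density satisfies $\delta\leq\epsilon/(1-\epsilon)^2$. By K\"{o}nig's theorem, the maximum matching of $H$ has size $|B|-d$, where $d=\max_{S\subseteq B}(|S|-|N_H(S)|)$. For the defect-maximising $S$, counting non-edges between $S$ and $A\setminus N_H(S)$ gives $|S|(|A|-|N_H(S)|)\leq \delta|A||B|$, and therefore $d\leq s-|A|+\delta|A||B|/s$ with $s=|S|$. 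The right-hand side is a convex function of $s$; a direct computation shows it takes the value $\delta|B|$ at precisely $s=\delta|B|$ and $s=|A|$, so by convexity it stays $\leq\delta|B|$ on the interval $[\delta|B|,|A|]\supseteq[\delta|B|,|B|]$. For $s<\delta|B|$ the edge-count constraint is vacuous, but the trivial bound $d\leq s<\delta|B|$ closes the gap. Hence $d\leq\delta|B|$.

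Combining these bounds, the matching has size at least $(1-\delta)|B|\geq (1-\delta)(1-\epsilon)|V_2|=\frac{(1-\epsilon)^2-\epsilon}{1-\epsilon}|V_2|=\frac{1-3\epsilon+\epsilon^2}{1-\epsilon}|V_2|\geq(1-3\epsilon)|V_2|$, the final step reducing to $\epsilon-2\epsilon^2\geq 0$, valid for $\epsilon<1/2$. Since this matching lies entirely in $C$, it is a connected matching on at least $2(1-3\epsilon)|V_2|$ vertices, as required. The main obstacle I anticipate is the convex-analysis bound $d\leq\delta|B|$: the case split between the vacuous-constraint regime $s<\delta|B|$ and the effective regime $s\in[\delta|B|,|B|]$ requires some care, and the symmetric case $|A|<|B|$ must be handled by swapping roles, yielding matching $\geq (1-\delta)|A|\geq (1-3\epsilon)|V_1|\geq(1-3\epsilon)|V_2|$ via $|V_1|\geq|V_2|$.
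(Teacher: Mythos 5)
Your proof is correct. One point of comparison is moot here: the paper does not prove this statement at all --- it is quoted verbatim as Lemma~10 of Figaj and \L uczak \cite{FL2008}, so there is no in-text argument to measure yours against. Taken on its own terms, your two-step route is sound: the component-counting step (using $\sum_i x_iy_i\geq(1-\epsilon)|V_1||V_2|$ together with the two trivial upper bounds, then ruling out $C'\neq C''$ by the non-edge budget since $(1-\epsilon)^2>\epsilon$ for $\epsilon<0.01$) correctly produces a single component meeting both sides in a $(1-\epsilon)$-fraction, and the K\"onig--Ore defect bound is handled properly: the convexity argument for $f(s)=s-|A|+\delta|A||B|/s$, with $f(\delta|B|)=f(|A|)=\delta|B|$, together with the trivial bound $d\leq s$ in the regime $s<\delta|B|$, does give $d\leq\delta|B|$, and the final arithmetic $(1-\delta)(1-\epsilon)\geq 1-3\epsilon$ reduces, as you say, to $\epsilon-2\epsilon^2\geq0$. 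Your handling of the possible case $|A|<|B|$ (which can indeed occur when $|V_1|=|V_2|$) by swapping roles and finishing via $|V_1|\geq|V_2|$ is also needed and correct. The only stylistic remark: the deficiency formula plus convexity is slightly heavier machinery than strictly necessary (one can instead take a maximum matching inside the giant component and count non-edges between unmatched vertices), but the argument as written is complete and yields the stated bound, and the matching lies inside one component, so it is a connected-matching in the sense of this paper.
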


Notice that, if~$G$ is a $(1-\epsilon)$-complete bipartite graph with bipartition $(V_1,V_2)$, then we may immediately apply the above to find a large connected-matching in~$G$.

\begin{lemma}
\label{l:eleven}
Let $G=G[V_1,V_2]$ be a bipartite graph with bipartition $(V_1,V_2)$. If $\ell$ is a positive integer such that $|V_1|\geq|V_2|\geq \ell$ and~$G$ is $a$-almost-complete for some $a$ such that $0<a/\ell<0.5$, then~$G$ contains a connected-matching on at least $2|V_2|-2a$ vertices.
\end{lemma}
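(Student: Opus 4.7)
The plan is to split the argument into two independent claims: (a) $G$ is connected under the given hypotheses, and (b) $G$ contains a matching covering all but at most $a$ vertices of $V_2$. Combining these immediately yields a connected-matching on at least $2|V_2|-2a$ vertices, since any matching of a connected graph is a connected-matching. The key numerical input is that $a<\ell/2\leq|V_2|/2\leq |V_1|/2$, so in particular $|V_1|-a>a$ and $|V_2|-a>a$.

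For connectivity, I would argue by contradiction. Suppose $G$ has components $C,C'$ with $A=C\cap V_1$, $B=C\cap V_2$ (and analogously for $C'$). First, neither $A$ nor $B$ can be empty, because any vertex in $V_1$ has degree at least $|V_2|-a>0$ and any vertex in $V_2$ has degree at least $|V_1|-a>0$, so an isolated side is impossible. Since all neighbours of vertices in $A$ lie in $B$, the $a$-almost-completeness forces $|B|\geq|V_2|-a$; by the same reasoning $|B'|\geq|V_2|-a$ for the other component. But then $|V_2|\geq|B|+|B'|\geq 2(|V_2|-a)$, which contradicts $|V_2|>2a$. Hence $G$ is connected.

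For the matching of size $|V_2|-a$, I would invoke K\H{o}nig's theorem. If the maximum matching had size strictly less than $|V_2|-a$, there would exist a vertex cover $C=C_1\cup C_2$ with $C_1\subseteq V_1$, $C_2\subseteq V_2$ and $|C|<|V_2|-a$. If $V_1\setminus C_1$ and $V_2\setminus C_2$ are both non-empty, then picking $u\in V_1\setminus C_1$ and $v\in V_2\setminus C_2$ and using $d(u)\geq|V_2|-a$, $d(v)\geq|V_1|-a$ forces $|C_2|\geq|V_2|-a$ and $|C_1|\geq|V_1|-a\geq|V_2|-a$, so $|C|\geq 2(|V_2|-a)>|V_2|-a$, contradiction. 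If instead $C_1=V_1$ then $|C|\geq|V_1|\geq|V_2|>|V_2|-a$; similarly if $C_2=V_2$. In every case we contradict $|C|<|V_2|-a$, so the matching exists.

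The routine part is the two contradiction arguments; the only place one has to be careful is the degenerate cases where one side of the cover equals the whole of $V_1$ or $V_2$, and the verification that the strict inequality $a<\ell/2$ (rather than $a\leq\ell/2$) is what makes the final counting strict. I do not foresee a genuine obstacle: the statement is really a straightforward corollary of K\H{o}nig's theorem plus a small connectivity observation, both driven by the same minimum-degree lower bound supplied by $a$-almost-completeness.
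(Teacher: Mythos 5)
Your proof is correct, but it follows a different route from the paper's for the matching step. The paper first notes that $G$ is $(1-a/\ell)$-complete, so connectivity follows from the standing observation in Section~4 that any $(1-c)$-complete bipartite graph with $c<\half$ is connected (your component-counting argument is essentially a direct proof of that same observation, so the connectivity halves coincide in substance). For the matching itself, however, the paper uses a short greedy augmentation: if a matching has fewer than $|V_2|-a$ edges, an unmatched vertex $v_2\in V_2$ has at least $|V_1|-a$ neighbours, and since fewer than $|V_2|-a\leq|V_1|-a$ vertices of $V_1$ are matched, one of these neighbours is free, so the matching extends; induction gives $|V_2|-a$ edges. You instead invoke K\H{o}nig's theorem and rule out a small vertex cover, handling the degenerate cases $C_1=V_1$ and $C_2=V_2$ separately; your counting is sound ($|C_2|\geq|V_2|-a$ and $|C_1|\geq|V_1|-a$ when both sides of the cover are proper, and $|V_2|>2a$ kills the remaining cases). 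What the paper's greedy argument buys is self-containedness and brevity; what yours buys is an appeal to a standard black box in place of the explicit augmentation, at the cost of a slightly longer case analysis. Both yield the required connected-matching on at least $2|V_2|-2a$ vertices.
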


\begin{proof}
Observe that~$G$ is $(1-a/\ell)$-complete. Therefore, since $a/\ell<0.5$,~$G$ is connected. Thus, it suffices to find a matching of the required size. Suppose that we have found a matching with vertex set~$M$ such that $|M|=2k$ for some $k<|V_2|-a$ and consider a vertex $v_2\in V_2\backslash M$. Since $G$ is $a$-almost-complete, $v_2$ has at least $|V_1|-a$ neighbours in~$|V_1|$ and thus at least one neighbour in $v_1\in V_1\backslash M$. Then, the edge $v_1v_2$ can be added to the matching and thus, by induction, we may obtain a matching on $2|V_2|-2a$ vertices.
\end{proof}

We also make use of the following Lemma~from~\cite{KoSiSk2}, which is an extension of the two-colour Ramsey result for even cycles and which allows us to find, in any almost-complete two-multicoloured graph on~$K$ vertices, either a large matching or a particular structure.

\begin{lemma}[\cite{KoSiSk2}]
\label{l:SkB}

For every~$\eta$ such that $0<\eta<10^{-20}$, there exists $k_{\ref{l:SkB}}=k_{\ref{l:SkB}}(\eta)$ such that, for every $k>k_{\ref{l:SkB}}$ and every $\alpha,\beta>0$ such that $\alpha \geq \beta \geq 100\eta^{1/2}\alpha$, if $K>(\alpha + \half\beta-\eta^{1/2}\beta)k$ and $G=(V,E)$ is a two-multicoloured $\beta \eta^2 k$-almost-complete graph on~$k$ vertices, then at least one of the following occurs:
\begin{itemize}
\item[(i)]~$G$ contains a red connected-matching on at least $(1+\eta^{1/2})\alpha k$ vertices;
\item[(ii)]~$G$ contains a blue connected-matching on at least $(1+\eta^{1/2})\beta k$ vertices;
\item[(iii)] the vertices of~$G$ can be partitioned into three sets $W$, $V'$, $V''$ such that
\begin{itemize}
\item[(a)] $|V'| < (1+\eta^{1/2})\alpha k$, 
$|V''|\leq \half(1+\eta^{1/2})\beta k$,
$|W|\leq \eta^{1/16} k$,
\item[(b)] $G_1[V']$ is $(1-\eta^{1/16})$-complete and $G_2[V']$ is $\eta^{1/16}$-sparse,
\item[(c)] $G_2[V',V'']$ is $(1-\eta^{1/16})$-complete and $G_1[V',V'']$ is $\eta^{1/16}$-sparse;
\end{itemize}
\item[(iv)] we have $\beta > (1-\eta^{1/8})\alpha$ and the vertices of~$G$ can be partitioned into sets $W$, $V'$ and $V''$ such that
\begin{itemize}
\item[(a)] $|V'| < (1+\eta^{1/2})\beta k$,
$|V''|\leq \half(1+\eta^{1/8})\alpha k$,
$|W|\leq \eta^{1/16} k$,
\item[(b)] $G_2[V']$ is $(1-\eta^{1/16})$-complete and $G_1[V']$ is $\eta^{1/16}$-sparse, 
\item[(c)] $G_1[V',V'']$ is $(1-\eta^{1/16})$-complete and $G_2[V',V'']$ is $\eta^{1/16}$-sparse.
\end{itemize}
\end{itemize}
Furthermore, if $\alpha+ \half\beta \geq 2(1+\eta^{1/2})\beta$, then we can replace (i) with
\begin{itemize}
\item[(i')]~$G$ contains a red odd connected-matching on $(1+\eta^{1/2})\alpha k$ vertices.
\end{itemize}
\end{lemma}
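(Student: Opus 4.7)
I would argue by contradiction: assume that neither a red connected-matching of size $(1+\eta^{1/2})\alpha k$ nor a blue connected-matching of size $(1+\eta^{1/2})\beta k$ exists in $G$, and extract a partition of $V$ as in~(iii) or~(iv). The first step is to apply the Figaj--\L uczak decomposition lemma (Lemma~\ref{l:decomp}) separately to the red graph $G_{1}$ with threshold $m_{r}=(1+\eta^{1/2})\alpha k$ and to the blue graph $G_{2}$ with threshold $m_{b}=(1+\eta^{1/2})\beta k$. This yields partitions $V=V'_{r}\cup V''_{r}=V'_{b}\cup V''_{b}$ where $G_{1}[V'_{r}]$ and $G_{2}[V'_{b}]$ are bipartite, the induced subgraphs on $V''_{r}$ and $V''_{b}$ are sparse in their respective colours, and no red (resp.\ blue) edge crosses between $V'_{r}$ and $V''_{r}$ (resp.\ $V'_{b}$ and $V''_{b}$). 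Combined with the $\beta\eta^{2}k$-almost-completeness assumption, this forces $V''_{r}$ and $V''_{b}$ to be small: every vertex in $V''_{r}$ must send almost all its edges to $V'_{r}$ in blue, so the intersection $V''_{r}\cap V''_{b}$ is negligible and can be absorbed into an exceptional set $W$.

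Next, on the core $V_{c}=V'_{r}\cap V'_{b}$ both colour classes are bipartite; let the red bipartition be $A_{r}\cup B_{r}$ and the blue bipartition be $A_{b}\cup B_{b}$. Taking the common refinement produces four atoms, and inside each atom neither red nor blue edges can appear, so almost-completeness caps each atom at $\beta\eta^{2}k$ vertices---except that two ``opposite'' atoms may be much larger and absorb the bulk of $V_{c}$. I would then collapse the four-atom picture into a two-class description $V=V'\cup V''\cup W$ by letting $V'$ be the union of those atoms mutually connected to each other in red, and $V''$ the remaining large atom(s), whose edges to $V'$ are all blue. The $\eta^{1/16}$ thresholds in~(iii) emerge from bookkeeping the $\eta^{1/2}$-level sparseness coming from the decomposition and the $\eta^{2}$-level error from almost-completeness.

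With this structure in hand, the size constraints of~(iii) are pinned down as follows. Since $G_{1}[V']$ is $(1-\eta^{1/16})$-complete, Corollary~\ref{dirac1a} produces a red cycle of length essentially $|V'|$ in $V'$, so $|V'|<(1+\eta^{1/2})\alpha k$. Since $G_{2}[V',V'']$ is almost-complete bipartite, Lemma~\ref{l:ten} or Lemma~\ref{l:eleven} produces a blue connected-matching of size essentially $2|V''|$, so $|V''|\leq\tfrac12(1+\eta^{1/2})\beta k$. Plugging these into $K>(\alpha+\tfrac12\beta-\eta^{1/2}\beta)k$ pins the sizes to the claimed extremes. If instead the merge step produced the colour-reversed structure (blue dense inside, red between the two sides), a symmetric argument yields~(iv), together with $\beta>(1-\eta^{1/8})\alpha$ forced by comparing the two sizes. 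Finally, for the odd-matching strengthening, assume $\alpha+\tfrac12\beta\ge 2(1+\eta^{1/2})\beta$ and suppose that the red connected-matching of size $m_{r}$ we found lies entirely in a bipartite red component; then this bipartite red component, together with the blue bipartite skeleton of the first step, would cover most of $V$ by pieces that are bipartite in both colours, and a direct edge count would leave fewer than $\beta\eta^{2}k\cdot K$ room for the missing edges guaranteed by almost-completeness, contradicting the lower bound on $K$.

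\textbf{Main obstacle.} The delicate point is the merge step: aligning the two colour-dependent bipartitions into a single clean two-class description, upgrading the $\eta^{1/2}$-scale sparseness from Lemma~\ref{l:decomp} to the $\eta^{1/16}$-scale completeness and sparseness demanded by~(iii) and~(iv), while corralling every ``inconsistent'' vertex (those whose red and blue neighbourhood patterns disagree with the proposed two-class labelling) into $W$ without letting $|W|$ exceed $\eta^{1/16}k$. The odd-matching upgrade in the last clause is the second delicate point, since it requires a separate extremal argument that uses precisely the slack $\alpha+\tfrac12\beta\ge 2(1+\eta^{1/2})\beta$ to rule out a purely bipartite red extremal configuration.
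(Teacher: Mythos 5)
First, a point of comparison: the paper does not prove this lemma at all --- it is imported verbatim from~\cite{KoSiSk2}, where it is the main stability result and its proof occupies a substantial part of that paper. So there is no in-paper argument to measure you against; your proposal has to stand on its own, and it does not.

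The decisive gap is your treatment of the Figaj--\L uczak decomposition. Lemma~\ref{l:decomp} applied to $G_1$ with $m_r=(1+\eta^{1/2})\alpha k$ gives a part $V''_r$ whose red graph merely has at most $\tfrac12 m_r|V''_r|$ edges, i.e.\ red \emph{average} degree at most roughly $\alpha k$ --- this is not sparseness. In particular a red clique on $\approx\alpha k$ vertices satisfies that bound, and in the extremal configuration~(iii) this is exactly what happens: the set $V'$ on which $G_1$ is $(1-\eta^{1/16})$-complete is a non-bipartite red component, so it lies inside $V''_r$, not inside the bipartite part $V'_r$. Consequently your claim that almost-completeness ``forces $V''_r$ and $V''_b$ to be small'' is false, and the ensuing analysis looks in the wrong place. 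Indeed, in the common refinement of the two bipartitions of $V_c=V'_r\cap V'_b$, every atom excludes both red and blue edges internally, so every atom (not just two of them) has at most $\beta\eta^2k+1$ vertices and $V_c$ itself is negligible; your parenthetical ``except that two opposite atoms may be much larger'' contradicts the sentence before it. The entire content of the lemma --- that a two-coloured almost-complete graph on barely more than $(\alpha+\tfrac12\beta)k$ vertices with no large red or blue connected-matching must contain a red near-clique of size almost $\alpha k$ with an attached blue-complete set of size almost $\tfrac12\beta k$ (or the colour-swapped picture with the accompanying constraint $\beta>(1-\eta^{1/8})\alpha$) --- concerns precisely the odd-component parts $V''_r$, $V''_b$ that you discard, and nothing in your outline produces the $(1-\eta^{1/16})$-completeness of $G_1[V']$ or the $\eta^{1/16}$-sparseness statements; the sizes in (iii)(a) are then read off from the structure, so deriving them ``from Corollary~\ref{dirac1a} and Lemma~\ref{l:ten}'' presupposes the structure you have not built. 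The final clause (i$'$) is likewise only gestured at: ruling out a large red connected-matching whose component is bipartite under $\alpha+\tfrac12\beta\geq2(1+\eta^{1/2})\beta$ needs a genuine extremal argument, not a one-line edge count. In short, the proposal assumes the stability structure rather than proving it; this is why the result is taken from~\cite{KoSiSk2} rather than re-derived here.
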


We also make use of the following corollary of Lemma~\ref{l:SkB}:

\begin{corollary} 
\label{c:SkBe}
For every $0<\epsilon <10^{-12}$, there exists $k_{\ref{c:SkBe}}=k_{\ref{c:SkBe}}(\epsilon)$ such that, for every $k\geq k_{\ref{c:SkBe}}$, if $K>(1-\epsilon)k$ and $G=(V,E)$ is a two-multicoloured $\tfrac{27}{8}\epsilon^4 k$-almost-complete graph, then~$G$ contains at least one of the following:
\begin{itemize}
\item[(i)] a red connected-matching on $(\tfrac{2}{3}-7\epsilon^{1/8})k$ vertices;
\item[(ii)] a blue connected-matching on $(\tfrac{2}{3}-7\epsilon^{1/8})k$ vertices.
\end{itemize}
\end{corollary}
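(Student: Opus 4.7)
The strategy is to apply Lemma~\ref{l:SkB} to~$G$ with the symmetric parameter choice $\alpha=\beta=\tfrac{2}{3}-7\epsilon^{1/8}$ and $\eta=3\epsilon^{2}$, and then to extract the required monochromatic connected-matching from whichever of the four conclusions of the lemma holds. I first verify the hypotheses. Since $\epsilon<10^{-12}$, we have $\eta=3\epsilon^{2}<10^{-20}$; the inequality $\beta\geq100\eta^{1/2}\alpha$ is immediate; the vertex-count hypothesis $K>(\alpha+\tfrac{1}{2}\beta-\eta^{1/2}\beta)k$ rearranges to $(1-\epsilon)k>(1-\tfrac{21}{2}\epsilon^{1/8}-\alpha\eta^{1/2})k$, which holds because $\tfrac{21}{2}\epsilon^{1/8}\gg\epsilon$; and the almost-completeness requirement $\tfrac{27}{8}\epsilon^{4}k\leq\beta\eta^{2}k$ reduces to $\beta\geq\tfrac{3}{8}$, valid for $\epsilon$ sufficiently small. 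Hence one of the conclusions (i)--(iv) of Lemma~\ref{l:SkB} applies.

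Conclusions~(i) and~(ii) directly deliver a red or blue connected-matching on at least $(1+\eta^{1/2})\alpha k\geq\alpha k=(\tfrac{2}{3}-7\epsilon^{1/8})k$ vertices, and we are done.

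In conclusions~(iii) and~(iv), the large nearly-monochromatic set~$V'$ is the key. Using the bounds $|V''|\leq\tfrac{1}{2}(1+\eta^{1/8})\alpha k$ (which covers both cases, since $\alpha=\beta$ and $\eta^{1/2}\leq\eta^{1/8}$) and $|W|\leq\eta^{1/16}k$, together with $\eta^{1/16}=3^{1/16}\epsilon^{1/8}$, a direct computation gives
\[
|V'| \;\geq\; K-|V''|-|W| \;\geq\; \bigl(\tfrac{2}{3}+(\tfrac{7}{2}-3^{1/16})\epsilon^{1/8}-O(\epsilon^{1/4})\bigr)k \;>\; \bigl(\tfrac{2}{3}+\epsilon^{1/8}\bigr)k
\]
for $\epsilon$ small enough. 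Now $G_{1}[V']$ (in case~(iii)) or $G_{2}[V']$ (in case~(iv)) is $(1-\eta^{1/16})$-complete, that is, $\eta^{1/16}(|V'|-1)$-almost-complete, and this almost-completeness parameter sits well below $\tfrac{1}{2}|V'|$. Corollary~\ref{dirac1a} then produces a monochromatic cycle of length $2\lfloor|V'|/2\rfloor$ entirely inside~$V'$; its alternating perfect matching is a red (resp.\ blue) connected-matching on at least $|V'|-1\geq(\tfrac{2}{3}+\epsilon^{1/8})k-1\geq(\tfrac{2}{3}-7\epsilon^{1/8})k$ vertices, where the last inequality holds once $k_{\ref{c:SkBe}}$ is chosen to exceed both $k_{\ref{l:SkB}}(\eta)$ and $1/(8\epsilon^{1/8})$.

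The main obstacle is a delicate calibration of~$\eta$: it must be large enough that $\beta\eta^{2}\geq\tfrac{27}{8}\epsilon^{4}$, so that the given almost-completeness of~$G$ suffices to invoke Lemma~\ref{l:SkB}, yet small enough that the error term $\eta^{1/16}$ appearing in the structural alternatives~(iii) and~(iv) is dominated by $\tfrac{7}{2}\epsilon^{1/8}$. The choice $\eta=3\epsilon^{2}$ yields $\eta^{1/16}\approx1.07\,\epsilon^{1/8}$, comfortably meeting both demands; beyond this calibration, the argument is a routine quantitative substitution combining Lemma~\ref{l:SkB} with Dirac's theorem in the form of Corollary~\ref{dirac1a}.
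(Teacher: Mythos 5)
Your proposal is correct, and its engine is the same as the paper's: apply Lemma~\ref{l:SkB} with a symmetric choice $\alpha=\beta$ and read off the answer from the four alternatives. The differences are in the calibration and in how the stability cases are finished. The paper takes $\alpha=\beta=\tfrac{2}{3}$ and $\eta=(\tfrac{3}{2}\epsilon)^2$, so that $\beta\eta^2 k=\tfrac{27}{8}\epsilon^4 k$ and the threshold is exactly $(1-\epsilon)k$; in cases (iii)/(iv) it lower-bounds \emph{both} $|V'|$ and $|V''|$ and applies Lemma~\ref{l:ten} to the dense bipartite graph $G_2[V',V'']$ (resp.\ $G_1[V',V'']$), extracting the matching in the colour opposite to the one dominating $V'$. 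You instead take $\alpha=\beta=\tfrac{2}{3}-7\epsilon^{1/8}$, $\eta=3\epsilon^2$ (your hypothesis checks, including $\beta\eta^2\geq\tfrac{27}{8}\epsilon^4$, are right), lower-bound only $|V'|$, and finish with Dirac in the form of Corollary~\ref{dirac1a} applied to the near-complete monochromatic graph on $V'$, getting the matching in the \emph{same} colour that dominates $V'$; this is a legitimate alternative finish and the quantitative bookkeeping ($\eta^{1/16}=3^{1/16}\epsilon^{1/8}$, $|V'|-1\geq(\tfrac{2}{3}-7\epsilon^{1/8})k$ for $k\geq 1/(8\epsilon^{1/8})$) is sound. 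One remark worth noting: with your deflated $\alpha=\beta$, the size constraints of outcomes (iii)/(iv) force $|V'|+|V''|+|W|\leq\bigl(\tfrac{3}{2}(1+\eta^{1/2})\alpha+\eta^{1/16}\bigr)k<(1-9\epsilon^{1/8})k<(1-\epsilon)k<K$, so these outcomes are in fact vacuous and you could have dismissed them outright, yielding (i) or (ii) immediately; your in-case derivation remains valid regardless (it only uses the lower bound on $|V'|$, never the contradictory upper bound $|V'|<(1+\eta^{1/2})\alpha k$), but it is worth being aware that the structural cases only carry content when the parameters are chosen, as in the paper, so that the threshold matches $K$ essentially exactly.
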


\begin{proof}
Setting $\eta=(\tfrac{3}{2}\epsilon)^2$, $\alpha=\beta=2/3$, provided $k\geq k_{\ref{l:SkB}}(\eta)$, we may apply Lemma~\ref{l:SkB}, which results in at least one of the following occurring:
\begin{itemize}
\item [(i)]~$G$ contains a red connected-matching on at least $(\tfrac{2}{3}+\epsilon)k$ vertices;
\item [(ii)]~$G$ contains a blue connected-matching on at least $(\tfrac{2}{3}+\epsilon)k$ vertices;
\item [(iii)] the vertices of~$G$ can be partitioned into three sets $W$, $V'$, $V''$ such that
\begin{itemize}
\item[(a)] $|V'| < (\tfrac{2}{3}+\epsilon)k$,
$|V''|\leq (\tfrac{1}{3}+\half\epsilon)k$,
$|W|\leq (\tfrac{3}{2}\epsilon)^{1/8} k$,
\item[(b)] $G_1[V']$ is $(1-(\tfrac{3}{2}\epsilon)^{1/8})$-complete and $G_2[V']$ is $(\tfrac{3}{2}\epsilon)^{1/8}$-sparse,
\item[(c)]$G_2[V',V'']$ is $(1-(\tfrac{3}{2}\epsilon)^{1/8})$-complete and $G_1[V',V'']$ is $(\tfrac{3}{2}\epsilon)^{1/8}$-sparse;
\end{itemize}
\item [(iv)] the vertices of~$G$ can be partitioned into three sets $W$, $V'$, $V''$ such that
\begin{itemize}
\item[(a)] $|V'| < (\tfrac{2}{3}+\epsilon)k$,
$|V''|\leq (\tfrac{1}{3}+\half\epsilon)k$,
$|W|\leq (\tfrac{3}{2}\epsilon)^{1/8} k$,
\item[(b)] $G_2[V']$ is $(1-(\tfrac{3}{2}\epsilon)^{1/8})$-complete and $G_1[V']$ is $(\tfrac{3}{2}\epsilon)^{1/8}$-sparse,
\item[(c)]$G_1[V',V'']$ is $(1-(\tfrac{3}{2}\epsilon)^{1/8})$-complete and $G_2[V',V'']$ is $(\tfrac{3}{2}\epsilon)^{1/8}$-sparse.
\end{itemize}
\end{itemize}

In the first two cases, the result is immediate.

In the third case, recalling that $K=|V'|+|V''|+|W|$, simple algebra yields $|V'|\geq (\tfrac{2}{3}-2\epsilon^{1/8})$ and $|V''|\geq (\tfrac{1}{3}-2\epsilon^{1/8}).$
Then, since $G_2[V',V'']$ is $(1-(\tfrac{3}{2}\epsilon)^{1/8})$-complete, there are at least 
$(1-(\tfrac{3}{2}\epsilon)^{1/8})(\tfrac{1}{3}-2\epsilon^{1/8})(\tfrac{2}{3}-2\epsilon^{1/8})k^2$ edges in $G_2[V',V'']$. Thus, by Lemma~\ref{l:ten}, $G[V',V'']$ contains a blue connected-matching on at least 
$2(1-3(\tfrac{3}{2}\epsilon)^{1/8})(\tfrac{1}{3}-2\epsilon^{1/8})k\geq(\tfrac{2}{3}-7\epsilon^{1/8})k$ vertices. 
In the fourth case, exchanging the roles of red and blue, an identical argument yields a red connected-matching on $(\tfrac{2}{3}-7\epsilon^{1/8})k$ vertices.
\end{proof}

It is a well-known fact that either a graph is connected or its complement is. We now prove
two
simple extensions of this fact for two-coloured almost-complete graphs, all of which can be immediately extended to two-multicoloured almost-complete graphs. 

\begin{lemma}\label{l:dgf0}
For every $\eta$ such that $0<\eta<1/3$ and every $K\geq 1/\eta$, if $G=(V,E)$ is a two-coloured $(1-\eta)$-complete graph on~$K$ vertices and~$F$ is its largest monochromatic component, then $|F|\geq (1-3\eta)K$.
\end{lemma}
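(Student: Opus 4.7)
The plan is to argue by contradiction: assume that every monochromatic component has fewer than $(1-3\eta)K$ vertices, and deduce that some colour class has a component containing all of $V$. By the symmetry between the two colours, I may assume that $F$ is the largest red component, so in particular the largest blue component also has size less than $(1-3\eta)K$.

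First I would establish a crude lower bound on $|F|$. Every vertex $v$ has at least $(1-\eta)(K-1)$ neighbours in $G$, and by pigeonhole at least half of them share a single colour. Hence $v$ lies in a monochromatic component of size at least $(1-\eta)(K-1)/2+1 \geq (1-\eta)K/2$. Therefore $|F| \geq (1-\eta)K/2$, and the hypothesis $\eta<1/3$ then gives $|F| > \eta K$.

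The central observation is that every edge of $G$ between $F$ and $V\setminus F$ must be blue: a red edge from $F$ to some $u \in V\setminus F$ would force $u$ into the red component $F$, a contradiction. Since $G$ is $(1-\eta)$-complete, each vertex has at most $\eta(K-1) \leq \eta K$ non-neighbours, so each $v \in F$ has at least $|V\setminus F|-\eta K$ blue neighbours lying in $V\setminus F$, and each $u \in V\setminus F$ has at least $|F|-\eta K$ blue neighbours lying in $F$.

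The contradiction assumption $|F|<(1-3\eta)K$ gives $|V\setminus F|>3\eta K>2\eta K$, so $2(|V\setminus F|-\eta K)>|V\setminus F|$; thus, for any two $v_1,v_2 \in F$, their blue neighbourhoods in $V\setminus F$ must intersect, placing them in a common blue component $B$. Since $|F|>\eta K$, every $u \in V\setminus F$ has a blue neighbour in $F\subseteq B$ and therefore belongs to $B$ as well. This forces $|B|=K$, contradicting the standing assumption that every monochromatic component has size less than $(1-3\eta)K$. The main (essentially only) thing to verify is that the constraints $\eta<1/3$ and $K\geq 1/\eta$ are just enough to make the inequalities $|F|>\eta K$ and $|V\setminus F|>2\eta K$ hold simultaneously; this parameter check is the single obstacle.
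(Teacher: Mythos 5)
Your proof is correct and follows essentially the same route as the paper: a largest red component that is too small yields a cut with no red edges across it (you use $(F,V\setminus F)$ where the paper groups red components into two sets $A,B$ of size at least $3\eta K$), and then almost-completeness forces common blue neighbours across the cut, producing a blue component spanning all of $V$, a contradiction. The only cosmetic difference is your pigeonhole bound $|F|\geq(1-\eta)K/2>\eta K$, which replaces the paper's balanced partition step; the parameter checks you flag ($\eta<1/3$ giving $|F|>\eta K$, and $|V\setminus F|>3\eta K>2\eta K$) indeed go through.
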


\begin{proof}
If the largest monochromatic (say, red) component in~$G$ has at least $(1-3\eta)K$ vertices, then we are done. Otherwise, we may partition the vertices of~$G$ into sets~$A$ and~$B$ such that $|A|,|B|\geq3\eta K\geq 2$ such that there are no red edges between~$A$ and~$B$. Since~$G$ is $(1-\eta)$-complete, any two vertices in~$A$ have a common neighbour in~$B$, and any two vertices in~$B$ have a common neighbour in~$A$. Thus, $A\cup B$ forms a single blue component.
\end{proof}

The following lemmas form analogues of the above, the first concerns the structure of two-coloured almost-complete graphs with one hole and the second concerns the structure of two-coloured almost-complete graphs with two holes, that is, bipartite graphs. 

\begin{lemma}
\label{l:dgf1} 

For every $\eta$ such that $0<\eta<1/20$ and every $K\geq 1/\eta$, the following holds. For~$W$, any subset of~$V$ such that $|W|,|V\backslash W|\geq 4\eta^{1/2}K$, let $G_{W}=(V,E)$ be a two-coloured graph obtained from~$G$, a $(1-\eta)$-complete graph on~$K$ vertices with vertex set~$V$ by removing all edges contained entirely within~$W$. Let~$F$ be the largest monochromatic component of $G_W$ and define the following two sets:
\begin{align*} 
 W_{r}&= \{\text{$w \in W$ : $w$ has red edges to all but at most  $3\eta^{1/2} K$ vertices in $V \backslash W$}\};
\\ 
 W_{b}&=\{ w \in W : w \text{ has blue edges to all but at most } 3\eta^{1/2} K \text{ vertices in } V \backslash W\}.
\end{align*}
Then, at least one of the following holds:
\begin{itemize}
\item [(i)] $|F|\geq (1-2\eta^{1/2})K$; 
\item [(ii)] $|W_{r}|,|W_{b}|>0$.
\end{itemize}
\end{lemma}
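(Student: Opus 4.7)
The plan is to argue by contrapositive: suppose both (i) and (ii) fail, and by the symmetry between the colours assume without loss of generality that $W_b=\emptyset$. Using that $G$ is $(1-\eta)$-complete, so each $w\in W$ has at most $\eta(K-1)$ non-neighbours in $G$, the condition $w\notin W_b$ translates into the density statement that every $w\in W$ has at least $2\eta^{1/2}K$ \emph{red} neighbours in $V\backslash W$.

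The first substantive step is to apply Lemma~\ref{l:dgf0} to the induced subgraph $G[V\backslash W]$, which is $(1-\eta')$-complete with $\eta'\leq \eta K/|V\backslash W|\leq \eta^{1/2}/4 < 1/3$ and has $|V\backslash W|\geq 4\eta^{1/2}K\geq 1/\eta'$; this yields a monochromatic component $F'$ of $G[V\backslash W]$ with $|F'|\geq |V\backslash W|-3\eta K$. If $F'$ is red, then because $V\backslash W\setminus F'$ has size at most $3\eta K \ll 2\eta^{1/2}K$, every $w\in W$ has a red neighbour in $F'$ and therefore joins the red component of $G_W$ containing $F'$; this produces a red component on at least $|W|+|F'|\geq (1-3\eta)K\geq (1-2\eta^{1/2})K$ vertices, contradicting the failure of (i).

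The substantive case is when $F'$ is blue. Let $B$ be the blue component of $G_W$ containing $F'$ and set $W_{\text{out}}=W\setminus B$. If $W_{\text{out}}$ is empty then all of $W$ already lies in $B$, giving $|B|\geq |W|+|F'|$, which contradicts the failure of (i) as above. Otherwise, pick $w^{*}\in W_{\text{out}}$: by definition $w^{*}$ has no blue edges into $F'$, so every edge from $w^{*}$ into $F'$ is red, and $w^{*}$ has at least $|F'|-\eta K$ red neighbours inside $F'$. The key step is then a pigeonhole computation: although an arbitrary $w\in W$ is only guaranteed $\geq 2\eta^{1/2}K-3\eta K$ red neighbours in $F'$, this small set must intersect the huge red neighbourhood of $w^{*}$ in at least $2\eta^{1/2}K-4\eta K>0$ common vertices. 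Hence every $w\in W$ lies in the same red component as $w^{*}$, producing a red component of size $\geq |W|+|F'|-\eta K\geq (1-4\eta)K$, once more contradicting (i).

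The main obstacle is precisely the blue case: the large monochromatic component of $G[V\backslash W]$ is then on the ``wrong'' colour for the vertices of $W$ to automatically join it, and the naive bound $|B|\geq |F'|$ alone is too weak. The resolution is to exploit the duality between the two colours via $w^{*}$: the very vertex that obstructs $W$ from joining the blue component $B$ is forced by its lack of blue edges into $F'$ to become a red ``hub'' covering almost all of $F'$, and this hub then lets every other $w\in W$ (with comparatively few red edges into $F'$) be linked to $w^{*}$ through shared red neighbours, recovering the required giant monochromatic component.
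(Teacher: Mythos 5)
Your proof is correct, and it rests on the same key ingredient as the paper: applying Lemma~\ref{l:dgf0} to $G[V\backslash W]$ to obtain a monochromatic component $F'$ covering all but $O(\eta K)$ of $V\backslash W$, and then analysing how $W$ attaches to it. The organisation, however, is different. The paper argues directly: assuming without loss of generality that $F'$ is red, either every $w\in W$ sends a red edge to $F'$ (giving (i)), or some $w$ has no red edge to $F'$ and is therefore a blue hub, i.e.\ $w\in W_b$; then, looking at that vertex's blue neighbourhood $B\subseteq F'$, either every vertex of $W$ sends a blue edge to $B$ (giving (i) again) or some vertex has none and lies in $W_r$, so (ii) holds --- the paper never needs to assemble a second giant component. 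You instead take the contrapositive, put the without-loss-of-generality on $W_b=\emptyset$, convert this into ``every $w\in W$ has at least $2\eta^{1/2}K$ red neighbours in $V\backslash W$'', and split on the colour of $F'$; in the blue case your vertex $w^{*}\notin B$ is forced to be a red hub on $F'$, and the common-neighbour pigeonhole $2\eta^{1/2}K-4\eta K>0$ merges all of $W$ with $N_{\mathrm{red}}(w^{*})\cap F'$ into a red component of size at least $(1-4\eta)K\geq(1-2\eta^{1/2})K$. That extra merging step is the price of the contrapositive framing (the paper can stop as soon as it has exhibited one hub of each colour); what it buys is a cleaner single hypothesis to work with and, in the blue case, the stronger conclusion that a near-spanning \emph{red} component exists rather than merely a pair of hub vertices. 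The only point to tidy is the completeness parameter for $G[V\backslash W]$: you need $\eta'\geq\eta(K-1)/(|V\backslash W|-1)$ rather than $\eta K/|V\backslash W|$, but taking, say, $\eta'=\tfrac14\eta^{1/2}$ as the paper does, all of your estimates go through with ample slack.
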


\begin{proof}
Consider $G[V\backslash W]$. 
Since~$G$ is $(1-\eta)$-complete, $|V\backslash W|\geq 4\eta^{1/2}K$ and $\eta<1/20$, we see that every vertex in $G[V\backslash W]$ has degree at least $|V\backslash W|-\eta(K-1)\geq(1-\tfrac{1}{4}\eta^{1/2})(|V\backslash W|-1)$, that is, $G[V\backslash W]$ is $(1-\tfrac{1}{4}\eta^{1/2})$-complete. Thus, provided $4\eta^{1/2}K \geq 1/(\tfrac{1}{4}\eta^{1/2})$, that is, provided $K\geq 1/\eta$, we can apply Lemma~\ref{l:dgf0}, which tells us that the largest monochromatic component in $G[V\backslash W]$ contains at least $|V\backslash W|-\eta^{1/2} K$ vertices. We assume, without loss of generality, that this large component is red and call it~$R$.

Now,~$G$ is $(1-\eta)$-complete so either every vertex in~$W$ has a red edge to~$R$ (giving a monochromatic component of the required size) or there is a vertex $w\in W$ with at least $|R|-2\eta K$ {blue neighbours} in~$R$, that is, a vertex $w\in W_{b}$. Denote by~$B$ the set of $u\in R$ such that $uw$ is blue. Then, $|B|\geq |V\backslash W|-2\eta^{1/2} K$ and either every point in~$W$ has a blue edge to~$B$, giving a blue component of size at least $|B\cup W|>(1-2\eta^{1/2})K$, or there is a vertex $w_1\in W_{r}$.
\end{proof}

\section{Proof of the stability result -- Part I}
\label{s:stabp}

We begin with the case where $\aI\geq\aII, \aIII$ and wish to prove that any three-multicoloured graph on slightly fewer than $(2\aI+\aII)k$ vertices with sufficiently large minimum degree will contain a red connected-matching on at least~$\alpha_{1}k$ vertices, a blue connected-matching on at least~$\alpha_{2}k$ vertices or a green odd connected-matching on at least~$\alpha_{3}k$ vertices, or will have a particular structure.

Thus, given $\aI, \aII, \aIII$ such that $\aI \geq \aII, \aIII$, we choose  
$$\eta<\eta_{B1}=\min \left\{10^{-50}, 
\frac{\aII}{10^{20}}, \bigg(\frac{\aII}{120}\bigg)^8,  \bigg(\frac{\aII}{800\aI}\bigg)^2, \frac{\aIII}{10^4}
\right\}$$
and consider $G=(V,E)$, a $(1-\eta^4)$-complete graph on~$K\geq 72/\eta$ vertices, where
$$(2\aI + \aII - \eta)k \leq K \leq (2\aI + \aII - \half\eta)k$$ for some integer $k>k_{B1}$
, where $k_{B1}=k_{B1}(\aI,\aII,\aIII,\eta)$ will be defined implicitly during the course of this section, in that, on a finite number of occasions, we will need to bound~$k$ below in order to apply results from Section~\ref{s:pre1}.

Note that, since $\aI \geq \aII, \aIII$, the largest forbidden connected-matching is red and need not be odd. Note that, by scaling, we may assume that $\aI\leq1$. Notice, then, that $G$ is $3\eta^4k$-almost-complete and, thus, for any $X\subset V$, $G[X]$ is also $3\eta^4k$-almost-complete. 

In this section, we seek to prove that~$G$ contains at least one of the following:
\begin{itemize}
\item [(i)]	  a red connected-matching on at least $\aI k$ vertices;
\item [(ii)]  a blue connected-matching on at least $\alpha_{2}k$ vertices;
\item [(iii)]  a green odd connected-matching on at least $\alpha_{3}k$ vertices; 
\item [(iv)]  
disjoint subsets of vertices $X$ and $Y$ such that $G[X]$ contains a two-coloured spanning subgraph~$H$ from $\cH_{1}\cup\cH_{2}$ and $G[Y]$ contains a two-coloured spanning subgraph $K$ from $\cH_{1}\cup\cH_{2}$, where
\vspace{-2mm}
\begin{align*}
\cH_1=&\left((\aI-2\eta^{1/32})k,(\half\aII-2\eta^{1/32})k,3\eta^4 k,\eta^{1/32},\text{red},\text{blue}\right),\text{ } 
\\ \cH_2=&\left((\aII-2\eta^{1/32})k,(\half\aI-2\eta^{1/32})k,3\eta^4 k,\eta^{1/32},\text{blue},\text{red}\right).
\end{align*}
\end{itemize}

We begin by noting that, if~$G$ has a green odd connected-matching on at least $\alpha_{3}k$ vertices, then we are done. Thus, provided that $\aIII k\geq 3$, since $\aIII \leq \aI$, by Lemma~\ref{l:decomp}, we may partition the vertices of~$G$ into $W, X$ and $Y$ such that
\begin{itemize}
\item[(i)] $X$ and $Y$ contain only red and blue edges; 
\item[(ii)] $W$ has at most $\half \aIII k|W|\leq \half \aI k|W|$ green edges; and 
\item[(iii)] there are no green edges between $W$ and $X\cup Y$. 
\end{itemize}

\begin{figure}[!h]
\vspace{-2mm}
\centering
\includegraphics[width=64mm, page=6]{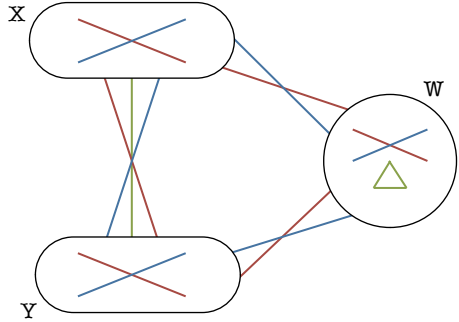}
\vspace{-3mm}\caption{Decomposition of the green graph.}
  
\end{figure}

By this decomposition, writing $w$ for $|W|/k$ and noticing that $e(G[X,Y])$ is maximised when~$X$ and~$Y$ are equal in size, we have
\begin{equation} 
\label{ub8}
e(G_{3}) = e(G_{3}[W])+e(G_{3}[X,Y]) \leq \half \aI wk^{2}+ \tfrac{1}{4} (2\aI +\aII-w)^{2}k^{2}. 
\end{equation}
Now, consider the average degree of the green graph~$d(G_3)$. Note that, since $K\leq 3k$, $\eta^4K \leq \eta k$. Thus, the average number of {missing} edges at each vertex is at most~$\eta k$. Thus, if $d(G_{3})~\leq~(\aI -2\eta)k$, then either $d(G_{1}) \geq \aI k$ or $d(G_{2}) \geq \alpha_{2}k$. If $d(G_1)\geq \aI k$, then, by Corollary~\ref{l:eg},~$G$ contains a red connected-matching on at least $\aI k$ vertices. Similarly, if $d(G_2)\geq \aII k$, then~$G$ contains a blue connected-matching on at least~$\aII k$ vertices. Thus, we may assume that $d(G_{3}) >(\aI -2\eta)k$, in which case
\begin{equation}
\label{lb8} 
e(G_{3}) >\half (\aI -2\eta)(2\aI +\alpha_{2}-\eta)k^{2}.
\end{equation}
Comparing~(\ref{ub8}) and~(\ref{lb8}), we obtain
\begin{equation*}
0  < w^{2} + w (-2\aI-2\aII) + 2\aI\aII+\aII^2+\eta(10\aI+4\aII).\end{equation*}
Since $1\geq \aI \geq \aII$ and $\eta<\aI/100$, this results in two cases:
\begin{itemize}
\item[(A)] $w > \aI +\aII + \sqrt{\aI^2 -(10\aI+4\aII)\eta}> 2\aI +\alpha_{2}-10\eta$; 
\item[(B)] $w < \aI +\aII - \sqrt{\aI^2 -(10\aI+4\aII)\eta}< \aII+10\eta$.
\end{itemize}

\hypertarget{CaseAht}{}

In {\bf Case A}, almost all the vertices of $G$ are contained in the odd green component. Since $\eta<\eta_{B1}
$, we have $2\aI +\alpha_{2}-10\eta>\aI +\half\aII+\half\aIII+9\eta^{1/2}$
and may apply Theorem~\ref{l:largeW} to obtain a connected-matching on $(\alpha_{i}+\eta)k$ vertices, provided that $k>k_{\ref{l:largeW}}(\aI,\aII,\aIII,\eta)$. Furthermore, the nature of the decomposition means that, if the connected-matching is green, then it is odd, thus completing the case.

Moving onto {\bf Case B}, we assume that $|X|\geq|Y|$ and consider the subgraph $G_1[X\cup W] \cup G_2[X\cup W]$, that is, the subgraph of~$G$ on $X\cup W$ induced by the red and blue edges.

Since $\eta\leq\eta_{B1}$, provided that $k>k_{\ref{l:hole}}(\aI,\aII,w,\eta)$, by Lemma~\ref{l:hole} (regarding $W$ as the hole), if 
\begin{equation*}
|X|+|W| \geq \half\left( \aI+\aII+\max \left\{ 2w, {\aI} \right\}+6\eta^{1/2} \right)k,
\end{equation*}
then we can obtain a red connected-matching on at least $(\aI+\eta)k$ vertices or a blue connected-matching on at least $(\alpha_{2}+\eta)k$ vertices.

We may therefore assume that
\begin{equation}
\label{e8}
|X|+|W|< \half\left( \aI+\aII+\max \left\{ 2w, {\aI} \right\}+6\eta^{1/2} \right)k.
\end{equation}
Since $K=|X|+|Y|+|W|$ and $|X|\geq|Y|$,
\begin{equation}
\label{e9}
|X|+|W|\geq \frac{K-|W|}{2}+|W|=\frac{(2\aI +\alpha_{2}-\eta)k+wk}{2} =  \left(\aI +\frac{\aII}{2}-\frac{\eta}{2}+\frac{w}{2} \right)k.
\end{equation}
Now, suppose that $w\leq \half\aI$, in which case, by~(\ref{e8}) and~(\ref{e9}), we have
\begin{align*}
(\aI+\half\aII-\tfrac{1}{2}\eta+\half w)k\leq|W|+|X| & < (\aI+\half\aII+3\eta^{1/2})k, \end{align*}
which results in a contradiction, unless $w<6\eta^{1/2}+\eta$, in which case almost all the vertices of~$G$ belong to~$X\cup Y$.
Eliminating~$|W|$, we find that
$$(\aI+\half\aII-4\eta^{1/2})k \leq |X| < (\aI+\half\aII+3\eta^{1/2})k.$$
Since $\eta\leq \eta_{B1}$, provided $k>k_{\ref{l:SkB}}(\eta^{1/2})$, we may apply Lemma~\ref{l:SkB} (with $\alpha=\aI, \beta=\aII$) to find that~$G[X]$ contains either a red connected-matching on at least~$\aI k$ vertices, a blue connected-matching on at least~$\alpha_{2}k$ vertices {or} 
a subgraph $H_1$ from $\cH_1\cup\cH_2$, where
\begin{align*}
\cH_1=&\cH\left((\aI-2\eta^{1/32})k,(\half\aII-2\eta^{1/32})k,3\eta^4 k,\eta^{1/32},\text{red},\text{blue}\right),\text{ } 
\\ \cH_2=&\cH\left((\aII-2\eta^{1/32})k,(\half\aI-2\eta^{1/32})k,3\eta^4 k,\eta^{1/32},\text{blue},\text{red}\right).
\end{align*}
Furthermore, unless $\aII\geq\aI-\eta^{1/16}$, $H_1\in\cH_1$.

Now, consider~$Y$. Since $|G|=|X|+|Y|+|W|$ and $|X|\geq|Y|$, we obtain
$$(\aI+\half\aII-10\eta^{1/2})k \leq |Y| < (\aI+\half\aII+3\eta^{1/2})k.$$
Then, provided $k\geq k_{\ref{l:SkB}}(\eta^{1/2})$, we may apply Lemma~\ref{l:SkB} to~$G[Y]$ to find that $G[Y]$ contains either a red connected-matching on at least~$\aI k$ vertices, a blue connected-matching on at least~$\alpha_{2}k$ vertices or a two-coloured subgraph $H_2$ belonging to $\cH_1 \cup \cH_2$. Furthermore, unless $\aII\geq\aI-\eta^{1/16}$, $H_2\in\cH_1$ which would be sufficient to complete the proof in this case. 

We may, therefore, assume that $\half\aI<w<\alpha_{2}+10\eta$, in which case, from~(\ref{e8}) and~(\ref{e9}), we have
\begin{align*}
\left( \aI+\half\aII+\half w-\tfrac{1}{2}\eta \right)k\leq |W|+|X| & < \left(\half\aI+\half\aII+w+3\eta^{1/2} \right)k.
\end{align*}
Since $\alpha_{2}>w-10\eta$, we obtain $|W|+|X|\geq (\aI+w-6\eta)k$.

Then, since $K=|X|+|Y|+|W|$ and $|X|\geq|Y|$, it follows that 
\begin{equation}
\label{e22}
\left.
\begin{aligned}
\,\,\,\quad\quad\quad\quad\quad\quad\quad\quad(\aI-\eta^{1/2})k \leq |X| < & \left(\half\aI+\half\aII+3\eta^{1/2} \right)k,\quad\quad\quad\quad\quad\,\,\,\,\,\\
(\aI-4\eta^{1/2})k  < |Y| < & \left(\half\aI+\half\aII+3\eta^{1/2} \right)k,\\
(\aI-8\eta^{1/2})k \leq |W| <  & \left(\half\aI+\half\aII+10\eta^{1/2} \right)k. 
\end{aligned}
\right\}\!
\end{equation}
The bounds for $|X|$ in (\ref{e22}) lead to a contradiction unless 
$\aI-\alpha_{2}\leq8\eta^{1/2}$. Therefore, we may only concern ourselves with the case where~$\aII\leq\aI\leq\alpha_{2}+8\eta^{1/2}$ and, therefore,~$X,Y$ and~$W$ each contain about a third of the vertices of~$G$.

Recall that there are no green edges contained within~$X$ or~$Y$. Then, since~$G$ is $3\eta^4k$-almost-complete, provided $k>k_{\ref{c:SkBe}}(\eta)$, we may apply Corollary~\ref{c:SkBe} 
to $G[X], G[Y]$ to find that each contains a monochromatic connected-matching on at least $(\tfrac{2}{3}\aI-8\eta^{1/8})k$ vertices. Thus, provided $\eta<(\aI/120)^8$, we may assume that each of $X$ and $Y$ contain a monochromatic connected-matching on at least $\tfrac{3}{5}\aI k$ vertices. Referring to these matchings as $M_1\subseteq G[X]$ and $M_2\subseteq G[Y]$, we consider three subcases:
\begin{itemize}
\item[(B.i)] \rm$M_{1}$ and $M_{2}$ are both red;
\item[(B.ii)]  \rm$M_{1}$ and $M_{2}$ are both blue;
\item[(B.iii)] \rm$M_{1}$ is red and $M_{2}$ is blue.
\end{itemize}

\label{Bistart}
In {\bf Case B.i}, suppose that there exists $r\in M_{1}, w\in W$ and $s\in M_{2}$ such that $rw$ and $ws$ are red. This would give a red connected-matching on at least $\tfrac{6}{5}\aI k$ vertices. Therefore, we may assume that every $w\in W$ has either~$rw$ blue or missing for all $r\in M_{1}$, or~$ws$ blue or missing for all $s\in M_{2}$. 

Thus, we may partition $W$ into $W_{1} \cup W_{2}$, where $W_1,W_2$ are defined as follows: 
\begin{align*}
W_{1}&=\{w\in W \text{ such that } w \text{ has no red edges to } M_1\};\\
W_{2}&=\{w\in W \text{ such that } w \text{ has no red edges to } M_2\}.
\end{align*}
\begin{figure}[!h]
\centering
\vspace{-6mm}
\includegraphics[width=67mm, page=24]{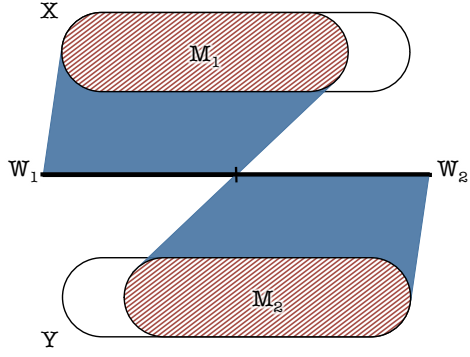}
\vspace{-3mm}\caption{Partition of $W$ into $W_1\cup W_2$.}
  
\end{figure}

Suppose that $|W_1|\geq (\half\aII +6\eta^4)k$. Then, since~$G$ is $3\eta^4 k$-almost-complete, so is $G[W,M_1]$. Since $\eta<(\aII/100)^2 \leq (\aII/60)^{1/4}$, $|M_1|\geq \frac{3}{5}\aI k \geq \frac{\aII}{2} k + 6\eta^4k$ and we may apply Lemma~\ref{l:eleven} with $\ell=(\half\aII +6\eta^4) k$ and $a=3\eta^4 k$ to give a blue connected-matching on at least~$\aII k$ vertices. The result is the same in the event that $|W_2|\geq(\half\aII +6\eta^4)k$.
Therefore, we may assume that $|W_{1}|, |W_{2}|\leq (\half\aII+6\eta^4) k$. In that case, we have $|W_{1}|=|W|-|W_{2}|\geq (\frac{\aI}{2}-9\eta^{1/2})k$ and, likewise, $|W_{2}|\geq (\frac{\aI}{2}-9\eta^{1/2})k$. Thus, since $\eta<(\aI/100)^2$, Lemma~\ref{l:eleven} gives a blue connected-matching on at least $({\aII}-20\eta^{1/2})k$ vertices in each of $G[M_1,W_1]$ and $G[M_2,W_2]$.

Then, suppose there exists a blue edge~$rw$ for $r\in M_{1}$, $w \in W_{2}$. This would connect these two blue connected-matchings, giving one on at least $(2\aI-40\eta^{1/2})k\geq \aII k$ vertices. Thus, all edges present in~$G[M_{1},W_{2}]$ are coloured exclusively red. By the same argument, so are all edges present in $G[M_2,W_1]$. 

Now, choose any set~$R_1$ of $10\eta^{1/2} k$ of the edges from the matching~$M_1$, let $M_1'=M\backslash R_1$ and consider $G[V(M_1'),W_2]$. Since $\eta<(\aI/100)^2$, we have $|V(M_1')|\geq (\half\aI  - 9\eta^{1/2})k$ and thus may apply Lemma~\ref{l:eleven} to $G[V(M_1'),W_2]$ to obtain a collection $R_2$ of edges from $G[V(M_1'),W_2]$ which form a red connected-matching on at least $(\aI-20\eta^{1/2})k$ vertices. Since $R_1$ and $R_2$ do not share any vertices but do belong to the same red-component of~$G$, the collection of edges $R_1\cup R_2$ forms a red connected-matching on at least $\aI k$ vertices, completing this case.

In {\bf Case B.ii}, exchanging the roles of red and blue (and where necessary $\aI$ and $\aII$), the proof follows the same steps as in Case B.i above.

Finally, we consider {\bf Case B.iii}. Without loss of generality, consider the case where~$M_{1}$ is red and~$M_{2}$ is blue. Since $\eta<(\aI/100)^2$, by (\ref{e22}), we have $|X|+|W|, |Y|+|W|\geq \half K$, so $G[X\cup W]$ and $G[Y\cup W]$ are each $(1-2\eta^4)$-complete.
Additionally, $|W|,|V\backslash W|\geq 4(2\eta^{4})^{1/2}|X\cup W|$. Thus, provided that $\half K\geq 1/2\eta^4$, we may apply Lemma~\ref{l:dgf1} separately to $G[X\cup W]$ and $G[Y\cup W]$ (regarding $W$ as the hole in each case) with the result being that at least one of the following occurs:
\begin{itemize}
\item[(a)] $X\cup W$ has a connected red 
component~$F$ on at least $|X \cup W| - \eta k$ vertices; 
\item[(b)] $X\cup W$ has a connected blue
 component~$F$ on at least $|X \cup W| - \eta k$ vertices; 
\item[(c)] $Y\cup W$ has a connected red
 component~$F$ on at least $|Y \cup W| - \eta k$ vertices;
\item[(d)] $Y\cup W$ has a connected blue
component~$F$ on at least $|Y \cup W| - \eta k$ vertices;
\item[(e)] there exist points $w_{1}, w_{2}, w_{3}, w_{4} \in W$ such that the following hold:
\begin{itemize}
\item[(i)] $w_1$ has red edges to all but at most $\eta k$ vertices in $X$,
\item[(ii)] $w_2$ has blue edges to all but at most $\eta k$ vertices in $X$,
\item[(iii)] $w_3$ has red edges to all but at most $\eta k$ vertices in $Y$,
\item[(iv)] $w_4$ has blue edges to all but at most $\eta k$ vertices in $Y$.
\end{itemize}
\end{itemize}

In case (a), we discard from~$W$ the, at most~$\eta k$, vertices not contained in~$F$ and consider $G[W,Y]$. Either there are at least $\tfrac{1}{5}\aI k$ mutually independent red edges present in $G[W,Y]$ (which can be used to augment $M_1$) or we may obtain $W'\subset W$, $Y' \subset Y$ with $|W'|,|Y'| \geq (\tfrac{4}{5} \aI  - 10\eta^{1/2})k$ such that all the edges present in $G[W',Y']$ are coloured exclusively blue. Notice that, since~$G$ is $3\eta^4 k$-almost-complete, so is $G_2[W',Y']$ and, since $\eta<(\aI/100)^2$, we may apply Lemma~\ref{l:eleven} (with $a=3\eta^4k$ and $\ell=\tfrac{3}{5}\aI k$) to obtain a blue connected-matching on at least $\aI k\geq \aII k$ vertices.

In case (b), suppose there exists a blue edge in $G[M_2,F]$. Then, at least $|M_2\cup W|-\eta k$ of the vertices of $M_2\cup W$ would belong to the same blue component in~$G$. We could then consider $G[W,X]$ and, by the same argument used in case (a) (with the roles of the colours reversed), find either a red connected-matching on at least $\aI k$ vertices or a blue connected-matching on at least~$\aII k$ vertices. Thus, we may instead, after discarding at most $\eta k$ vertices from $W$, assume that all edges present in $G[M_2,W]$ are coloured exclusively red and apply Lemma~\ref{l:eleven} to obtain a red connected-matching on at least~$\aI k$ vertices.

In case (c), the same argument as given in case (b) gives either a red connected-matching on at least~$\aI k$ vertices or a blue connected-matching on at least~$\aII k$ vertices.

In case (d), the same argument as given in case (a) gives gives either a red connected-matching on at least~$\aI k$ vertices or a blue connected-matching on at least~$\aII k$ vertices.

In case (e), there exist points $w_{1}, w_{2}, w_{3}, w_{4} \in W$ such that $w_1$ has red edges to all but at most $\eta k$ vertices in $X$, $w_2$ has blue edges to all but at most $\eta k$ vertices in $X$, $w_3$ has red edges to all but at most $\eta k$ vertices in $Y$, and $w_4$ has blue edges to all but at most $\eta k$ vertices in $Y$.
\begin{figure}[!h]
\centering
\vspace{2mm}
\includegraphics[width=70mm, page=26]{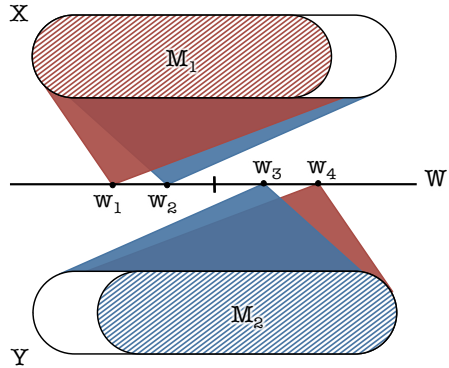}
\vspace{-3mm}\caption{Vertices $w_1,w_2,w_3$ and $w_4$ in case (e).}
\vspace{-3mm}
  
\end{figure}

Thus, defining
\begin{align*}
X_{S}&=\{x\in X \text{ such that } xw_1 \text{ is red and } xw_2 \text{ is blue}\},\\
Y_{S}&=\{y\in Y \text{ such that } yw_3 \text{ is red and } yw_4 \text{ is blue}\},
\end{align*}
by (\ref{e22}), we have $|X_S|,|Y_S|\geq(\aI-5\eta^{1/2})k$. Suppose there exists $x\in X_S$, $w\in W$,~$y\in Y_S$ such that~$xw$ and~$wy$ are red. In that case, $X_S\cup Y_S$ belong to the same red component of~$G$. Recall that $M_1$ contains a red matching on $\tfrac{3}{5} \aI k$ vertices and consider $G[W,Y_S]$. Either we can find $\tfrac{1}{5} \aI k$ mutually independent red edges in $G[W,Y_S]$ (which together with $M_1$ give a red connected-matching on at least~$\aI k$ vertices) or we may obtain $W'\subset W$, $Y' \subset Y_S$ with $|W'|,|Y'| \geq (\tfrac{4}{5}\aI  - 10\eta^{1/2})k$ such that all the edges present in $G[W',Y']$ are coloured exclusively blue. Then, as in case (a), we may apply Lemma~\ref{l:eleven} to obtain a blue connected-matching on at least $\aI k\geq \aII k$ vertices.

Thus, we assume no such triple exists and, similarly, we may assume there exists no triple $x\in X_S$, $w\in W$, $y\in Y_S$ such that $xw$ and $wy$ are blue. Thus, we may partition~$W$ into $W_1 \cup W_2$ such that all edges present in $G[W_1,X_S]$ and $G[W_2,Y_S]$ are coloured exclusively red and all edges present in $G[W_1,Y_S]$ and $G[W_2,X_S]$ are coloured exclusively blue. Thus, we may assume that $|W_1|,|W_2|\leq(\half\aI+\eta^{1/2})k$ (else Lemma~\ref{l:eleven} could be used to give a red connected-matching on at least~$\aI k$ vertices) and therefore also that $|W_1|,|W_2|\geq(\half\aI-9\eta^{1/2})k$, in which case, the same argument as in the last paragraph of case (a) gives a red connected-matching on~$\aI k$ vertices.

This concludes Case B and, thus, Part I of the proof of Theorem~\hyperlink{thB}{B}.

Note that the preceding section together with Section~\ref{s:p10} and Section~\ref{s:p11} forms a complete proof of Theorem~\hyperlink{thA}{A} in the case that $\aI\geq\aII,\aIII$. \label{Biiiend}
\qed

\section{Proof of the stability result  -- Part II}
\label{s:stabp2}

In \cite{DF2},
we provide the proof of Theorem~\hyperlink{thB}{B} in the case that $\aIII\geq\aI$. There, we prove that any three-multicoloured $(1-\eta^4)$-complete graph on  slightly fewer than $\max\{2\aI+\aII, \half\aI +\half\aII +\aIII\}k$ vertices will have a red connected-matching on at least~$\alpha_{1}k$ vertices, a blue connected-matching on at least~$\alpha_{2}k$ vertices, a green odd connected-matching on at least~$\alpha_{3}k$ vertices or will have one of the coloured structures described in Theorem~\hyperlink{thB}{B}.

\section{Proof of the main result -- Setup}
\label{s:p10}

For $\aI,\aII,\aIII>0$ such that $\aI\geq\aII$,  we set $c=\max\{2\aI+\aII,\half\aI+\half\aII+\aIII\}$, set
$$ \eta=\frac{1}{2}\min \left\{\eta_{B1}(\aI,\aII,\aIII),\eta_{B2}(\aI,\aII,\aIII),10^{-50},
 \left(\frac{\aII}{2500}\right)^{2},
\left(\frac{\aII}{200}\right)^{128}\right\}$$ 
and let~$k_0$ be the smallest integer such that
$$k_0\geq \max \left\{\left(c-\half{\eta}\right)k_{B1}(\aI,\aII,\aIII,\eta), \left(c-\half{\eta}\right)k_{B2}(\aI,\aII,\aIII,\eta), \frac{72}{\eta}\right\}.$$
We let
$$N=\max\left\{2\llangle \aI n\rrangle + \llangle \aII n \rrangle - 3,{~}\half\llangle \aI n\rrangle + \half\llangle \aII n \rrangle + \langle \aIII n \rangle - 2\right\},$$ for some integer $n$ such that $N \geq  K_{\ref{l:sze}}(\eta^4,k_0)$ and
$n>\max\{n_{\ref{th:blow-up}}(2,1,0,\eta), n_{\ref{th:blow-up}}(\half,\half,1,\eta)\}$
and consider a three-colouring of~$G=(V,E)$, the complete graph on~$N$ vertices.

In order to prove Theorem~\hyperlink{thA}{A}, we must prove that $G$ contains either a red cycle on~$\llangle \aI n \rrangle$ vertices, a blue cycle on~$\llangle \aII n \rrangle$ vertices or a green cycle on $\langle \aIII n \rangle$ vertices.

Recall that we use $G_1, G_2, G_3$ to refer to the monochromatic spanning subgraphs of $G$. That is,~$G_1$ (resp. $G_2, G_3$) has the same vertex set as~$G$ and includes as an edge any edge which in~$G$ is coloured red (resp. blue, green).

By Theorem~\ref{l:sze}, there exists an $(\eta^4,G_1,G_2,G_3)$-regular partition $\Pi=(V_0,V_1,\dots,V_K)$ for some~$K$ such that $k_0\leq K \leq K_{\ref{l:sze}}(\eta^4,k_0)$. Given such a partition, we define the $(\eta^4,\eta,\Pi)$-reduced-graph~$\cG=(\cV,\cE)$ on~$K$ vertices as in Definition~\ref{reduced}. The result is a three-multicoloured graph $\cG=(\cV,\cE)$ with
\begin{align*}
\cV&=\{V_1,V_2,\dots,V_K\}, &
\cE&=\{V_iV_j : (V_i,V_j) \text{ is } (\eta^4,G_r)\text{-regular for }r=1,2,3\},
\end{align*}
such that a given edge $V_iV_j$ of~$\cG$ is coloured with every colour for which there are at least $\eta|V_i||V_j|$ edges of that colour between~$V_i$ and~$V_j$ in~$G$.

In what follows, we will use $\cG_1, \cG_2, \cG_3$ to refer to the monochromatic spanning subgraphs of the reduced graph $\cG$. That is,~$\cG_1$ (resp. $\cG_2, \cG_3$) has the same vertex set as~$\cG$ and includes as an edge any edge which in~$G$ is coloured red (resp. blue, green).
Note that, by scaling, we may assume that either $\aII,\aIII\leq\aI=1$ or $\aII\leq\aI\leq1\leq\aIII\leq 2$. Thus, since $K\geq k_0 \geq 72/\eta$, we may fix an integer~$k$ such that 
\vspace{-2mm}
\begin{align}
\left(c-\eta\right)k \leq K \leq \left(c-\half\eta\right)k. 
\label{sizeK}
\end{align}
and may assume that $k\leq K\leq 3k$, $n\leq N\leq 3n$. 
Notice, also, that since the partition is~$\eta^4$-regular, we have $|V_0|\leq \eta^4 N$ and, for $1\leq i \leq K$,
\begin{equation}
\label{NK}
(1-\eta^4)\frac{N}{K}\leq |V_i|\leq \frac{N}{K}.
\end{equation}

\hypertarget{reB}
Applying Theorem~\hyperlink{thB}{B}, we find that $\cG$ contains at least one of the following:
\begin{itemize}
\item [(i)]	a red connected-matching on at least $\aI
 k$ vertices;
\item [(ii)]  a blue connected-matching on at least $\aII k$ vertices;
\item [(iii)]  a green odd connected-matching on at least $\aIII k$ vertices;
\item [(iv)]  
disjoint subsets of vertices $\cX$ and $\cY$ such that $G[\cX]$ contains a two-coloured spanning subgraph~$H$ from $\cH_{1}\cup\cH_{2}$ and $G[\cY]$ contains a two-coloured spanning subgraph $K$ from $\cH_{1}\cup\cH_{2}$, where
\vspace{-2mm}
\begin{align*}
\cH_1=&\cH\left((\aI-2\eta^{1/32})k,(\half\aII-2\eta^{1/32})k,3\eta^4 k,\eta^{1/32},\text{red},\text{blue}\right),\text{ } 
\\ \cH_2=&\cH\left((\aII-2\eta^{1/32})k,(\half\aI-2\eta^{1/32})k,3\eta^4 k,\eta^{1/32},\text{blue},\text{red}\right);
\end{align*}
 \item [(v)]  a subgraph from 
 $$\cK\left((\half\aI-14000\eta^{1/2})k, (\half\aII-14000\eta^{1/2})k, (\aIII-68000\eta^{1/2})k, 4\eta^4 k \right);$$
\item [(vi)]  a subgraph from $\cK_1^{*}\cup \cK_2^*$, where
\begin{align*}
\cK_{1}^{*}=\cK^*\big((\half\aI-97\eta^{1/2})k, (\half\aI-97&\eta^{1/2})k, (\half\aI+102\eta^{1/2})k, \\&(\half\aI+102\eta^{1/2})k, (\aIII-10\eta^{1/2})k, 4\eta^4 k\big),\hphantom{l}
\end{align*}
\vspace{-12mm}
\begin{align*}
\cK_{2}^{*}=\cK^*\big((\half\aI-97\eta^{1/2})k, (\half\aII-97\eta^{1/2})k, (\tfrac{3}{4}&\aIII-140\eta^{1/2})k, \\& 100\eta^{1/2}k, (\aIII-10\eta^{1/2})k, 4\eta^4 k\big).
\end{align*}
\end{itemize}
\vspace{-2mm}
Furthermore, 
\begin{itemize}
\item[(iv)] occurs only if $\aIII\leq \tfrac{3}{2}\aI+\half\aII+14\eta^{1/2}$, with $H, K\in \cH_1$ unless $\aII\geq\aI-\eta^{1/16}$; 
\item[(v)] and (vi) occur only if $\aIII\geq \tfrac{3}{2}\aI+\half\aII-10\eta^{1/2}$.
\end{itemize}

Since $n>\max\{n_{\ref{th:blow-up}}(2,1,0,\eta), n_{\ref{th:blow-up}}(\half,\half,1,\eta)\}$ and $\eta<10^{-50}$, in cases (i)--(iii), Theorem~\ref{th:blow-up} gives a cycle of appropriate length, colour and parity to complete the proof. 

Thus, we need only concern ourselves with cases (iv)--(vi). We divide the remainder of the proof into three parts, each corresponding to one of the possible coloured structures.

\section{Proof of the main result -- Part I -- Case (iv)}
\label{s:p11}

Suppose that $\cG$ contains disjoint subsets of vertices $\cX$ and $\cY$ such that $G[\cX]$ contains a two-coloured spanning subgraph~$H$ from $\cH_{1}\cup\cH_{2}$ and $G[\cY]$ contains a two-coloured spanning subgraph $K$ from $\cH_{1}\cup\cH_{2}$, where
\vspace{-2mm}
\begin{align*}
\cH_1=&\left((\aI-2\eta^{1/32})k,(\half\aII-2\eta^{1/32})k,3\eta^4 k,\eta^{1/32},\text{red},\text{blue}\right),\text{ } 
\\ \cH_2=&\left((\aII-2\eta^{1/32})k,(\half\aI-2\eta^{1/32})k,3\eta^4 k,\eta^{1/32},\text{blue},\text{red}\right).
\end{align*}
In this case, additionally, from Theorem~\hyperlink{reB}{B} we may assume that
\begin{equation}
\label{a3notbig} 
\aIII\leq\tfrac{3}{2}\aI+\half\aII+14\eta^{1/2}\leq 2\aI+14\eta^{1/2}.
\end{equation}

We divide the proof that follows into three sub-parts depending on the colouring of the subgraphs~$H$~and~$K$, that is, whether $H$ and $K$ belong to $\cH_1$ or $\cH_2$:

\subsection*{Part I.A: $H, K\in \cH_1$.}

We have a natural partition of the vertex set $\cV$ of $\cG$ into $\cX _1 \cup \cX _2 \cup \cY _1 \cup \cY _2\cup \cZ$ where $\cX_1\cup \cX_2$ is the partition of the vertices of $H$ given by Definition~\ref{d:H} and $\cY_1\cup\cY_2$ is the corresponding partition of the vertices of $K$. Thus we have
\begin{equation}
\left.
\label{IA0a}
\begin{aligned}
\quad\quad\quad\quad\quad\quad\quad\quad\,\,\,
(\aI-2\eta^{1/32})k\leq|\cX _1|&=|\cY _1|=p\leq \aI k, 
\quad\quad\quad\quad\quad\quad\quad\,\,\,\\
(\half\aII-2\eta^{1/32})k\leq|\cX _2|&=|\cY _2|=q\leq \half\aII k, 
\end{aligned}
\right\}\!
\end{equation}
such that
\begin{itemize}
\labitem{HA1}{HA1} $\cG_1[\cX _1], \cG_1[\cY _1$] are each $(1-2\eta^{1/32})$-complete (and thus connected); 
\labitem{HA2}{HA2} $\cG_2[\cX_1,\cX_2], \cG_2[\cY_1,\cY_2]$ are each $(1-2\eta^{1/32})$-complete (and thus connected);
\labitem{HA3}{HA3} $\cG[\cX_1\cup\cX_2\cup\cY_1\cup\cY_2]$ is $3\eta^4 k$-almost-complete (and thus connected);
\labitem{HA4}{HA4} $\cG[\cX _1], \cG[\cY _1$] are each $2\eta^{1/32}$-sparse in blue and contain no green edges; and
\labitem{HA5}{HA5} $\cG[\cX_1,\cX_2], \cG[\cY_1,\cY_2]$ are each $2\eta^{1/32}$-sparse in red and contain no green edges.
\end{itemize}

\begin{figure}[!h]
\centering
\includegraphics[width=64mm, page=1]{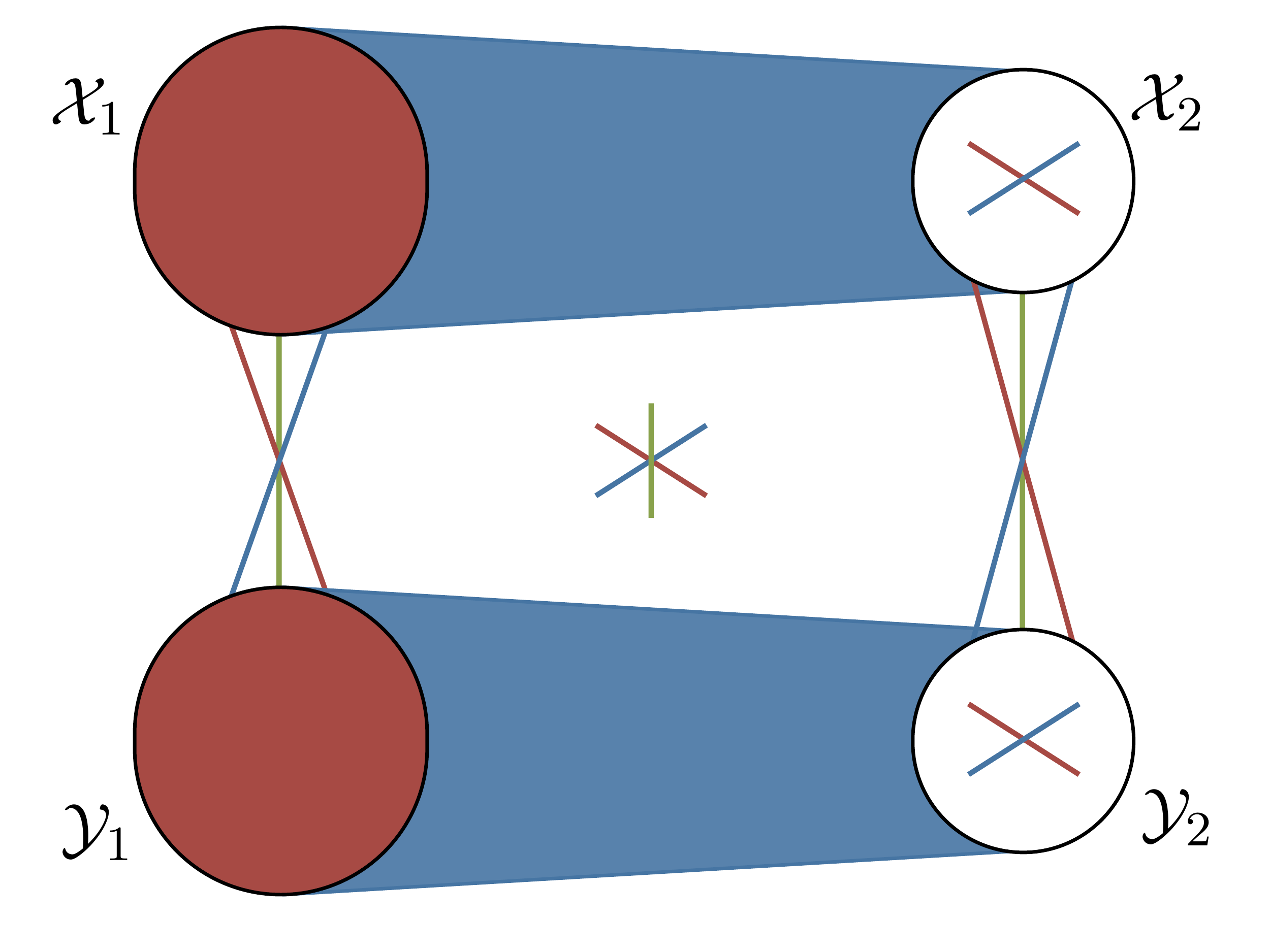}
\vspace{0mm}\caption{Coloured structure of the  reduced-graph in Part I.A.}
  
\end{figure}
The remainder of this section focuses on showing that the original graph must have a similar structure which can then be exploited in order to force a cycle of appropriate length, colour and parity.
By definition, each vertex $V_{i}$ of $\cG=(\cV,\cE)$ represents a class of vertices of $G=(V,E)$. In what follows, we will refer to these classes as \emph{clusters} (of vertices of~$G$). Additionally, recall, from \ref{NK}, that
$$(1-\eta^4)\frac{N}{K}\leq |V_i|\leq \frac{N}{K}.$$ 

Since $n> \max\{n_{\ref{th:blow-up}}(2,1,0,\eta), n_{\ref{th:blow-up}}(\half,\half,1,\eta)\}$, we can (as in the proof of Theorem~\ref{th:blow-up}) prove that $$
|V_i|\geq \left(1+\frac{\eta}{24}\right)\frac{n}{k}> \frac{n}{k}.$$

We partition the vertices of~$G$ into sets $X_{1}, X_{2}, Y_{1}, Y_{2}$ and $Z$ corresponding to the partition of the vertices of~$\cG$ into $\cX_1, \cX_2, \cY_1,\cY_2$ and $\cZ$. Then,~$X_1, Y_1$ each contain~$p$ clusters of vertices and $X_2, Y_2$ each contain~$q$ clusters and, recalling (\ref{IA0a}), we have
\begin{equation}
\left.
\label{IA1} 
\begin{aligned}
\,\,\,\quad\quad\quad\quad\quad\quad\quad\quad\quad\,\,\,\,
|X_1|,|Y_1| & = p|V_1| \geq (\aI-2\eta^{1/32})n,
\quad\quad\quad\quad\quad\quad\quad\quad
\\
|X_2|,|Y_2| & = q|V_1|\geq (\half \aII-2\eta^{1/32})n.
\end{aligned}
\right\}\!
\end{equation}

In what follows, we will \textit{remove} vertices from $X_1\cup X_2\cup Y_1\cup Y_2$ by moving them into~$Z$ in order to show that, in what remains, $G[X_1\cup X_2\cup Y_1 \cup Y_2]$ has a particular coloured structure.  We begin by proving the below claim which essentially tells us that  $G$ has similar coloured structure to $\cG$:

\begin{claim}
\label{G-struct}
We can \textit{remove} at most $9\eta^{1/64}n$ vertices from each of~$X_1$ and~$Y_1$ and at most $4\eta^{1/64}n$ vertices from each of~$X_2$ and~$Y_2$ so that the following pair of conditions~hold.
 \end{claim}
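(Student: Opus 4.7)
The plan is a standard two-stage regularity cleanup. In the first stage I will declare a small set of ``exceptional'' clusters within $\cX_1, \cX_2, \cY_1, \cY_2$ and relegate all of their $G$-vertices to $Z$. In the second stage, within each surviving cluster I will move individual ``atypical'' vertices into $Z$. The upshot is that every remaining vertex of $X_1, X_2, Y_1, Y_2$ belongs to a cluster whose edges into the relevant adjacent cluster-groups carry the colour dictated by \ref{HA1}--\ref{HA5}, and has the regularity-predicted number of coloured neighbours across every such edge.

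For the cluster pass, I will double-count defects using the $2\eta^{1/32}$-sparsity in \ref{HA1} and \ref{HA2} together with the $\eta K$ irregular pairs permitted by Theorem~\ref{l:sze}(iv). For instance, the number of pairs $(V_i, V_j) \in \cX_1\times\cX_1$ with $V_iV_j$ not red in $\cG_1$ is at most $2\eta^{1/32}p^2$, and analogously for the blue cross-edges from $\cX_1$ to $\cX_2$. Declare $V_i\in\cX_1$ \emph{exceptional} if its share of these defects exceeds $\eta^{1/64}p$ (respectively $\eta^{1/64}q$, $\eta^{1/64}K$); by Markov, at most $O(\eta^{1/64})p$ clusters in $\cX_1$ are exceptional, and the same for $\cY_1$. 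The bipartite analogue applied to $\cX_2, \cY_2$ (whose only structural constraint is the blue pairing to $\cX_1, \cY_1$) gives a somewhat tighter count. Since each $|V_i|\leq (1+\eta/24)n/k$, discarding the exceptional clusters costs at most $O(\eta^{1/64})n$ vertices from each of $X_1, Y_1$ and a correspondingly smaller amount from $X_2, Y_2$.

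For the vertex pass, within any surviving $V_i\in\cX_1$ and each remaining regular-and-correctly-coloured pair $(V_i, V_j)$ with $V_j\in\cX_1\cup\cX_2$, $(\eta^4, G_r)$-regularity ensures that all but at most $\eta^4|V_i|$ vertices of $V_i$ have $G_r$-degree to $V_j$ within $\eta^4|V_j|$ of $d_{G_r}(V_i,V_j)|V_j|$, and a parallel bound holds when $\cG$ carries no colour $r$ on $(V_i,V_j)$ (so $d_{G_r}(V_i,V_j)<\eta$). Summing over the at most $p+q\leq 2k$ relevant pairs and the three colours yields at most $6k\eta^4|V_i|$ atypical vertices to discard per cluster, contributing only $O(\eta^4)n$ vertices overall. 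Combined with $|V_0|\leq\eta^4 N$, the cluster pass dominates and delivers the advertised $9\eta^{1/64}n$ bound for $X_1, Y_1$ and $4\eta^{1/64}n$ for $X_2, Y_2$.

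The main obstacle is calibrating the exceptional threshold so that the surviving vertices simultaneously satisfy the degree conditions needed for the cycle-building steps in the subsequent subsections (which will invoke Lemma~\ref{bp-dir}, Corollary~\ref{dirac2} and Lemma~\ref{longpath}) while respecting the removal budget. The exponent $1/64$ is the natural square-root of the $1/32$ sparsity supplied by Theorem~\hyperlink{thB}{B}, and absorbs all lower-order $\eta^4$ errors coming from regularity and from $V_0$.
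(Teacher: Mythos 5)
There is a genuine gap, and it sits in your second (vertex-level) pass. You ask every surviving vertex of a cluster $V_i$ to be \emph{simultaneously typical for every one} of the $p+q\leq 2k$ pairs $(V_i,V_j)$, and you bound the cost by a union bound: at most $\eta^4|V_i|$ atypical vertices per pair per colour, hence ``at most $6k\eta^4|V_i|$ atypical vertices to discard per cluster, contributing only $O(\eta^4)n$ vertices overall.'' That accounting is wrong: $6k\eta^4|V_i|$ already exceeds $|V_i|$ as soon as $k>1/(6\eta^4)$ (and $k$ may be taken arbitrarily large for fixed $\eta$), and summed over the $\sim\aI k$ clusters the bound is $\Theta(\eta^4 k\,n)\gg n$, not $O(\eta^4)n$. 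So the vertex pass, as designed, may discard essentially everything; per-pair typicality with respect to all $\sim k$ pairs simply cannot be bought by removing few vertices via this union bound. Note also that the cluster pass alone does not rescue you: a vertex in a perfectly good cluster can still be the atypical vertex for many low-density regular pairs and thus have large non-red degree, which is exactly why some vertex-level step is needed — but it has to be organised around edges, not pairs.

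The paper's proof of Claim~\ref{G-struct} avoids this by counting edges globally and only then removing high-defect vertices. One bounds the total number of non-red edges inside $G[X_1]$ by four contributions — edges inside clusters ($p\binom{N/K}{2}$), edges at irregular pairs ($3\eta^4pk(N/K)^2$, using (\ref{HA3})), edges at pairs carrying a blue reduced-edge ($2\eta^{1/32}\binom{p}{2}(N/K)^2$, using the sparsity in (\ref{HA4})), and edges at regular pairs with blue and green density below $\eta$ ($2\eta\binom{p}{2}(N/K)^2$) — obtaining at most $24\eta^{1/32}n^2$; then, by double counting, at most $6\eta^{1/64}n$ vertices have red degree below $|X_1|-8\eta^{1/64}n$, and only those are removed, giving (\ref{HA6}); the same edge count with $16\eta^{1/32}n^2$ gives (\ref{HA7}) after removing at most $4\eta^{1/64}n$ vertices from each side. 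A vertex is allowed to be atypical for a few pairs, since each such pair costs it only about $N/K\leq 3n/k\ll\eta^{1/64}n$ edges; what matters is its total non-red (resp.\ non-blue) degree. If you reorganise your second stage along these lines — bound total defective edges, then delete vertices meeting more than $8\eta^{1/64}n$ (resp.\ $4\eta^{1/64}n$) of them — your cluster pass becomes unnecessary (the sparsity in (\ref{HA4})--(\ref{HA5}) and almost-completeness in (\ref{HA3}) are already per-cluster bounds) and the stated removal budgets come out as in the paper.
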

\begin{itemize}
\labitem{HA6}{HA6} $G_1[X_1]$ and $G_1[Y_1]$ are each $8\eta^{1/64}n$-almost-complete; and
\labitem{HA7}{HA7} $G_2[X_1,X_2]$ and $G_2[Y_1,Y_2]$ are each $4\eta^{1/64}n$-almost-complete.
\end{itemize}

\begin{proof} Consider the complete three-coloured graph $G[X_1]$ and recall, from (\ref{HA1}), (\ref{HA3}) and (\ref{HA4}), that~$\cG[\cX_1]$ contains only red and blue edges and is $3\eta^4k$-almost-complete. Given the structure of~$\cG$, we can bound the number of non-red edges in $G[X_1]$ as follows:

Since regularity provides no indication as to the colours of the edges contained within each cluster, these could potentially all be non-red. There are~$p$ clusters in $X_1$, each with at most $N/K$ vertices. Thus, there are at most $$p\binom{N/K}{2}$$ non-red edges in~$X_1$ within the clusters.

Now, consider a pair of clusters $(U_1,U_2)$ in $X_1$. If $(U_1, U_2)$ is not $\eta^4$-regular, then we can only trivially bound the number of non-red edges in $G[U_1,U_2]$ by $|U_1||U_2|\leq (N/K)^2$. However, by~(\ref{HA3}), there are at most $3\eta^4 |\cX_1| k$ such pairs in $\cG$. Thus, we can bound the number of non-red edges coming from non-regular pairs by
$$3\eta^4 pk \left(\frac{N}{K}\right)^2.$$

If the pair is regular and~$U_1$ and~$U_2$ are joined by a blue edge in the  reduced-graph, then, again, we can only trivially bound the number of non-red edges in $G[U_1,U_2]$ by $(N/K)^2$. However, by~(\ref{HA4}),~$\cG_2[X_1]$ is $2\eta^{1/32}$-sparse, so there are at most $2\eta^{1/32}\binom{p}{2}$ blue edges in~$\cG[X_1]$ and, thus, there are at most $$2\eta^{1/32}\binom{p}{2}\left(\frac{N}{K}\right)^2$$ non-red edges in $G[X_1]$ corresponding to such pairs of clusters.

Finally, if the pair is regular and~$U_1$ and~$U_2$ are not joined by a blue edge in the  reduced-graph, then the blue density of the pair is at most~$\eta$ (since, if the density were higher, they would be joined by a blue edge). Likewise, the green density of the pair is at most~$\eta$ (since there are no green edges in $\cG[X_1]$). Thus, there are at most $$2\eta\binom{p}{2}\left(\frac{N}{K}\right)^2$$ non-red edges in $G[X_1]$ corresponding to  such pairs of clusters.

Summing the four possibilities above gives an upper bound of 
$$p \binom{N/K}{2} + 3\eta^4 pk \left(\frac{N}{K}\right)^2+2\eta^{1/32}\binom{p}{2}\left(\frac{N}{K}\right)^{2}+2\eta\binom{p}{2}\left(\frac{N}{K}\right)^{2} $$ non-red edges in $G[X_1]$.

Since $K\geq k, \eta^{-1}$, $N\leq 3n$ and $p\leq \aI k \leq k$, we obtain
$$e(G_2[X_1])+e(G_3[X_1])\leq [4.5\eta + 27\eta^{4} +18\eta^{1/32}+9 \eta]n^2\leq 24\eta^{1/32}n^2.$$
Since $G[X_1]$ is complete and contains at most $24\eta^{1/32}n^2$ non-red edges, there are at most~$6\eta^{1/64}n$ vertices with red degree at most $|X_1|-8\eta^{1/64}n$. Removing these vertices from $X_1$, that is, re-assigning these vertices to~$Z$ gives a new~$X_1$  
such that every vertex in $G[X_1]$ has red degree at least $|X_1|-8\eta^{1/64}n$. The same argument works for $G[Y_1]$, thus completing the proof of (\ref{HA6}).

Now, consider $G[X_1,X_2]$. In a similar way to the above, we can bound the number of non-blue edges in $G[X_1,X_2]$ by 
$$3\eta^4 pk \left(\frac{N}{K}\right)^2+2\eta^{1/32}pq\left(\frac{N}{K}\right)^{2}+2\eta pq\left(\frac{N}{K}\right)^{2}. $$
Where the first term bounds the number of non-blue edges between non-regular pairs, the second bounds the number of non-blue edges between pairs of clusters that are joined by a red edge in the  reduced-graph and the second bounds the number of non-blue edges between pairs of clusters that are not joined by a red edge in the  reduced-graph.

Since $K\geq k$, $N\leq 3n$, $p \leq \aI k \leq k$ and $q\leq \half\aII k\leq\tfrac{1}{2}k$, we obtain
$$e(G_1[X_1,X_2])+e(G_3[X_1,X_2])\leq 16 \eta^{1/32}n^2.$$
Since $G[X_1,X_2]$ is complete and contains at most $16\eta^{1/32}n^2$ non-blue edges, there are at most $4\eta^{1/64}n$ vertices in~$X_1$ with blue degree to~$X_2$ at most $|X_2|-4\eta^{1/64}n$ and at most~$4\eta^{1/64}n$ vertices in~$X_2$ with blue degree to~$X_1$ at most $|X_1|-4\eta^{1/64}n$. Removing these vertices results in every vertex in~$X_1$ having degree in $G_2[X_1,X_2]$ at least $|X_2|-4\eta^{1/64}n$ and every vertex in~$X_2$ having degree in $G_2[X_1,X_2]$ at least $|X_1|-4\eta^{1/64}n$.

We repeat the above for~$G[Y_1,Y_2]$, removing vertices such that every (remaining) vertex in~$Y_1$ has degree in $G_2[Y_1,Y_2]$ at least $|Y_2|-4\eta^{1/64}n$ and every (remaining) vertex in~$Y_2$ has degree in $G_2[Y_1,Y_2]$ at least $|Y_1|-4\eta^{1/64}n$, thus completing the proof of (\ref{HA7}).
\end{proof}

Having discarded some vertices, recalling~(\ref{IA1}), we  have
\begin{align}
\label{IA2}
|X_1|,|Y_1| & \geq (\aI-10\eta^{1/64})n,
& |X_2|,|Y_2| & \geq (\half \aII-5\eta^{1/64})n.
\end{align}
We now proceed to the {end-game}: Notice that, given the colouring found thus far, $G[X_1]$ and $G[Y_1]$ each contain a red Hamiltonian cycle. Similarly, $G[X_1,X_2]$ and $G[Y_1,Y_2]$ each contain a blue cycle of length twice the size of the smaller part. Essentially, we will show that it is possible to augment each of $X_1, X_2, Y_1, Y_2$ with vertices from~$Z$ while maintaining these properties and, then, considering the sizes of each part, show that there must, in fact, be a cycle of appropriate length, colour and parity to complete the proof. 

\begin{figure}[!h]
\centering
\includegraphics[width=64mm, page=5]{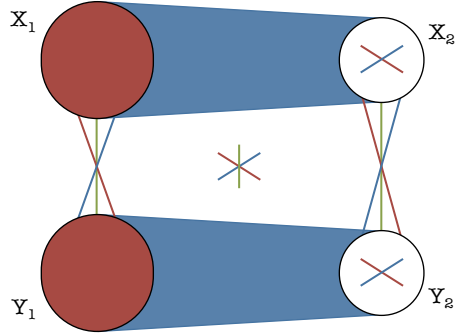}
\vspace{0mm}\caption{Colouring of $G$ after Claim~\ref{G-struct}.}
\end{figure}
The following pair of claims tell us that after removing a small number of vertices from~$X_1$ and $Y_1$, we may assume that  all edges in $G[X_1,Y_1]$ are coloured green:

\begin{claim}
\label{nogreen}
\hspace{-2.8mm} {\rm \bf a.} If there exist distinct vertices $x_1,x_2 \in X_1$ and $y_1, y_2\in Y_1$ such~that~$x_1y_1$ and~$x_2y_2$ are coloured red, then~$G$ contains a red cycle of length exactly~$\llangle \aI n \rrangle$.

{\rm \bf Claim~\ref{nogreen}.b.} If there exist distinct vertices $x_1,x_2 \in X_1$ and $y_1, y_2\in Y_1$ such~that~$x_1y_1$ and~$x_2y_2$ are coloured blue, then~$G$ contains a blue cycle of length exactly~$\llangle \aII n \rrangle$.
\end{claim}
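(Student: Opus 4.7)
Both parts follow the same template: use the almost-complete structure provided by Claim~\ref{G-struct} to build two monochromatic paths between prescribed endpoints inside $G[X_1\cup X_2]$ and $G[Y_1\cup Y_2]$ respectively, and then close them into a single cycle of the required length by means of the two given cross-edges between $X_1$ and $Y_1$.

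For part~(a), recall from~(\ref{HA6}) that $G_1[X_1]$ and $G_1[Y_1]$ are $8\eta^{1/64}n$-almost-complete, a property preserved under taking induced subgraphs. My plan is to pick $X_1'\subseteq X_1$ of size $a$ containing $x_1,x_2$ and $Y_1'\subseteq Y_1$ of size $b$ containing $y_1,y_2$, with $a+b=\llangle\aI n\rrangle$ and $a,b\geq 20\eta^{1/64}n$; this is possible by~(\ref{IA2}) since $|X_1|,|Y_1|\geq(\aI-10\eta^{1/64})n$ and $\llangle\aI n\rrangle\leq\aI n$. Every vertex of $G_1[X_1']$ then has red degree at least $|X_1'|-8\eta^{1/64}n\geq\half|X_1'|+1$, and the same holds in $G_1[Y_1']$. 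Thus Corollary~\ref{dirac2} supplies a red Hamiltonian path in $G_1[X_1']$ from $x_1$ to $x_2$ and one in $G_1[Y_1']$ from $y_1$ to $y_2$; concatenating these with the red edges $x_2y_2$ and $y_1x_1$ yields a red cycle on exactly $\llangle\aI n\rrangle$ vertices.

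For part~(b), the blue structure is bipartite: by~(\ref{HA7}), $G_2[X_1,X_2]$ and $G_2[Y_1,Y_2]$ are $4\eta^{1/64}n$-almost-complete bipartite. Any blue path in $G_2[X_1,X_2]$ between two vertices of $X_1$ alternates sides, hence uses $a_2+1$ vertices of $X_1$ and $a_2$ vertices of $X_2$ for some integer $a_2$, and similarly on the $Y$-side with parameter $b_2$. Gluing two such paths with the blue edges $x_1y_1$ and $x_2y_2$ produces a cycle on $2(a_2+b_2)+2$ vertices, so to hit $\llangle\aII n\rrangle$ I will set $b_2=20\eta^{1/64}n$ and $a_2=\half\llangle\aII n\rrangle-1-b_2$; the bounds from~(\ref{IA2}), together with $\aII\leq\aI$, guarantee the existence of subsets $X_2'\subseteq X_2$ of size $a_2$ and $Y_2'\subseteq Y_2$ of size $b_2$.

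The core step is then to apply Lemma~\ref{bp-dir} to $G_2[X_1,X_2']$ to produce a blue path from $x_1$ to $x_2$ visiting every vertex of $X_2'$, and likewise to $G_2[Y_1,Y_2']$. The hypotheses of that lemma hold because the imbalance $|X_1|-|X_2'|\geq(\half\aI-10\eta^{1/64})n$ comfortably exceeds $8\eta^{1/64}n+2$, which yields both $|X_1|>|X_2'|+1$ and the minimum-degree bound $\half(|X_1|+|X_2'|)+1$ at the $X_2'$-side; the condition $d(x_2)\geq 2$ is immediate from almost-completeness, and the $(Y_1,Y_2')$-side is verified identically using $\aI\geq\aII$. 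Concatenating the two resulting blue paths with the blue edges $x_1y_1$ and $x_2y_2$ gives a blue cycle of length $(2a_2+1)+(2b_2+1)=\llangle\aII n\rrangle$. The only real obstacle is the arithmetic bookkeeping required to fit $a_2$ and $b_2$ simultaneously inside the available bounds of~(\ref{IA2}) while retaining enough imbalance for Lemma~\ref{bp-dir}; all such inequalities reduce to $\eta$ being chosen small relative to $\aI$ and $\aII$, which is already arranged in Section~\ref{s:p10}.
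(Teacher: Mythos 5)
Your proposal is correct and follows essentially the same route as the paper: part (a) uses Corollary~\ref{dirac2} to find red Hamiltonian paths between the prescribed endpoints inside subsets of $X_1$ and $Y_1$ whose sizes sum to $\llangle\aI n\rrangle$, and part (b) uses Lemma~\ref{bp-dir} in $G_2[X_1,X_2']$ and $G_2[Y_1,Y_2']$ before closing up with the two cross-edges. The only difference is how you apportion the cycle length between the two sides (the paper splits it essentially evenly, you split it unevenly in (b)), which is immaterial.
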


\begin{proof}
(a) Suppose there exist distinct vertices $x_1,x_2\in X_1$ and  $y_1,y_2\in Y_1$ such that the edges $x_1y_1$ and $x_2y_2$ are coloured red. Then, let $\widetilde{X}_1$ be any set of $\half \llangle \aI n \rrangle $ vertices in~$X_1$ such that $x_1,x_2 \in \widetilde{X}_1$. 

By (\ref{HA6}), every vertex in $\widetilde{X}_1$ has degree at least $|\widetilde{X}_1|-8\eta^{1/64}n$ in $G_1[\widetilde{X}_1]$. Since $\eta\leq(\aI/100)^{64}$, we have $|\widetilde{X}_1|-8\eta^{1/64}n \geq \half |\widetilde{X}_1| +2$. So, by Corollary~\ref{dirac2}, there exists a Hamiltonian path in $G_1[\widetilde{X}_1]$ between $x_1, x_2$, that is, there exists a red path between~$x_1$ and $x_2$ in $G[X_1]$ on exactly $\half \llangle \aI n \rrangle$ vertices. 

Likewise, given any two vertices $y_1,y_2$ in~$Y_1$, there exists a red path between~$y_1$ and~$y_2$ in~$G[Y_1]$ on exactly $\half \llangle \aI n \rrangle$ vertices.  
Combining the edges $x_1y_1$ and $x_2y_2$ with the red paths gives a red cycle on exactly~$\llangle \aI n \rrangle$ vertices.

(b) Suppose there exist distinct vertices $x_1,x_2\in X_1$ and  $y_1,y_2\in Y_1$ such that $x_1y_1$ and $x_2y_2$ are coloured blue. Then, let $\widetilde{X}_2$ be any set of 
$$\ell_1=\left\lfloor \frac{\llangle \aII n \rrangle-2}{4} \right\rfloor \geq 4\eta^{1/64} n +2 $$ vertices from $X_2$. By~(\ref{HA7}), $x_1$ and $x_2$ each have at least two neighbours in $\widetilde{X}_2$ and, since $\eta\leq(\aI/100)^{64}$, every vertex in $\widetilde{X}_2$ has degree at least 
$|X_1|-4\eta^{1/64}n\geq \half|X_1| +\half|\widetilde{X}_2| +1$ in $G[X_1,\widetilde{X}_2]$. Since $|X_1|>\ell_1+1$, by Lemma~\ref{bp-dir},  $G_2[X_1,\widetilde{X}_2]$ contains a path on exactly $2\ell_1+1$ vertices from~$x_1$ to~$x_2$.

Likewise, given $y_1, y_2 \in Y_1$, for  any set $\widetilde{Y}_2$ of 
$$\ell_2=\left\lceil \frac{\llangle \aII n \rrangle-2}{4} \right\rceil \geq 4\eta^{1/64} n +2 $$ vertices from $Y_2$, $G_2[Y_1,\widetilde{Y}_2]$ contains a a path on exactly $2\ell_2+1$ vertices from~$y_1$ to~$y_2$.

Combining the edges~$x_1y_1$,~$x_2y_2$ with the blue paths found gives a blue cycle on exactly $2\ell_1+2\ell_2+2=\llangle \aII n \rrangle$ vertices, completing the proof of the claim. \end{proof}

The existence of red cycle on~$\aIna$ vertices or a blue cycle on~$\aIIna$ vertices would be sufficient to complete the proof of Theorem~\hyperlink{thA}{A}. Thus, there cannot exist such a pair of vertex-disjoint red edges or such a pair of vertex-disjoint blue edges in $G[X_1,Y_1]$. 
Similarly, there cannot be a pair of vertex-disjoint blue edges in $G[X_1,Y_2]$ or in $G[X_2,Y_1]$. Thus, after removing at most three vertices from each of~$X_1$ and~$Y_1$ and one vertex from from each of~$X_2$ and~$Y_2$, we may assume that 
\begin{itemize}
\labitem{HA8a}{HA8a} the green graph $G_3[X_1,Y_1]$ is complete; and
\labitem{HA8b}{HA8b} the are no blue edges in $G[X_1,Y_2]\cup G[X_2,Y_1]$.
\end{itemize}

Then, recalling~(\ref{IA2}), we have
\begin{align}
\label{IA3}
|X_1|, |Y_1|&\geq (\aI-11\eta^{1/64})n,
& |X_2|, |Y_2|&\geq (\half \aII - 6\eta^{1/64})n.
\end{align}
We now consider $Z$. Defining $Z_G$ to be the set of vertices in $Z$ having a green edge to both $X_1$ and $Y_1$, we prove the following claim which allows us to assume that $Z_G$ is empty:

\begin{figure}[!h]
\centering
\vspace{-1mm}\hspace{2mm}
{\includegraphics[height=50mm, page=10]{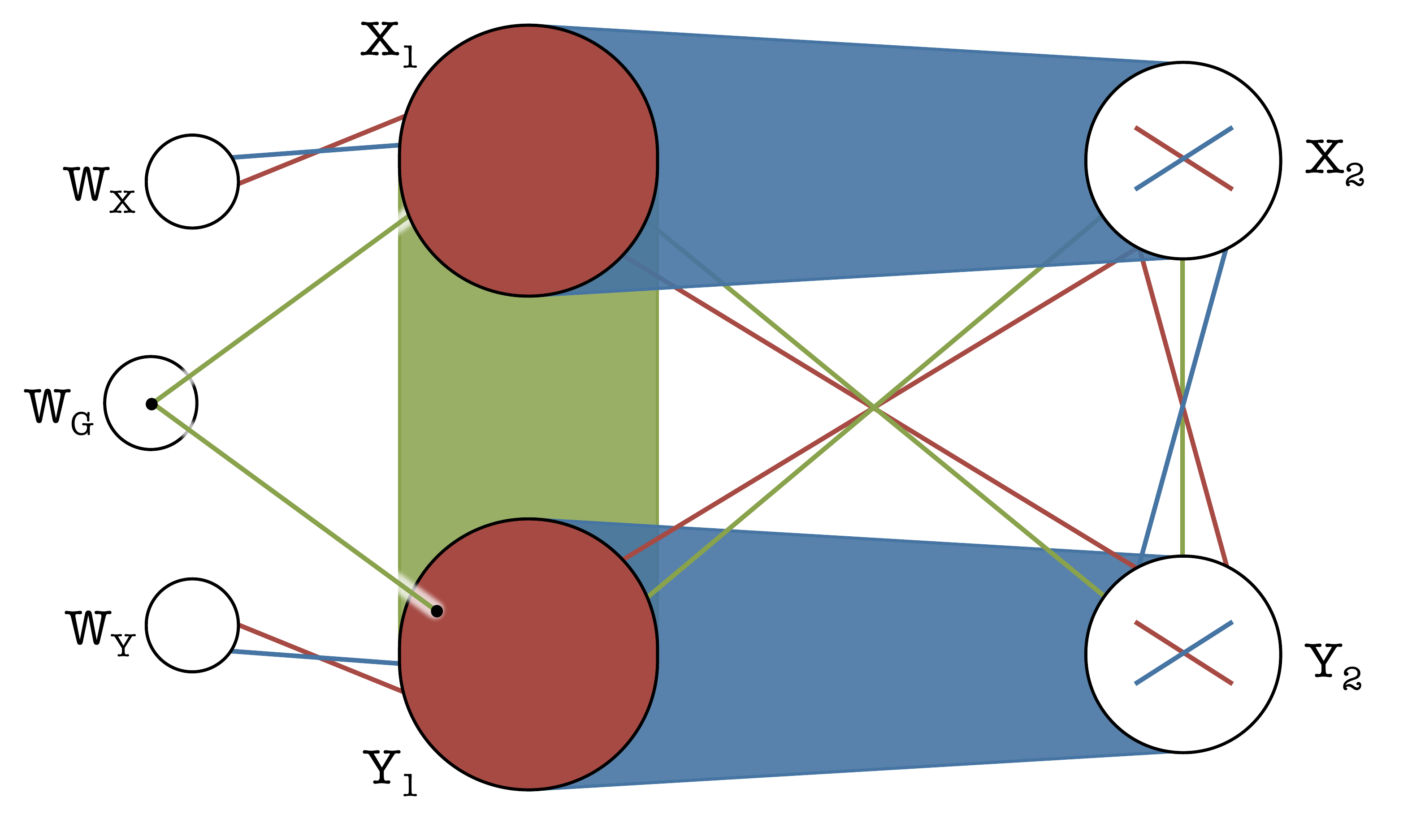}\hspace{18mm}}
\vspace{-1mm}\caption{Using $z\in Z_G$ to construct an odd green cycle. }
  
\end{figure}

\begin{claim}
\label{greenorred}
If $Z_G$ is non empty, then $G$ contains either a red cycle on exactly~$\llangle \aI n \rrangle$ vertices or a green cycle on exactly $\langle \aIII n \rangle$ vertices. 
\end{claim}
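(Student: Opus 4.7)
\vspace{2mm}\noindent\textit{Proof proposal.} The plan is to exploit the complete bipartite green structure $G_3[X_1, Y_1]$ given by~(\ref{HA8a}) and use the vertex $z \in Z_G$ as the odd ``apex'' of a green cycle. First, fix green neighbours $x^* \in X_1$ and $y^* \in Y_1$ of $z$, which exist by definition of $Z_G$. For any integer $k$ with $1 \leq k \leq \min(|X_1|, |Y_1|)$, the cycle $z, x^*, y_1, x_1, \ldots, y_{k-1}, x_{k-1}, y^*, z$, where the $x_i \in X_1 \setminus \{x^*\}$ and $y_i \in Y_1 \setminus \{y^*\}$ are chosen freely to be distinct, is entirely green and has length $2k+1$. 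Setting $k = (\langle \aIII n \rangle - 1)/2$ produces a green cycle of length exactly $\langle \aIII n \rangle$, provided $k \leq \min(|X_1|, |Y_1|)$.

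By~(\ref{IA3}) this works directly whenever $\aIII \leq 2\aI - 22\eta^{1/64}$. In the opposite regime, combining $\aIII > 2\aI - 22\eta^{1/64}$ with the upper bound $\aIII \leq \tfrac{3}{2}\aI + \half\aII + 14\eta^{1/2}$ from~(\ref{a3notbig}) and $\aII \leq \aI$ forces $\aII \geq \aI - O(\eta^{1/64})$ and leaves a shortfall of only $O(\eta^{1/64})n$ vertices on the smaller side. To close this gap I would augment the alternating construction using ``$X_1$-like'' vertices from $X_2$, ``$Y_1$-like'' vertices from $Y_2$, or further vertices of $Z$. The key structural input is~(\ref{HA8b}): no edge of $G[X_2, Y_1]$ or $G[X_1, Y_2]$ is blue, so such edges are red or green. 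A density argument on the number of green edges between $X_2$ and $Y_1$ (and between $Y_2$ and $X_1$) then splits into two cases. Either enough augmenting vertices have many green neighbours in the right side, in which case they can be folded into the alternating path to yield a green cycle of length exactly $\langle \aIII n \rangle$; or else most such cross-edges are red, and combining~(\ref{HA6}) with these extra red edges gives a red-almost-complete graph on at least $\llangle \aI n \rrangle$ vertices containing $X_1$ (or $Y_1$), inside which Corollary~\ref{dirac1a} produces a red cycle of length exactly $\llangle \aI n \rrangle$.

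The main obstacle will be the boundary case. In the hard regime the shortfall $(\langle \aIII n \rangle - 1)/2 - \min(|X_1|, |Y_1|)$ is only $O(\eta^{1/64})n$, so all vertex counts must be tracked at that precision; moreover the parity and exact-length requirements force us to produce either an odd green cycle of length precisely $\langle \aIII n \rangle$ (with $z$ fixed as apex) or an even red cycle of length precisely $\llangle \aI n \rrangle$. Matching these exact targets, rather than just asymptotic lower bounds, is where the density dichotomy between green-augmentation and red-augmentation must be quantified sharply, likely via a threshold tuned to the shortfall, and where the Hamilton-type Corollary~\ref{dirac1a} will have to be applied to a carefully constructed enlargement of $X_1$ or $Y_1$ rather than to $X_1$ or $Y_1$ itself.
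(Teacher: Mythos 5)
Your proposal follows essentially the same route as the paper: the apex $z$ together with the complete green bipartite graph $G_3[X_1,Y_1]$ yields odd green cycles up to length roughly $2\aI n$, and in the remaining regime (shortfall $O(\eta^{1/64})n$) the same dichotomy is invoked — either the blue-free cross pairs supply enough green helper vertices to hit $\langle \aIII n \rangle$ exactly, or one such pair is almost entirely red and a red cycle on exactly $\llangle \aI n \rrangle$ vertices is obtained by absorbing a few of its vertices into the red-almost-complete $X_1$ (or $Y_1$). The paper detects the green helpers via Erd\H{o}s--Gallai green paths in $G[X_1,Y_2]$ and builds the red cycle by gluing a Hamiltonian path in $X_1$ (Corollary~\ref{dirac2}) to a path through $Y_2$ (Lemma~\ref{bp-dir}), whereas you use green-degree counting and Corollary~\ref{dirac1a} on the enlarged set, but these are implementation-level differences rather than a different argument.
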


\begin{proof}
Suppose that $Z_G$ is non-empty. Then, there exists $x\in X_1$, $y\in Y_1$, $z\in Z$ such that $xz$ and $yz$ are both coloured green. Recalling (\ref{HA8a}), $G_3[X_1,Y_1]$ is complete, thus, we may obtain a cycle of any odd length up to $|X_1|+|Y_1|+1$ in $G[X\cup Y\cup Z_G]$. Therefore, to avoid having a a green cycle on exactly $\langle \aIII n \rangle$ vertices, we may assume that $|X_1|+|Y_1|+1<\langle \aIII n \rangle$. 
Then, considering~(\ref{a3notbig}) and~(\ref{IA3}), we have
$$2\aI-22\eta^{1/64}\leq \aIII\leq 2\aI+2\eta^{1/64},$$
and 
$|X_1|+|Y_1|\geq(2\aI-22\eta^{1/64})n\geq(\aIII-24\eta^{1/64})n.$

In that case, suppose that, for some $x_a, x_b\in X_1$ and $y_a, y_b\in Y_1$, there exist green paths 
\begin{align*}
\quad\quad\quad\quad&P_1\text{ from }x_a\in X_1\text{ to }x_b\in X_1&&\text{ on }2\lceil12\eta^{1/64}n\rceil+1\text{ vertices }\text{in } G[X_1,Y_2],
\quad\quad\quad\quad\\
\quad\quad\quad\quad&P_2\text{ from }y_a\in Y_1\,\text{ to }y_b\in Y_1&&\text{ on }2\lceil12\eta^{1/64}n\rceil+1\text{ vertices }\text{in } G[Y_1,Y_2].
\quad\quad\quad\quad
\end{align*}
Then, since $G_3[X_1,Y_1]$ is complete and $Z_G$ is non-empty, $P_1$ and $P_2$ could be used along with edges from $G[X_1,Y_1]$ and $G[z,X_1\cup Y_1]$ to give an odd green cycle on exactly $\langle \aIII n\rangle$ vertices. Therefore, without loss of generality, we may assume that $G[X_1,Y_2]$ does not contain a green path on $2\lceil12\eta^{1/64}n\rceil+1$ vertices. Thus, by Theorem~\ref{th:eg}, $G[X_1,Y_2]$ contains at most $16\eta^{1/64}n^2$ green edges.

\begin{figure}[!h]
\centering
\includegraphics[height=50mm, page=11]{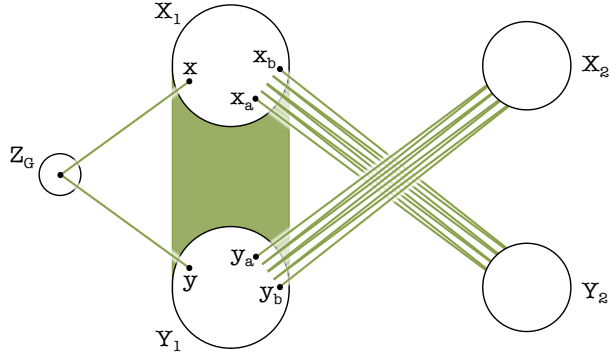}
\vspace{-1mm}\caption{Construction of a long odd green cycle.}
\end{figure}

Recalling (\ref{HA8b}), we know that $G[X_1,Y_2]$ is complete, contains no blue edges and contains at most~$16\eta^{1/64}n^2$ green edges. Thus, there are at most $4\eta^{1/128}n$ vertices in~$X_1$ with red degree to~$Y_2$ at most $|Y_2|-4\eta^{1/128}n$ and at most $4\eta^{1/128}n$ vertices in~$Y_2$ with red degree to~$X_1$ at most $|X_1|-4\eta^{1/128}n$. Removing these vertices from~$X_1\cup Y_2$ results in every vertex in~$X_1$ having red degree  at least $|Y_2|-4\eta^{1/128}n$ in $G[X_1,Y_2]$ and every vertex in~$Y_2$ having red degree  at least $|X_1|-4\eta^{1/128}n$ in $G[X_1,Y_2]$. Thus, $G_1[X_1,Y_2]$ is $4\eta^{1/128}n$-almost-complete.

Recalling~(\ref{IA3}), since $\eta<10^{-50}$, having discarded these vertices, we have 
\begin{align}
\label{IA5}
|X_1|&\geq (\aI-7\eta^{1/128})n,
& |Y_2|&\geq (\half \aII - 7\eta^{1/128})n.
\end{align}
Then, given this bound for $X_1$, there exist disjoint subsets $X_{L},X_{S}\subseteq X_1$ such that 
\begin{align}
\label{IA6}
|X_L|&=\llangle \aI n \rrangle -2\lfloor 8\eta^{1/128} n \rfloor -1, & |X_S|&= \lfloor 8\eta^{1/128} n \rfloor.
\end{align} 

\begin{figure}[!h]
\centering
\includegraphics[height=50mm, page=7]{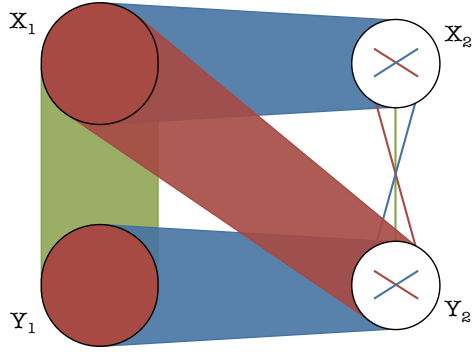}
\vspace{0mm}\caption{Colouring of $G[X_1,Y_2]$ in Claim~\ref{greenorred}. }
\end{figure}

Let $x_1, x_2$ be distinct vertices in $X_L$. By (\ref{HA6}), every vertex in $X_1$ has red degree  at least $(|X_1|-1)-8\eta^{1/64}n$ in $G[X_1]$. Thus, since $\eta\leq(\aI/100)^{64}$, every vertex in $X_L$ has red degree in $G[X_L]$ at least~$\half|X_L|+1$ and so, by Corollary~\ref{dirac2}, there exists a red Hamiltonian  path $R_1$ in $G[X_L]$ from $x_1$ to~$x_2$. 

We now consider $G[Y_2,X_S]$. Since $G_1[X_1,Y_2]$ is $4\eta^{1/128}n$-almost-complete, there exist disjoint vertices $y_1,y_2\in Y_2$, such that $x_1y_1$ and $x_2y_2$ are red and $d(y_1),d(y_2)\geq 2$.
Since $\eta\leq(\aII/100)^{128}$, considering~(\ref{IA5}) and~(\ref{IA6}), we have $|Y_2|>|X_S|+1$ and know that every vertex in $X_S$ has red degree at least $|Y_2|-4\eta^{1/128}n\geq \half(|Y_2|+\tfrac{1}{2}\aII n -13\eta^{1/128}n)\geq \half(|Y_2|+|X_S|+1)$ in $G[Y_2,X_S]$. Thus, by Lemma~\ref{bp-dir}, there exists a red path $R_2$ in~$G[Y_2,X_S]$ from $y_1$ to $y_2$ which visits every vertex of $X_S$.

Together, the paths $R_1, R_2$ and the edges $x_1y_1$ and $x_2y_2$ form a red cycle on exactly~$\llangle \aI n\rrangle$ vertices in $G[X_1]\cup G[X_1,Y_2]$, thus completing the proof of the claim.
\end{proof}

The existence of a red cycle on exactly~$\llangle \aI n \rrangle$ vertices or a green cycle on exactly $\langle \aIII n \rangle$ vertices as offered by Claim~\ref{greenorred} would be sufficient to complete the proof of Theorem~\hyperlink{thA}{A}. We may, therefore, assume that $Z_G$ is empty.
Thus, defining $Z_X$ to be the set of vertices in $Z$ having no green edges to $X_1$ and $Z_Y$ to be the set of vertices in $Z$ having no green edges to $Y_1$, we see that  $Z_X\cup Z_Y$ is a partition of $Z$.
We thus have a partition of $V(G)$ into $X_1\cup X_2\cup Y_1\cup Y_2\cup Z_X\cup Z_Y$. Then, since $$|V(G)|\geq2\llangle\aI n \rrangle +\llangle\aII n \rrangle-3,$$ without loss of generality, we may assume that 
$$|X_1 \cup X_2\cup Z_X|\geq \llangle \aI n \rrangle + \half \llangle \aII n \rrangle -1$$ since, if not, then $Y_1 \cup Y_2\cup Z_Y$ is that large instead. 
%
Given (\ref{HA6}) and (\ref{HA7}), we can obtain upper bounds on $|X_1|$, $|X_2|$, $|Y_1|$ and $|Y_2|$ as follows: By Corollary~\ref{dirac1a}, for every integer $m$ such that $16\eta^{1/64}n+2\leq m \leq |X_1|$, we know that $G[X_1]$ contains a red cycle of length $m$. Thus, in order to avoid having a red cycle on exactly~$\aIna$ vertices, we may assume that $|X_1|<\aIna$.
By Corollary~\ref{moonmoser2}, for every even integer $m$ such that $16\eta^{1/64}n+2\leq m\leq 2\min\{|X_1|, |X_2|\}$, we know that $G[X_1,X_2]$ contains a blue cycle on $m$ vertices. Recalling~(\ref{IA3}), we have $|X_1|\geq(\aI-11\eta^{1/64})n\geq\half\aII n$, thus, in order to avoid having a blue cycle on exactly~$\aIIna$ vertices, we may assume that $|X_2|<\half\aIIna$.
In summary, we then have
\begin{equation}
\label{IA7}
\left.
\begin{aligned}
\,\,\,\,\quad\quad\quad\quad\quad\quad\quad\quad\quad\,\,\,\,
(\aI-9\eta^{1/64})n&\leq |X_1|< \aIna, 
\quad\quad\quad\quad\quad\quad\quad\quad\quad\\
(\half \aII - 6\eta^{1/64})n&\leq |X_2|< \half\aIna.
\end{aligned}
\right\}\!
\end{equation}

\begin{figure}[!h]
\centering
{\includegraphics[height=50mm, page=12]{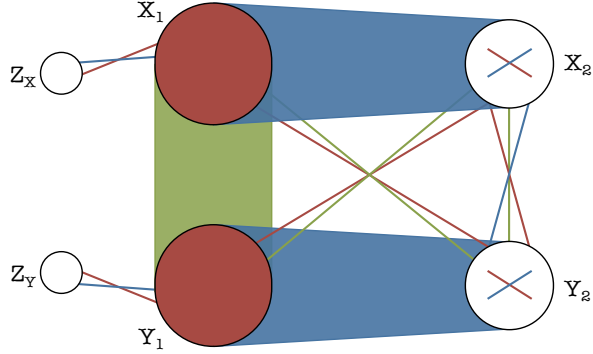}\hspace{18mm}}
\vspace{0mm}\caption{Partition of $Z$ into $Z_X\cup Z_Y$.}
\end{figure}

Letting
\begin{align*}
Z_B&=\{ z\in Z_X \text{ such that } z \text{ has at least } |X_1|-64\eta^{1/64} \text{ blue neighbours in } X_1\};\text{ and}
\\ Z_R&=Z\backslash Z_B=\{ z\in Z_X \text{ such that } z \text{ has at least } 64\eta^{1/64} \text{ red neighbours in } X_1\},
\end{align*}
we have $Z=Z_R\cup Z_B$. Thus, either $|X_1\cup Z_R|\geq\llangle \aI n \rrangle$ or $|X_2\cup Z_B|\geq\half\llangle \aII n \rrangle$.

\begin{figure}[!h]
\centering
\includegraphics[height=22mm, page=9]{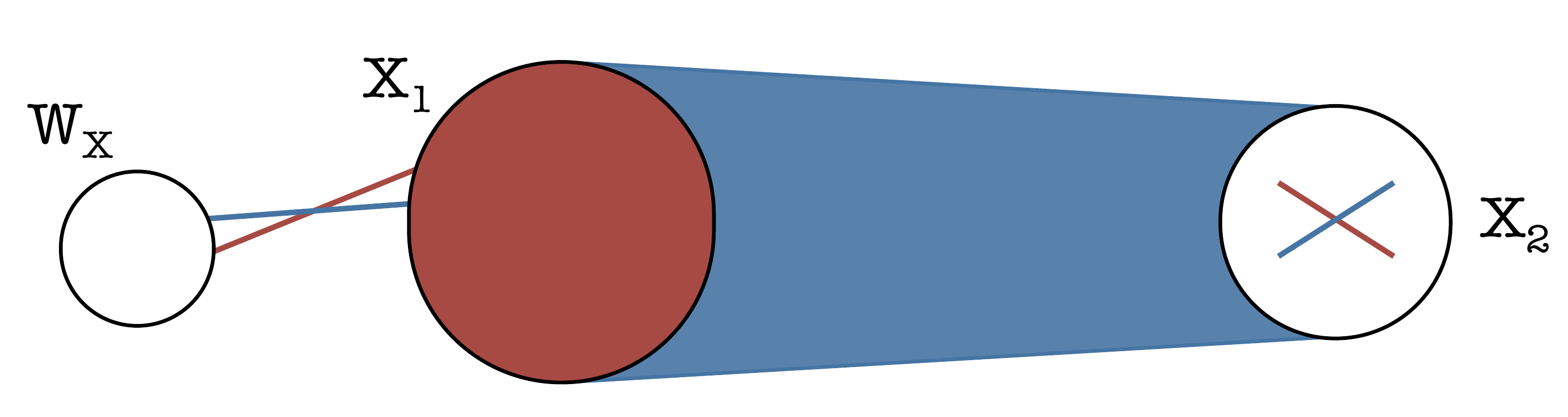}
\vspace{0mm}\caption{Partition of $Z_X$ into $Z_B\cup Z_R$.}
\end{figure}

If $|X_1\cup Z_R|\geq\llangle \aI n \rrangle$, then we show that $G_1[X_1\cup Z_R]$ contains a long red cycle as follows: Let~$X$ be any set of~$\llangle \aI n \rrangle$ vertices from $X_1\cup Z_R$ consisting of every vertex from~$X_1$ and~$\llangle \aI n \rrangle-|X_1|$ vertices from~$Z_R$. By (\ref{HA6}) and~(\ref{IA7}), the red graph~$G_1[X]$ has at least $\llangle \aI n \rrangle-11\eta^{1/64}n$ vertices of degree at least $|X|-20\eta^{1/64}n$ and at most $11\eta^{1/64}$ vertices of degree at least $64\eta^{1/64}n$. Thus, by Theorem~\ref{chv},~$G[X]$ contains a red cycle on exactly~$\llangle \aI n \rrangle$ vertices.

Thus, we may, instead, assume that $|X_2\cup Z_B|\geq\half\llangle \aII n \rrangle$, in which case, we consider the blue graph $G_2[X_1,X_2\cup Z_B]$. Given the relative sizes of~$X_1$ and~$X_2\cup Z_B$ and the large minimum-degree of the graph, we can use Theorem~\ref{moonmoser} to give a blue cycle on exactly~$\llangle \aII n \rrangle$ vertices. Indeed, by~(\ref{IA7}), we have $|X_1|\geq\half\llangle\aII n \rrangle$ and may choose subsets $\widetilde{X}_1\subseteq X_1$, $\widetilde{X}_2\subseteq X_2\cup Z_B$ such that $\wX_2$ includes every vertex of $X_2$ and
\begin{align*}
|\widetilde{X}_1|&=|\widetilde{X}_2|=\half\llangle\aII n \rrangle, & |\wX_2\cup Z_B|\leq6\eta^{1/64}n.
\end{align*}
Recall that $G_2[X_1,X_2]$ is $4\eta^{1/64}k$-almost-complete and that, all vertices in $Z_B$ have blue degree at least $|\widetilde{X}_1|-64\eta^{1/64}n$ in $G[\widetilde{X}_1,\widetilde{X}_2]$. Thus, since $|\wX_2\cup Z_B|\leq 6\eta^{1/64}n$ and $\eta\leq(\aII/200)^{64}$, for any pair of vertices $x_1\in\widetilde{X}_1$ and $x_2\in\widetilde{X}_2$, we have $d(x_1)+d(x_2)\geq |\widetilde{X}_1|+|\widetilde{X}_2|-74\eta^{1/64}n\geq\half\llangle\aII n \rrangle+1.$ 

Therefore, by Theorem~\ref{moonmoser}, $G_2[\wX_1,\wX_2]$ contains a blue cycle on exactly~$\llangle \aII n \rrangle$ vertices, thus completing this part of the proof.

\subsection*{Part I.B: $H, K \in \cH_2$.}
By Theorem~\hyperlink{reB}{B}, this case only occurs when \begin{equation}
\tag{HB0}
\label{HB0}
\aII \leq \aI \leq \aII + \eta^{1/16}.
\end{equation}
Recalling that $\cG_1, \cG_2$ and $\cG_3$ are the monochromatic spanning subgraphs of the  reduced-graph $\cG$, the vertex set~$\cV$ of~$\cG$ has a natural partition into $\cX _1 \cup \cX _2 \cup \cY _1 \cup \cY _2\cup \cZ$ where $\cX_1\cup \cX_2$ is the partition of the vertices of $H$ given by Definition~\ref{d:H} and $\cY_1\cup\cY_2$ is the corresponding partition of the vertices of~$K$. Thus we have
\begin{align}
\left.
\label{IB0a}
\begin{aligned}
\quad\quad\quad\quad\quad\quad\quad\quad\,\,\,
(\aII-2\eta^{1/32})k\leq|\cX _1|&=|\cY _1|=p\leq \aII k, 
\quad\quad\quad\quad\quad\quad\quad\,\,\,\\
(\half\aI-2\eta^{1/32})k\leq|\cX _2|&=|\cY _2|=q\leq \half\aI k, 
\end{aligned}
\right\}
\end{align}
and know that
\begin{itemize}
\labitem{HB1}{HB1} $\cG_2[\cX _1], \cG_2[\cY _1$] are each $(1-2\eta^{1/32})$-complete (and thus connected); 
\labitem{HB2}{HB2} $\cG_1[\cX_1,\cX_2], \cG_1[\cY_1,\cY_2]$ are each $(1-2\eta^{1/32})$-complete (and thus connected);
\labitem{HB3}{HB3} $\cG[\cX_1\cup\cX_2\cup\cY_1\cup\cY_2]$ is $3\eta^4 k$-almost-complete (and thus connected);
\labitem{HB4}{HB4} $\cG[\cX _1], \cG[\cY _1$] are each $2\eta^{1/32}$-sparse in red and contain no green edges; and
\labitem{HB5}{HB5} $\cG[\cX_1,\cX_2], \cG[\cY_1,\cY_2]$ are each $2\eta^{1/32}$-sparse in blue and contain no green edges.
\end{itemize}

\begin{figure}[!h]
\centering
\includegraphics[width=64mm, page=2]{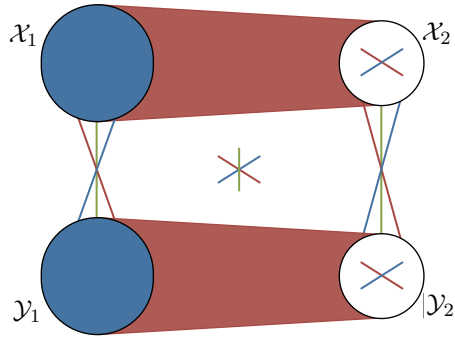}
\vspace{0mm}\caption{Coloured structure of the  reduced-graph in Part I.B.}
\end{figure}

The proof in this case is essentially identical to that in Part I.A. However we include the key steps here for completeness.
As in Part I.A, each vertex~$V_{i}$ of~$\cG$ represents a class of vertices of~$G$. We partition the vertices of~$G$ into sets $X_{1}, X_{2}, Y_{1}, Y_{2}$ and~$Z$ corresponding to the partition of the vertices of $\cG$ into $\cX_1, \cX_2, \cY_1, \cY_2$ and $\cZ$. Then~$X_1$,~$Y_1$ each contain~$p$ {clusters} of vertices, $X_2, Y_2$ each contain~$q$ clusters and we have
\begin{align}
\label{IB1}
|X_1|,|Y_1| & = p|V_1| \geq (\aII-2\eta^{1/32})n,
&|X_2|,|Y_2| & = q|V_1|\geq (\half \aI-2\eta^{1/32})n.
\end{align}

By the following claim, $G$ has essentially the same colouring as the  reduced-graph:
\begin{claim}
\label{G-struct-b}
Given~$G$ as described, we can \textit{remove} at most $9\eta^{1/64}n$ vertices from each of $X_1$ and $Y_1$ and at most $4\eta^{1/64}n$ vertices from each of~$X_2$ and~$Y_2$ so that the following pair of conditions~hold.
 \end{claim}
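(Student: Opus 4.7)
The plan is to mimic the proof of Claim~\ref{G-struct} verbatim, swapping the roles of red and blue throughout, since in Part~I.B the structural properties of~$\cG$ listed in (\ref{HB1})--(\ref{HB5}) are precisely those of Part~I.A with the two colours interchanged. Concretely, we aim to establish
\begin{itemize}
\item[(HB6)] $G_2[X_1]$ and $G_2[Y_1]$ are each $8\eta^{1/64}n$-almost-complete; and
\item[(HB7)] $G_1[X_1,X_2]$ and $G_1[Y_1,Y_2]$ are each $4\eta^{1/64}n$-almost-complete.
\end{itemize}

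First I would estimate the number of non-blue edges in $G[X_1]$. As before, the edges inside a single cluster are uncontrolled, contributing at most $p\binom{N/K}{2}$; non-regular pairs in $\cG$ (of which there are at most $3\eta^4 p k$ by (\ref{HB3})) contribute at most $3\eta^4 pk(N/K)^2$; regular pairs carrying a red colour in the reduced-graph are controlled by the fact that $\cG_1[\cX_1]$ is $2\eta^{1/32}$-sparse (by (\ref{HB4})), giving at most $2\eta^{1/32}\binom{p}{2}(N/K)^2$ edges; and regular pairs not carrying red or green have blue density at least $1-2\eta$ (recalling that green is absent in $\cG[\cX_1]$), giving at most $2\eta\binom{p}{2}(N/K)^2$ non-blue edges. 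Using $p\leq \aII k\leq k$, $K\geq k$ and $N\leq 3n$, these four contributions sum to at most $24\eta^{1/32}n^2$ non-blue edges in $G[X_1]$, so at most $6\eta^{1/64}n$ vertices of $X_1$ have blue degree smaller than $|X_1|-8\eta^{1/64}n$; reassigning these vertices to $Z$ gives the desired bound on $G_2[X_1]$. An identical argument handles $G_2[Y_1]$, thereby establishing (HB6).

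For (HB7), I would run the analogous calculation on $G[X_1,X_2]$, where $\cG_1[\cX_1,\cX_2]$ is now $(1-2\eta^{1/32})$-complete by (\ref{HB2}) and green is forbidden between the parts by (\ref{HB5}). The three contributions to non-red edges are: non-regular pairs, bounded by $3\eta^4 pk(N/K)^2$; regular pairs coloured blue in $\cG$, bounded using the $2\eta^{1/32}$-sparseness of blue from (\ref{HB5}) by $2\eta^{1/32}pq(N/K)^2$; and regular pairs uncoloured in the reduced graph, bounded by $2\eta pq(N/K)^2$ since both blue and green densities must lie below $\eta$. Using $p\leq \aII k$, $q\leq \half\aI k$ and the same bounds on $K$ and $N$, the total is at most $16\eta^{1/32}n^2$ non-red edges, so removing at most $4\eta^{1/64}n$ vertices from each side makes $G_1[X_1,X_2]$ into a $4\eta^{1/64}n$-almost-complete bipartite graph. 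Repeating for $G[Y_1,Y_2]$ then gives (HB7).

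The total number of vertices removed from each set matches the bounds in the claim statement: at most $6\eta^{1/64}n+3\cdot 0$ from $X_1,Y_1$ for (HB6) plus later cycle-avoidance trimmings (which are absorbed into the $9\eta^{1/64}n$ slack), and at most $4\eta^{1/64}n$ from each of $X_2,Y_2$ for (HB7). There is no substantive obstacle here: the argument is mechanical and the only thing to verify is that $\eta$ is small enough that $24\eta^{1/32}\leq (6\eta^{1/64})\cdot(8\eta^{1/64})$ and $16\eta^{1/32}\leq (4\eta^{1/64})\cdot(4\eta^{1/64})$, which follow from $\eta<\eta_{B1}$.
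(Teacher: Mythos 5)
Your proposal is correct and is exactly the paper's approach: the paper proves Claim~\ref{G-struct-b} simply by declaring it ``identical to that of Claim~\ref{G-struct} with the roles of red and blue exchanged,'' and your write-up carries out precisely that colour-swapped repetition of the edge-counting and vertex-removal argument, with the appropriate bounds $p\leq\aII k$ and $q\leq\half\aI k$ from (\ref{IB0a}).
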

\begin{itemize}
\labitem{HB6}{HB6} $G_2[X_1]$ and $G_2[Y_1]$ are each $8\eta^{1/64}n$-almost-complete;
\labitem{HB7}{HB7} $G_1[X_1,X_2]$ and $G_1[Y_1,Y_2]$ are each $4\eta^{1/64}n$-almost-complete.
\end{itemize}
\begin{proof}
Identical to that of Claim~\ref{G-struct} with the roles of red and blue exchanged. 
\end{proof}

Having discarded some vertices, recalling~(\ref{IB1}), we  have
\begin{align}
\label{IB2}
|X_1|,|Y_1| & \geq (\aII-10\eta^{1/64})n,
& |X_2|,|Y_2| & \geq (\half \aI-5\eta^{1/64})n,
\end{align}
 and can proceed to the {end-game}. 
  
 The following pair of claims allow us to determine the colouring of $G[X_1,X_2]$:
\begin{claim}
\label{nogreen2}
\hspace{-2.8mm} {\rm \bf a.} If there exist distinct vertices $x_1,x_2 \in X_1$ and $y_1, y_2\in Y_1$ such~that~$x_1y_1$ and~$x_2y_2$ are coloured blue, then~$G$ contains a blue cycle of length exactly~$\llangle \aII n \rrangle$.

{\rm \bf Claim~\ref{nogreen2}.b.} If there exist distinct vertices $x_1,x_2 \in X_1$ and $y_1, y_2\in Y_1$ such~that~$x_1y_1$ and~$x_2y_2$ are coloured red, then~$G$ contains a red cycle of length exactly~$\llangle \aI n \rrangle$.
\end{claim}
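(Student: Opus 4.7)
The proof is a direct mirror of Claim \ref{nogreen} with the roles of red and blue (and of $\aI$ and $\aII$) exchanged. My plan is to use (\ref{HB6}) in place of (\ref{HA6}) and (\ref{HB7}) in place of (\ref{HA7}), keeping the overall structure identical. The required lower bounds on $|X_1|, |Y_1|, |X_2|, |Y_2|$ are supplied by (\ref{IB2}).

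For (a), I aim to build a blue cycle of length exactly $\llangle \aII n \rrangle$ by combining two blue Hamiltonian paths with the given linking edges $x_1y_1$ and $x_2y_2$. First choose $\widetilde{X}_1 \subseteq X_1$ of size exactly $\half\llangle \aII n \rrangle$ containing both $x_1$ and $x_2$; this is possible since (\ref{IB2}) gives $|X_1| \geq (\aII - 10\eta^{1/64})n$. By (\ref{HB6}), $G_2[\widetilde{X}_1]$ inherits $8\eta^{1/64}n$-almost-completeness, and since $\eta < (\aII/100)^{64}$ this comfortably exceeds the $\half|\widetilde{X}_1|+1$ minimum-degree threshold, so Corollary \ref{dirac2} yields a blue Hamiltonian path in $G_2[\widetilde{X}_1]$ from $x_1$ to $x_2$ on exactly $\half\llangle \aII n \rrangle$ vertices. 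The identical construction on the $Y_1$ side, concatenated through the edges $x_1y_1$ and $x_2y_2$, closes up into the required blue cycle.

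For (b), I aim to build a red cycle of length exactly $\llangle \aI n \rrangle$ using the bipartite red structure between $X_1, X_2$ and $Y_1, Y_2$. Set $\ell_1 = \lfloor (\llangle \aI n \rrangle - 2)/4 \rfloor$ and $\ell_2 = \lceil (\llangle \aI n \rrangle - 2)/4 \rceil$, and pick $\widetilde{X}_2 \subseteq X_2$ of size $\ell_1$ and $\widetilde{Y}_2 \subseteq Y_2$ of size $\ell_2$; both selections are possible since $|X_2|, |Y_2| \geq (\half \aI - 5\eta^{1/64})n$ by (\ref{IB2}). By (\ref{HB7}), $G_1[X_1, \widetilde{X}_2]$ is $4\eta^{1/64}n$-almost-complete, while (\ref{HB0}) combined with (\ref{IB2}) yields $|X_1| \geq (\aI - \eta^{1/16} - 10\eta^{1/64})n > |\widetilde{X}_2| + 1$. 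Lemma \ref{bp-dir} then produces a red path in $G_1[X_1, \widetilde{X}_2]$ from $x_1$ to $x_2$ traversing every vertex of $\widetilde{X}_2$, on exactly $2\ell_1 + 1$ vertices; the analogous red path from $y_1$ to $y_2$ on $2\ell_2 + 1$ vertices comes from the same argument on the $Y_1, Y_2$ side. Closing up with the red edges $x_1y_1$ and $x_2y_2$ gives a red cycle on exactly $2\ell_1 + 2\ell_2 + 2 = \llangle \aI n \rrangle$ vertices.

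The only calculation demanding any care is the verification of the minimum-degree hypothesis of Lemma \ref{bp-dir} in part (b). By (\ref{HB7}), every vertex of $\widetilde{X}_2$ has red degree at least $|X_1| - 4\eta^{1/64}n \approx \aI n$ in $G_1[X_1, \widetilde{X}_2]$, while the threshold needed is $\half(|X_1| + |\widetilde{X}_2|) + 1 \approx \tfrac{5}{8}\aI n$; the gap is of order $\Theta(\aI n)$ and the hypothesis is satisfied with room to spare. Since the remainder of the argument is a mechanical translation of Claim \ref{nogreen} under the swap of (red, $\aI$) with (blue, $\aII$), I anticipate no genuine obstacle beyond this routine check.
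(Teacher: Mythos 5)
Your proposal is correct and follows exactly the paper's route: the paper proves Claim~\ref{nogreen2} by repeating the proof of Claim~\ref{nogreen} with red/blue and $\aI$/$\aII$ exchanged, using (\ref{HB6}), (\ref{HB7}) in place of (\ref{HA6}), (\ref{HA7}), and it likewise singles out (\ref{HB0}) as the one point needed to guarantee $|X_1|>\ell_1+1$ and $|Y_1|>\ell_2+1$ for Lemma~\ref{bp-dir} in part (b), which you identified. No gap.
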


\begin{proof}
Identical to that of Claim~\ref{nogreen} with the roles of red and blue exchanged and also the roles of $\aI$ and $\aII$. Note that, when needed in (b), the fact that $|X_1|> \ell_1+1, |Y_1|> \ell_2+1$, follows from (\ref{HB0}).
\end{proof}

The existence of a red cycle on~$\aIna$ vertices or a blue cycle on~$\aIIna$ vertices would be sufficient to complete the proof of Theorem~\hyperlink{thA}{A}. Thus, there cannot exist such a pair of vertex-disjoint red edges or such a pair of vertex-disjoint blue edges in $G[X_1,Y_1]$. Similarly, there cannot be a pair of vertex-disjoint red edges in $G[X_1,Y_2]$ or in $G[X_2,Y_1]$. Thus, after removing at most three vertices from each of~$X_1$ and~$Y_1$ and one vertex from from each of~$X_2$ and~$Y_2$, we may assume that
\begin{itemize}
\labitem{HB8a}{HB8a} the green graph $G_3[X_1,Y_1]$ is complete; and
\labitem{HB8b}{HB8b} the are no red edges in $G[X_1,Y_2]\cup G[X_2,Y_1]$.
\end{itemize}

Then, recalling~(\ref{IB2}), we have 
\begin{align}
\label{IB3}
|X_1|, |Y_1|&\geq (\aII-11\eta^{1/64})n,
& |X_2|, |Y_2|&\geq (\half \aI - 6\eta^{1/64})n.
\end{align}
We now consider $Z$. Defining $Z_G$ to be the set of vertices in $Z$ having a green edge to both $X_1$ and $Y_1$, the following claim allows us to assume that $Z_G$ is empty:

\begin{claim}
\label{greenorred2}
If $Z_G$ is non empty, then $G$ contains either a blue cycle on exactly~$\llangle \aII n \rrangle$ vertices or a green cycle on exactly $\langle \aIII n \rangle$ vertices. 
\end{claim}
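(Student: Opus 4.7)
The plan is to adapt the proof of Claim~\ref{greenorred} essentially verbatim, swapping the roles of red and blue (and of $\aI$ and $\aII$) throughout, while exploiting the extra closeness $\aII\leq\aI\leq\aII+\eta^{1/16}$ guaranteed by~(\ref{HB0}). First, I would unpack the assumption that $Z_G$ is non-empty to extract $z\in Z$ with green edges to some $x\in X_1$ and $y\in Y_1$. Since~(\ref{HB8a}) gives that $G_3[X_1,Y_1]$ is complete, the subgraph on $X_1\cup Y_1\cup\{z\}$ supports an odd green cycle of every odd length from $3$ up to $|X_1|+|Y_1|+1$. In particular, to avoid an immediate green cycle of length exactly $\aIIIna$, we must have $|X_1|+|Y_1|+1<\aIIIna$; combining this with the lower bounds in~(\ref{IB3}), the constraint~(\ref{a3notbig}), and~(\ref{HB0}) pins $\aIII$ into a narrow band around $2\aII$, with $\aIII\geq 2\aII-O(\eta^{1/64})$.

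Next, I would try to lengthen the odd green cycle by splicing two odd green paths of length $2\lceil 12\eta^{1/64}n\rceil+1$, one inside $G[X_1,Y_2]$ between a pair of vertices of $X_1$, the other inside $G[Y_1,X_2]$ between a pair of vertices of $Y_1$. If both paths exist, stitching them together with edges of $G_3[X_1,Y_1]$ and the vertex $z$ gives a green cycle of length exactly $\aIIIna$, completing the claim. Otherwise, without loss of generality $G_3[X_1,Y_2]$ contains no green path on $2\lceil 12\eta^{1/64}n\rceil+1$ vertices and, by Theorem~\ref{th:eg}, has at most $16\eta^{1/64}n^2$ green edges.

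The pivot is then to build a blue cycle on exactly $\aIIna$ vertices using $X_1\cup Y_2$. By~(\ref{HB8b}), the bipartite graph $G[X_1,Y_2]$ has no red edges; by the previous step it has $O(\eta^{1/64}n^2)$ green edges; so almost every edge is blue, and deleting $O(\eta^{1/128}n)$ low-blue-degree vertices from each side turns $G_2[X_1,Y_2]$ into a $4\eta^{1/128}n$-almost-complete bipartite graph. Choosing disjoint subsets $X_L\subseteq X_1$ of size $\aIIna-2\lfloor 8\eta^{1/128}n\rfloor-1$ and $X_S\subseteq X_1$ of size $\lfloor 8\eta^{1/128}n\rfloor$, I would use~(\ref{HB6}) and Corollary~\ref{dirac2} to find a blue Hamiltonian path in $G_2[X_L]$ between two chosen endpoints $x_1,x_2$, and Lemma~\ref{bp-dir} applied to $G_2[Y_2,X_S]$ to produce a blue path through every vertex of $X_S$ joining blue neighbours $y_1,y_2\in Y_2$ of $x_1,x_2$. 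Concatenating these with the connecting blue edges $x_1y_1$ and $x_2y_2$ gives the required blue cycle on $\aIIna$ vertices.

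The main obstacle will be step one: verifying that~(\ref{HB0}) and~(\ref{a3notbig}) together with the lower bounds on $|X_1|,|Y_1|$ in~(\ref{IB3}) are genuinely strong enough to squeeze $\aIII$ into a band for which the subsequent path-splicing targets length exactly $\aIIIna$ rather than merely close to it. Beyond that, the degree inequalities needed to invoke Lemma~\ref{bp-dir} and Corollary~\ref{dirac2} (in particular $|Y_2|>|X_S|+1$ and $d(y_1),d(y_2)\geq 2$) rely on~(\ref{HB0}) rather than on $\aI\geq\aII$ alone, so one must keep careful track of the near-equality of $\aI$ and $\aII$ throughout the final counting.
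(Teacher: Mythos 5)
Your proposal is correct and takes essentially the same route as the paper, which proves Claim~\ref{greenorred2} simply by invoking the proof of Claim~\ref{greenorred} with the roles of red and blue (and of $\aI$ and $\aII$) exchanged — exactly the adaptation you describe, including the essential use of (\ref{HB0}) to force $\aIII$ close to $2\aII$ so that the green path-splicing can hit length exactly $\langle \aIII n\rangle$, with the fallback blue cycle on $\llangle \aII n\rrangle$ vertices built from $X_1$ and $Y_2$ via Corollary~\ref{dirac2} and Lemma~\ref{bp-dir}. (Your use of $G[Y_1,X_2]$ for the second spliced path is the intended mirror of the paper's construction, and your only cautionary remark — that the bp-dir/dirac degree checks need (\ref{HB0}) — is in fact unnecessary there, since $|Y_2|$ is on the $\half\aI$ scale while the target cycle is on the $\aII$ scale.)
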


\begin{proof}
Identical to that of Claim~\ref{nogreen} with the roles of red and blue and also the roles of $\aI$ and $\aII$ exchanged. 
\end{proof}

Since $Z_G$ is empty, defining $Z_X$ to be the set of vertices in $Z$ having no green edges to $X_1$ and $Z_Y$ to be the set of vertices in $Z$ having no green edges to $Y_1$, we see that  $Z_X\cup Z_Y$ is a partition of $Z$. We may assume, without loss of generality that 
$$|X_1 \cup X_2 \cup Z_X|\geq \llangle \aI n \rrangle + \half \llangle \aII n \rrangle -1.$$ 
Given (\ref{HB6}) and (\ref{HB7}), we can obtain upper bounds on $|X_1|$, $|X_2|$, $|Y_1|$ and $|Y_2|$ as follows: By Corollary~\ref{dirac1a}, for every integer $m$ such that $16\eta^{1/64}n+2\leq m\leq |X_1|$, we know that $G_2[X_1]$ contains a blue cycle of length $m$. Thus, in order to avoid having a blue cycle of length~$\aIIna$, we may assume that $|X_1|<\aIIna$. 
By Corollary~\ref{moonmoser2}, for every even integer $m$ such \mbox{that $16\eta^{1/64}n+2\leq m \leq 2\min\{|X_1|,|X_2|\}$}, we know that $G_1[X_1,X_2]$ contains a red cycle of length $m$. Recalling (\ref{HB0}) and (\ref{IB3}), we have $|X_1|\geq(\aII-11\eta^{1/64})n\geq\half\aI n$. Thus, in order to avoid having a red cycle on exactly~$\aIna$ vertices, we may assume that $|X_2|<\half\aIna$.
In summary, we~have
\begin{equation}
\label{IB7}
\left.
\begin{aligned}
\,\,\,\,\quad\quad\quad\quad\quad\quad\quad\quad\quad\,\,\,\,
(\aII-11\eta^{1/64})n&\leq |X_1|< \aIIna, 
\quad\quad\quad\quad\quad\quad\quad\quad\quad\\
(\half \aI - 6\eta^{1/64})n&\leq |X_2|< \half\aIna.
\end{aligned}
\right\}\!
\end{equation}
Let
\begin{align*}
Z_R&=\{ z\in Z_X \text{ such that } z \text{ has at least } |X_1|-64\eta^{1/64} \text{ red neighbours in } X_1\};\text{ and}
\\ Z_B&=W\backslash Z_R=\{ z\in Z_X \text{ such that } z \text{ has at least } 64\eta^{1/64} \text{ blue neighbours in } X_1\}.
\end{align*}

\begin{figure}[!h]
\centering
\includegraphics[height=22mm, page=10]{Th-AB-Figs-ExtraW.pdf}
\vspace{0mm}\caption{Partition of $Z_X$ into $Z_R\cup Z_B$.}
  
\end{figure}

Then, we have $Z=Z_B\cup Z_R$. Thus, either $|X_1\cup Z_B|\geq\llangle \aII n \rrangle$ or $|X_2\cup Z_R|\geq\half\llangle \aI n \rrangle$.

If $|X_1\cup Z_B|\geq\llangle \aII n \rrangle$, then we show that $G_2[X_1\cup Z_B]$ contains a long blue cycle as follows: Let~$X$ be any set of~$\llangle \aII n \rrangle$ vertices from $X_1 \cup Z_B$ consisting of every vertex from~$X_1$ and~$\llangle \aII n \rrangle-|X_1|$ vertices from~$Z_B$. By (\ref{HB6}) and~(\ref{IB7}), the blue graph~$G_2[X]$ has at least $\llangle \aII n \rrangle-11\eta^{1/64}n$ vertices of degree at least $|X|-20\eta^{1/64}n$ and at most $11\eta^{1/64}n$ vertices of degree at least $64\eta^{1/64}n$. Thus, by Theorem~\ref{chv},~$G[X]$ contains a blue cycle on exactly~$\llangle \aII n \rrangle$ vertices.

Thus, we may, instead, assume that $|X_2\cup Z_R|\geq\half\llangle \aI n \rrangle$, in which case, we consider the red graph $G_2[X_1,X_2\cup Z_R]$. Given the relative sizes of~$X_1$ and~$X_2\cup Z_R$ and the large minimum-degree of the graph, we can use Theorem~\ref{moonmoser} to give a red cycle on exactly~$\llangle \aI n \rrangle$ vertices as follows: By (\ref{HB0}) and~(\ref{IB7}), we have $|X_1|\geq\half\llangle\aI n \rrangle$ and may choose subsets $\widetilde{X}_1\subseteq X_1$, $\widetilde{X}_2\subseteq X_2\cup Z_R$ such that $\wX_2$ includes every vertex of $X_2$, $
|\widetilde{X}_1|=|\widetilde{X}_2|=\half\llangle\aI n \rrangle$ and $|\wX_2\cap Z_R|\leq6\eta^{1/64}n$. Recall, from (\ref{HB7}), that $G_1[X_1,X_2]$ is $4\eta^{1/64}k$-almost-complete and that, by definition, all vertices in $Z_R$ have red degree at least $|\widetilde{X}_1|-64\eta^{1/64}n$ in $G[\widetilde{X}_1,\widetilde{X}_2]$. Thus, since $|\wX_2 \cap Z_R|\leq 6\eta^{1/64}n$, for any pair of vertices $x_1\in\widetilde{X}_1$ and $x_2\in\widetilde{X}_2$, we have $d(x_1)+d(x_2)\geq\half\llangle\aI n \rrangle+1$. Therefore, by Theorem~\ref{moonmoser}, $G_1[\wX_1,\wX_2]$ contains a red cycle on exactly~$\llangle \aI n \rrangle$ vertices, thus completing this part of the proof.

\subsection*{Part I.C: $H \in \cH_1, K \in \cH_2$.}
By Theorem~\hyperlink{reB}{B}, this case only occurs when 
\begin{equation}
\tag{HC0}
\label{HC0}
\aII \leq \aI \leq \aII + \eta^{1/16}.
\end{equation}
Recalling that $\cG_1, \cG_2$ and $\cG_3$ are the monochromatic spanning subgraphs of the  reduced-graph $\cG$, the vertex set~$\cV$ of~$\cG$ has a natural partition into $\cX _1 \cup \cX _2 \cup \cY _1 \cup \cY _2\cup \cZ$ with

 ~\vspace{-12mm}
\begin{align*}
(\aI-2\eta^{1/32})k&\leq|\cX _1|=p\leq \aI k, &\quad
(\half\aII-2\eta^{1/32})k&\leq|\cX _2|=q\leq \half\aII k,  \\
(\aII-2\eta^{1/32})k&\leq|\cY _1|=r\leq \aII k,&
(\half\aI-2\eta^{1/32})k&\leq|\cY _2|=q\leq \half\aI k
\end{align*}
\vspace{-1mm}
such that
\begin{itemize}
\labitem{HC1}{HC1} $\cG_1[\cX _1], \cG_2[\cY _1$] are each $(1-2\eta^{1/32})$-complete (and thus connected); 
\labitem{HC2}{HC2} $\cG_2[\cX_1,\cX_2], \cG_1[\cY_1,\cY_2]$ are each $(1-2\eta^{1/32})$-complete (and thus connected);
\labitem{HC3}{HC3} $\cG[\cX_1\cup\cX_2\cup\cY_1\cup\cY_2]$ is $3\eta^4 k$-almost-complete (and thus connected);
\labitem{HC4}{HC4} $\cG[\cX _1]$ is $2\eta^{1/32}$-sparse in blue and contains no green edges, 
\\ \hphantom{.}\quad$\cG[\cY _1$] is $2\eta^{1/32}$-sparse in red and contains no green edges; and
\labitem{HC5}{HC5} $\cG[\cX_1,\cX_2]$ is $2\eta^{1/32}$-sparse in red and contains no green edges,
\\  \hphantom{.}\quad$\cG[\cY_1,\cY_2]$ is $2\eta^{1/32}$-sparse in blue and contains no green edges.
\end{itemize}
\begin{figure}[!h]
\centering
\vspace{-1mm}
\includegraphics[width=64mm, page=3]{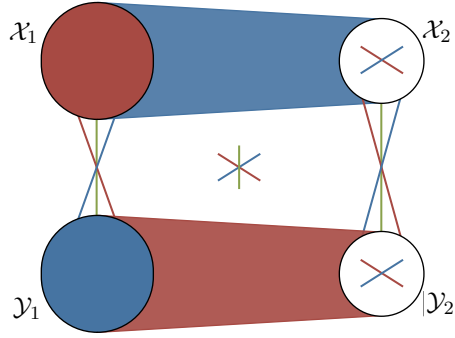}
\vspace{-2mm}\caption{Coloured structure of the  reduced-graph in Part I.C.}
  
\end{figure}

The proof that follows is essentially similar to those in Parts I.A and I.B but with some additional complications. Again, each vertex~$V_{i}$ of~$\cG$ represents a class of vertices of~$G$. We partition the vertices of~$G$ into sets $X_{1}$, $X_{2}$, $Y_{1}$, $Y_{2}$ and~$Z$ corresponding to the partition of the vertices of~$\cG$ into $\cX_1, \cX_2, \cY_1, \cY_2$ and $\cZ$. Then $X_1$ contains~$p$ clusters, $X_2$ contains~$q$ clusters $Y_1$ contains~$r$ clusters, $Y_2$ contains~$s$ clusters and we have
\begin{equation}
\label{IC1}
\left.
\begin{aligned}
\quad\,\,\,\,|X_1|&=p|V_1| \geq (\aI-2\eta^{1/32})n,
\,&\, |X_2|&=q|V_1|\geq (\half \aII-2\eta^{1/32})n,\quad\quad\\
|Y_1|&=r|V_1| \geq (\aII-2\eta^{1/32})n,
\quad&\quad |Y_2|&=s|V_1|\geq (\half \aI-2\eta^{1/32})n.
\end{aligned}
\right\}\!
\end{equation}

We may then show that $G$ has similar coloured structure to the  reduced-graph $\cG$:
\begin{claim}
\label{G-struct-C}
Given~$G$ as described, we can \textit{remove} at most $9\eta^{1/64}n$ vertices from each of $X_1$ and $Y_1$ and at most $4\eta^{1/64}n$ vertices from each of $X_2$ and $Y_2$ so that the following pair of conditions hold:
\end{claim}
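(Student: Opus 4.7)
The plan is to mirror the edge-counting proof of Claim~\ref{G-struct} twice: once to establish the almost-completeness of $G_1[X_1]$ and $G_2[X_1,X_2]$, and once (with the roles of red and blue swapped, as in Claim~\ref{G-struct-b}) to establish the almost-completeness of $G_2[Y_1]$ and $G_1[Y_1,Y_2]$. By (\ref{HC1})--(\ref{HC5}), restricted to $\cX_1\cup\cX_2$ the reduced graph $\cG$ has exactly the colour pattern of Part~I.A --- red dense on $\cX_1$, blue dense across $(\cX_1,\cX_2)$, no green edges, and the non-dominant colour sparse in each place --- whereas, restricted to $\cY_1\cup\cY_2$, it has the same pattern with red and blue exchanged. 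This essentially reduces the claim to two independent copies of the counting done in Part~I.A.

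For the almost-completeness of $G_1[X_1]$, I would run the edge-counting of Claim~\ref{G-struct} verbatim: partition the non-red edges of the complete graph $G[X_1]$ according to whether they lie inside a single cluster, across a non-$\eta^4$-regular pair, across a regular pair joined by a blue edge of $\cG$, or across a regular pair joined by no non-red edge of $\cG$. The number of pairs of each type is controlled by (\ref{HC3}) and (\ref{HC4}); in the last case, regularity forces both the blue and the green densities of the pair to be at most $\eta$. Summing the obvious upper bounds, using $K\geq k\geq\eta^{-1}$, $N\leq 3n$ and $p\leq\aI k$, the total number of non-red edges in $G[X_1]$ comes out to at most $24\eta^{1/32}n^2$; hence at most $6\eta^{1/64}n$ vertices can fail to have red degree at least $|X_1|-8\eta^{1/64}n$, and moving those into $Z$ delivers the required $8\eta^{1/64}n$-almost-completeness of $G_1[X_1]$. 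The argument for $G_2[Y_1]$ is identical, using (\ref{HC1}) and (\ref{HC4}) for $\cY_1$ with the roles of red and blue interchanged.

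For the bipartite pieces $G_2[X_1,X_2]$ and $G_1[Y_1,Y_2]$ I plan the analogous count of edges in the two non-dominant colours, splitting according to non-regular pairs, pairs joined by the ``wrong'' colour of $\cG$, and pairs whose density in each non-dominant colour is at most $\eta$. Each of the two counts comes out to at most $16\eta^{1/32}n^2$ bad edges, so at most $4\eta^{1/64}n$ vertices on each side can fail to be high-degree in the dominant colour; discarding these into $Z$ yields the required $4\eta^{1/64}n$-almost-completeness of both bipartite graphs.

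No genuinely new obstacle appears: the $X$-side counts use only the hypotheses concerning $\cX_1\cup\cX_2$, and the $Y$-side counts only those concerning $\cY_1\cup\cY_2$, so the four removal steps are independent of one another. A line of bookkeeping then shows that the total number of vertices removed from each of $X_1$ and $Y_1$ stays below the $9\eta^{1/64}n$ budget, and from each of $X_2$ and $Y_2$ below the $4\eta^{1/64}n$ budget stated in the claim, completing the proof.
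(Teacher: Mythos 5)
Your proposal is correct and is essentially the paper's own proof: the paper likewise disposes of the claim by noting that the argument for $G_1[X_1]$ and $G_2[X_1,X_2]$ is identical to that of Claim~\ref{G-struct}, and that for $G_2[Y_1]$ and $G_1[Y_1,Y_2]$ one repeats it with the roles of red and blue exchanged. Your expanded edge-counting sketch matches the counts used there (at most $24\eta^{1/32}n^2$ bad edges inside $X_1$, at most $16\eta^{1/32}n^2$ across $(X_1,X_2)$, etc.), so nothing further is needed.
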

\begin{itemize}
\labitem{HC6}{HC6} $G_1[X_1]$ and $G_2[Y_1]$ are each $8\eta^{1/64}n$-almost-complete;
\labitem{HC7}{HC7} $G_2[X_1,X_2]$ and $G_1[Y_1,Y_2]$ are each $4\eta^{1/64}n$-almost-complete.
\end{itemize}

\begin{proof}
The proof for $G_1[X_1]$ and $G_2[X_1,X_2]$ is identical to that of Claim~\ref{G-struct}. The proof for $G_2[Y_1]$ and $G_1[Y_1,Y_2]$ is identical but with the roles of red and blue exchanged.
\end{proof}

Having discarded some vertices, recalling~(\ref{IC1}), we now have
\begin{equation}
\label{IC2}
\left.
\begin{aligned}
\,\,\quad\quad\,\,\,\,\quad|X_1| & \geq (\aI-10\eta^{1/64})n,
\quad\quad&\quad\quad|X_2| & \geq (\half \aII-5\eta^{1/64})n,\quad\quad\quad\,\,\,\,\,
\\|Y_1| & \geq (\aII-10\eta^{1/64})n,
\,\,\,\,\,\quad&\quad\quad|Y_2| & \geq (\half \aI-5\eta^{1/64})n.
\end{aligned}
\right\}\!
\end{equation}
and proceed to consider $G[X,Y]$:
\begin{claim}
\label{nogreen3}
If there exist distinct vertices $x_1,x_2 \in X_1$ and $y_1, y_2\in Y_1$ such~that~$x_1y_1$ and~$x_2y_2$ are coloured red, then~$G$ contains a red cycle of length exactly~$\llangle \aI n \rrangle$.
\end{claim}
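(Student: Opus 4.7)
The plan is to mimic the two-piece construction used in Claim~\ref{greenorred}, exploiting the two red-dense regions available in the mixed case~I.C: by~(\ref{HC6}), $G_1[X_1]$ is almost-complete, and by~(\ref{HC7}), the bipartite graph $G_1[Y_1,Y_2]$ is almost-complete. The red cross-edges $x_1y_1$ and $x_2y_2$ will serve as bridges joining a red Hamiltonian-type path inside $\widetilde X_1\subseteq X_1$ to a red bipartite path between $y_1$ and $y_2$ passing through a small subset $\widetilde Y_2\subseteq Y_2$.

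First, I would fix a set $\widetilde X_1\subseteq X_1$ of size $|\widetilde X_1|=\llangle \aI n\rrangle-2\lfloor 8\eta^{1/64} n\rfloor-1$ containing both $x_1$ and $x_2$. This is feasible by the lower bound on $|X_1|$ in~(\ref{IC2}). Since $G_1[X_1]$ is $8\eta^{1/64}n$-almost-complete by~(\ref{HC6}), so is $G_1[\widetilde X_1]$, and for $\eta$ small enough every vertex of $\widetilde X_1$ has red-degree at least $\tfrac{1}{2}|\widetilde X_1|+1$. Corollary~\ref{dirac2} then supplies a red Hamiltonian path $R_1$ in $G_1[\widetilde X_1]$ from $x_1$ to $x_2$.

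Next, I would pick an arbitrary set $\widetilde Y_2\subseteq Y_2$ of size $\lfloor 8\eta^{1/64}n\rfloor$, which again is feasible by~(\ref{IC2}). By~(\ref{HC7}), $G_1[Y_1,\widetilde Y_2]$ is $4\eta^{1/64}n$-almost-complete; since $|Y_1|\gg|\widetilde Y_2|+1$, each vertex of $\widetilde Y_2$ has red-degree at least $|Y_1|-4\eta^{1/64}n\geq \tfrac{1}{2}(|Y_1|+|\widetilde Y_2|)+1$ into $Y_1$, and $y_2\in Y_1$ has red-degree at least $|\widetilde Y_2|-4\eta^{1/64}n\geq 2$ into $\widetilde Y_2$. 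Lemma~\ref{bp-dir} (applied with $Y_1$ as the larger part) then provides a red path $R_2$ in $G_1[Y_1,\widetilde Y_2]$ from $y_1$ to $y_2$ visiting every vertex of $\widetilde Y_2$; because $R_2$ alternates, it uses exactly $|\widetilde Y_2|$ vertices of $\widetilde Y_2$ and $|\widetilde Y_2|+1$ vertices of $Y_1$ (namely $y_1$, $y_2$, and $|\widetilde Y_2|-1$ further vertices of $Y_1$), so $|V(R_2)|=2|\widetilde Y_2|+1$.

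Finally, the edges $x_1y_1$ and $x_2y_2$ glue $R_1$ and $R_2$ (which are vertex-disjoint because $\widetilde X_1\subseteq X_1$ and $V(R_2)\subseteq Y_1\cup Y_2$) into a single red cycle on exactly
\[
|\widetilde X_1|+|V(R_2)|=\bigl(\llangle\aI n\rrangle-2\lfloor 8\eta^{1/64}n\rfloor-1\bigr)+\bigl(2\lfloor 8\eta^{1/64}n\rfloor+1\bigr)=\llangle \aI n\rrangle
\]
vertices, finishing the proof. There is no serious obstacle here: the almost-completeness of $G_1[X_1]$ and $G_1[Y_1,Y_2]$ already does all the heavy lifting, and the only real content is the bookkeeping that the two pieces add up to exactly $\llangle\aI n\rrangle$ — which is forced by our choice $|\widetilde Y_2|=\lfloor 8\eta^{1/64}n\rfloor$ and $|\widetilde X_1|=\llangle\aI n\rrangle-2|\widetilde Y_2|-1$.
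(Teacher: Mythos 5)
Your construction is correct and is essentially the paper's own argument: a red path through a subset of $X_1$ supplied by Corollary~\ref{dirac2}, a red path from $y_1$ to $y_2$ through $Y_1$ and a chosen subset $\widetilde Y_2\subseteq Y_2$ supplied by Lemma~\ref{bp-dir}, glued into a cycle by the red edges $x_1y_1$ and $x_2y_2$. The only difference is the apportionment of lengths: the paper takes roughly half the cycle inside $X_1$ and the other half in $G_1[Y_1,\widetilde Y_2]$ with $|\widetilde Y_2|\approx\tfrac{1}{4}\llangle\aI n\rrangle$ (which is where it invokes (\ref{HC0}) to guarantee $Y_1$ is large enough), whereas you place almost the whole cycle in $X_1$ with only a short detour of size $\lfloor 8\eta^{1/64}n\rfloor$, which is equally valid given the bounds in (\ref{IC2}) and in fact does not need (\ref{HC0}) at this step.
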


\begin{proof}
Suppose there exist distinct vertices $x_1,x_2\in X_1$ and $y_1,y_2\in Y_2$ such that the edges $x_1y_1$ and $x_2y_2$ are coloured red.  Then, letting $\widetilde{X}_1$ be any set of of~{$2\lceil \tfrac{1}{4} \llangle \aI n \rrangle \rceil -1$} vertices in $X_1$ such that $x_1, x_2\in \widetilde{X}_1$, by (\ref{HC6}), every vertex in $\widetilde{X}_1$ has degree at least $|\widetilde{X}_1|-8\eta^{1/64}n$ in $G[\widetilde{X}_1]$. Since $\eta\leq(\aI/100)^{64}$, we have $|\widetilde{X}_1|-8\eta^{1/64}n\geq \half|\widetilde{X}_1|+2$, so, by Corollary~\ref{dirac2}, there exists a red path from~$x_1$ and $x_2$ on exactly $2\lceil \tfrac{1}{4} \llangle \aI n \rrangle \rceil -1$ vertices in $X_1$.

Let $\widetilde{Y}_2$ be any set of $\lfloor \tfrac{1}{4} \llangle \aI n \rrangle \rfloor \geq 4\eta^{1/64} n +2 $ vertices from ${Y}_2$. By~(\ref{HC7}), $y_1$ and $y_2$ each have at least two neighbours in $\widetilde{Y}_2$. Also, by~(\ref{HC0}) and (\ref{HC7}), every vertex in $\widetilde{Y}_2$ has degree at least $\half|Y_1| +\half|\widetilde{Y}_2| +1$ in $G[Y_1,\widetilde{Y}_2]$. Finally, by (\ref{HC0}), we have $|Y_1|\geq\ell+1$. Thus, by Lemma~\ref{bp-dir},  $G_2[Y_1,\widetilde{Y}_2]$ contains a path on exactly $2\lfloor \tfrac{1}{4} \llangle \aI n \rrangle \rfloor +1$ vertices from~$y_1$ to~$y_2$. Then, combining the red edges $x_1y_1$ and $x_2y_2$ with the red paths found in $G[X_1]$ and $G[Y_1,Y_2]$ gives a red cycle on exactly~$\llangle \aI n \rrangle$ vertices. 
\end{proof}

The existence of a red cycle on~$\aIna$ vertices would be sufficient to complete the proof. Thus, there cannot exist such a pair of vertex-disjoint red edges. Similarly, there cannot exist a pair of vertex-disjoint red edges in $G[X_1,Y_2]$ or a pair of vertex-disjoint blue edges in $G[X_1,Y_1]$ or $G[X_2,Y_1]$.

\begin{figure}[!h]
\centering
\includegraphics[width=64mm, page=10]{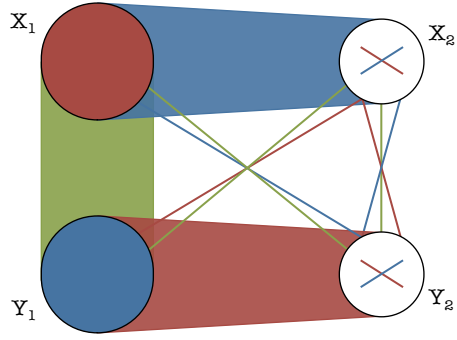}
\vspace{0mm}\caption{Colouring of $G$ after Claim~\ref{nogreen3}.}
\end{figure}

Thus, after removing at most three vertices from each of $X_1$, $Y_1$ and at most one vertex from each of $X_2,Y_2$, we may assume that 
\begin{itemize}
\labitem{HC8a}{HC8a} the green graph $G_3[X_1,Y_1]$ is complete; and
\labitem{HC8b}{HC8b} the are no red edges in $G[X_1,Y_2]$ and no blue edges in $G[X_2,Y_1]$.
\end{itemize}

Then, recalling~(\ref{IC2}), we have
\begin{equation}
\label{IC3}
\left.
\begin{aligned}
\,\,\quad\quad\,\,\,\,\quad|X_1| & \geq (\aI-11\eta^{1/64})n,
\quad\quad&\quad\quad|X_2| & \geq (\half \aII-6\eta^{1/64})n,\quad\quad\quad\,\,\,\,\,
\\|Y_1| & \geq (\aII-11\eta^{1/64})n,
\,\,\,\,\,\quad&\quad\quad|Y_2| & \geq (\half \aI-6\eta^{1/64})n.
\end{aligned}
\right\}\!
\end{equation}

We now consider $W$. Defining $Z_G$ to be the set of vertices in $Z$ having a green edge to both $X_1$ and~$Y_1$, the following claim allows us to assume that $Z_G$ is empty:

\begin{claim}
\label{greenorred3}
If $Z_G$ is non empty, then $G$ contains either a red cycle on exactly~$\llangle \aI n \rrangle$ vertices, a blue cycle on exactly~$\llangle \aII n \rrangle$ vertices or a green cycle on exactly $\langle \aIII n \rangle$ vertices. 
\end{claim}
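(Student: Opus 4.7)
The plan is to first exploit the complete green bipartite structure $G_3[X_1,Y_1]$ (from~\ref{HC8a}) together with the vertex $z\in Z_G$. Picking green neighbours $x'\in X_1$ and $y'\in Y_1$ of $z$, alternating $X_1$-$Y_1$ paths closed at $z$ realise a green odd cycle of every odd length up to $2\min(|X_1|,|Y_1|)+1$. So the easy case is $\langle\alpha_3 n\rangle\le 2\min(|X_1|,|Y_1|)+1$, giving the green cycle immediately. Otherwise, combining the bounds of~(\ref{IC3}) with~(\ref{a3notbig}) and~(\ref{HC0}) forces $\alpha_3=2\alpha_2+O(\eta^{1/64})$, so the shortfall is only $D=O(\eta^{1/64})n$ vertices.

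To cover the shortfall, I will splice into the basic alternating cycle one or both of: (a) a green path on $2c'+1$ vertices in $G_3[X_1,Y_2]$ with both endpoints in $X_1$, substituted for a single $X_1$-vertex of the cycle, with the two splice ends supplied by $X_1$-$Y_1$ green edges from~\ref{HC8a}, adding $c'$ new vertices to each of $X_1$ and $Y_2$; and (b) the symmetric splice via a green path on $2c+1$ vertices in $G_3[X_2,Y_1]$ with both endpoints in $Y_1$, adding $c$ new vertices to each of $Y_1$ and $X_2$. With $k$ alternation vertices on each side, the total length becomes $2k+2c+2c'+1$, and the bounds~(\ref{IC3}) easily admit a choice $(k,c,c')$ with $k+c+c'=(\langle\alpha_3 n\rangle-1)/2$ and $c+c'\le D$; hence if the required green paths of length $O(D)$ exist in both $G_3[X_1,Y_2]$ and $G_3[X_2,Y_1]$, the splicing delivers a green cycle of length exactly $\langle\alpha_3 n\rangle$.

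The fallback is when one of these green paths is missing. If, say, $G_3[X_1,Y_2]$ contains no green path on $2\lceil D\rceil+1$ vertices, then by the path form of Erd\H{o}s--Gallai (Theorem~\ref{th:eg}) it carries only $O(\eta^{1/64})n^2$ green edges, and together with~(\ref{HC8b}) — which forbids red edges in $G[X_1,Y_2]$ — this makes $G_2[X_1,Y_2]$ an $O(\eta^{1/128})n$-almost-complete bipartite graph after discarding a small number of low-degree vertices. I then build a blue cycle on exactly $\llangle\alpha_2 n\rrangle$ vertices following the end-game style of Parts~I.A and~I.B: $G_2[Y_1]$ is $8\eta^{1/64}n$-almost-complete on $\approx\alpha_2 n$ vertices by~(\ref{HC6}), $G_2[X_1,X_2]$ is almost-complete by~(\ref{HC7}), and the newly-almost-complete $G_2[X_1,Y_2]$ bridges them; Corollary~\ref{dirac1a}, Lemma~\ref{bp-dir} and Chv\'atal's theorem (Theorem~\ref{chv}) then deliver the blue cycle of exactly the required length. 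The symmetric case, where $G_3[X_2,Y_1]$ lacks a long green path, yields via~(\ref{HC8b}) an almost-complete red $G_1[X_2,Y_1]$ bridging $G_1[X_1]$ (from~\ref{HC6}) and $G_1[Y_1,Y_2]$ (from~\ref{HC7}) to give a red cycle on $\llangle\alpha_1 n\rrangle$ vertices.

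The main obstacle is the case $|X_1|,|Y_1|<(\langle\alpha_3 n\rangle-1)/2$: then neither a single $Y_2$-splice nor a single $X_2$-splice suffices, so both splices must act simultaneously, demanding long green paths in \emph{both} $G_3[X_1,Y_2]$ and $G_3[X_2,Y_1]$. Fortunately, the dichotomy above guarantees that whenever either green path is missing, the corresponding structural argument manufactures the required red or blue cycle, so the three cycle constructions partition cleanly.
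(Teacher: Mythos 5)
Your main line -- the odd green cycle built from $z$ and the complete green bipartite graph $G_3[X_1,Y_1]$, the two splices (a green path in $G_3[X_1,Y_2]$ with endpoints in $X_1$, one in $G_3[X_2,Y_1]$ with endpoints in $Y_1$), the shortfall estimate, and the Erd\H{o}s--Gallai dichotomy combined with (\ref{HC8b}) when one of the green paths is absent -- is exactly the paper's argument, and your length accounting is sound. The gap is in the two fallback cycle constructions. When $G_3[X_1,Y_2]$ has no long green path you propose to build the blue cycle by letting the newly almost-complete $G_2[X_1,Y_2]$ ``bridge'' $G_2[Y_1]$ and $G_2[X_1,X_2]$. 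But in the Part~I.C structure $Y_1$ is blue-isolated: every edge of $G[X_1,Y_1]$ is green by (\ref{HC8a}), $G[X_2,Y_1]$ has no blue edges by (\ref{HC8b}), and $G[Y_1,Y_2]$ is red-almost-complete by (\ref{HC7}), so no blue cycle of length $\approx\aII n$ can pass through $Y_1$, and $G_2[X_1,Y_2]$, whose vertex set avoids $Y_1$ entirely, cannot bridge anything to it. The symmetric red fallback fails for the same reason: $X_1$ has no red edges to $Y_1$ (all green), none to $Y_2$ by (\ref{HC8b}), and essentially none to $X_2$ by (\ref{HC7}), so the red clique $G_1[X_1]$ cannot be joined to $G_1[X_2,Y_1]$ or $G_1[Y_1,Y_2]$. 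The clique-plus-detour scheme you are imitating is the one the paper uses in Part~I.A, where the colours line up; in Part~I.C they do not.

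The repair is to drop those cliques altogether, which is what the paper does: in your case (a) the graphs $G_2[X_1,X_2]$ (from (\ref{HC7})) and the cleaned-up $G_2[X_1,Y_2]$ together make $G_2[X_1,X_2\cup Y_2]$ almost-complete, and since $\aI\geq\aII$ both sides have at least $\half\llangle\aII n\rrangle$ vertices, so Theorem~\ref{moonmoser} (rather than Corollary~\ref{dirac1a} or Theorem~\ref{chv}, which do not produce an exact even cycle in a bipartite graph) gives a blue cycle on exactly $\llangle\aII n\rrangle$ vertices; in the symmetric case one applies the same argument to $G_1[Y_1,X_2\cup Y_2]$, where (\ref{HC0}) guarantees $|Y_1|\geq\half\llangle\aI n\rrangle$, giving a red cycle on exactly $\llangle\aI n\rrangle$ vertices. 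With that substitution your proof coincides with the paper's.
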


\begin{proof}
Suppose that $Z_G$ is non-empty. Then, there exists $z\in Z$, $x\in X_1$, $y\in Y_1$ such that $xz$ and $yz$ are both coloured green. Recalling (\ref{HC8a}), $G_3[X_1,Y_1]$ is complete, thus we may obtain an odd cycle of any odd length up to $|X_1|+|Y_1|+1$ in $G[Z_G,X_1\cup Y_1]\cup G[X_1,Y_1]$. Therefore, to avoid having a a green cycle on exactly $\langle \aIII n \rangle$ vertices, we assume that $|X_1|+|Y_1|+1<\langle \aIII n \rangle$.

Then, considering,~(\ref{a3notbig}), (\ref{HC0}) and~(\ref{IC3}), we have
$$|X_1|+|Y_1|\geq(\aI+\aII-22\eta^{1/64})n\geq(\aIII-24\eta^{1/64})n.$$

In that case, suppose that, for some $x_a, x_b\in X_1$ and $y_a, y_b\in Y_1$, there exist green paths 
\begin{align*}
\quad\quad\quad\quad&P_1\text{ from }x_a\in X_1\text{ to }x_b\in X_1&&\text{ on }2\lceil12\eta^{1/64}n\rceil+1\text{ vertices }\text{in } G[X_1,Y_2],
\quad\quad\quad\quad\\
\quad\quad\quad\quad&P_2\text{ from }y_a\in Y_1\,\text{ to }y_b\in Y_1&&\text{ on }2\lceil12\eta^{1/64}n\rceil+1\text{ vertices }\text{in } G[Y_1,Y_2].
\quad\quad\quad\quad
\end{align*}
Then, since $G_3[X_1,Y_1]$ is complete and $Z_G$ is non-empty, $P_1$ and $P_2$ could be used along with edges from $G[X_1,Y_1]$ and $G[z,X_1\cup Y_1]$ to give an odd green cycle on exactly $\langle \aIII n\rangle$ vertices. 
Thus, at most one of $G[X_1,Y_2]$, $G[X_2,Y_1]$ contains a green path on $2\lceil12\eta^{1/64}n\rceil+1$ vertices. Thus, by Theorem~\ref{th:eg}, either (a) $G[X_1,Y_2]$ contains at most $16\eta^{1/64}n^2$ green edges or (b) $G[X_2,Y_1]$ contains at most $16\eta^{1/64}n^2$ green edges.

(a) If $G[X_1,Y_2]$ contains at most $16\eta^{1/64}n^2$ green edges,  then recalling (\ref{HC8b}), we know that $G[X_1,Y_2]$ is complete, contains no red edges and contains at most $16\eta^{1/64}n^2$ green edges. Thus, after removing at most $4\eta^{1/128}$ vertices from each of~$X_1$ and $Y_2$ we may assume that $G_2[X_1,Y_2]$ is $4\eta^{1/128}n$-almost-complete. Recall from (\ref{HC6}) that $G_2[X_1,X_2]$ is $4\eta^{1/64}$-almost-complete. 
Thus, $G_2[X_1,X_2\cup Y_2]$ is~$6\eta^{1/128}$-almost-complete.

Having discarded these vertices, recalling~(\ref{IC3}), since $\eta<10^{-50}$, we have 
\begin{equation}
\label{IC5}
\left.
\begin{aligned}
\,\,\,\quad\quad\,\,\,\,\quad|X_1| & \geq (\aI-7\eta^{1/128})n,
\quad\quad&\quad\quad|X_2| & \geq (\half \aII-2\eta^{1/128})n,\quad\quad\quad\quad
\\|Y_1| & \geq (\aII-2\eta^{1/128})n,
\quad\quad&\quad\quad|Y_2| & \geq (\half \aI-7\eta^{1/128})n.
\end{aligned}
\right\}\!
\end{equation}

Given the bounds in (\ref{IC5}), there exist subsets $\wX_1\subseteq X_1$ and $\wX_2\subseteq X_2\cup Y_2$ such that $|\wX_1|=|\wX_2|=\half\aIIna$. Then, by Theorem~\ref{moonmoser}, $G_2[\wX_1,\wX_2]$ is Hamiltonian and, thus, provides a blue cycle on exactly~$\aIIna$ vertices.

(b) If instead $G[X_2,Y_1]$ contains at most $16\eta^{1/64}n^2$ green edges, then, after removing at most $4\eta^{1/64}n$ vertices from each of~$X_2$ and $Y_1$, we may assume that $G_1[X_2,Y_1]$ is $4\eta^{1/128}n$-almost-complete. Recall from (\ref{HC6}) that $G_2[Y_1,Y_2]$ is $4\eta^{1/64}$-almost-complete. 
Thus, $G_1[Y_1,X_2\cup Y_2]$ is $6\eta^{1/128}$-almost-complete. Having discarded these vertices, recalling~(\ref{IC3}), since $\eta<10^{-20}$, we have 
\begin{equation}
\label{IC5b}
\left.
\begin{aligned}
\,\,\,\quad\quad\,\,\,\,\quad|X_1| & \geq (\aI-2\eta^{1/128})n,
\quad\quad&\quad\quad|X_2| & \geq (\half \aII-7\eta^{1/128})n,\quad\quad\quad\quad
\\|Y_1| & \geq (\aII-7\eta^{1/128})n,
\quad\quad&\quad\quad|Y_2| & \geq (\half \aI-2\eta^{1/128})n.
\end{aligned}
\right\}\!
\end{equation}

Given these bounds, there exist subsets $\wY_1\subseteq X_1$ and $\wY_2\subseteq X_2\cup Y_2$ such that $|\wY_1|=|\wY_2|=\half\aIna$. Then, by Theorem~\ref{moonmoser}, $G_1[\wY_1,\wY_2]$ is Hamiltonian and, thus, provides a red cycle on exactly~$\aIna$ vertices.
\end{proof}

\begin{figure}[!h]
\centering
\includegraphics[height=50mm, page=11]{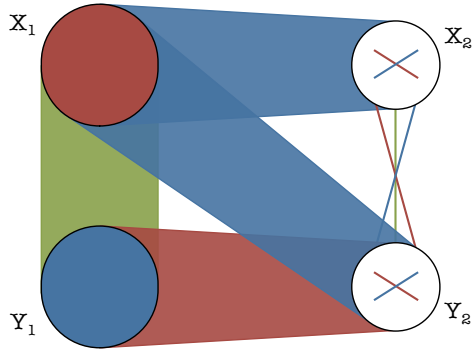}
\vspace{0mm}\caption{Colouring of $G[X_1,Y_2]$ in Claim~\ref{greenorred3}(a).}
\end{figure}

The existence of a red cycle on exactly~$\llangle \aI n \rrangle$ vertices, a blue cycle on exactly~$\llangle \aII n \rrangle$ vertices or a green cycle on exactly $\langle \aIII n \rangle$ vertices, as offered by Claim~\ref{greenorred3}, would be sufficient to complete the proof of Theorem~\hyperlink{thA}{A}. We may, therefore, instead assume that~$Z_G$ is empty.
Then, defining $Z_X$ to be the set of vertices in $Z$ having no green edges to $X_1$ and $Z_Y$ to be the set of vertices in $Z$ having no green edges to $Y_1$, we see that  $Z_X\cup Z_Y$ is a partition of $Z$. Thus either 
\begin{align*}
|X_1 \cup X_2 \cup Z_X| & \geq \llangle \aI n \rrangle + \half \llangle \aII n \rrangle -1 && \text{or} & |Y_1 \cup Y_2 \cup Z_Y| & \geq \llangle \aI n \rrangle + \half \llangle \aII n \rrangle -1.
\end{align*}
In the first case, the proof proceeds exactly as in Part I.A. In the second case, the proof proceeds exactly as in Part I.B. Thus, we obtain either a red cycle on exactly~$\llangle \aI n \rrangle$ or a blue cycle on exactly~$\llangle \aI n \rrangle$, completing Part I of the proof of Theorem~\hyperlink{thA}{A}.
Note that, in the case that $\aI\geq\aIII$,
we have dealt with the only coloured structure that can arise. Thus, since the graph providing the corresponding lower bound has already been seen, we have proved that, given $\aI,\aII,\aIII>0$ such that $\aI\geq\aII,\aIII$, there exists $n_{A^{\prime}}=n_{A^{\prime}}(\aI,\aII,\aIII)$ such that, for $n> n_{A^{\prime}}$,
$$ R(C_{\llangle \alpha_{1}n \rrangle}, C_{\llangle \alpha_{2}n \rrangle}, C_{\langle \alpha_{3}n \rangle }) = 2\llangle \alpha_{1}n \rrangle+ \llangle \alpha_{2}n \rrangle - 3.$$

\section{Proof of the main result  -- Part II -- Case (v)}
\label{s:p12}

Suppose that $\cG$ contains a subgraph from 
\begin{align*}
\cK\left((\half\aI-14000\eta^{1/2})k, (\half\aI-14000\eta^{1/2})k, (\aIII-68000\eta^{1/2})k, 4\eta^4 k\right).
\end{align*}

In that case, by Theorem~\hyperlink{reB}{B}, we may assume that
$
\label{a3big} 
\aIII\geq\tfrac{3}{2}\aI+\half\aII-10\eta^{1/2}.
$

 Recalling that $\cG_1, \cG_2$ and $\cG_3$ are the monochromatic spanning subgraphs of the  reduced-graph~$\cG$, since $\eta<10^{-20}$, the vertex set~$\cV$ of~$\cG$ has partition into $\cX _1 \cup \cX _2 \cup \cX_3\cup \cZ$~with 
\begin{align*}
(\half\aI-\eta^{1/4})k\leq |\cX_1| &=p\leq \half\aI k,\\
(\half\aII-\eta^{1/4})k\leq |\cX_2| &=q\leq \half\aII k,\\
(\aIII-\eta^{1/4})k \leq |\cX_3| &=r\leq \aIII k.
\end{align*}
such that all edges present in $\cG[\cX_1,\cX_3]$ are coloured exclusively red, all edges present in $\cG[\cX_2,\cX_3]$ are coloured exclusively blue, all edges present in $\cG[\cX_3]$ are coloured exclusively green and $\cG[\cX_1\cup \cX_2\cup \cX_3]$ is $4\eta^4 k$-almost-complete.
\begin{figure}[!h]
\centering
\includegraphics[width=64mm, page=15]{Th-AB-Figs.pdf}
\vspace{0mm}\caption{Coloured structure of the  reduced-graph in Part II.}
  
\end{figure}

Again, each vertex $V_{i}$ of $\cG=(\cV,\cE)$ represents a cluster of vertices of $G=(V,E)$ with
$$(1-\eta^4)\frac{N}{K}\leq |V_i|\leq \frac{N}{K}$$ 
and, since $n> \max\{n_{\ref{th:blow-up}}(2,1,0,\eta), n_{\ref{th:blow-up}}(\half,\half,1,\eta)\}$, we have $$
|V_i|\geq \left(1+\frac{\eta}{24}\right)\frac{n}{k}> \frac{n}{k}.$$ 
We partition the vertices of~$G$ into sets $X_{1}$, $X_{2}$, $X_{3}$ and~$Z$ corresponding to the partition of the vertices of~$\cG$ into $\cX_1, \cX_2, \cX_3$ and $\cZ$. Then~$X_1$ contains~$p$ clusters of vertices, $X_2$ contains~$q$ clusters, $X_3$ contains~$r$ clusters and we have  
\begin{equation}
\label{Y0}
\left.
\begin{aligned}
\quad\,\,\,
|X_1| &= p|V_1|\geq(\half\aI-\eta^{1/4})n,
\quad\quad&\quad\quad |X_3| & = r|V_1|\geq(\aIII-\eta^{1/4})n,
\quad\quad
\\
 |X_2| & = q|V_1|\geq(\half\aII-\eta^{1/4})n.
\end{aligned}
\right\}\!\!
\end{equation}
In what follows, we will remove vertices from $X_1, X_2, X_3$,  by moving them into~$Z$. We prove the below claim which essentially tells us that $G$ has a similar coloured structure to $\cG$.
\begin{claim}
\label{G-structII}
We can remove at most $7\eta^{1/2}n$ vertices from~$X_1$,
$7\eta^{1/2}n$ vertices from~$X_2$,
and $24\eta^{1/2}n$ vertices from~$X_3$
such that the following holds:
\begin{itemize}
\item[(i)] $G_1[X_1,X_3]$ is $7\eta^{1/2}n$-almost-complete;
\item[(ii)] $G_2[X_2,X_3]$ is $7\eta^{1/2}n$-almost-complete;
\item[(iii)] $G_3[X_3]$ is $10\eta^{1/2}n$-almost-complete.
\end{itemize}
 \end{claim}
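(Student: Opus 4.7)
The plan is to follow the scheme of Claim~\ref{G-struct}: use the structural information about $\cG$ to bound the number of miscoloured edges of~$G$ across each pair, then remove the few vertices that account for an unusually large share of them.

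For~(i), consider $G[X_1,X_3]$. Every non-red edge lies in a pair of clusters $(V_i,V_j)$ that is either non-regular for~$G_1$, or regular with a red edge in~$\cG$. By the $4\eta^4 k$-almost-completeness of $\cG[\cX_1\cup\cX_2\cup\cX_3]$, there are at most $4\eta^4 k\cdot p$ non-regular pairs between $\cX_1$ and $\cX_3$, contributing at most $4\eta^4 k p (N/K)^2$ non-red edges; for each of the at most $pr$ regular red pairs, the blue and green densities are each strictly less than~$\eta$, contributing at most $2\eta p r (N/K)^2$ non-red edges in total. Using $p\leq\half\aI k$, $r\leq\aIII k\leq 2k$, $N\leq 3n$ and $K\geq k$, the sum is $O(\eta n^2)$. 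Averaging, at most $O(\eta^{1/2}n)$ vertices of~$X_1$ have non-red degree to~$X_3$ exceeding $7\eta^{1/2}n$, and similarly for~$X_3$; removing them secures~(i) while staying comfortably within the~$7\eta^{1/2}n$ budget for~$X_1$.

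The same argument with red and blue exchanged (and $X_1$ replaced by $X_2$) secures~(ii). For~(iii), I would count non-green edges inside $G[X_3]$, now including a third contribution from within-cluster edges (since regularity says nothing about these). Each cluster contributes at most $\binom{N/K}{2}$ such edges, for a total of $r\binom{N/K}{2}=O(n^2/k)\leq \eta n^2$ using $k\geq 72/\eta$. The non-regular-pair and regular-green-pair contributions are handled exactly as above and are each $O(\eta n^2)$. Thus the total is~$O(\eta n^2)$, so removing all vertices of~$X_3$ of non-green degree exceeding $10\eta^{1/2}n$ costs a further $O(\eta^{1/2}n)$ vertices.

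Aggregating the three rounds gives total removals of at most roughly $3\eta^{1/2}n$ from~$X_1$, $3\eta^{1/2}n$ from~$X_2$, and $14\eta^{1/2}n$ from~$X_3$, comfortably within the stated budgets of $7\eta^{1/2}n$, $7\eta^{1/2}n$ and $24\eta^{1/2}n$. The absolute (rather than proportional) nature of the thresholds means that removing vertices in later rounds only decreases the relevant miscoloured degrees of surviving vertices and hence does not invalidate conditions established in earlier rounds, so all three properties hold at the end. The only subtlety is the book-keeping of the implicit constants across the three rounds, which is routine rather than a genuine obstacle.
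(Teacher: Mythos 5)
Your proposal is correct and follows essentially the same route as the paper: bound the miscoloured edges across each pair (non-regular pairs, regular pairs below the colour threshold, and, for $G[X_3]$, within-cluster edges) by $O(\eta n^2)$, then delete the few vertices carrying miscoloured degree above the $7\eta^{1/2}n$ (resp.\ $10\eta^{1/2}n$) threshold, noting as the paper implicitly does that these absolute degree conditions are preserved under later removals. The only differences are cosmetic book-keeping of constants, which your budgets absorb.
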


\begin{proof} 
Consider $G[X_1,X_3]$. We can bound the the number of non-red edges in $G[X_1,X_3]$ by 
$$4\eta^4 rk \left(\frac{N}{K}\right)^2+2\eta pr\left(\frac{N}{K}\right)^{2},$$
where the first term bounds the number of non-red edges between non-regular pairs, the second bounds the number of non-red edges between pairs of clusters that are joined by a red edge in the  reduced-graph.
Since $K\geq k$, $N\leq 3n$, $p \leq \half\aI k\leq \half k$ and $r\leq \aIII k \leq 2k$, we obtain
$$e(G_2[X_1,X_3])+e(G_3[X_1,X_3])\leq (72 \eta^4+18\eta)\leq 40\eta n^2.$$
Since $G[X_1,X_3]$ is complete and contains at most $40\eta n^2$ non-red edges, there are at most $7\eta^{1/2}$ vertices in~$X_1$ with red degree to~$X_3$ at most $|X_3|-7\eta^{1/2}n$ and at most $7\eta^{1/2}$ vertices in~$X_3$ with red degree to~$X_1$ at most $|X_1|-7\eta^{1/2}n$. Re-assigning these vertices to~$Z$ results in every vertex in~$X_1$ having degree in $G_1[X_1,X_3]$ at least $|X_3|-7\eta^{1/2}n$ and every vertex in~$X_3$ having degree in $G_1[X_1,X_3]$ at least $|X_1|-7\eta^{1/2}n$.
We repeat the above for~$G[X_2,X_3]$, removing vertices such that every (remaining) vertex in~$X_2$ has degree in $G_2[X_2,X_3]$ at least $|X_3|-7\eta^{1/2}n$ and every (remaining) vertex in~$X_3$ has degree in $G_2[X_1,X_3]$ at least $|X_2|-7\eta^{1/2}n$.

Finally, we consider the complete three-coloured graph $G[X_3]$ and recall that $\cG[\cX_3]$ is $4\eta^4k$-almost-complete and has only green edges. Given the structure of~$\cG$, we can bound the number of non-green edges in $G[X_3]$ by
$$r \binom{N/K}{2} + 4\eta^4 rk \left(\frac{N}{K}\right)^2+2\eta\binom{r}{2}\left(\frac{N}{K}\right)^{2},$$ 
where the first term counts the number of non-green edges within the clusters,
the second counts the number of non-green edges between non-regular pairs,
the third counts the number of non-green edges between regular pairs.
%
%
%
%
%
%
Since $K\geq k, \eta^{-1}$, $N\leq 3n$ and $r\leq \aIII k \leq 2k$, we obtain
$$e(G_1[X_3])+e(G_2[X_3])\leq [9\eta + 72\eta^{4} +36 \eta]n^2\leq 50\eta n^2.$$
Since $G[X_3]$ is complete and contains at most $50\eta n^2$ non-green edges, there are at most $10\eta^{1/2}n$ vertices with green degree at most $|X_3|-10\eta^{1/2}n$. After re-assigning these vertices to~$Z$, every vertex in $G[X_3]$ has red degree at least $|X_3|-10\eta^{1/2}n$, thus completing the proof of Claim~\ref{G-structII}.
\end{proof}

We now proceed to the {end-game}: Observe that, by Corollary~\ref{moonmoser2} there exist red cycles in $G[X_1,X_3]$ of every (non-trivial) even length up to twice the size of the smaller part and blue cycles in $G[X_2,X_3]$ of every (non-trivial) even length up to twice the size of the smaller part. Similarly, by Corollary~\ref{dirac1a}, there exist green cycles in $G[X_3]$ of every (non-trivial) length up to $|X_3|$. We will show that it is possible to augment each of $X_1, X_2, X_3$ with vertices from~$Z$ while maintaining this  property. Then, considering the sizes of each part, there must, in fact, be a cycle of appropriate length, colour and parity to complete the proof:

Since $24\eta^{1/2}\leq\eta^{1/4}$, recalling (\ref{Y0}), having discarded some vertices while proving Claim~\ref{G-structII}, we have
\begin{align*}
(\half\aI-2\eta^{1/4})n \leq |X_1|&< \half\llangle \aI n \rrangle, \\
(\half\aII-2\eta^{1/4})n \leq |X_2|&< \half\llangle \aII n \rrangle,\\
(\aIII-2\eta^{1/4})n \leq |X_3|&< \langle \aIII n \rangle,
\end{align*}
and know that $G_1[X_1,X_3]$, $G_2[X_2,X_3]$ and $G_3[X_3]$ are each $\eta^{1/4}n$-almost-complete.

Now, let
\vspace{-1mm}
 $$Z_G=\{z\in Z : w\text{ has at least }4\eta^{1/4}n\text{ green edges to }X_3\}.$$ Suppose that $|X_3\cup Z_G|\geq\langle\aIII n\rangle$. Then, since $|X_3|\leq \langle\aIII n\rangle$, we may choose a subset~$X$ of size $\langle \aIII n \rangle$ from $X_3\cup Z_G$ which includes every vertex from $X_3$ and $\langle \aIII n \rangle-|X_3|$ vertices from $Z_G$. Then, $G[X]$ has at least $(\aIII-2\eta^{1/4})n$ vertices of degree at least $(\aIII-4\eta^{1/4})n$ and at most $2\eta^{1/4}$ vertices of degree at least $4\eta^{1/4} n$, so, by Theorem~\ref{chv}, $G[X]$ is Hamiltonian and, thus, contains a green cycle of length exactly $\langle \aIII n \rangle$. The existence of such a cycle would be sufficient to complete the proof of Theorem~\hyperlink{thA}{A} in this case, so we may assume, instead that $|X_3\cup Z_G|<\langle\aIII n\rangle$.
Thus, letting $Z_{RB}=Z\backslash Z_G$, we may assume that 
$$|X_1|+|X_2|+|Z_{RB}|\geq\half\llangle\aI n\rrangle +\half\llangle \aII n \rrangle -1,$$
and, defining
\vspace{-2mm}
\begin{align*}
Z_R&=\{z\in Z : z\text{ has at least }\half|X_3|-2\eta^{1/4}n\text{ red edges to }X_3\},\\
Z_B&=\{z\in Z : z\text{ has at least }\half|X_3|-2\eta^{1/4}n\text{ blue edges to }X_3\},
\end{align*}
we may assume, without loss of generality, that $|X_1\cup Z_R|\geq \llangle \aI n \rrangle.$
In that case, let $\widetilde{Z}_R\subseteq Z_R$ be such that $|X_1|+|\widetilde{Z}_R|=\half\aIna$. Then, observing that any $z\in Z_R$ and $x\in X_1$ have at least $\half|X_3|-3\eta^{1/4}n\geq|\widetilde{Z}_R|$ common neighbours and that any $x,y\in X_1$ have at least $(\aIII -2\eta^{1/4})n\geq\aI n$ common neighbours, we can greedily construct a red cycle of length~$\llangle \aI n \rrangle$ using all the vertices of $X_1\cup \widetilde{Z}_R$, thus completing this part of the proof of Theorem~\hyperlink{thA}{A}.

\section{Proof of the main result  -- Part III -- Case (vi)}
\label{s:p13}

We now consider the final case, thus, we suppose that $\cG$ contains a subgraph $K^*$ from $\cK_1^{*}\cup \cK_2^*$, where
\begin{align*}
\cK_{1}^{*}=\cK^*\big((\half\aI-97\eta^{1/2})k, (\half\aI-97&\eta^{1/2})k, (\half\aI+102\eta^{1/2})k, \\&(\half\aI+102\eta^{1/2})k, (\aIII-10\eta^{1/2})k, 4\eta^4 k\big),\hphantom{l}
\end{align*}
\vspace{-12mm}
\begin{align*}
\cK_{2}^{*}=\cK^*\big((\half\aI-97\eta^{1/2})k, (\half\aII-97\eta^{1/2})k, (\tfrac{3}{4}&\aIII-140\eta^{1/2})k, \\& 100\eta^{1/2}k, (\aIII-10\eta^{1/2})k, 4\eta^4 k\big).
\end{align*}
Recalling, Theorem~\hyperlink{reB}{B}, we may assume that 
\begin{equation}
\tag{K0}
\label{K0} 
\aIII\geq\tfrac{3}{2}\aI+\half\aII-10\eta^{1/2}.
\end{equation}
Recalling that $\cG_1, \cG_2$ and $\cG_3$ are the monochromatic spanning subgraphs of the  reduced-graph, we have a partition of the vertex set $\cV$ of $\cG$ into $\cX_1\cup\cX_2\cup\cY_1\cup\cY_2\cup\cZ$ 
such that all edges present in $\cG[\cX_1,\cY_1]\cup\cG[\cX_2,\cY_2]$ are coloured exclusively red, all edges present in $\cG[\cX_1,\cY_2]\cup\cG[\cX_2,\cY_1]$ are coloured exclusively blue and all edges present in $\cG[\cX_1,\cX_2]\cup\cG[\cY_1,\cY_2]$ are coloured exclusively green. Also, for any $\cZ\subseteq\cX_1\cup\cX_2\cup\cY_1\cup\cY_2$, $\cG[\cZ]$ is $4\eta^4 k$-almost-complete.

\begin{figure}[!h]
\centering
\includegraphics[width=64mm, page=17]{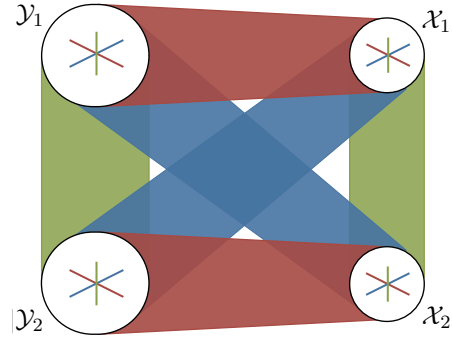}
\vspace{0mm}\caption{Initial coloured structure of the  reduced-graph in Part III.}
\label{fig-III-initial}
\end{figure}

In each case, before proceeding to consider $G$, we we must determine more about the the coloured structure of the  reduced-graph $\cG$. In the process, we will discard further vertices from $\cX_1, \cX_2, \cY_1$ and~$\cY_2$. As in Section~\ref{s:stabp2}, these discarded vertices are considered as having been re-assigned to $\cZ$.

\subsection*{Part III.A:  $K^*\in\cK_1^{*}$.}
In this case, we have a partition of the vertex set $\cV$ of $\cG$ into $\cX_1\cup\cX_2\cup\cY_1\cup\cY_2\cup\cZ$ with
\begin{align*}
|\cX_1|,|\cX_2|\geq(\half\aI-97\eta^{1/2})k, \quad\quad
|\cY_1|,\,|\cY_2|\geq(\half\aI+102\eta^{1/2})k, \quad\quad
|\cY_1|\!+\!|\cY_2|\geq(\aIII-10\eta^{1/2})k.
\end{align*}
 Observe that, since $|\cX_1|,|\cY_1|\geq(\half\aI-97\eta^{1/2})k$ and $\cG_1[\cX_1\cup\cY_1]$ is $4\eta^4 k$-complete, $\cG[\cX_1\cup\cY_1]$ has a single red component.  Similarly, $\cG[\cX_2\cup\cY_2]$ has a single red component and each of $\cG[\cX_1\cup\cY_2]$ and $\cG[\cX_2\cup\cY_1]$ has a single blue component.  

Consider~$\cG[\cY_1]$ and suppose that there exists a red matching $\cR_1$ on $198\eta^{1/2}k$ vertices in $G[\cY_1]$. Then, we have $|\cY_1\backslash\cV(\cR_1)|,|\cX_1|\geq(\half\aI-97\eta^{1/2})k$, so, by Lemma~\ref{l:eleven}, $\cG[\cX_1,\cY_1\backslash \cV(\cR_1)]$ contains a red connected-matching on at least $(\aI-196\eta^{1/2})k$ vertices, which combined with $\cR_1$ gives a red connected-matching on at least~$\aI k$ vertices.  
Thus, there can be no such red matching in $\cG[\cY_1]$. Similarly,~$\cG[\cY_1]$ cannot contain a blue matching on $198\eta^{1/2}k$ vertices. 
Thus, after discarding at most $396\eta^{1/2}k$ vertices from $\cY_1$, we may assume that all edges present in~$\cG[\cY_1]$ are coloured exclusively green. Similarly, after discarding at most $396\eta^{1/2}k$ vertices from $\cY_2$, we may assume that all edges present in~$\cG[\cY_2]$ are coloured exclusively green.

\begin{figure}[!h]
\centering
\includegraphics[width=64mm, page=18]{Th-AB-Figs.pdf}
\vspace{0mm}\caption{Colouring of $\cG[\cY_1]\cup\cG[\cY_2]$.}
  
\end{figure}

Thus, we may assume that we have a partition $\cX_1\cup\cX_2\cup\cY_1\cup\cY_2\cup\cZ$  such that 
\begin{subequations}
\begin{equation}
\left.
\begin{aligned}
\quad\quad\quad\quad\quad\quad\quad\quad\quad\quad
(\half\aI-97\eta^{1/2})k&\leq|\cX_1|=|\cX_2|=p\leq\half\aI k,
\quad\quad\quad\quad\,\\
(\half\aI-294\eta^{1/2})k&\leq|\cY_1|=r, \\ 
(\half\aI-294\eta^{1/2})k&\leq|\cY_2|=s.
\end{aligned}
\right\}\!
\end{equation}

Additionally, writing $\cY$ for $\cY_1\cup\cY_2$, we have
\begin{align}
(\aIII-802\eta^{1/2})k&\leq |\cY|=r+s\leq \aIII k \,\,
\end{align}
\end{subequations}
and know that all edges present in $\cG[\cX_1,\cY_1]\cup\cG[\cX_2,\cY_2]$ are coloured exclusively red, all edges present in $\cG[\cX_1,\cY_2]\cup\cG[\cX_2,\cY_1]$ are coloured exclusively blue and all edges present in $\cG[\cX_1,\cX_2]\cup\cG[\cY_1\cup\cY_2]$ are coloured exclusively green.

Thus far, we have obtained information about the structure of the  reduced-graph~$\cG$. The remainder of this section focuses on showing that the original graph must have a similar structure which can then be exploited to force a cycle of appropriate length, colour and parity. Again, each vertex $V_{i}$ of $\cG=(\cV,\cE)$ represents a cluster of vertices of $G=(V,E)$ with
$$(1-\eta^4)\frac{N}{K}\leq |V_i|\leq \frac{N}{K}$$ 

and that, since $n> \max\{n_{\ref{th:blow-up}}(2,1,0,\eta), n_{\ref{th:blow-up}}(\half,\half,1,\eta)\}$, we have $$
|V_i|\geq \left(1+\frac{\eta}{24}\right)\frac{n}{k}> \frac{n}{k}.$$ 

We partition the vertices of~$G$ into sets $X_{1}$, $X_{2}$, $Y_{1}$, $Y_{2}$ and~$Z$ corresponding to the partition of the vertices of~$\cG$ into $\cX_1, \cX_2, \cY_1, \cY_2$ and $\cZ$. Then~$X_1$ and $X_2$ contain~$p$ clusters of vertices, $Y_1$ contains~$q$ clusters and $Y_2$ contains~$r$ clusters. Note that, we write~$Y$ for $Y_1\cup Y_2$ and $\cY$ for $\cY_1\cup \cY_2$. Thus, we have
\begin{equation}
\label{Z0}
\left.
\begin{aligned}
\,\quad\quad\quad\quad\quad\quad\quad\quad
|X_1|=|X_2|&=p|V_1|\geq(\half\aI-97\eta^{1/2})n,
\quad\quad\quad\quad\quad\quad\quad\quad\\
|Y_1|&=r|V_1|\geq(\half\aI-294\eta^{1/2})n,\\
|Y_2|&=s|V_1|\geq(\half\aI-294\eta^{1/2})n,\\
|Y|=|Y_1|+|Y_2|&=(r+s)|V_1|\geq(\aIII-802\eta^{1/2})n.
\end{aligned}
\right\}
\end{equation}
Again, we will remove vertices from $X_1\cup X_2\cup Y_1\cup Y_2$,  by moving them into~$Z$.

The following claim  tells us that the graph $G$ has essentially the same coloured structure as the  reduced-graph $\cG$:
\begin{claim}
\label{G-structIIIA}
We can remove at most $14\eta^{1/2}n$ vertices from~$X_1$,
$14\eta^{1/2}n$ vertices from~$X_2$ and $38\eta^{1/2}n$ vertices from~$Y$
such that each of the following holds:
\end{claim}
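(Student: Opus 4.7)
The plan is to mimic the proof of Claim~\ref{G-structII}, applying the same edge-counting/cluster-density argument to each of the relevant pieces $G[X_1,Y_1]$, $G[X_2,Y_2]$, $G[X_1,Y_2]$, $G[X_2,Y_1]$, $G[X_1,X_2]$, and $G[Y]$, with target colours red, red, blue, blue, green and green respectively. For each such piece, I bound the total number of edges of the ``wrong'' colour(s) by splitting contributions into three sources: edges lying inside a cluster (only relevant for $G[Y]$, since that piece is induced rather than bipartite across $\cV(\cG)$-classes), edges across non-regular pairs, and edges across regular pairs either joined by a non-target colour in $\cG$ or not joined by the target colour at all (so of density $\leq\eta$).

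For a bipartite piece such as $G[X_1,Y_1]$, the two latter sources respectively contribute at most $4\eta^4 \cdot \max(p,r)\cdot k\cdot (N/K)^2$ and $2\eta\cdot p\cdot r \cdot (N/K)^2$ non-red edges; using $K\geq k$, $N\leq 3n$ and $p,r\leq \aIII k\leq 2k$, this gives at most $O(\eta)\, n^2$ non-red edges in $G[X_1,Y_1]$. A straightforward Markov argument then shows that removing at most $O(\eta^{1/2})\,n$ vertices from $X_1$ and from $Y_1$ makes the remaining $G_1[X_1,Y_1]$ be $O(\eta^{1/2})\,n$-almost-complete. The same reasoning applied to $G[X_2,Y_2]$, $G[X_1,Y_2]$, $G[X_2,Y_1]$ and $G[X_1,X_2]$ yields the red/blue/green almost-completeness of each. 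The piece $G[Y]$ is handled as in Claim~\ref{G-structII}(iii): the within-cluster contribution adds a term $(r+s)\binom{N/K}{2}\leq O(\eta)\,n^2$ (using $K\geq 1/\eta$), which is absorbed in the same bound.

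Finally I add up the vertex removals. Each vertex of $X_1$ is involved in three bipartite pieces $G[X_1,Y_1]$, $G[X_1,Y_2]$ and $G[X_1,X_2]$, each forcing the removal of at most around $5\eta^{1/2}n$ bad vertices; summing and tightening constants gives the stated bound $14\eta^{1/2}n$, and the identical count works for $X_2$. Each vertex of $Y=Y_1\cup Y_2$ participates in $G[X_1,Y_1]$ or $G[X_2,Y_2]$, in $G[X_1,Y_2]$ or $G[X_2,Y_1]$, and in $G[Y]$; since $G[Y]$ is induced rather than bipartite, its ``bad'' degree threshold is chosen slightly more generously, producing a larger constant and the bound $38\eta^{1/2}n$.

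The only mild obstacle is bookkeeping: because $Y$ is treated as a single set (rather than splitting the claim into bounds on $Y_1$ and $Y_2$ separately), one must be careful to track that the vertex removed from, say, $G[X_1,Y_1]$ and from $G[X_1,Y_2]$ are counted once when tallying the ``at most $38\eta^{1/2}n$ removed from $Y$'' bound, and that the resulting almost-complete bipartite (and induced) minimum-degree guarantees are all formulated with respect to the reduced sets $X_1,X_2,Y_1,Y_2$. This is entirely analogous to the handling in Claim~\ref{G-struct} and Claim~\ref{G-structII}, and introduces no new difficulty.
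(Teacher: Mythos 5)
Your proposal is correct and takes essentially the same route as the paper: for each piece ($G[Y]$ plus the four bipartite graphs) one bounds the wrong-coloured edges by within-cluster edges, non-regular pairs and low-density regular pairs (here the reduced-graph pieces are exclusively coloured, so the ``joined by a non-target colour'' contribution is vacuous), then removes the few vertices of large wrong-coloured degree and sums the budgets exactly as you describe ($7+7$ for each $X_i$, $10+4\times 7$ for $Y$). The only difference is that you also process $G[X_1,X_2]$, which the claim does not require; this is harmless since its bad-vertex count is small enough that your totals still fit within $14\eta^{1/2}n$ per $X_i$.
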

\begin{itemize}
\labitem{KA1}{xy-red} $G_1[X_1,Y_1]$ and $G_1[X_2,Y_2]$ are each $7\eta^{1/2}n$-almost-complete;
\labitem{KA2}{xy-blue} $G_2[X_1,Y_2]$ and $G_2[X_2,Y_1]$ are each $7\eta^{1/2}n$-almost-complete;
\labitem{KA3}{y-green} $G_3[Y]$ is $10\eta^{1/2}n$-almost-complete.
\end{itemize}

\begin{proof}
Consider the complete three-coloured graph $G[Y]$ and recall that $\cG[\cY]$ is $4\eta^4k$-almost-complete and that all edges present in $\cG[\cY]$ are coloured exclusively green. Given the construction of~$\cG$, we can bound the number of non-green edges in $G[Y]$ by 
%
%
%
%
$$(r+s) \binom{N/K}{2} + 4\eta^4 (r+s)k \left(\frac{N}{K}\right)^2+2\eta\binom{r+s}{2}\left(\frac{N}{K}\right)^{2},$$
where
where the first term counts the number of non-green edges within the clusters,
the second counts the number of non-green edges between non-regular pairs,
the third counts the number of non-green edges between regular pairs.

Since $K\geq k, \eta^{-1}$, $N\leq 3n$ and $r+s\leq\aIII k\leq 2k$, we obtain
$$e(G_1[Y])+e(G_2[Y])\leq [9\eta + 72\eta^{4} +36 \eta]n^2\leq 50\eta n^2.$$
Since $G[Y]$ is complete and contains at most $50\eta n^2$ non-green edges, there are at most $10\eta^{1/2}n$ vertices with green degree at most $|Y|-10\eta^{1/2}n$. Re-assigning these vertices to~$Z$ gives a new~$Y$ 
such that every vertex in $G[Y]$ has red degree at least $|Y|-10\eta^{1/2}n$. 

Next, we consider $G[X_1,Y_1]$, bounding the the number of non-red edges in $G[X_1,Y_1]$ by 
$$4\eta^4 pk \left(\frac{N}{K}\right)^2+2\eta pr\left(\frac{N}{K}\right)^{2}, $$
where the first term bounds the number of non-red edges between non-regular pairs and the second bounds the number of non-red edges between regular pairs.

Since $K\geq k$, $N\leq 3n$, $p \leq \half\aI\leq \half k$ and $r\leq \aIII k \leq 2k$, we obtain
$$e(G_2[X_1,Y_1])+e(G_3[X_1,Y_1])\leq (18\eta^4+18\eta)n^2 \leq 40 \eta n^2.$$
Since $G[X_1,Y_1]$ is complete and contains at most $40\eta n^2$ non-red edges, there are at most $7\eta^{1/2}n$ vertices in~$X_1$ with red degree to~$Y_1$ at most $|Y_1|-7\eta^{1/2}n$ and at most $7\eta^{1/2}n$ vertices in~$Y_1$ with red degree to~$X_1$ at most $|X_1|-7\eta^{1/2}n$. Re-assigning these vertices to~$Z$ results in every vertex in~$X_1$ having degree in $G_1[X_1,Y_1]$ at least $|Y_1|-7\eta^{1/2}n$ and every vertex in~$Y_1$ having degree in $G_1[X_1,Y_1]$ at least $|X_1|-7\eta^{1/2}n$.
We repeat the above for each of~$G[X_1,Y_2], G[X_2,Y_1]$ and $G[X_2,Y_2]$, thus completing the proof of the claim. \end{proof}

\begin{figure}[!h]
\centering
\includegraphics[width=64mm, page=19]{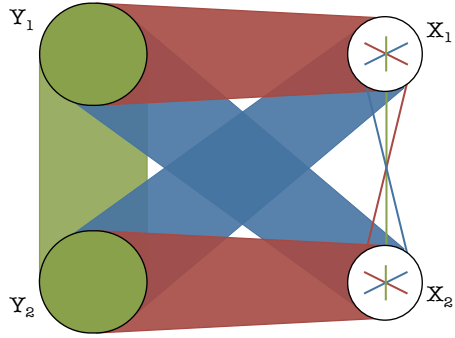}
\vspace{0mm}\caption{Colouring of $G$ after Claim~\ref{G-structIIIA}.}
\end{figure}

Having removed some vertices from $X_1,X_2,Y_1$ and $Y_2$, recalling (\ref{Z0}), we have
\begin{equation}
\label{Z1}
\left.
\begin{aligned}
\quad\quad\quad
|X_1|=|X_2|&\geq(\half\aI-112\eta^{1/2})n,\quad\quad&
|Y_1|&\geq(\half\aI-332\eta^{1/2})n,\,\,\quad\quad\\
|Y|&\geq(\aIII-840\eta^{1/2})n, &
|Y_2|&\geq(\half\aI-332\eta^{1/2})n.
\end{aligned}
\right\}
\end{equation}
Notice also, that, since $\eta\leq(\aII/2500)^2$, without loss of generality, we have 
\begin{equation}
\label{y1bigish}
\left.
\begin{aligned}
\quad\,\,\,|Y_1|=\max\{|Y_1|,|Y_2|\}&\geq \half(\aIII-840\eta^{1/2})n
\\&\geq(\tfrac{3}{4}\aI+\tfrac{1}{4}\aII-430\eta^{1/2})n\geq(\half\aI+100\eta^{1/2})n.\quad\,\,\,
\end{aligned}
\right\}
\end{equation}
Then, recalling (\ref{xy-red}) and (\ref{xy-blue}), by Corollary~\ref{moonmoser2}, in order to avoid having a red cycle on exactly~$\aIna$ vertices in $G[X_1,Y_1]$ or a blue cycle on exactly~$\aIIna$ vertices in $G[X_2,Y_1]$, we may assume that $|X_1|<\half\aIna$ and $|X_2|< \half\aIIna$. Also, recalling (\ref{y-green}), by Corollary~\ref{dirac1a}, in order to avoid having a green cycle on exactly $\aIIIna$, we may assume that $|Y|<\aIIIna$. 

Now, let 
$$Z_G=\{z\in Z : z\text{ has at least $850\eta^{1/2}n$ green edges to }Y\}.$$
Suppose that $|Y\cup Z_G|\geq\langle\aIII n\rangle$. Then, we may choose a set $Y^{\prime}$ of vertices from $Y\cup Z_G$, including every vertex from $Y$ and at most $840\eta^{1/2}n$ vertices from $Z_G$. In that case, $G[Y^{\prime}]$ has at least $(\aIII-840\eta^{1/2})n$ vertices of degree at least $(\aIII-850\eta^{1/4})n$ and at most $840\eta^{1/2}n$ vertices of degree at least~$850\eta^{1/2} n$, so, by Theorem~\ref{chv}, $G[Y^{\prime}]$ is Hamiltonian and thus contains a green cycle of length exactly $\langle \aIII n \rangle$. The existence of such a cycle would be sufficient to complete the proof of Theorem~\hyperlink{thA}{A}. Thus, we may assume, instead that $|Y\cup Z_G|<\langle\aIII n\rangle$.
Thus, letting $Z_{RB}=Z\backslash Z_G$, we may assume that 
\begin{equation}
\label{x1x2wrb}
|X_1|+|X_2|+|Z_{RB}|\geq\half\llangle\aI n\rrangle +\half\llangle \aII n \rrangle -1.
\end{equation}
By definition every vertex in $Z_{RB}$ has  at least $|Y|-850\eta^{1/2}n$ red or blue edges to $Y$.

 Observe that, given any pair of vertices $y_{11}, y_{12}$ in $Y_1$, we can use Lemma~\ref{bp-dir} to establish the existence of a long red path in $G[X_1,Y_1]$ from $y_{11}$ to $y_{12}$. Likewise, given $y_{21}, y_{22}$ in~$Y_2$, we can use Lemma~\ref{bp-dir} to establish the existence of a long red path in $G[X_2,Y_2]$ from $y_{21}$ to $y_{22}$. Thus, we may prove the following claim:
 
\begin{claim}
\label{redblueW}
If there exist distinct vertices $y_{11}, y_{12}\in Y_1$,  $y_{21}, y_{22}\in Y_2$ and $z_1,z_2\in Z$ such that the edges $z_1y_{11}, z_2y_{12}, z_1y_{21}$ and $z_2y_{22}$ are all coloured red, then $G$ contains a red cycle on exactly $\llangle \aI n \rrangle$.
\end{claim}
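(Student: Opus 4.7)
The plan is to exhibit a red cycle on exactly $\llangle \aI n \rrangle$ vertices of the form
\[
z_1 \,-\, y_{11} \,-\, P_1 \,-\, y_{12} \,-\, z_2 \,-\, y_{22} \,-\, P_2 \,-\, y_{21} \,-\, z_1,
\]
where $P_1$ is a red path from $y_{11}$ to $y_{12}$ inside $G_1[X_1 \cup Y_1]$ and $P_2$ is a red path from $y_{21}$ to $y_{22}$ inside $G_1[X_2 \cup Y_2]$. Since each $P_i$ alternates between $Y_i$ and $X_i$ with both endpoints in $Y_i$, a path using $a_i$ vertices of $X_i$ has $2a_i + 1$ vertices in total, and the whole cycle has $2(a_1 + a_2) + 4$ vertices. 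I therefore fix integers $a_1, a_2 \geq 1$ with $a_1 + a_2 = \half \llangle \aI n \rrangle - 2$ and both approximately $\aI n / 4$.

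The two paths are produced by Lemma~\ref{bp-dir}. Choose $\widetilde{X}_1 \subseteq X_1$ with $|\widetilde{X}_1| = a_1$ and apply the lemma to $G_1[Y_1, \widetilde{X}_1]$. By~(\ref{y1bigish}) we have $|Y_1| \geq (\half\aI + 100\eta^{1/2})n$, while $a_1 \leq \aI n/4$, so $|Y_1| > |\widetilde{X}_1| + 1$ and $|Y_1| - |\widetilde{X}_1| \geq 14\eta^{1/2} n + 2$. By the almost-completeness of $G_1[X_1, Y_1]$ in~(\ref{xy-red}), every vertex of $\widetilde{X}_1$ has red degree at least $|Y_1| - 7\eta^{1/2} n$ in $G_1[Y_1, \widetilde{X}_1]$, which by the previous estimate is at least $\half(|Y_1| + |\widetilde{X}_1|) + 1$, and $y_{11}, y_{12}$ each have red degree at least $|\widetilde{X}_1| - 7\eta^{1/2} n \geq 2$. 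Lemma~\ref{bp-dir} therefore supplies a red path $P_1$ on $2a_1 + 1$ vertices from $y_{11}$ to $y_{12}$ visiting every vertex of $\widetilde{X}_1$. An identical argument applied to $G_1[Y_2, \widetilde{X}_2]$, using the lower bound $|Y_2| \geq (\half\aI - 332\eta^{1/2})n$ from~(\ref{Z1}), produces the path $P_2$ on $2a_2 + 1$ vertices from $y_{21}$ to $y_{22}$.

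Since $X_1, X_2, Y_1, Y_2, Z$ partition $V(G)$, the paths $P_1$ and $P_2$ together with $z_1, z_2 \in Z$ are pairwise vertex-disjoint, so combining them via the four red edges $z_1 y_{11}, z_2 y_{12}, z_1 y_{21}, z_2 y_{22}$ yields a red cycle on exactly $2 + (2a_1 + 1) + (2a_2 + 1) = 2(a_1 + a_2) + 4 = \llangle \aI n \rrangle$ vertices, as required. The only non-routine step is the degree verification needed to invoke Lemma~\ref{bp-dir}, and this is immediate from~(\ref{xy-red}) together with the gap between $|Y_i|$ and $|\widetilde{X}_i|$ guaranteed by~(\ref{y1bigish}) and~(\ref{Z1}); no step presents a serious obstacle.
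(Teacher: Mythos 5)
Your proposal is correct and follows essentially the same route as the paper: both construct the cycle by applying Lemma~\ref{bp-dir} twice, once in $G_1[\widetilde{X}_1,Y_1]$ and once in $G_1[\widetilde{X}_2,Y_2]$, to obtain red paths of prescribed odd lengths from $y_{11}$ to $y_{12}$ and from $y_{21}$ to $y_{22}$ (the paper takes $\ell_1,\ell_2$ to be the floor and ceiling of $(\llangle\aI n\rrangle-4)/4$, matching your choice of $a_1,a_2$), and then closes the cycle through $z_1$ and $z_2$ using the four given red edges. The degree and size verifications you give via (\ref{xy-red}), (\ref{y1bigish}) and (\ref{Z1}) are the same ones the paper uses.
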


\begin{proof}
\begin{figure}[!h]
\centering
{\includegraphics[height=50mm, page=13]{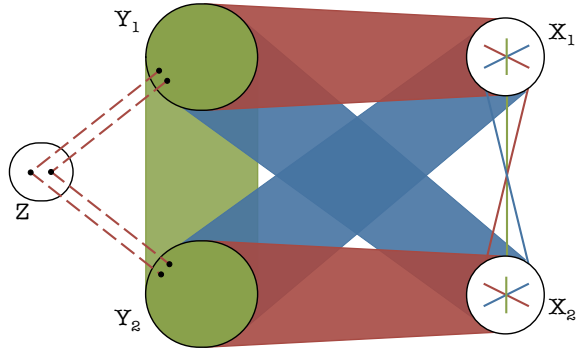}\hspace{18mm}}
\vspace{0mm}\caption{Existence of two red vertex-disjoint paths between $X_1$ and $Y_1$.}
  
\end{figure}
Suppose there exist distinct vertices $y_{11}, y_{12}\in Y_1$,  $y_{21}, y_{22}\in Y_2$ and $z_1, z_2\in Z$ such that the edges $z_1y_{11}, z_2y_{12}, z_1y_{21}$ and $z_2y_{22}$ are all coloured red. 
Then, let $\widetilde{X}_1$ be any set of 
$$\ell_1=\left\lfloor \frac{\llangle \aI n \rrangle-4}{4} \right\rfloor\geq 7\eta^{1/2}n+2$$ vertices from $X_1$.
 By~(\ref{xy-red}), $y_{11}$ and $y_{12}$ each have at least two neighbours in $\widetilde{X}_1$ and, since $\eta\leq(\aI/100)^{2}$, every vertex in $\widetilde{X}_1$ has degree at least 
$|Y_1|-7\eta^{1/2}n\geq \half|Y_1| +\half|\widetilde{X}_1| +1$ in $G[\widetilde{X}_1,Y]$. Then, Since $|Y_1|>\ell_1+1$, by Lemma~\ref{bp-dir},  $G_1[\widetilde{X}_1,Y_1]$ contains a red path $R_1$ on exactly $2\ell_1+1$ vertices from~$y_{11}$ to~$y_{12}$.
Similarly, letting $\widetilde{X}_2$ be any set of $$\ell_2=\left\lceil \frac{\llangle \aI n \rrangle-4}{4} \right\rceil\geq 7\eta^{1/2}n+2$$ vertices from $X_2$, by Lemma~\ref{bp-dir},  $G_1[\widetilde{X}_2,Y_2]$ contains a red path $R_2$ on exactly $2\ell_2+1$ vertices from~$y_{21}$ to~$y_{22}$. Then, combining $R_1$ and $R_2$ with $y_{11} w_1 y_{21}$ and $y_{12} w_2 y_{22}$ gives a red cycle on exactly $2\ell_1+2\ell_2+4=\llangle\aI n \rrangle$ vertices. 
\end{proof}

Similarly, the existence of two such vertex-disjoint paths blue paths from $Y_1$ to $Y_2$ via~$Z$ would result in a blue cycle on exactly~$\llangle \aII n \rrangle$ vertices. The existence of such a red or blue cycle would be sufficient to complete the proof of Theorem~\hyperlink{thA}{A}. Therefore, we may assume that there can be at most two vertices in $W_{RB}$ with red edges to both $Y_1$ and~$Y_2$ or blue edges to both $Y_1$ and $Y_2$. 

We denote by $Z^{*}$ the (possibly empty) set of vertices having either red edges to both $Y_1$ and $Y_2$ or blue edges to both $Y_1$ and $Y_2$.
Then, letting $Z_{RB}^{*}=Z_{RB}\backslash Z^{*}$, observe that we may partition $Z_{RB}^{*}$ into $Z_1\cup Z_2$ such that there are no blue edges in $G[Z_1,Y_1]\cup G[Z_2,Y_2]$ and no red edges in $G[Z_1,Y_2]\cup G[Z_2,Y_1]$. Then, recalling that every vertex in $Z_{RB}$ has at most $850\eta^{1/2}n$ green edges to $Y$, we know that

\begin{itemize}
\labitem{KA4a}{IIIA4a} every vertex in $Z_1$ has red degree at least $|Y_1|-850\eta^{1/2}n$ in $G[Z_1,Y_1]$;
\labitem{KA4b}{IIIA4b} every vertex in $Z_1$ has blue degree at least $|Y_2|-850\eta^{1/2}n$ in $G[Z_1,Y_2]$;
\labitem{KA4c}{IIIA4c} every vertex in $Z_2$ has red degree at least $|Y_2|-850\eta^{1/2}n$ in $G[Z_2,Y_2]$;
\labitem{KA4d}{IIIA4d} every vertex in $Z_2$ has blue degree at least $|Y_1|-850\eta^{1/2}n$ in $G[Z_2,Y_1]$.
\end{itemize}

\begin{figure}[!h]
\centering
{\includegraphics[height=53mm, page=2]{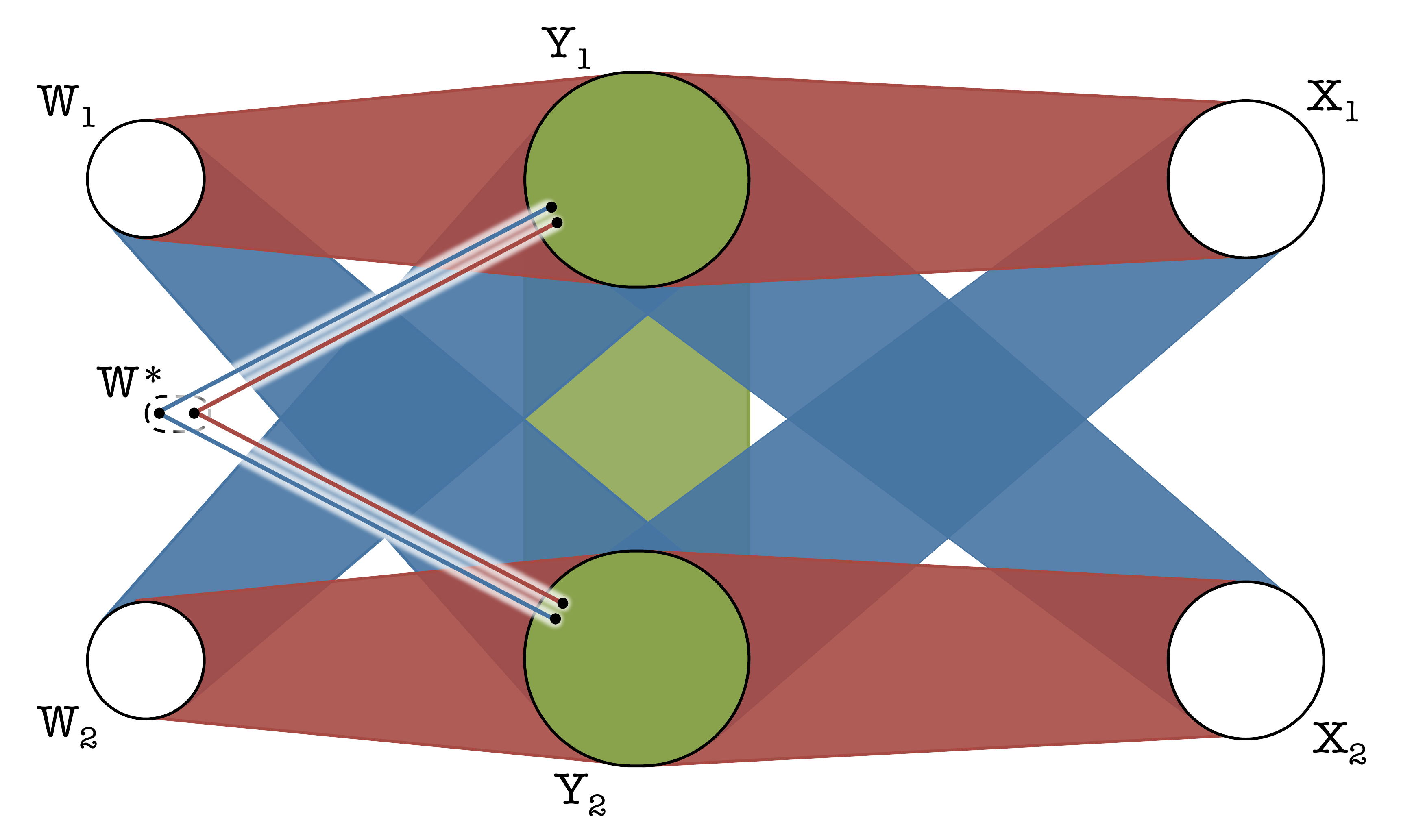}
\vspace{0mm}\caption{Partition of $Z_{RB}$ into $Z_1\cup Z_2\cup Z^{*}$.}}
\end{figure}

\FloatBarrier
Then, since $Z_{RB}=Z_1\cup Z_2\cup Z^{*}$, by~(\ref{x1x2wrb}), we have
$$|X_1|+|X_2|+|Z_1|+|Z_2|+|Z^{*}|\geq\half\llangle\aI n\rrangle +\half\llangle \aII n \rrangle -1.$$
Thus, one of the following must occur:
\begin{itemize}
\item[(i)] $|X_1|+|Z_1|\geq \half\llangle \aI n \rrangle$;
\item[(ii)] $|X_2|+|Z_2|\geq \half\llangle \aII n \rrangle$;
\item[(iii)] $|X_1|+|Z_1|=\half\llangle \aI n \rrangle-1$, $|X_2|+|Z_2|= \half\llangle \aII n \rrangle-1$, and $|Z^{*}|=1$;
\item[(iv)] $|X_1|+|Z_1|=\half\llangle \aI n \rrangle-2$, $|X_2|+|Z_2|= \half\llangle \aII n \rrangle-1$, and $|Z^{*}|=2$; 
\item[(v)] $|X_1|+|Z_1|=\half\llangle \aI n \rrangle-1$, $|X_2|+|Z_2|= \half\llangle \aII n \rrangle-2$, and $|Z^{*}|=2$.
\end{itemize}

In each case, we can show that $G$ contains either a red cycle on exactly~$\aIna$ vertices or a blue cycle on exactly~$\aIIna$ vertices as follows:

(i) Suppose $|X_1|+|Z_1|\geq\half\aIna$ and recall that, by~(\ref{y1bigish}), $|Y_1|\geq\half\aIna$. Then,  by~(\ref{Z1}), we may choose $\wX_1
 \in X_1\cup Z_1$ and $\wY_1 \in Y_1$ such that $|\wX_1|=|\wY_1|=\half\aIna$ and $|\wX_1\cap Z_1|\leq 114\eta^{1/2} n$.
By~(\ref{xy-red}), every vertex in $\wY_1$ has at least 
$|\wX_1\cap X_1|-7\eta^{1/2}n$ red neighbours in $\wX_1\cap X_1$, that is, at least $|\wX_1|-121\eta^{1/2}n$ red neighbours in $\wX_1$. By~(\ref{xy-red}) and~(\ref{IIIA4a}), every vertex in $\wX_1$ has at least $|\wY_1|-850\eta^{1/2}n$ red neighbours in $\wY_1$. Thus, for any $x\in \wX_1, y\in \wY_1$, $d(x)+d(y)\geq|\wX_1|+|\wY_1|-971\eta^{1/2}n\geq\half|\wX_1|+\half|\wY_1|+1$. So, by Theorem~\ref{moonmoser}, there exists a red cycle on exactly~$\aIna$ vertices in $G[\wX_1,\wY_1]$.

(ii) Suppose $|X_2|+|Z_2|\geq\half\aIIna$ and recall that, by~(\ref{y1bigish}), $|Y_1|\geq\half\aIIna$. Then,  by~(\ref{Z1}), we may choose $\wX_2
 \in X_2\cup Z_2$ and $\wY_1 \in Y_1$ such that $|\wX_2|=|\wY_1|=\half\aIIna$ and $|\wX_2\cap Z_2|\leq 114\eta^{1/2} n$.
By~(\ref{xy-blue}), every vertex in $\wY_1$ has at least 
$|\wX_2\cap X_2|-7\eta^{1/2}n$ blue neighbours in $\wX_2\cap X_2$, that is, at least $|\wX_2|-121\eta^{1/2}n$ neighbours in $\wX_2$. By~(\ref{xy-blue}) and~(\ref{IIIA4d}), every vertex in $\wX_2$ has at least $|\wY_1|-850\eta^{1/2}n$ blue neighbours in $\wY_1$. Thus, for any $x\in \wX_2, y\in \wY_1$, $d(x)+d(y)\geq|\wX_2|+|\wY_1|-971\eta^{1/2}n\geq\half|\wX_2|+\half|\wY_1|+1$. So, by Theorem~\ref{moonmoser}, there exists a blue cycle on exactly~$\aIIna$ vertices in $G[\wX_2,\wY_2]$.

(iii) Suppose that $|X_1|+|Z_1|=\half\llangle \aI n \rrangle-1$, $|X_2|+|Z_2|= \half\llangle \aII n \rrangle-1$ and $|Z^{*}|=1$. Consider $z\in Z^{*}$. Since $Z^{*}\subset Z_{RB}$, we know that $z$ has green edges to at most $850\eta^{1/2}n$ of the vertices of $Y=Y_1\cup Y_2$. Thus, $z$ either has red edges to at least two vertices in $Y_1$ or blue edges to at least two vertices in $Y_1$. We denote two of these as $y_1$ and $y_2$. 

In the former case, by (\ref{xy-red}), $y_1$ and $y_2$ each have at least two red neighbours in $X_1$. By (\ref{y1bigish}), we have $|Y_1|>|Z_1|+|X_1|+1$. By~(\ref{xy-red}),~(\ref{IIIA4a}) and (\ref{y1bigish}), since $\eta\leq(\aII/5000)^2$, every vertex in $Z_1\cup X_1$ has at least $|Y_1|-850\eta^{1/2}n\geq\half(|Z_1|+|X_1|+|Y_1|)+1$ red neighbours in $Y_1$. Thus, by Lemma~\ref{bp-dir}, there exists a red path in $G[X_1\cup Z_1,Y_1]$ from $y_1$ to $y_2$ which visits every vertex of $X_1\cup Z_1$. This path, together with the red edges $y_1z$ and $y_2z$, forms a red cycle on exactly~$\aIna$ vertices.

In the latter case, by (\ref{xy-blue}), $y_1$ and $y_2$ each have at least two blue neighbours in $X_2$. By (\ref{y1bigish}), we have $|Y_1|>|Z_2|+|X_2|+1$. By~(\ref{xy-blue}),~(\ref{IIIA4d}) and~(\ref{y1bigish}), since $\eta\leq(\aII/5000)^2$, every vertex in $Z_2\cup X_2$ has at least $|Y_1|-850\eta^{1/2}n\geq\half(|Z_2|+|X_2|+|Y_1|)+1$ blue neighbours in $Y_1$. Thus, by Lemma~\ref{bp-dir}, there exists a blue path in $G[X_2\cup Z_2,Y_1]$ from $y_1$ to $y_2$ which visits every vertex of $X_2\cup Z_2$. This path, together with the blue edges $y_1z$ and $y_2z$, forms a blue cycle on exactly~$\aIIna$ vertices.

(vi) Suppose that $|X_1|+|Z_1|=\half\llangle \aI n \rrangle-2$, $|X_2|+|Z_2|= \half\llangle \aII n \rrangle-1$ and $|Z^{*}|=2$. Then, considering $z_1,z_2\in Z^{*}$, since $Z^{*}\subset Z_{RB}$, we know that $z_1$ and $z_2$ each have green edges to at most $850\eta^{1/2}n$ of the vertices of $Y=Y_1 \cup Y_2$. Thus, either one of $z_1$,~$z_2$ has blue edges to two distinct vertices in $Y_1$ or both have at least $|Y_1|-900\eta^{1/2}n$ red neighbours in $Y_1$. In the former case, the situation is identical to one already considered in (iii): We have $|X_2|+|Z_2|=\half\aIIna-1$ and know of the existence of a vertex in $Z$ with blue edges to two distinct vertices in $Y_1$.

In the latter case, the situation is similar to the one considered in (i): We have $|X_1|+|Z_1|+|Z^{*}|\geq\half\aIna$ and $|Y_1|\geq\half\aIna$. By~(\ref{Z1}), we may choose $\wX_1
 \in X_1\cup Z_1\cup Z^{*}$ and $\wY_1 \in Y_1$ such that $|\wX_1|=|\wY_1|=\half\aIna$ and $|\wX_1\cap (Z_1\cup Z^{*})|\leq 114\eta^{1/2} n$.
By~(\ref{xy-red}) and~(\ref{IIIA4a}), every vertex in $\wY_1$ at least $|\wX_1|-121\eta^{1/2}n$ red neighbours in~$\wX_1$ and every vertex in $\wX_1$ has at least $|\wY_1|-900\eta^{1/2}n$ red neighbours in $\wY_1$. Thus, for any $x\in \wX_1, y\in \wY_1$, $d(x)+d(y)\geq|\wX_1|+|\wY_1|-1021\eta^{1/2}n\geq\half|\wX_1|+\half|\wY_1|+1$. So, by Theorem~\ref{moonmoser}, there exists a red cycle on exactly~$\aIna$ vertices in $G[\wX_1,\wY_1]$.

(v) Suppose that $|X_1|+|Z_1|=\half\llangle \aI n \rrangle-1$, $|X_2|+|Z_2|= \half\llangle \aII n \rrangle-2$ and $|Z^{*}|=2$. Then, considering $z_1,z_2\in Z^{*}$, since $Z^{*}\subset Z_{RB}$, we know that $z_1$ and $z_2$ each have green edges to at most $850\eta^{1/2}n$ of the vertices of $Y$. Thus, either one of~$z_1$,~$z_2$ has red edges to two distinct vertices in $Y_1$ or both have at least $|Y_1|-900\eta^{1/2}n$ blue neighbours in $Y_1$. In the former case, the situation is identical to one already considered in (iii): We have $|X_1|+|Z_1|=\half\aIna-1$ and know of the existence of a vertex in $Z$ with red edges to two distinct vertices in $Y_1$.

In the latter case, the situation is similar to the one considered in (ii): We have $|X_2|+|Z_2|+|Z^{*}|\geq\half\aIIna$ and $|Y_1|\geq\half\aIIna$. By~(\ref{Z1}), we may choose $\wX_2
 \in X_2\cup Z_2\cup Z^{*}$ and $\wY_1 \in Y_1$ such that $|\wX_2|=|\wY_1|=\half\aIIna$ and $|\wX_2\cap (Z_2\cup Z^{*})|\leq 114\eta^{1/2} n$.
By~(\ref{xy-blue}) and~(\ref{IIIA4d}), every vertex in $\wY_1$ at least $|\wX_2|-121\eta^{1/2}n$ blue neighbours in~$\wX_2$ and every vertex in $\wX_2$ has at least $|\wY_1|-900\eta^{1/2}n$ blue neighbours in $\wY_1$. Thus, for any $x\in \wX_2, y\in \wY_1$, $d(x)+d(y)\geq|\wX_1|+|\wY_1|-1021\eta^{1/2}n\geq\half|\wX_2|+\half|\wY_1|+1$. So, by Theorem~\ref{moonmoser}, there exists a red cycle on exactly~$\aIna$ vertices in $G[\wX_2,\wY_1]$.
The existence of such a red or blue cycle would be sufficient to complete the proof of Theorem~\hyperlink{thA}{A}. Thus,  we have completed Part III.A.

\subsection*{Part III.B: $K^*\in\cK_2^{*}$.}
In this case, we have a partition the vertex set $\cV$ of $\cG$ into $\cX_1\cup\cX_2\cup\cY_1\cup\cY_2\cup\cZ$ with
\begin{subequations}
\begin{align}
\label{ZZ0a}
|\cX_1|&\geq(\half\aI-97\eta^{1/2})k, & |\cX_2|&\geq(\half\aII-97\eta^{1/2})k
\end{align}
and, writing $\cY$ for $\cY_1\cup\cY_2$,
\begin{align}
\label{ZZ0b}
|\cY_1|&\geq(\tfrac{3}{4}\aIII-140\eta^{1/2})k, & |\cY_2|&\geq 100\eta^{1/2}k, &
|\cY|&\geq(\aIII-10\eta^{1/2})k,
\end{align}
\end{subequations}
such that all edges present in $\cG[\cX_1,\cY_1]\cup\cG[\cX_2,\cY_2]$ are coloured exclusively red, all edges present in $\cG[\cX_1,\cY_2]\cup\cG[\cX_2,\cY_1]$ are coloured exclusively blue and all edges present in $\cG[\cX_1,\cX_2]\cup\cG[\cY_1,\cY_2]$ are coloured exclusively green (see Figure~\ref{fig-III-initial}). Also, for any $\cZ\subseteq\cX_1\cup\cX_2\cup\cY_1\cup\cY_2$, $\cG[\cZ]$ is $4\eta^4 k$-almost-complete.
 Observe that, since 
 $\cG_1[\cX_1\cup\cY_1]$ is $4\eta^4 k$-complete, it
has a single red component.  Similarly, $\cG[\cX_2\cup\cY_2]$ has a single red component and each of $\cG[\cX_1\cup\cY_2]$ and $\cG[\cX_2\cup\cY_1]$ has a single blue component.  
 
Consider~$\cG[\cY_1]$ and suppose that there exists a red matching $\cR_1$ on $198\eta^{1/2}k$ vertices in $G[\cY_1]$. Then, we have $|\cY_1\backslash\cV(\cR_1)|,|\cX_1|\geq(\half\aI-97\eta^{1/2})k$, so, by Lemma~\ref{l:eleven}, $\cG[\cX_1,\cY_1\backslash \cV(\cR_1)]$ contains a red connected-matching on at least $(\aI-196\eta^{1/2})k$ vertices, which combined with $\cR_1$ gives a red connected-matching on at least~$\aI k$ vertices.  
Thus, there can be no such red matching in $\cG[\cY_1]$. Similarly,~$\cG[\cY_1]$ cannot contain a blue matching on $198\eta^{1/2}k$ vertices. Thus, after discarding at most $396\eta^{1/2}k$ vertices from~$\cY_1$, we may assume that all edges present in~$\cG[\cY_1]$ are coloured exclusively green. 
After discarding these vertices, recalling (\ref{ZZ0a}) and (\ref{ZZ0b}), we may assume that we have a partition $\cX_1\cup\cX_2\cup\cY_1\cup\cY_2\cup\cZ$  such that 
\begin{subequations}
\begin{equation}
\left.
\begin{aligned}
\label{ZZ0ca}
\quad\quad\quad\quad\quad\quad\quad\,\,\,\,
(\half\aI-97\eta^{1/2})k&\leq|\cX_1|=p\leq\half\aI k,
\quad\quad\quad\quad\quad\quad\quad\quad\quad\,\,\\
(\half\aII-97\eta^{1/2})k&\leq|\cX_2|=q\leq\half\aII k,\\
(\tfrac{3}{4}\aIII-536\eta^{1/2})k&\leq|\cY_1|=r\leq(\aIII-100\eta^{1/2})k, \\ 
100\eta^{1/2}k&\leq|\cY_2|=s\leq(\tfrac{1}{4}\aIII+536\eta^{1/2})k.
\end{aligned}
\right\}\!\!
\end{equation}
Additionally, writing $\cY$ for $\cY_1\cup\cY_2$, we have
\begin{align}
\label{ZZ0cb}
(\aIII-406\eta^{1/2})k&\leq |\cY|=r+s\leq \aIII k \,\,
\end{align}
\end{subequations}
and know that all edges present in $\cG[\cX_1,\cY_1]\cup\cG[\cX_2,\cY_2]$ are coloured exclusively red, all edges present in $\cG[\cX_1,\cY_2]\cup\cG[\cX_2,\cY_1]$ are coloured exclusively blue and all edges present in $\cG[\cX_1,\cX_2]\cup\cG[\cY_1,\cY_2]\cup\cG[\cY_1]$ are coloured exclusively green.

\begin{figure}[!h]
\centering
\vspace{-2mm}
\includegraphics[width=64mm, page=21]{Th-AB-Figs.pdf}
\vspace{0mm}\caption{Colouring of $\cG[\cY_1]$.}
\end{figure}

The remainder of this section focuses on showing that the original graph must have a similar structure which can then be exploited to force a cycle of appropriate length, colour and parity. Again, each vertex $V_{i}$ of $\cG=(\cV,\cE)$ represents a cluster of vertices of $G=(V,E)$ with
$$(1-\eta^4)\frac{N}{K}\leq |V_i|\leq \frac{N}{K}$$ 
and, since $n> \max\{n_{\ref{th:blow-up}}(2,1,0,\eta), n_{\ref{th:blow-up}}(\half,\half,1,\eta)\}$, we have $$
|V_i|\geq \left(1+\frac{\eta}{24}\right)\frac{n}{k}> \frac{n}{k}.$$ 
Similarly, we may show that $$|V_i|\leq\big(1+\eta\big)\frac{n}{k}.$$

We partition the vertices of~$G$ into sets $X_{1}$, $X_{2}$, $Y_{1}$, $Y_{2}$, and~$Z$ corresponding to the partition of the vertices of~$\cG$
. Then~$X_1$ contains~$p$ clusters of vertices, $X_2$ contains $p$ clusters, $Y_1$ contains~$r$ clusters and $Y_2$ contains~$s$ clusters. 
Writing $Y$ for $Y_1\cup Y_2$ and recalling (\ref{ZZ0ca}) and (\ref{ZZ0cb}), we have
\begin{equation}
\label{ZZ0d}
\left.
\begin{aligned}
|X_1|&=p|V_1|\geq(\half\aI-97\eta^{1/2})n,& \quad\,\,
|Y_1|&=r|V_1|\geq(\tfrac{3}{4}\aIII-536\eta^{1/2})n,\\
|X_2|&=p|V_1|\geq(\half\aII-97\eta^{1/2})n,&
|Y_2|&=s|V_1|\geq100\eta^{1/2}n,\\
&&|Y|&=(r+s)|V_1|\geq(\aIII-406\eta^{1/2})n.
\quad\,\,
\end{aligned}
\right\}\!
\end{equation}

In what follows, we will remove vertices from $X_1\cup X_2\cup Y_1\cup Y_2$  by moving them into~$Z$. The following claim  tells us that the graph $G$ has essentially the same coloured structure as the  reduced-graph $\cG$:
\begin{claim}
\label{G-structIIIB}
We can remove at most $7\eta^{1/2}n$ vertices from~$X_1$,
$7\eta^{1/2}n$ vertices from~$X_2$, $31\eta^{1/2}n$ vertices from~$Y_1$ and $7\eta^{1/2}n$ vertices from~$Y_2$
such that each of the following holds:
\end{claim}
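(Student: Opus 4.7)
The plan is to mirror the proof of Claim~\ref{G-structIIIA} essentially verbatim, applying Szemer\'edi-regularity-based edge counting to each of the bipartite pairs $G[X_1,Y_1]$, $G[X_2,Y_2]$, $G[X_1,Y_2]$, $G[X_2,Y_1]$ and to the within-part subgraph $G[Y_1]$ (and, if the stated conditions require it, also to $G[Y_1,Y_2]$). The coloured structure of $\cG$ forces the only sources of ``wrongly-coloured'' edges in each such $G$-subgraph to be: (a)~edges lying within a single cluster $V_i$ (only relevant for within-part subgraphs like $G[Y_1]$); (b)~edges between non-regular pairs of clusters, of which there are at most $4\eta^4 k$ per vertex of $\cG$; and (c)~edges between regular pairs whose off-colour density is at most $\eta$, by the colouring rule in Definition~\ref{reduced}.

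Concretely, for $G[X_1,Y_1]$, since $\cG[\cX_1,\cY_1]$ contains only red edges, the number of non-red edges is bounded by
\[
4\eta^4 pk\Big(\tfrac{N}{K}\Big)^2 + 2\eta pr\Big(\tfrac{N}{K}\Big)^2,
\]
and using $K\geq k\geq 1/\eta$, $N\leq 3n$, $p\leq\tfrac{1}{2}\aI k\leq k$, $r\leq\aIII k\leq 2k$, this is at most $40\eta n^2$. Hence fewer than $7\eta^{1/2}n$ vertices on either side of $G[X_1,Y_1]$ have red degree below $|Y_1|-7\eta^{1/2}n$ respectively $|X_1|-7\eta^{1/2}n$; reassigning these to $Z$ yields the required almost-completeness for $G_1[X_1,Y_1]$. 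The identical calculation handles $G_1[X_2,Y_2]$, $G_2[X_1,Y_2]$ and $G_2[X_2,Y_1]$, contributing at most $7\eta^{1/2}n$ removals from each of $X_1,X_2$ and an analogous contribution to $Y_1,Y_2$.

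For $G[Y_1]$, after the cleanup that left $\cG[\cY_1]$ monochromatic green and $4\eta^4 k$-almost-complete, the number of non-green edges is bounded by the within-cluster and between-cluster contributions
\[
r\binom{N/K}{2} + 4\eta^4 rk\Big(\tfrac{N}{K}\Big)^2 + 2\eta\binom{r}{2}\Big(\tfrac{N}{K}\Big)^2 \;\leq\; 50\eta n^2,
\]
since $r\leq\aIII k\leq 2k$. Hence at most $10\eta^{1/2}n$ vertices of $Y_1$ have green degree below $|Y_1|-10\eta^{1/2}n$; moving these to $Z$ delivers a $10\eta^{1/2}n$-almost-complete $G_3[Y_1]$. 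Summing removals across all five subgraphs (and attributing each to the correct part) produces the totals $7\eta^{1/2}n$ for $X_1$ and $X_2$, $31\eta^{1/2}n$ for $Y_1$ (roughly $7+7+7+10$ after accounting for the different subgraphs touching $Y_1$), and $7\eta^{1/2}n$ for $Y_2$.

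There is no real obstacle here: the argument is pure bookkeeping on top of the already-performed Szemer\'edi decomposition, directly paralleling Claim~\ref{G-structII} and Claim~\ref{G-structIIIA}. The one point meriting vigilance is that $|Y_2|$ is only guaranteed to exceed $100\eta^{1/2}n$, so removing up to $7\eta^{1/2}n$ vertices from $Y_2$ leaves $|Y_2|\geq 93\eta^{1/2}n$; this is still comfortably positive, and subsequent arguments in Part~III.B will have to work with this genuinely small lower bound rather than one of order $n$, but that is a concern for the end-game, not for the present claim.
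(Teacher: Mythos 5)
Your proposal is correct and follows the paper's proof essentially verbatim: the paper cleans exactly $G[Y_1]$ (green, via the within-cluster/non-regular/low-density count giving $\leq 50\eta n^2$ bad edges and $10\eta^{1/2}n$ removals) together with $G_1[X_1,Y_1]$, $G_2[X_2,Y_1]$ and $G_3[Y_1,Y_2]$ (each via the $\leq 40\eta n^2$ count and $7\eta^{1/2}n$ removals per side), which is precisely your calculation and yields the stated totals $7\eta^{1/2}n$, $7\eta^{1/2}n$, $31\eta^{1/2}n$, $7\eta^{1/2}n$. One small tidy-up: the claim's conditions only concern $G_1[X_1,Y_1]$, $G_2[X_2,Y_1]$, $G_3[Y_1]$ and $G_3[Y_1,Y_2]$, so the extra pairs $G[X_1,Y_2]$ and $G[X_2,Y_2]$ you list at the outset should simply be omitted --- cleaning them as well would push the removals from $X_1$, $X_2$ and $Y_2$ above the stated budgets, and your final tally in effect already drops them.
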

\begin{itemize}
\labitem{KB1}{Bxy-red} $G_1[X_1,Y_1]$ is $7\eta^{1/2}n$-almost-complete;
\labitem{KB2}{Bxy-blue} $G_2[X_2,Y_1]$ is $7\eta^{1/2}n$-almost-complete;
\labitem{KB3}{By-green} $G_3[Y_1]$ is $10\eta^{1/2}n$-almost-complete and $G_3[Y_1,Y_2]$ is $7\eta^{1/2}n$-almost-complete.
\end{itemize}

\begin{proof}
Consider the complete three-coloured graph $G[Y_1]$ and recall that~$\cG[\cY_1]$ is $4\eta^4k$-almost-complete and that all edges present in~$\cG[\cY_1]$ are coloured exclusively green. Given the construction of~$\cG$, we can bound the number of non-green edges in $G[Y_1]$ by
%
%
%
%
$$r \binom{N/K}{2} + 4\eta^4 rk \left(\frac{N}{K}\right)^2+2\eta\binom{r}{2}\left(\frac{N}{K}\right)^{2},$$ where the first term counts the number of non-green edges within the clusters,
the second counts the number of non-green edges between non-regular pairs,
the third counts the number of non-green edges between regular pairs.

Since $K\geq k, \eta^{-1}$, $N\leq 3n$ and $r\leq \aIII k\leq 2k$, we obtain
$$e(G_1[Y])+e(G_2[Y])\leq [9\eta + 72\eta^{4} +36 \eta]n^2\leq 50\eta n^2.$$
Since $G[Y_1]$ is complete and contains at most $50\eta n^2$ non-green edges, there are at most $10\eta^{1/2}n$ vertices with green degree at most $|Y_1|-10\eta^{1/2}n$. Re-assigning these vertices to~$Z$ gives a new~$Y_1$ 
such that every vertex in $G[Y_1]$ has red degree at least $|Y_1|-10\eta^{1/2}n$. 

Now, consider $G[X_1,Y_1]$, bounding the the number of non-red edges in $G[X_1,Y_1]$ by 
$$4\eta^4 pk \left(\frac{N}{K}\right)^2+2\eta pr\left(\frac{N}{K}\right)^{2}. $$
Where the first term bounds the number of non-red edges between non-regular pairs and the second bounds the number of non-red edges between regular pairs.

Since $K\geq k$, $N\leq 3n$, $p \leq \half\aI k\leq \half k$ and $r\leq \aIII k \leq 2k$, we obtain
$$e(G_2[X_1,Y_1])+e(G_3[X_1,Y_1])\leq (18\eta^4+18\eta)n^2 \leq 40 \eta n^2.$$
Since $G[X_1,Y_1]$ is complete and contains at most $40\eta n^2$ non-red edges, there are at most $7\eta^{1/2}n$ vertices in~$X_1$ with red degree to~$Y_1$ at most $|Y_1|-7\eta^{1/2}n$ and at most $7\eta^{1/2}n$ vertices in~$Y_1$ with red degree to~$X_1$ at most $|X_1|-7\eta^{1/2}n$. Re-assigning these vertices to~$Z$ results in every vertex in~$X_1$ having degree in $G_1[X_1,Y_1]$ at least $|Y_1|-7\eta^{1/2}n$ and every vertex in~$Y_1$ having degree in $G_1[X_1,Y_1]$ at least $|X_1|-7\eta^{1/2}n$.
By the same argument we can show that each of~$G_2[X_2,Y_1]$ and $G_3[Y_1,Y_2]$, are $7\eta^{1/2}n$-almost-complete, thus completing the proof of the claim. \end{proof}

\begin{figure}[!h]
\centering
\includegraphics[width=64mm, page=22]{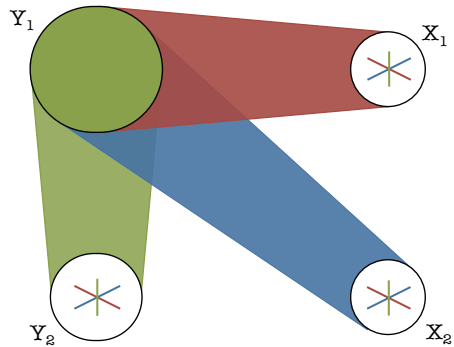}
\vspace{0mm}\caption{Colouring of $G$ after Claim~\ref{G-structIIIB}.}
 \end{figure}

Following the removals in Claim~\ref{G-structIIIB}, recalling (\ref{ZZ0d}), we have
\begin{equation}
\label{ZZ1}
\left.
\begin{aligned}
\quad\quad\quad\,\,\,\,
|X_1|&\geq(\half\aI-104\eta^{1/2})n,&
\quad\quad\quad\quad
|Y_1|&\geq(\tfrac{3}{4}\aIII-567\eta^{1/2})n,
\quad\quad\quad\\
|X_2|&\geq(\half\aII-104\eta^{1/2})n,&
|Y_2|&\geq93\eta^{1/2}n,\\
&&|Y|&\geq(\aIII-444\eta^{1/2})n.
\end{aligned}
\right\}
\end{equation}
and know that $G_1[X_1,Y_1]$, $G_2[X_2,Y_1]$ and $G_3[Y_1,Y_2]$ are each $7\eta^{1/2}n$-almost-complete and $G_3[Y_1]$ is $10\eta^{1/2}n$-almost-complete.
Observe that, by Corollary~\ref{moonmoser2}, there exist red cycles in $G[X_1,Y_1]$ of every (non-trivial) even length up to twice the size of the smaller part and  blue cycles in $G[X_2,Y_1]$ of every (non-trivial) even length up to twice the size of the smaller part. Additionally, we may use use Corollary~\ref{dirac2} and Lemma~\ref{bp-dir} to obtain a green cycle in $G[Y]$ of any 
length up to $|Y|=|Y_1|+|Y_2|$ as follows:

Recalling (\ref{ZZ0ca}), we have $s\leq(\tfrac{1}{4}\aIII+536\eta^{1/2})k$. Then, $|Y_2|= s|V_1|\leq (1+\eta)(\tfrac{1}{4}\aIII+536\eta^{1/2})n\leq(\tfrac{1}{4}\aIII+538\eta^{1/2})n$. Thus, we have $|Y_1|\geq(\tfrac{3}{4}\aIII-567\eta^{1/2})\geq|Y_2|+14\eta^{1/2}n+2$. Thus, since $G_3[Y_1]$ is $10\eta^{1/2}n$-almost-complete and $G_3[Y_1,Y_2]$ is $7\eta^{1/2}n$-almost-complete, every vertex in $Y_1$ has degree at least two in $G_3[Y_1,Y_2]$ and every vertex in $Y_2$ has degree at least $\half(|Y_1|+|Y_2|)+1$ in $G_3[Y_1,Y_2]$. Therefore, by Lemma~\ref{bp-dir}, given any two vertices $y_1$ and $y_2$ in $Y_1$, there exists a green path $P_1$ on~$2|Y_2|+1$ vertices from $y_1$ to $y_2$ in $G[Y_1,Y_2]$. 
Let $Y_1^{\prime}$ be a subset of $\big(Y_1\backslash V(P_1)\big)\cup\{y_1,y_2\}$ such that $y_1,y_2\in Y_1^{\prime}$ and
$|Y_1^{\prime}|
\geq 20\eta^{1/2}n +2.$
Then, since $G_3[Y_1]$ is $10\eta^{1/2}n$-almost-complete, every vertex in $Y_1^{\prime}$ has degree at least $\half|Y_1^{\prime}|+1$ in $G_3[Y_1^{\prime}]$ and so, by Corollary~\ref{dirac2}, there exists a green path $P_2$ on $|Y_1^{\prime}|$ vertices from $y_1$ to $y_2$ in $G[Y_1^{\prime}]$. Together, the green paths $P_1$ and $P_2$ form a green cycle on exactly $|Y_1^{\prime}|+|Y_2|$ vertices.

Thus, we may assume that
\begin{align}
\label{291}
|X_1|&\leq\half\aIna, & |X_2|&\leq\half\aIIna, & |Y|&\leq \aIIIna.
\end{align}
 We will show that it is possible to augment each of $X_1, X_2, Y$ with vertices from~$Z$ and that, considering the sizes of each part, there must in fact be a cycle of appropriate length, colour and parity to complete the proof.
 Observe that, by (\ref{By-green}) and (\ref{ZZ1}), every vertex in $Y_1$ has degree at least 
$$(\tfrac{3}{4}\aIII-577\eta^{1/2})n\geq \aIIIna -(\tfrac{1}{4}\aIII+577\eta^{1/2})n$$
in $G_3[Y_1]\subseteq G_3[Y]$ and every vertex in $Y_2$ has degree at least
$$(\tfrac{3}{4}\aIII-574\eta^{1/2})n\geq \aIIIna -(\tfrac{1}{4}\aIII+574\eta^{1/2})n$$ in $G_3[Y_1,Y_2]\subseteq G_3[Y]$.
Then, let $$Z_G=\{z\in Z : z\text{ has at least }(\tfrac{1}{4}\aIII+578\eta^{1/2})n \text{ green edges to }Y_1\}$$ and suppose that $|Z_G\cup Y_1\cup Y_2|\geq\langle\aIII n\rangle$. 

In that case, since $|Y|\leq \aIIIna$, we we may choose a subset $\wY$ of size $\langle \aIII n \rangle$ from $Z_G\cup Y_1\cup Y_2$ which includes every vertex of $Y_1\cup Y_2$ and $\aIIIna - |Y|$ vertices from~$Z_G$. Then, by (\ref{ZZ1}), $\wY$ includes at least $(\aIII -444\eta^{1/2})n$ vertices from $Y_1\cup Y_2$ and at most $445\eta^{1/2}n$ vertices from $Z_G$. Thus, $G[Y]$ has at least $(\aIII -444\eta^{1/2})n$ vertices of degree at least $\aIIIna -(\tfrac{1}{4}\aIII+577\eta^{1/2})n$ and at most $445\eta^{1/2}$ vertices of degree at least $(\tfrac{1}{4}\aIII+578\eta^{1/2})n$. Therefore, by Theorem~\ref{chv}, $G[Y]$ is Hamiltonian and thus contains a green cycle of length exactly $\langle \aIII n \rangle$. The existence of such a cycle would be sufficient to complete the proof of Theorem~\hyperlink{thA}{A} in this case so we may assume instead that $|X_3\cup Z_G|<\langle\aIII n\rangle$.

Thus, letting $Z_{RB}=Z\backslash Z_G$, we may assume that 
$$|X_1|+|X_2|+|Z_{RB}|\geq\half\llangle\aI n\rrangle +\half\llangle \aII n \rrangle -1$$
and, defining
\begin{align*}
Z_R&=\{w\in Z : w\text{ has at least }(\half\aI-575\eta^{1/2})n\text{ red edges to }Y_1\},\\
Z_B&=\{w\in Z : w\text{ has at least }(\half\aII-575\eta^{1/2})n\text{ blue edges to }Y_1\},
\end{align*}
may assume, without loss of generality, that $|X_1\cup Z_R|\geq \llangle \aI n \rrangle.$

In that case, by (\ref{291}), we may choose a subset $\widetilde{Z}_R\subseteq Z_R$ such that $|X_1|+|\widetilde{Z}_R|=\half\aIna$. 
By (\ref{ZZ1}), we have $|\widetilde{Z}_R|\leq106\eta^{1/2}n$. Then, observing that any $z\in Z_R$ and $x\in X_1$ have at least $(\half\aI-582\eta^{1/2})n\geq|\widetilde{Z}_R|$ common neighbours and that any $x,y\in X_1$ have at least $(\tfrac{3}{4}\aIII -581\eta^{1/2})n\geq\aI n$ common neighbours, we can greedily construct a red cycle of length~$\llangle \aI n \rrangle$ using all the vertices of $X_1\cup \widetilde{Z}_R$ and $\half\aIna$ vertices from~$Y_1$, completing Part III of the proof of Theorem~\hyperlink{thA}{A}.

Observing that we have exhausted all the possibilities arising from Theorem~\hyperlink{thB}{B} and that the graphs providing the corresponding lower bounds have already been seen (in Section~\ref{ram:low}), we have proved that, given $\aI,\aII,\aIII>0$ such that $\aI\geq\aII$, there exists $n_A=n_A(\aI,\aII,\aIII)$ such that, for $n > n_{A}$, 
 \begin{align*}
 R(C_{\llangle \alpha_{1} n \rrangle},C_{\llangle \alpha_{2} n \rrangle}, C_{\langle \alpha_{3} n \rangle }) = \max\{ 2\llangle \alpha_{1} n \rrangle \!+\! \llangle \alpha_{2} n \rrangle  \!-\!\text{\:}3,\text{\:}\half\llangle  \alpha_{1} n \rrangle  \!+\! \half\llangle \alpha_{2} n \rrangle  \!+\! \langle \alpha_{3} n \rangle \!-\! \text{\:}2\},
\end{align*}
 thus completing the the proof of Theorem~\hyperlink{thA}{A}.
\qed

\renewcommand{\baselinestretch}{1.15}\small\normalsize

\cleardoublepage
\phantomsection
\addcontentsline{toc}{section}{References}
\clearpage
\bibliographystyle{halpha2}
\bibliography{test}

\end{document}